\newtheorem{mydef}{Definition}
\newtheorem{mycor}{Corollary}
\newtheorem{myprop}{Proposition}
\newtheorem{mythe}{Theorem}
\newtheorem{mylem}{Lemma}
\DeclareMathOperator*{\DMC}{DMC}
\DeclareMathOperator*{\Proj}{Proj}
\DeclareMathOperator*{\supp}{supp}
\DeclareMathOperator*{\conv}{co}
\DeclareMathOperator*{\Imag}{Im}
\DeclareMathOperator*{\CE}{CE}
\DeclareMathOperator*{\rank}{rank}
\DeclareMathOperator*{\can}{can}
\DeclareMathOperator*{\cl}{cl}
\DeclareMathOperator*{\MP}{MP}
\DeclareMathOperator*{\BSC}{BSC}
\DeclareMathOperator*{\BEC}{BEC}
\begin{document}

\sloppy

%% Paper Title
%% You can use linebreaks \\ within to get better formatting as
%% desired. 
\title{Topological Structures on DMC spaces}

%% Author names and affiliations:
%%
%% Avoiding spaces at the end of the author lines is not a problem with
%% conference papers because we don't use \thanks or \IEEEmembership.
%%
%% For several authors with only one affiliation:
%%
% \author{
%   \IEEEauthorblockN{Hui-Ting Chang and Stefan M.~Moser}
%   \IEEEauthorblockA{Department of Electrical and Computer Engineering\\
%     National Chiao Tung University (NCTU)\\
%     Hsinchu, Taiwan\\
%     Email: \{email-of-hui-ting,email-of-stefan\}@ieee.org} 
% }
%%
%% For up to three affiliations:
%%

%\author{
%Rajai Nasser\\
%EPFL, Lausanne, Switzerland\\
%rajai.nasser@epfl.ch
%
%\and
%
%Joseph M. Renes\\
%ETH Zurich, Switzerland\\
%renes@itp.phys.ethz.ch
%}

\author{
Rajai Nasser\\
EPFL, Lausanne, Switzerland\\
rajai.nasser@epfl.ch
%\thanks{This paper was presented in part at the IEEE International Symposium on Information Theory, Hong Kong, June 2015.}
}

%%
%% For over three affiliations, or if they all won't fit within the width
%% of the page, use this alternative format:
%%
% \author{
%   \IEEEauthorblockN{
%     Michael Shell\IEEEauthorrefmark{1},
%     Homer Simpson\IEEEauthorrefmark{2},
%     James Kirk\IEEEauthorrefmark{3}, 
%     Montgomery Scott\IEEEauthorrefmark{3} and
%     Eldon Tyrell\IEEEauthorrefmark{4}}
%   \IEEEauthorblockA{
%     \IEEEauthorrefmark{1}School of Electrical and Computer Engineering\\
%     Georgia Institute of Technology, Atlanta, Georgia 30332--0250\\ 
%     Email: see http://www.michaelshell.org/contact.html}
%   \IEEEauthorblockA{
%     \IEEEauthorrefmark{2}Twentieth Century Fox, Springfield, USA\\
%     Email: homer@thesimpsons.com}
%   \IEEEauthorblockA{
%     \IEEEauthorrefmark{3}Starfleet Academy, San Francisco, California 96678-2391\\
%     Telephone: (800) 555--1212, Fax: (888) 555--1212}
%   \IEEEauthorblockA{
%     \IEEEauthorrefmark{4}Tyrell Inc., 123 Replicant Street, Los Angeles, California 90210--4321}
% }

%% Use for special paper notices
%\IEEEspecialpapernotice{(Invited Paper)}

%% To balance the two columns, you should reduce the text-height of
%% the last page using the following command:
%%%%%%%%%%%%%%%%%%%%%%%%%%%%%%%%%%%%%%%%%%%%%%%%%%%%%%%%%%%%%%%%%%%%%
%\addtolength{\textheight}{-9.35cm}
%%%%%%%%%%%%%%%%%%%%%%%%%%%%%%%%%%%%%%%%%%%%%%%%%%%%%%%%%%%%%%%%%%%%%
%% with an appropriate value. This command must be place on the second
%% last page, i.e., for a one-page abstract here, for a two-page
%% abstract right after the \maketitle command.

%% Create the title:
\maketitle

%% Abstract: 
%% For the final version of the accepted paper, please make sure you
%% remove the comment "THIS PAPER IS ELIGIBLE FOR THE STUDENT PAPER
%% AWARD."
%%
\begin{abstract}
Two channels are said to be equivalent if they are degraded from each other. The space of equivalent channels with input alphabet $\mathcal{X}$ and output alphabet $\mathcal{Y}$ can be naturally endowed with the quotient of the Euclidean topology by the equivalence relation. We show that this topology is compact, path-connected and metrizable. A topology on the space of equivalent channels with fixed input alphabet $\mathcal{X}$ and arbitrary but finite output alphabet is said to be natural if and only if it induces the quotient topology on the subspaces of equivalent channels sharing the same output alphabet. We show that every natural topology is $\sigma$-compact, separable and path-connected. On the other hand, if $|\mathcal{X}|\geq 2$, a Hausdorff natural topology is not Baire and it is not locally compact anywhere. This implies that no natural topology can be completely metrized if $|\mathcal{X}|\geq 2$. The finest natural topology, which we call the strong topology, is shown to be compactly generated, sequential and $T_4$. On the other hand, the strong topology is not first-countable anywhere, hence it is not metrizable. We show that in the strong topology, a subspace is compact if and only if it is rank-bounded and strongly-closed. We provide a necessary and sufficient condition for a sequence of channels to converge in the strong topology. We introduce a metric distance on the space of equivalent channels which compares the noise levels between channels. The induced metric topology, which we call the noisiness topology, is shown to be natural. We also study topologies that are inherited from the space of meta-probability measures by identifying channels with their Blackwell measures. We show that the weak-$\ast$ topology is exactly the same as the noisiness topology and hence it is natural. We prove that if $|\mathcal{X}|\geq 2$, the total variation topology is not natural nor Baire, hence it is not completely metrizable. Moreover, it is not locally compact anywhere. Finally, we show that the Borel $\sigma$-algebra is the same for all Hausdorff natural topologies.
\end{abstract}

\section{Introduction}

A topology on a given set is a mathematical structure that allows us to formally talk about the neighborhood of a given point of the set. This makes it possible to define continuous mappings and converging sequences. Topological spaces generalize metric spaces which are mathematical structures that specify distances between the points of the space. Links between information theory and topology were investigated in \cite{TopInInfInTop}. In this paper, we aim to construct meaningful topologies and metrics for the space of equivalent channels sharing a common input alphabet.

Let $\mathcal{X}$ and $\mathcal{Y}$ be two fixed finite sets. Every discrete memoryless channel (DMC) with input alphabet $\mathcal{X}$ and output alphabet $\mathcal{Y}$ can be determined by its transition probabilities. Since there are $|\mathcal{X}|\times|\mathcal{Y}|$ such probabilities, the space of all channels from $\mathcal{X}$ to $\mathcal{Y}$ can be seen as a subset of $\mathbb{R}^{|\mathcal{X}|\times|\mathcal{Y}|}$. Therefore, this space can be naturally endowed with the Euclidean metric, or any other equivalent metric. A generalization of this topology to infinite input and output alphabets was considered in \cite{Schwarte}.

There are a few drawbacks for this approach. For example, consider the case where $\mathcal{X}=\mathcal{Y}=\mathbb{F}_2:=\{0,1\}$. The binary symmetric channels $\BSC(\epsilon)$ and $\BSC(1-\epsilon)$ have non-zero Euclidean distance if $\epsilon\neq\frac{1}{2}$. On the other hand, $\BSC(\epsilon)$ and $\BSC(1-\epsilon)$ are completely equivalent from an operational point of view: both channels have exactly the same probability of error under optimal decoding for any fixed code. Moreover, any sub-optimal decoder for one channel can be transformed to a sub-optimal decoder for the other channel without changing the probability of error nor the computational complexity. This is why it makes sense, from an information-theoretic point of view, to identify equivalent channels and consider them as one point in the space of ``equivalent channels".

The limitation of the Euclidean metric is clearer when we consider channels with different output alphabets. For example, $\BSC\left({\frac{1}{2}} \right)$ and $\BEC\left(1\right)$ are completely equivalent but they do not have the same output alphabet, and so there is no way to compare them with the Euclidean metric because they do not belong to the same space.

The standard approach to solve this problem is to find a ``canonical sufficient statistic" and find a representation of each channel in terms of this sufficient statistic. This makes it possible to compare channels with different output-alphabets. One standard sufficient statistic that has been widely used for binary-input channels is the log-likelihood ratio. Each binary-input channel can be represented as a density of log-likelihood ratios (called $L$-density in \cite{RichardsonUrbanke}). This representation makes it possible to ``topologize" the space of ``equivalent" binary-input channels by considering the topology of convergence in distribution \cite{RichardsonUrbanke}. A similar approach can be adopted for non-binary-input channels (see \cite{RathiUrbanke} and \cite{BannatanBurshtein}). Another (equivalent) way to ``topologize" the space of equivalent channels is by using the Le Cam deficiency distance \cite{LeCam}.

The current formulation of this topology cannot be generalized to the quantum setting because it is given in terms of posterior probabilities which do not have quantum analogues. Therefore, if we want to generalize this topology to the space of equivalent quantum channels and equivalent classical-quantum channels, it is crucial to find a formulation for this topology that does not explicitly depend on posterior probabilities.

Another issue (which is secondary and only relevant for conceptual purposes) is that the current formulation of this topology does not allow us to see it as a ``natural topology". Consider a fixed output alphabet $\mathcal{Y}$ and let us focus on the space of ``equivalent channels" from $\mathcal{X}$ to $\mathcal{Y}$. Since this space is the quotient of the space of channels from $\mathcal{X}$ to $\mathcal{Y}$, which is naturally topologized by the Euclidean metric, it seems that the most natural topology on this space is the quotient of the Euclidean topology by the equivalence relation. This motivates us to consider a topology on the space of ``equivalent channels" with input alphabet $\mathcal{X}$ and arbitrary but finite output alphabet as \emph{natural} if and only if it induces the quotient topology on the subspaces of ``equivalent channels" from $\mathcal{X}$ to $\mathcal{Y}$ for any output alphabet $\mathcal{Y}$. A legitimate question to ask now is whether the $L$-density topology is natural in this sense or not.

In this paper, we study general and particular natural topologies on DMC spaces. In Section II, we provide a brief summary of the basic concepts and theorems in general topology. The measure-theoretic notations that we use are introduced in section III. The space of channels from $\mathcal{X}$ to $\mathcal{Y}$ and its topology is studied in Section IV. We formally define the equivalence relation between channels in section V. It is shown that the equivalence class of a channel can be determined by the distribution of its posterior probability distribution. This is the standard generalization of $L$-densities to non-binary-input channels. This distribution is called the Blackwell measure of the channel. In section VI, we study the space of equivalent channels from $\mathcal{X}$ to $\mathcal{Y}$ and the quotient topology.

In Section VII, we define the space of equivalent channels with input alphabet $\mathcal{X}$ and we study the properties of general natural topologies. The finest natural topology, which we call the \emph{strong topology} is studied in Section VIII. A metric for the space of equivalent channels is proposed in section IX. The induced topology by this metric is called the \emph{noisiness topology}. In section X, we study the topologies that are inherited from the space of meta-probability measures by identifying equivalent channels with their Blackwell measures. We show that the weak-$\ast$ topology (which is the standard generalization of the $L$-density topology to non-binary-input channels) is exactly the same as the noisiness topology. The total variation topology is also investigated in section X. The Borel $\sigma$-algebra of Hausdorff natural topologies is studied in section XI.

The continuity (under the topologies introduced here) of mappings that are relevant to information theory (such as capacity, mutual information, Bhattacharyya parameter, probability of error of a fixed code, optimal probability of error of a given rate and blocklength, channel sums and products, etc ...) is studied in \cite{RajContTop}. We also study the polarization process of Ar{\i}kan \cite{Arikan} and its convergence under various topologies in \cite{RajPolarConvTop}.

\section{Preliminaries}

In this section, we recall basic definitions and well known theorems in general topology. The reader who is already familiar with the basic concepts of topology may skip this section and refer to it later if necessary. Proofs of all non-referenced facts can be found in any standard textbook on general topology (e.g., \cite{kelley1975general}). Definitions and theorems that may not be widely known can be found in Sections \ref{subsecQuotient}, \ref{subsecSequential} and \ref{subsecCompactlyGenerated}.

\subsection{Set-theoretic notations}

For every integer $n>0$, we denote the set $\{1,\ldots,n\}$ as $[n]$.

The set of mappings from a set $A$ to a set $B$ is denoted as $B^A$.

Let $A$ be a subset of $B$. The \emph{indicator mapping} $\mathds{1}_{A,B}:B\rightarrow\{0,1\}$ of $A$ in $B$ is defined as:
$$\mathds{1}_{A,B}(x)=\mathds{1}_{x\in A}=\begin{cases}1\quad&\text{if}\;x\in A,\\0\quad&\text{otherwise}.\end{cases}$$
If the superset $B$ is clear from the context, we simply write $\mathds{1}_A$ to denote the indicator mapping of $A$ in $B$.

The \emph{power set} of $B$ is the set of subsets of $B$. Since every subset of $B$ can be identified with its indicator mapping, we denote the power set of $B$ as $2^B:=\{0,1\}^B$.

A collection $\mathcal{A}\subset 2^B$ of subsets of $B$ is said to be \emph{finer} than another collection $\mathcal{A}'\subset 2^B$ if $\mathcal{A}'\subset\mathcal{A}$. If this is the case, we also say that $\mathcal{A}'$ is \emph{coarser} than $\mathcal{A}$.

Let $(A_i)_{i\in I}$ be a collection of arbitrary sets indexed by $I$. The \emph{disjoint union} of $(A_i)_{i\in I}$ is defined as $\displaystyle \coprod_{i\in I} A_i=\bigcup_{i\in I}(A_i\times\{i\})$. For every $i\in I$, the $i^{th}$-\emph{canonical injection} is the mapping $\phi_i:A_i\rightarrow \displaystyle\coprod_{j\in I} A_j$ defined as $\phi_i(x_i)=(x_i,i)$. If no confusions can arise, we can identify $A_i$ with $A_i\times\{i\}$ through the canonical injection. Therefore, we can see $A_i$ as a subset of $\displaystyle\coprod_{j\in I} A_j$ for every $i\in I$.

A \emph{relation} $R$ on a set $T$ is a subset of $T\times T$. For every $x,y\in T$, we write $x R y$ to denote $(x,y)\in R$.

A relation is said to be \emph{reflexive} if $x R x$ for every $x\in T$. It is \emph{symmetric} if $x R y$ implies $y R x$ for every $x,y\in T$. It is \emph{anti-symmetric} if $x R y$ and $y R x$ imply $x=y$ for every $x,y\in T$. It is \emph{transitive} if $x R y$ and $y R z$ imply $x R z$ for every $x,y,z\in T$.

An \emph{order relation} is a relation that is reflexive, anti-symmetric and transitive. An \emph{equivalence relation} is a relation that is reflexive, symmetric and transitive.

Let $R$ be an equivalence relation on $T$. For every $x\in T$, the set $\hat{x}=\{y\in T:\; x R y\}$ is the \emph{$R$-equivalence class} of $x$. The collection of $R$-equivalence classes, which we denote as $T/R$, forms a partition of $T$, and it is called the \emph{quotient space of $T$ by $R$}. The mapping $\Proj_R:T\rightarrow T/R$ defined as $\Proj_R(x)=\hat{x}$ for every $x\in T$ is the \emph{projection mapping onto $T/R$}.

\subsection{Topological spaces}

A \emph{topological space} is a pair $(T,\mathcal{U})$, where $\mathcal{U}\subset 2^T$ is a collection of subsets of $T$ satisfying:
\begin{itemize}
\item $\o\in\mathcal{U}$ and $T\in\mathcal{U}$.
\item The intersection of a finite collection of members of $\mathcal{U}$ is also a member of $\mathcal{U}$.
\item The union of an arbitrary collection of members of $\mathcal{U}$ is also a member of $\mathcal{U}$.
\end{itemize}
If $(T,\mathcal{U})$ is a topological space, we say that $\mathcal{U}$ is a \emph{topology} on $T$.

The power set $2^T$ of $T$ is clearly a topology. It is called the \emph{discrete topology} on $T$.

If $\mathcal{A}$ is a an arbitrary collection of subsets of $T$, we can construct a topology on $T$ starting from $\mathcal{A}$ as follows:
$$\bigcap_{\substack{\mathcal{A}\subset \mathcal{V}\subset 2^T,\\\mathcal{V}\;\text{is a topology on}\; T}} \mathcal{V}.$$
This is the coarsest topology on $T$ that contains $\mathcal{A}$. It is called the \emph{topology on $T$ generated by $\mathcal{A}$}.

Let $(T,\mathcal{U})$ be a topological space. The subsets of $T$ that are members of $\mathcal{U}$ are called the \emph{open sets} of $T$. Complements of open sets are called \emph{closed sets}. We can easily see that the closed sets satisfy the following:
\begin{itemize}
\item $\o$ and $T$ are closed.
\item The union of a finite collection of closed sets is closed.
\item The intersection of an arbitrary collection of closed sets is closed.
\end{itemize}

Let $A$ be an arbitrary subset of $T$. The \emph{closure} $\cl(A)$ of $A$ is the smallest closed set containing $A$: $$\displaystyle \cl(A)=\bigcap_{\substack{A\subset F\subset T,\\F\;\text{is closed}}} F.$$
The \emph{interior} $A^{\circ}$ of $A$ is the largest open subset of $A$: $$\displaystyle A^{\circ}=\bigcup_{\substack{U\subset A,\\U\;\text{is open}}} U.$$

If $A\subset T$ and $\cl(A)=T$, we say that $A$ is \emph{dense} in $T$.

$(T,\mathcal{U})$ is said to be \emph{separable} if there exists a countable subset of $T$ that is dense in $T$.

A subset $O$ of $T$ is said to be a \emph{neighborhood} of $x\in T$ if there exists an open set $U\in\mathcal{U}$ such that $x\in U\subset O$.

A \emph{neighborhood basis} of $x\in T$ is a collection $\mathcal{O}$ of neighborhoods of $x$ such that for every neighborhood $O$ of $x$, there exists $O'\in\mathcal{O}$ such that $O'\subset O$.

We say that $(T,\mathcal{U})$ is \emph{first-countable} if every point $x\in T$ has a countable neighborhood basis.

A collection of open sets $\mathcal{B}\subset \mathcal{U}$ is said to be a \emph{base} for the topology $\mathcal{U}$ if every open set $U\in\mathcal{U}$ can be written as the union of elements of $\mathcal{B}$.

We say that $(T,\mathcal{U})$ is a \emph{second-countable} space if the topology $\mathcal{U}$ has a countable base.

It is a well known fact that every second-countable space is first-countable and separable.

We say that a sequence $(x_n)_{n\geq 0}$ of elements of $T$ \emph{converges} to $x\in T$ if for every neighborhood $O$ of $x$, there exists $n_0\geq 0$ such that for every $n\geq n_0$, we have $x_n\in O$. We say that $x$ is a \emph{limit} of the sequence $(x_n)_{n\geq 0}$. Note that the limit does not need to be unique if there is no constraint on the topology.

\subsection{Separation axioms}

$(T,\mathcal{U})$ is said to be a \emph{$T_1$-space} if for every $x,y\in T$, there exists an open set $U\in\mathcal{U}$ such that $x\in U$ and $y\notin U$. It is easy to see that $(T,\mathcal{U})$ is $T_1$ if and only if all singletons are closed.

$(T,\mathcal{U})$ is said to be a \emph{Hausdorff} space (or \emph{$T_2$-space}) if for every $x,y\in T$, there exist two open sets $U,V\in\mathcal{U}$ such that $x\in U$, $y\in V$ and $U\cap V=\o$.

If $(T,\mathcal{U})$ is Hausdorff, the limit of every converging sequence is unique.

$(T,\mathcal{U})$ is said to be \emph{regular} if for every $x\in T$ and every closed set $F$ not containing $x$, there exist two open sets $U,V\in\mathcal{U}$ such that $x\in U$, $F\subset V$ and $U\cap V=\o$.

$(T,\mathcal{U})$ is said to be \emph{normal} if for every two disjoint closed sets $A$ and $B$, there exist two open sets $U,V\in\mathcal{U}$ such that $A\subset U$, $B\subset V$ and $U\cap V=\o$.

If $(T,\mathcal{U})$ is normal, disjoint closed sets can be separated by disjoint closed neighborhoods. I.e., for every two disjoint closed sets $A$ and $B$, there exist two open sets $U,U'\in\mathcal{U}$ and two closed sets $K,K'$ such that $A\subset U\subset K$, $B\subset U'\subset K'$ and $K\cap K'=\o$.

$(T,\mathcal{U})$ is said to be a \emph{$T_3$-space} if it is both $T_1$ and regular.

$(T,\mathcal{U})$ is said to be a \emph{$T_4$-space} if it is both $T_1$ and normal.

It is easy to see that $T_4\Rightarrow T_3\Rightarrow T_2\Rightarrow T_1$.

\subsection{Relativization}

If $(T,\mathcal{U})$ is a topological space and $A$ is an arbitrary subset of $T$, then $A$ inherits a topology $\mathcal{U}_A$ from $(T,\mathcal{U})$ as follows:
$$\mathcal{U}_A=\{A\cap U:\;U\in\mathcal{U}\}.$$
It is easy to check that $\mathcal{U}_A$ is a topology on $A$.

If $(T,\mathcal{U})$ is first-countable (respectively second-countable, or Hausdorff), then $(A,\mathcal{U}_A)$ is first-countable (respectively second-countable, or Hausdorff).

If $(T,\mathcal{U})$ is normal and $A$ is closed, then $(A,\mathcal{U}_A)$ is normal.

The union of a countable number of separable subspaces is separable.

\subsection{Continuous mappings}

Let $(T,\mathcal{U})$ and $(S,\mathcal{V})$ be two topological spaces. A mapping $f:T\rightarrow S$ is said to be \emph{continuous} if for every $V\in\mathcal{V}$, we have $f^{-1}(V)\in\mathcal{U}$.

$f:T\rightarrow S$ is an \emph{open} mapping if $f(U)\in\mathcal{V}$ whenever $U\in \mathcal{U}$.
$f:T\rightarrow S$ is a \emph{closed} mapping if $f(F)$ is closed in $S$ whenever $F$ is closed in $T$.

A bijection $f:T\rightarrow S$ is a \emph{homeomorphism} if both $f$ and $f^{-1}$ are continuous. In this case, for every $A\subset T$, $A\in\mathcal{U}$ if and only if $f(A)\in\mathcal{V}$. This means that $(T,\mathcal{U})$ and $(S,\mathcal{V})$ have the same topological structure and share the same topological properties.

\subsection{Compact spaces and sequentially compact spaces}

$(T,\mathcal{U})$ is a \emph{compact} space if every open cover of $T$ admits a finite sub-cover. I.e., if $(U_i)_{i\in I}$ is a collection of open sets such that $\displaystyle T=\bigcup_{i\in I} U_i$ then there exists $n>0$ and $i_1,\ldots,i_n\in I$ such that $\displaystyle T=\bigcup_{j=1}^{n} U_{i_j}$.

If $(T,\mathcal{U})$ is compact, then every closed subset of $T$ is compact (with respect to the inherited topology).

If $f:T\rightarrow S$ is a continuous mapping from a compact space $(T,\mathcal{U})$ to an arbitrary topological space $(S,\mathcal{V})$, then $f(T)$ is compact.

If $A$ is a compact subset of a Hausdorff topological space, then $A$ is closed.

$(T,\mathcal{U})$ is said to be \emph{locally compact} if every point has at least one compact neighborhood. A compact space is automatically locally compact.

If $(T,\mathcal{U})$ is Hausdorff and locally compact, then for every point $x\in T$ and every neighborhood $O$ of $x$, $O$ contains a compact neighborhood of $x$.

A compact Hausdorff space is always normal.

$(T,\mathcal{U})$ is a \emph{$\sigma$-compact} space if it is the union of a countable collection of compact subspaces.

$(T,\mathcal{U})$ is \emph{countably compact} if every countable open cover of $T$ admits a finite sub-cover. This is a weaker condition compared to compactness.

$(T,\mathcal{U})$ is said to be \emph{sequentially compact} if every sequence in $T$ has a converging subsequence. In general, compactness does not imply sequential compactness nor the other way around.

\subsection{Connected spaces}

$(T,\mathcal{U})$ is a \emph{connected} space if it satisfies one of the following equivalent conditions:
\begin{itemize}
\item $T$ cannot be written as the union of two disjoint non-empty open sets.
\item $T$ cannot be written as the union of two disjoint non-empty closed sets.
\item The only subsets of $T$ that are both open and closed are $\o$ and $T$.
\item Every continuous mapping from $T$ to $\{0,1\}$ is constant, where $\{0,1\}$ is endowed with the discrete topology.
\end{itemize}

$(T,\mathcal{U})$ is \emph{path-connected} if every two points of $T$ can be joined by a continuous path. I.e., for every $x,y\in T$, there exists a continuous mapping $f:[0,1]\rightarrow T$ such that $f(0)=x$ and $f(1)=y$, where $[0,1]$ is endowed with the well known Euclidean topology\footnote{See Section \ref{SubsecMetricSpaces} for the definition of the Euclidean metric and its induced topology}.

A path-connected space is connected but the converse is not true in general.

A subset $A$ of $T$ is said to be connected (respectively path-connected) if $(A,\mathcal{U}_A)$ is connected (respectively path-connected).

If $(A_i)_{i\in I}$ is a collection of connected (respectively path-connected) subsets of $T$ such that $\displaystyle \bigcap_{i\in I} A_i\neq \o$, then $\displaystyle \bigcup_{i\in I} A_i$ is connected (respectively path-connected).

\subsection{Product of topological spaces}

Let $\{(T_i,\mathcal{U}_i)\}_{i\in I}$ be a collection of topological spaces indexed by $I$. Let $\displaystyle T=\prod_{i\in I} T_i$ be the product of this collection. For every $j\in I$, the $j^{th}$-\emph{canonical projection} is the mapping $\Proj_j:T\rightarrow T_j$ defined as $\Proj_j\big((x_i)_{i\in I}\big)=x_j$.

The \emph{product topology} $\displaystyle \mathcal{U}:=\bigotimes_{i\in I}\mathcal{U}_i$ on $T$ is the coarsest topology that makes all the canonical projections continuous. It can be shown that $\mathcal{U}$ is generated by the collection of sets of the form $\displaystyle\prod_{i\in I} U_i$, where $U_i\in \mathcal{U}_i$ for all $i\in I$, and $U_i\neq T_i$ for only finitely many $i\in I$.

The product of $T_1$ (respectively, Hausdorff, regular, $T_3$, compact, connected, or path-connected) spaces is $T_1$ (respectively, Hausdorff, regular, $T_3$, compact, connected, or path-connected).

\subsection{Disjoint union}

Let $\{(T_i,\mathcal{U}_i)\}_{i\in I}$ be a collection of topological spaces indexed by $I$. Let $\displaystyle T=\coprod_{i\in I} T_i$ be the disjoint union of this collection. The \emph{disjoint union topology} $\displaystyle \mathcal{U}:=\bigoplus_{i\in I}\mathcal{U}_i$ on $T$ is the finest topology which makes all the canonical injections continuous. It can be shown that $U\in\mathcal{U}$ if and only if $U\cap T_i\in\mathcal{U}_i$ for every $i\in I$.

A mapping $f:T\rightarrow S$ from $(T,\mathcal{U})$ to a topological space $(S,\mathcal{V})$ is continuous if and only if it is continuous on $T_i$ for every $i\in I$.

The disjoint union of $T_1$ (respectively Hausdorff) spaces is $T_1$ (respectively Hausdorff). The disjoint union of two or more non-empty spaces is always disconnected.

Products are distributive with respect to the disjoint union, i.e., if $(S,\mathcal{V})$ is a topological space then $S\times \left(\displaystyle\coprod_{i\in I} T_i\right)=\displaystyle\coprod_{i\in I}\left(S\times T_i\right)$ and $\mathcal{V}\otimes\left(\displaystyle\bigoplus_{i\in I}\mathcal{U}_i\right)=\displaystyle\bigoplus_{i\in I}\left(\mathcal{V}\otimes\mathcal{U}_i\right)$.

\subsection{Quotient topology}

\label{subsecQuotient}
Let $(T,\mathcal{U})$ be a topological space and let $R$ be an equivalence relation on $T$. The \emph{quotient topology} on $T/R$ is the finest topology that makes the projection mapping $\Proj_R$ continuous. It is given by
$$\mathcal{U}/R=\left\{\hat{U}\subset T/R:\;\textstyle\Proj_R^{-1}(\hat{U})\in \mathcal{U}\right\}.$$

\begin{mylem}
\label{lemQuotientFunction}
Let $f:T\rightarrow S$ be a continuous mapping from $(T,\mathcal{U})$ to $(S,\mathcal{V})$. If $f(x)=f(x')$ for every $x,x'\in T$ satisfying $x R x'$, then we can define a \emph{transcendent mapping} $f:T/R\rightarrow S$ such that $f(\hat{x})=f(x')$ for any $x'\in\hat{x}$. $f$ is well defined on $T/R$ . Moreover, $f$ is a continuous mapping from $(T/R,\mathcal{U}/R)$ to $(S,\mathcal{V})$.
\end{mylem}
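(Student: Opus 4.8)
The plan is to verify three things in turn: that the transcendent mapping (which I will temporarily denote $\tilde f\colon T/R\to S$, to keep it notationally distinct from $f$) is well defined, that it satisfies $\tilde f\circ\Proj_R=f$, and that it is continuous. The last point follows immediately from the explicit formula for the quotient topology given just above the statement, so there is essentially no real difficulty to overcome here; this is nothing but the universal property of the quotient construction.

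First I would check well-definedness. For $\hat{x}\in T/R$, the proposed value $\tilde f(\hat{x})$ is $f(x')$ for an arbitrary representative $x'\in\hat{x}$. If $x',x''\in\hat{x}$, then $x' R x''$, so by hypothesis $f(x')=f(x'')$; hence $\tilde f(\hat{x})$ does not depend on the chosen representative, and $\tilde f$ is a genuine mapping. Moreover, directly from the definition, $\tilde f(\Proj_R(x))=\tilde f(\hat{x})=f(x)$ for every $x\in T$, that is, $\tilde f\circ\Proj_R=f$.

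Next, to prove continuity, let $V\in\mathcal{V}$ be an arbitrary open set of $S$. Using the factorization, $\Proj_R^{-1}\big(\tilde f^{-1}(V)\big)=(\tilde f\circ\Proj_R)^{-1}(V)=f^{-1}(V)$, and this set belongs to $\mathcal{U}$ because $f$ is continuous. By the explicit description $\mathcal{U}/R=\{\hat{U}\subset T/R:\Proj_R^{-1}(\hat{U})\in\mathcal{U}\}$ of the quotient topology, it follows that $\tilde f^{-1}(V)\in\mathcal{U}/R$. Since $V$ was arbitrary, $\tilde f$ is continuous from $(T/R,\mathcal{U}/R)$ to $(S,\mathcal{V})$; identifying $\tilde f$ with $f$ as in the statement then finishes the proof.

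I do not expect any genuine obstacle. The only point requiring attention, rather than effort, is the bookkeeping between $f$ and $\tilde f$ (which the statement deliberately conflates under a single symbol), together with the fact that one should invoke the concrete form of $\mathcal{U}/R$; the abstract characterization of $\mathcal{U}/R$ as the finest topology making $\Proj_R$ continuous would also suffice, at the cost of one extra line of reasoning.
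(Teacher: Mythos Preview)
Your proof is correct. The paper does not actually supply a proof of this lemma: it is stated in the preliminaries section as a standard fact, with the blanket remark that ``proofs of all non-referenced facts can be found in any standard textbook on general topology'' (citing Kelley). Your argument is the standard one and would be an acceptable way to fill in the details.
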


If $(T,\mathcal{U})$ is compact (respectively, connected, or path-connected), then $(T/R,\mathcal{U}/R)$ is compact (respectively, connected, or path-connected).

$T/R$ is said to be \emph{upper semi-continuous} if for every $\hat{x}\in T/R$ and every open set $U\in\mathcal{U}$ satisfying $\hat{x}\subset U$, there exists an open set $V\in\mathcal{U}$ such that $\hat{x}\subset V\subset U$, and $V$ can be written as the union of members of $T/R$.

The following Lemma characterizes upper semi-continuous quotient spaces:
\begin{mylem}
\label{lemUpperSemiCont}
\cite{kelley1975general} $T/R$ is upper semi-continuous if and only if $\Proj_R$ is a closed mapping.
\end{mylem}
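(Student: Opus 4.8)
The plan is to prove the two implications directly from the definitions, using the single bookkeeping fact that a subset $V\subset T$ is a union of members of $T/R$ (i.e. $R$-saturated) if and only if $V=\Proj_R^{-1}(\hat{W})$ for some $\hat{W}\subset T/R$, together with the observation that for $\hat{x}\in T/R$ and $F\subset T$ we have $\hat{x}\in\Proj_R(F)$ exactly when $\hat{x}\cap F\neq\o$. I expect these translations between ``saturated subsets of $T$'' and ``subsets of $T/R$'' to be the only thing requiring care; there is no genuine obstacle, and the argument matches \cite{kelley1975general}.

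For the direction ``$\Proj_R$ closed $\Rightarrow$ $T/R$ upper semi-continuous'': given $\hat{x}\in T/R$ and an open set $U\in\mathcal{U}$ with $\hat{x}\subset U$, I would put $F=T\setminus U$, which is closed, so $\Proj_R(F)$ is closed in $T/R$ by hypothesis. Since $\hat{x}\subset U$ means $\hat{x}\cap F=\o$, we get $\hat{x}\notin\Proj_R(F)$, hence $\hat{W}:=(T/R)\setminus\Proj_R(F)$ is an open neighborhood of $\hat{x}$ in $T/R$. Then $V:=\Proj_R^{-1}(\hat{W})$ is open in $T$ by continuity of $\Proj_R$, is $R$-saturated by construction, contains $\hat{x}$, and is disjoint from $F$ (if $y\in V\cap F$ then $\hat{y}\in\hat{W}\cap\Proj_R(F)=\o$, a contradiction), so $\hat{x}\subset V\subset U$ as required.

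For the converse ``$T/R$ upper semi-continuous $\Rightarrow$ $\Proj_R$ closed'': let $F\subset T$ be closed; I want $\Proj_R(F)$ closed in $T/R$, equivalently $\Proj_R^{-1}(\Proj_R(F))$ closed in $T$, equivalently its complement $G$ open. Now $x\in G$ means $\hat{x}\cap F=\o$, i.e. $\hat{x}\subset U:=T\setminus F$, with $U$ open. By upper semi-continuity there is an open $R$-saturated $V$ with $\hat{x}\subset V\subset U$; since $V$ is saturated and disjoint from $F$, it is disjoint from $\Proj_R^{-1}(\Proj_R(F))$, so $V\subset G$. Thus $G$ is a neighborhood of each of its points and hence open, which finishes the proof.
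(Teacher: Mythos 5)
Your proof is correct. The paper does not give its own argument for this lemma — it cites \cite{kelley1975general} directly — so there is nothing in the paper to compare against; your two implications, translating between ``$R$-saturated open subsets of $T$'' and ``open subsets of $T/R$'' via $\Proj_R^{-1}$ and using the identity $\hat{x}\in\Proj_R(F)\Leftrightarrow\hat{x}\cap F\neq\o$, are exactly the standard textbook argument.
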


The following theorem is very useful to prove many topological properties for the quotient space:
\begin{mythe}
\label{theQuotientTop}
\cite{kelley1975general} Let $(T,\mathcal{U})$ be a topological space, and let $R$ be an equivalence relation on $T$ such that $T/R$ is upper semi-continuous and $\hat{x}$ is a compact subset of $T$ for every $\hat{x}\in T/R$. If $(T,\mathcal{U})$ is Hausdorff (respectively, regular, locally compact, or second-countable) then $(T/R,\mathcal{U}/R)$ is Hausdorff (respectively, regular, locally compact, or second-countable).
\end{mythe}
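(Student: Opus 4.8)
The plan is to use Lemma~\ref{lemUpperSemiCont} to restate the hypothesis as: $q := \Proj_R$ is a closed continuous surjection whose fibers $q^{-1}(\hat x) = \hat x$ are compact (what is sometimes called a \emph{perfect} map). The single technical device I would set up first is the \emph{saturation} of an open set. For $U\in\mathcal{U}$, put $\widetilde{U} := T \setminus q^{-1}\big(q(T\setminus U)\big)$. Since $T\setminus U$ is closed and $q$ is closed and continuous, $\widetilde U$ is open; one checks directly that $\widetilde U$ is a union of $R$-classes (it is saturated), that $\widetilde U\subseteq U$, and that $\widetilde U$ contains every class $\hat z$ with $\hat z\subseteq U$, so in particular $\hat x\subseteq\widetilde U$ whenever $\hat x\subseteq U$. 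I would also record two one-line facts about $q$: it carries saturated open sets to open sets (for saturated $V$ one has $q^{-1}(q(V))=V$, and openness in $\mathcal{U}/R$ is tested on preimages), and it carries disjoint saturated sets to disjoint sets. With this, each of the four cases follows the same pattern: separate the relevant sets \emph{upstairs} in $T$ using the assumed property, replace the separating open sets by their saturations, and push forward by $q$.

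Concretely, for the Hausdorff case: if $\hat x\neq\hat y$ then $\hat x$ and $\hat y$ are disjoint compact subsets of the Hausdorff space $T$, hence lie in disjoint open sets $U,V$ by the usual compactness argument; then $q(\widetilde U)$ and $q(\widetilde V)$ are disjoint open neighborhoods of $\hat x$ and $\hat y$. For regularity: given $\hat x$ and a closed $\hat F\subseteq T/R$ with $\hat x\notin\hat F$, the set $q^{-1}(\hat F)$ is closed and disjoint from the compact set $\hat x$, so regularity (again via a finite-subcover argument) yields disjoint open $U\supseteq\hat x$ and $W\supseteq q^{-1}(\hat F)$; one verifies $q^{-1}(\hat F)\subseteq\widetilde W$ because $q^{-1}(\hat F)$ is itself saturated and lies in $W$, and then $q(\widetilde U),q(\widetilde W)$ separate $\hat x$ from $\hat F$. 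For local compactness: cover the compact class $\hat x$ by finitely many open sets, each contained in a compact neighborhood; if $O$ is their union and $K$ the union of the corresponding compact neighborhoods, then $K$ is compact, $q(\widetilde O)$ is an open neighborhood of $\hat x$, and $q(\widetilde O)\subseteq q(K)$, so $q(K)$ is a compact neighborhood of $\hat x$.

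For second countability, fix a countable base $\mathcal{B}$ of $T$ and, for each finite $F\subseteq\mathcal{B}$, define $W_F := \{\hat x\in T/R :\ \hat x\subseteq\bigcup F\}$. Then $q^{-1}(W_F) = T\setminus q^{-1}\big(q(T\setminus\bigcup F)\big)$, which is open because $\bigcup F$ is open, $q$ is closed, and preimages of closed sets are closed; hence each $W_F$ is open in $T/R$, and the family $\{W_F\}$ is countable. To see it is a base, take an open $\hat U$ and $\hat x\in\hat U$: then $q^{-1}(\hat U)$ is open and contains the compact set $\hat x$, so finitely many members of $\mathcal{B}$ lying inside $q^{-1}(\hat U)$ cover $\hat x$; calling that finite subfamily $F$, we get $\hat x\subseteq\bigcup F\subseteq q^{-1}(\hat U)$, hence $\hat x\in W_F\subseteq\hat U$.

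I expect no conceptual obstacle: the whole argument rests on the saturation construction. The one place that needs care is checking that $\widetilde U$ is genuinely open, which is exactly where closedness of $q$ — equivalently upper semi-continuity of $T/R$ via Lemma~\ref{lemUpperSemiCont} — is used, and checking that the ``saturated and disjoint'' property survives the pushforward by $q$. Compactness of the fibers enters only to legitimize the finite-subcover steps and, in the local compactness case, to produce the compact neighborhood. The main bookkeeping risk is simply keeping straight the roles of ``saturated'' and ``open'' when manipulating $q^{-1}(q(\cdot))$.
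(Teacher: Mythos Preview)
The paper does not prove this theorem at all: it is stated in the preliminaries with a citation to Kelley \cite{kelley1975general} and is used as a black box. Your proposal supplies a correct proof along the standard lines one finds in such references---reformulating upper semi-continuity via Lemma~\ref{lemUpperSemiCont} as closedness of the projection, building the saturated-open-set operation $\widetilde U$, and then handling each of the four properties by separating upstairs, saturating, and pushing down---so there is nothing to compare against and nothing to correct.
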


\subsection{Metric spaces}

\label{SubsecMetricSpaces}

A \emph{metric space} is a pair $(M,d)$, where $d:M\times M\rightarrow\mathbb{R}^+$ satisfies:
\begin{itemize}
\item $d(x,y)=0$ if and only if $x=y$ for every $x,y\in M$.
\item Symmetry: $d(x,y)=d(y,x)$ for every $x,y\in M$.
\item Triangle inequality: $d(x,z)\leq d(x,y)+d(y,z)$ for every $x,y,z\in M$.
\end{itemize}
If $(M,d)$ is a metric space, we say that $d$ is a \emph{metric} (or \emph{distance}) on $M$.

The \emph{Euclidean metric} on $\mathbb{R}^n$ is defined as $d(x,y)=\sqrt{\displaystyle\sum_{i=1}^n(x_i-y_i)^2}$, where $x=(x_i)_{1\leq i\leq n}$ and $y=(y_i)_{1\leq i\leq n}$.

$\mathbb{R}^n$ is second countable. Moreover, a subset of $\mathbb{R}^n$ is compact if and only if it is bounded and closed.

For every $x\in M$ and every $\epsilon>0$, we define the \emph{open ball} of center $x$ and radius $\epsilon$ as:
$$B_{\epsilon}(x)=\{y\in M:\; d(x,y)<\epsilon\}.$$

The \emph{metric topology} $\mathcal{U}_d$ on $M$ \emph{induced by $d$} is the coarsest topology on $M$ which makes $d$ a continuous mapping from $M\times M$ to $\mathbb{R}^+$. It is generated by all the open balls.

The metric topology is always $T_4$ and first-countable. Moreover, $(M,\mathcal{U}_d)$ is separable if and only if it is second-countable.

Since every metric space is Hausdorff, we can see that every subset of a compact metric space is closed if and only if it is compact.

Every $\sigma$-compact metric space is second-countable.

For metric spaces, compactness and sequential compactness are equivalent.

A function $f:M_1\rightarrow M_2$ from a metric space $(M_1,d_1)$ to a metric space $(M_2,d_2)$ is said to be \emph{uniformly continuous} if for every $\epsilon>0$, there exists $\delta>0$ such that for every $x,x'\in M_1$ satisfying $d_1(x,x')<\delta$ we have $d_2(f(x),f(x'))<\epsilon$.

If $f:M_1\rightarrow M_2$ is a continuous mapping from a compact metric space $(M_1,d_1)$ to an arbitrary metric space $(M_2,d_2)$, then $f$ is uniformly continuous.

A topological space $(T,\mathcal{U})$ is said to be \emph{metrizable} if there exists a metric $d$ on $T$ such that $\mathcal{U}$ is the metric topology on $T$ induced by $d$.

The disjoint union of metrizable spaces is always metrizable.

The following theorem shows that all separable metrizable spaces are characterized topologically:
\begin{mythe}
\label{theMetrSep}
\cite{kelley1975general} A topological space $(T,\mathcal{U})$ is metrizable and separable if and only if it is Hausdorff, regular and second countable.
\end{mythe}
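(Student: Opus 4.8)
This is the classical Urysohn metrization theorem in its separable form, so I would follow the standard embedding-into-the-Hilbert-cube argument. \emph{Easy direction:} if $(T,\mathcal{U})$ is metrizable and separable, then since the metric topology is always $T_4$ and first-countable, $(T,\mathcal{U})$ is in particular $T_1$ and normal, hence $T_3$, hence Hausdorff and regular; and for metric spaces separability is equivalent to second-countability (as recalled above), so $(T,\mathcal{U})$ is second-countable. This disposes of one implication.

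\emph{Hard direction:} assume $(T,\mathcal{U})$ is Hausdorff, regular and second-countable. Separability is then automatic, since every second-countable space is separable, and the only real task is to construct a metric. First I would upgrade regularity to normality: a second-countable space is Lindel\"of, and a regular Lindel\"of space is normal by a standard covering argument, so together with $T_1$ (from Hausdorff) the space is $T_4$. Now fix a countable base $\mathcal{B}=\{B_n\}_{n\geq 0}$. For each ordered pair $(m,n)$ with $\cl(B_m)\subset B_n$, the sets $\cl(B_m)$ and $T\setminus B_n$ are disjoint and closed, so Urysohn's lemma supplies a continuous $f_{m,n}:T\rightarrow[0,1]$ equal to $1$ on $\cl(B_m)$ and to $0$ on $T\setminus B_n$. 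Regularity guarantees that for every $x\in T$ and every open $U\ni x$ one can interpose basic open sets $x\in B_m\subset\cl(B_m)\subset B_n\subset U$, so the resulting countable family $(f_k)_{k\geq 0}$ has the \emph{separation property}: for every $x$ and every open $U\ni x$ there is a $k$ with $f_k(x)>0$ and $f_k\equiv 0$ on $T\setminus U$.

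Now define $F:T\rightarrow[0,1]^{\mathbb{N}}$ by $F(x)=(f_k(x))_{k\geq 0}$, the Hilbert cube carrying the product topology. Each coordinate is continuous, so $F$ is continuous. $F$ is injective: for $x\neq y$, the set $T\setminus\{y\}$ is open (by $T_1$) and contains $x$, so the separation property yields a coordinate on which $F(x)$ and $F(y)$ differ. $F$ is open onto its image: given open $U$ and $x\in U$, pick $k$ with $f_k(x)>0$ and $f_k\equiv 0$ on $T\setminus U$; then $F(T)\cap\Proj_k^{-1}\big((0,1]\big)$ is an open neighborhood of $F(x)$ in $F(T)$ contained in $F(U)$. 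Hence $F$ is a homeomorphism onto $F(T)$. Finally $[0,1]^{\mathbb{N}}$ is metrizable, e.g. by $d(a,b)=\sum_{k\geq 0}2^{-k}|a_k-b_k|$, and any subset of a metric space is a metric space in the relative metric, whose induced topology is the relative topology; so $T$ is metrizable, and we already noted it is separable.

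\emph{Main obstacle:} the two places that need genuine care are (i) extracting normality from regularity plus the Lindel\"of property, and (ii) verifying that $F$ is an embedding rather than merely a continuous bijection onto its image; both ultimately reduce to squeezing a member of the countable base between a point and a neighborhood, which is exactly what regularity provides. The remaining verifications (continuity and injectivity of $F$, and metrizability of the Hilbert cube) are routine.
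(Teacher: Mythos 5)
The paper does not prove this statement; it quotes it from Kelley's \emph{General Topology} as a known fact. Your argument is a correct and complete rendition of the standard Urysohn metrization proof (upgrade regularity to normality via the Lindel\"of property, build the countable separating family of Urysohn functions from a countable base, and embed into the Hilbert cube), which is essentially the proof given in the reference the paper cites, so there is nothing to reconcile.
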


\subsection{Complete metric spaces}

A sequence $(x_n)_{n\geq 0}$ is said to be a Cauchy sequence in $(M,d)$ if for every $\epsilon>0$, there exists $n_0\geq 0$ such that for every $n_1,n_2\geq n_0$ we have $d(x_{n_1},x_{n_2})<\epsilon$.

Every converging sequence is Cauchy, but the converse is not true in general.

A metric space is said to be \emph{complete} if every Cauchy sequence converges in it.

A closed subset of a complete space is always complete.

A complete subspace of an arbitrary metric space is always closed.

Every compact metric space is complete, but the converse is not true in general.

For every metric space $(M,d)$, there exists a superspace $(\overline{M},\overline{d})$ containing $M$ such that:
\begin{itemize}
\item $(\overline{M},\overline{d})$ is complete.
\item $M$ is dense in $(\overline{M},\overline{d})$.
\item $\overline{d}(x,y)=d(x,y)$ for every $x,y\in M$.
\end{itemize}
The space $(\overline{M},\overline{d})$ is said to be \emph{a completion} of $(M,d)$.

\subsection{Polish spaces and Baire spaces}
A topological space $(T,\mathcal{U})$ that is both separable and completely metrizable (i.e., has a metrization that is complete) is called a \emph{Polish space}.

A topological space is said to be a \emph{Baire space} if the intersection of countably many dense open subsets is dense. The following facts can be found in \cite{PolishBaire}:
\begin{itemize}
\item Every completely metrizable space is Baire.
\item Every compact Hausdorff space is Baire.
\item Every open subset of a Baire space is Baire.
\end{itemize}

\subsection{Sequential spaces}

\label{subsecSequential}

Sequential spaces were introduced by Franklin \cite{SequentialSpace} to answer the following question: Assume we know all the converging sequences of a topological space. Is this enough to uniquely determine the topology of the space? \emph{Sequential spaces} are the most general category of spaces for which converging sequences suffice to determine the topology.

Let $(T,\mathcal{U})$ be a topological space. A subset $U\subset T$ is said to be \emph{sequentially open} if for every sequence $(x_n)_{n\geq 0}$ that converges to a point of $U$ lies eventually in $U$, i.e., there exists $n_0\geq 0$ such that $x_n\in U$ for every $n\geq n_0$. Clearly, every open subset of $T$ is sequentially open, but the converse is not true in general.

A topological space $(T,\mathcal{U})$ is said to be \emph{sequential} if every sequentially open subset of $T$ is open.

A mapping $f:T\rightarrow S$ from a sequential topological space $(T,\mathcal{U})$ to an arbitrary topological space $(S,\mathcal{V})$ is continuous if and only if for every sequence $(x_n)_{n\geq 0}$ in $T$ that converges to $x\in T$, the sequence $(f(x_n))_{n\geq 0}$ converges to $f(x)$ in $(S,\mathcal{V})$ \cite{SequentialSpace}.

The following facts were shown in \cite{SequentialSpace}:
\begin{itemize}
\item Every first-countable space is sequential. Therefore, every metrizable space is sequential.
\item The quotient of a sequential space is sequential.
\item All closed and open subsets of a sequential space are sequential.
\item Every countably compact sequential Hausdorff space is sequentially compact.
\item A topological space is sequential if and only if it is the quotient of a metric space.
\end{itemize}

\subsection{Compactly generated spaces}
\label{subsecCompactlyGenerated}

A topological space $(T,\mathcal{U})$ is \emph{compactly generated} if it is Hausdorff and for every subset $F$ of $T$, $F$ is closed if and only if $F\cap K$ is closed for every compact subset $K$ of $T$. Equivalently, $(T,\mathcal{U})$ is \emph{compactly generated} if it is Hausdorff and for every subset $U$ of $T$, $U$ is open in $T$ if and only if $U\cap K$ is open in $K$ for every compact subset $K$ of $T$.

The following facts can be found in \cite{CompactlyGenerated}:
\begin{itemize}
\item All locally compact Hausdorff spaces are compactly generated.
\item All first-countable Hausdorff spaces are compactly generated. Therefore, every metrizable space is compactly generated.
\item A Hausdorff quotient of a compactly generated space is compactly generated.
\item If $(T,\mathcal{U})$ is compactly generated and $(S,\mathcal{V})$ is Hausdorff locally compact, then $(T\times S,\mathcal{U}\otimes\mathcal{V})$ is compactly generated.
\end{itemize}

\section{Measure-theoretic notations}

In this section, we introduce the measure-theoretic notations that we are using. We assume that the reader is familiar with the basic definitions and theorems of measure theory.

\subsection{Probability measures}

If $\mathcal{A}\subset 2^M$ is a collection of subsets of $M$, we denote the $\sigma$-algebra that is \emph{generated} by $\mathcal{A}$ as $\sigma(\mathcal{A})$.

The set of probability measures on $(M,\Sigma)$ is denoted as $\mathcal{P}(M,\Sigma)$. If the $\sigma$-algebra $\Sigma$ is known from the context, we simply write $\mathcal{P}(M)$ to denote the set of probability measures.

If $P\in\mathcal{P}(M,\Sigma)$ and $\{x\}$ is a measurable singleton, we simply write $P(x)$ to denote $P(\{x\})$.

For every $P_1,P_2\in\mathcal{P}(M,\Sigma)$, the \emph{total variation distance} between $P_1$ and $P_2$ is defined as:
$$\|P_1-P_2\|_{TV}=\sup_{A\in\Sigma}|P_1(A)-P_2(A)|.$$
The space $\mathcal{P}(M,\Sigma)$ is a complete metric space under the total variation distance.

\subsection{Probabilities on finite sets}

We always endow finite sets with their finest $\sigma$-algebra, i.e., the power set. In this case, every probability measure is completely determined by its value on singletons, i.e., if $P$ is a measure on a finite set $\mathcal{X}$, then for every $A\subset\mathcal{X}$, we have
$$P(A)=\sum_{x\in A}P(x).$$

If $\mathcal{X}$ is a finite set, we denote the set of probability distributions on $\mathcal{X}$ as $\Delta_{\mathcal{X}}$. Note that $\Delta_{\mathcal{X}}$ is an $(|\mathcal{X}|-1)$-dimensional simplex in $\mathbb{R}^{\mathcal{X}}$. We always endow $\Delta_{\mathcal{X}}$ with the total variation distance and its induced topology. For every $p_1,p_2\in\Delta_{\mathcal{X}}$, we have:
$$\|p_1-p_2\|_{TV}=\frac{1}{2}\sum_{x\in\mathcal{X}}|p_1(x)-p_2(x)|=\frac{1}{2}\|p_1-p_2\|_1.$$
Note that the total variation topology on $\Delta_{\mathcal{X}}$ is the same as the one inherited from the Euclidean topology of $\mathbb{R}^{\mathcal{X}}$ by relativisation. Since $\Delta_{\mathcal{X}}$ is a closed and bounded subset of $\mathbb{R}^{\mathcal{X}}$, it is compact.

\subsection{Borel sets and the support of a measure}

Let $(T,\mathcal{U})$ be a Hausdorff topological space. The \emph{Borel $\sigma$-algebra} of $(T,\mathcal{U})$ is the $\sigma$-algebra generated by $\mathcal{U}$. We denote the Borel $\sigma$-algebra of $(T,\mathcal{U})$ as $\mathcal{B}(T,\mathcal{U})$. If the topology $\mathcal{U}$ is known from the context, we simply write $\mathcal{B}(T)$ to denote the Borel $\sigma$-algebra. The sets in $\mathcal{B}(T)$ are called the \emph{Borel sets} of $T$.

The \emph{support} of a probability measure $P\in\mathcal{P}(T,\mathcal{B}(T))$ is the set of all points $x\in T$ for which every neighborhood has a strictly positive measure:
$$\supp(P)=\{x\in T:\;P(O)>0\;\text{for every neighborhood}\;O\;\text{of}\;x\}.$$
If $P$ is a probability measure on a Polish space, then $P\big(T\setminus\supp(P)\big)=0$.

\subsection{Convergence of probability measures and the weak-$\ast$ topology}

\label{subsecConvMeasuresTopology}

We have many notions of convergence of probability measures. If the measurable space does not have a topological structure, we have two notions of convergence:
\begin{itemize}
\item The \emph{total-variation convergence}: we say that a sequence $(P_n)_{n\geq 0}$ of probability measures in $\mathcal{P}(M,\Sigma)$ converges in total variation to $P\in \mathcal{P}(M,\Sigma)$ if and only if $\displaystyle\lim_{n\to\infty}\|P_n-P\|_{TV}=0$.
\item The \emph{strong convergence}: we say that a sequence $(P_n)_{n\geq 0}$ in $\mathcal{P}(M,\Sigma)$ strongly converges to $P\in \mathcal{P}(M,\Sigma)$ if and only if $\displaystyle\lim_{n\to\infty}P_n(A)=P(A)$ for every $A\in \Sigma$.
\end{itemize}

Clearly, total-variation convergence implies strong convergence. The converse is not true in general. However, if we are working in the Borel $\sigma$-algebra of a Polish space $T$ and $(P_n)_{n\geq 0}$ strongly converges to a finitely supported probability measure $P$, then
\begin{align*}
\|P_n-P\|_{TV}&= \sup_{B\in\mathcal{B}(T)}|P_n(B)-P(B)| \\
&\leq \sup_{B\in\mathcal{B}(T)}\bigg(\big|P_n\big(B\setminus \supp(P)\big)-P\big(B\setminus \supp(P)\big)\big|+\sum_{x\in\supp(P)}|P_n(x)-P(x)|\bigg)\\
&= \sup_{B\in\mathcal{B}(T)}\bigg(\big|P_n\big(B\setminus \supp(P)\big)\big|+\sum_{x\in\supp(P)}|P_n(x)-P(x)|\bigg)\\
&\leq |P_n\big(T\setminus \supp(P)\big)\big|+\sum_{x\in\supp(P)}|P_n(x)-P(x)|\\
&= |P_n\big(T\setminus \supp(P)\big)-P\big(T\setminus \supp(P)\big)\big|+\sum_{x\in\supp(P)}|P_n(x)-P(x)|\stackrel{n\to\infty}{\longrightarrow} 0,
\end{align*}
which implies that $(P_n)_{n\geq 0}$ also converges to $P$ in total variation. Therefore, in a Polish space, total variation convergence and strong convergence to finitely supported probability measures are equivalent.

Let $(T,\mathcal{U})$ be a Hausdorff topological space. We say that a sequence $(P_n)_{n\geq 0}$ of probability measures in $\mathcal{P}(T,\mathcal{B}(T))$ weakly-$\ast$ converges to $P\in \mathcal{P}(T,\mathcal{B}(T))$ if and only if for every bounded and continuous function $f$ from $T$ to $\mathbb{R}$, we have $$\displaystyle\lim_{n\to\infty}\int_{T}f\cdot dP_n=\int_{T}f\cdot dP.$$
Note that many authors call this notion ``weak convergence" rather than weak-$\ast$ convergence. We will refrain from using the term ``weak convergence" in order to be consistent with the functional analysis notation.

The \emph{weak-$\ast$ topology} on $\mathcal{P}(T,\mathcal{B}(T))$ is the coarsest topology which makes the mappings $$\displaystyle P\rightarrow\int_{\Delta_{\mathcal{X}}}f\cdot dP$$ continuous over $\mathcal{P}(T,\mathcal{B}(T))$, for every bounded and continuous function $f$ from $T$ to $\mathbb{R}$.

\subsection{Metrization of the weak-$\ast$ topology}

If $(T,\mathcal{U})$ is a Polish space (i.e., separable and completely metrizable), the weak-$\ast$ topology on $\mathcal{P}(T,\mathcal{B}(T))$ is also Polish \cite{WassersteinMetric}. There are many known metrizations for the weak-$\ast$ topology. One metrization that is particularly convenient for us is the Wasserstein metric.

The $1^{st}$-Wasserstein distance on $\mathcal{P}(T,\mathcal{B}(T))$ is defined as
$$W_1(P,P')=\inf_{\gamma\in\Gamma(P,P')}\int_{T\times T} d(x,x') \cdot d\gamma(x,x'),$$
where $\Gamma(P,P')$ is the collection of all probability measures on $T\times T$ with marginals $P$ and $P'$ on the first and second factors respectively, and $d$ is a metric on $T$ that induces the topology $\mathcal{U}$. $\Gamma(P,P')$ is also called the set of \emph{couplings} of $P$ and $P'$.

If $d$ is bounded and $(T,d)$ is separable and complete, then $W_1$ metrizes the weak-$\ast$ topology \cite{WassersteinMetric}. If $(T,\mathcal{U})$ is compact, then $(\mathcal{P}(T),W_1)$ is also compact \cite{WassersteinMetric}.

If $\displaystyle D=\sup_{x,x'\in T} d(x,x')$ is the diameter of $(T,d)$, then $W_1(P,P')\leq D\|P-P'\|_{TV}$ \cite{WassersteinMetric}. In other words, the Wasserstein metric is controlled by total variation.

\subsection{Meta-probability measures}

\label{subsecMetaProbDef}

Let $\mathcal{X}$ be a finite set. A \emph{meta-probability measure} on $\mathcal{X}$ is a probability measure on the Borel sets of $\Delta_{\mathcal{X}}$. It is called a meta-probability measure because it is a probability measure on the space of probability distributions on $\mathcal{X}$.

We denote the set of meta-probability measures on $\mathcal{X}$ as $\mathcal{MP}(\mathcal{X})$. Clearly, $\mathcal{MP}(\mathcal{X})=\mathcal{P}(\Delta_{\mathcal{X}})$.

A meta-probability measure ${\MP}$ on $\mathcal{X}$ is said to be \emph{balanced} if it satisfies
$$\int_{\Delta_{\mathcal{X}}}p\cdot d{\MP}(p)=\pi_{\mathcal{X}},$$
where $\pi_{\mathcal{X}}$ is the uniform probability distributions on $\mathcal{X}$.

We denote the set of all balanced meta-probability measures on $\mathcal{X}$ as $\mathcal{MP}_b(\mathcal{X})$. The set of all balanced and finitely supported meta-probability measures on $\mathcal{X}$ is denoted as $\mathcal{MP}_{bf}(\mathcal{X})$.

\section{The space of channels from $\mathcal{X}$ to $\mathcal{Y}$}

A discrete memoryless channel $W$ is a 3-tuple $W=(\mathcal{X},\;\mathcal{Y},\;p_W)$ where $\mathcal{X}$ is a finite set that is called the \emph{input alphabet} of $W$, $\mathcal{Y}$ is a finite set that is called the \emph{output alphabet} of $W$, and $p_W:\mathcal{X} \times \mathcal{Y} \rightarrow [0,1]$  is a function satisfying $\forall x\in\mathcal{X},\;\displaystyle\sum_{y\in\mathcal{Y}} p_W(x,y)=1$.

For every $(x,y)\in\mathcal{X}\times\mathcal{Y}$, we denote $p_W(x,y)$ as $W(y|x)$, which we interpret as the conditional probability of receiving $y$ at the output, given that $x$ is the input.

Let $\DMC_{\mathcal{X},\mathcal{Y}}$ be the set of all channels having $\mathcal{X}$ as input alphabet and $\mathcal{Y}$ as output alphabet.

For every $W,W'\in\DMC_{\mathcal{X},\mathcal{Y}}$, define the distance between $W$ and $W'$ as follows:
$$d_{\mathcal{X},\mathcal{Y}}(W,W')=\frac{1}{2} \max_{x\in\mathcal{X}}\sum_{y\in\mathcal{Y}}|W'(y|x)-W(y|x)|.$$

It is easy to check the following properties of $d_{\mathcal{X},\mathcal{Y}}$:
\begin{itemize}
\item $0\leq d_{\mathcal{X},\mathcal{Y}}(W,W')\leq 1$.
\item $d_{\mathcal{X},\mathcal{Y}}:\DMC_{\mathcal{X},\mathcal{Y}}\times\DMC_{\mathcal{X},\mathcal{Y}}\rightarrow\mathbb{R}^+$ is a metric distance on $\DMC_{\mathcal{X},\mathcal{Y}}$.
\end{itemize}
Throughout this paper, we always associate the space $\DMC_{\mathcal{X},\mathcal{Y}}$ with the metric distance $d_{\mathcal{X},\mathcal{Y}}$ and the metric topology $\mathcal{T}_{\mathcal{X},\mathcal{Y}}$ induced by it.

For every $x\in \mathcal{X}$, the mapping $y\rightarrow W(y|x)$ is a probability distributions on $\mathcal{Y}$. Therefore, every channel $W$ can be seen as a collection of probability distributions on $\mathcal{Y}$, and the collection is indexed by $x\in\mathcal{X}$. This allows us to identify the space $\DMC_{\mathcal{X},\mathcal{Y}}$ with $\displaystyle(\Delta_{\mathcal{Y}})^{\mathcal{X}}=\prod_{x\in\mathcal{X}}\Delta_{\mathcal{Y}}$, where $\Delta_{\mathcal{Y}}$ is the set of probability distributions on $\mathcal{Y}$. It is easy to see that the topology given by the metric $d_{\mathcal{X},\mathcal{Y}}$ on $\DMC_{\mathcal{X},\mathcal{Y}}$ is the same as the product topology on $(\Delta_{\mathcal{Y}})^{\mathcal{X}}$, which is also the same as the topology inherited from the Euclidean topology of $\mathbb{R}^{\mathcal{X}\times\mathcal{Y}}$ by relativization.

It is known that $\Delta_{\mathcal{Y}}$ is a closed and bounded subset of $\mathbb{R}^{\mathcal{Y}}$. Therefore, $\Delta_{\mathcal{Y}}$ is compact, which implies that $(\Delta_{\mathcal{Y}})^{\mathcal{X}}$ is compact. We conclude that the metric space $\DMC_{\mathcal{X},\mathcal{Y}}\equiv (\Delta_{\mathcal{Y}})^{\mathcal{X}}$ is compact. Moreover, since $\Delta_{\mathcal{Y}}$ a convex subset of $\mathbb{R}^{\mathcal{Y}}$, it is path-connected, hence $\DMC_{\mathcal{X},\mathcal{Y}}\equiv (\Delta_{\mathcal{Y}})^{\mathcal{X}}$ is path-connected as well.

If $W\in\DMC_{\mathcal{X},\mathcal{Y}}$ and $V\in\DMC_{\mathcal{Y},\mathcal{Z}}$, we define the composition $V\circ W\in\DMC_{\mathcal{X},\mathcal{Z}}$ of $W$ and $V$ as follows:
$$(V\circ W)(z|x)=\sum_{y\in\mathcal{Y}}V(z|y)W(y|x),\;\;\forall x\in\mathcal{X},\;\forall z\in\mathcal{Z}.$$
It is easy to see that the mapping $(W,V)\rightarrow V\circ W$ from $\DMC_{\mathcal{X},\mathcal{Y}}\times \DMC_{\mathcal{Y},\mathcal{Z}}$ to $\DMC_{\mathcal{X},\mathcal{Z}}$ is continuous.

For every mapping $f:\mathcal{X}\rightarrow \mathcal{Y}$, define the \emph{deterministic channel} $D_f\in\DMC_{\mathcal{X},\mathcal{Y}}$ as follows:
$$D_f(y|x)=\begin{cases}1\quad&\text{if}\;y=f(x),\\0\quad&\text{otherwise}.\end{cases}$$
It is easy to see that if $f:\mathcal{X}\rightarrow \mathcal{Y}$ and $g:\mathcal{Y}\rightarrow \mathcal{Z}$, then $D_g\circ D_f=D_{g\circ f}$.

\section{Equivalent channels and their representation}

\label{secMetaProb}

Let $W\in \DMC_{\mathcal{X},\mathcal{Y}}$ and $W'\in \DMC_{\mathcal{X},\mathcal{Z}}$ be two channels having the same input alphabet. We say that $W'$ is \emph{degraded} from $W$ if there exists a channel $V\in \DMC_{\mathcal{Y},\mathcal{Z}}$ such that $W'=V\circ W$. $W$ and $W'$ are said to be \emph{equivalent} if each one is degraded from the other. In the rest of this section, we describe one way to check whether two given channels are equivalent.

Let $\Delta_{\mathcal{X}}$ and $\Delta_{\mathcal{Y}}$ be the space of probability distributions on $\mathcal{X}$ and $\mathcal{Y}$ respectively. Define $P_W^o\in\Delta_{\mathcal{Y}}$ as $$\displaystyle P_W^o(y)=\frac{1}{|\mathcal{X}|}\sum_{x\in\mathcal{X}}W(y|x),\;\;\forall y\in\mathcal{Y}.$$ This can be interpreted as the probability distribution of the output when the input is uniformly distributed in $\mathcal{X}$. The \emph{image} of $W$ is the set of output-symbols $y\in\mathcal{Y}$ having strictly positive probabilities:
$$\Imag(W)=\{y\in\mathcal{Y}:\; P_W^o(y)>0\}.$$

For every $y\in\Imag(W)$, define $W^{-1}_y\in\Delta_{\mathcal{X}}$ as follows:
$$\; W^{-1}_y(x)=\displaystyle\frac{W(y|x)}{|\mathcal{X}|P_W^o(y)},\;\;\forall x\in\mathcal{X}.$$
$W^{-1}_y(x)$ can be interpreted as the posterior probability of $x$, given that the output is $y$, and assuming a uniform prior distribution on the input. In other words, if $X$ is a random variable uniformly distributed in $\mathcal{X}$ and $Y$ is the output of the channel $W$ when $X$ is the input, then:
\begin{itemize}
\item $P_W^o(y)=P_Y(y)$ for every $y\in\mathcal{Y}$.
\item $W^{-1}_y(x)=\displaystyle P_{X|Y}(x|y)$ for every $(x,y)\in\mathcal{X}\times\Imag(W)$.
\end{itemize}

Let $(x,y)\in\mathcal{X}\times\mathcal{Y}$. If $P_W^o(y)=P_Y(y)>0$, we have
$$W(y|x)=P_{Y|X}(y|x)=\frac{P_{X,Y}(x,y)}{P_X(x)}=|\mathcal{X}| P_Y(y)P_{X|Y}(x|y)=|\mathcal{X}|P_W^o(y)W_y^{-1}(x).$$
On the other hand, if $P_W^o(y)=0$, then we must have $W(y|x)=0$. We conclude that $P_W^o$ and the collection $\{W_y^{-1}\}_{y\in \Imag(W)}$ uniquely determine $W$.

The \emph{Blackwell measure}\footnote{In an earlier version of this work, I called $\MP_W$ the \emph{posterior meta-probability distribution} of $W$. Maxim Raginsky thankfully brought to my attention the fact that $\MP_W$ is called \emph{Blackwell measure}.} (denoted ${\MP}_W$) of $W$ is a probability distribution on $\Delta_{\mathcal{X}}$ having masses $P_W^o(y)$ on $W^{-1}_y$ for each $y\in\Imag(W)$:
$${\textstyle{\MP}_W}(B)=\sum_{\substack{y\in\Imag(W),\\W^{-1}_y\in B}}P_W^o(y),\;\;\forall B\in\mathcal{B}(\Delta_{\mathcal{X}}).$$
Another way to express $\MP_W$ is as follows:
$${\MP}_W=\sum_{y\in\Imag(W)}P_W^o(y)\cdot\delta_{W_y^{-1}},$$
where $\delta_{W_y^{-1}}$ is a Dirac measure centered at $W_y^{-1}\in\Delta_{\mathcal{X}}$.

${\MP}_W$ can be interpreted as follows: after the receiver obtains the output of the channel, he can compute the posterior probabilities of the input as the conditional probability distribution of the input given the output symbol that he received. But before receiving the output symbol, the receiver does not know what he we will receive. He just has different probabilities for different possible output symbols. Therefore, the posterior probability distribution that will be computed by the receiver is itself random, and so we need a meta-probability measure to describe it. ${\MP}_W$ is exactly this meta-probability measure.

Since $\Imag(W)$ is finite, the support of ${\MP}_W$ is finite and it consists of all points in $\Delta_{\mathcal{X}}$ having strictly positive mass:
$$\supp({\textstyle{\MP}_W})=\{p\in\Delta_{\mathcal{X}}:\;{\textstyle{\MP}_W(p)>0}\}.$$

The \emph{rank} of $W$ is the size of the support of its Blackwell measure: $$\rank(W)=|\supp({\MP}_W)|.$$

Notice that for every $x\in\mathcal{X}$, we have
\begin{align*}
\int_{\Delta_{\mathcal{X}}}p(x)\cdot d{\textstyle{\MP}_W}(p)&=\sum_{p\in\supp({\MP}_W)}{\textstyle{\MP}_W}(p)\cdot p(x)=\sum_{y\in\Imag(W)}P_W^o(y)W^{-1}_y(x)\\
&=\sum_{y\in\Imag(W)}\frac{1}{|\mathcal{X}|}W(y|x)\stackrel{(a)}{=}\sum_{y\in\mathcal{Y}}\frac{1}{|\mathcal{X}|}W(y|x)=\frac{1}{|\mathcal{X}|},
\end{align*}
where (a) follows from the fact that $W(y|x)=0$ for every $y\notin \Imag(W)$. Therefore, we can write
\begin{equation}
\int_{\Delta_{\mathcal{X}}}p\cdot d{\textstyle{\MP}_W}(p)=\pi_{\mathcal{X}},
\end{equation}
where $\pi_{\mathcal{X}}$ is the uniform probability distribution on $\mathcal{X}$. This shows that ${\MP}_W$ is a balanced meta-probability measure.

The following proposition characterizes the Blackwell measures of DMCs with input alphabet $\mathcal{X}$:
\begin{myprop}
\label{propCharacPostMetaProb}
\cite{torgersen} A meta-probability measure ${\MP}$ on $\mathcal{X}$ is the Blackwell measure of some DMC with input alphabet $\mathcal{X}$ if and only if $\MP$ is balanced and finitely supported.
\end{myprop}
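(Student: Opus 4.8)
The plan is to treat the two implications separately; the forward implication has essentially already been carried out in the discussion preceding the statement.

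\textbf{Necessity.} If $\MP = \MP_W$ for some $W \in \DMC_{\mathcal{X},\mathcal{Y}}$, then $\MP$ is finitely supported because $\supp(\MP_W) \subseteq \{W^{-1}_y : y \in \Imag(W)\}$ and $\Imag(W) \subseteq \mathcal{Y}$ is finite, and $\MP$ is balanced by the very computation that established equation (1). So nothing new is needed in this direction.

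\textbf{Sufficiency.} Suppose $\MP$ is balanced and finitely supported. I would write $\supp(\MP) = \{p_1, \dots, p_n\}$ with the $p_i$ pairwise distinct, set $\lambda_i = \MP(p_i) > 0$, so that $\MP = \sum_{i=1}^n \lambda_i \delta_{p_i}$ and $\sum_{i=1}^n \lambda_i = 1$, and note that the balancedness of $\MP$ reads $\sum_{i=1}^n \lambda_i p_i(x) = \frac{1}{|\mathcal{X}|}$ for every $x \in \mathcal{X}$. Then I would take $\mathcal{Y} = [n]$ and define $W \in \DMC_{\mathcal{X},[n]}$ by
$$W(i|x) = |\mathcal{X}|\, \lambda_i\, p_i(x), \qquad i \in [n],\ x \in \mathcal{X}.$$
The first thing to check is that $W$ is a legitimate channel: $W(i|x) \ge 0$ is clear, and $\sum_{i=1}^n W(i|x) = |\mathcal{X}| \sum_{i=1}^n \lambda_i p_i(x) = 1$ precisely because $\MP$ is balanced — this is the one and only place the balanced hypothesis is used.

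It then remains to verify $\MP_W = \MP$ by a direct computation: $P_W^o(i) = \frac{1}{|\mathcal{X}|}\sum_{x \in \mathcal{X}} W(i|x) = \lambda_i \sum_{x \in \mathcal{X}} p_i(x) = \lambda_i > 0$, hence $\Imag(W) = [n]$; next $W^{-1}_i(x) = \frac{W(i|x)}{|\mathcal{X}|\, P_W^o(i)} = p_i(x)$, i.e.\ $W^{-1}_i = p_i$; and since the $p_i$ are distinct, $\MP_W = \sum_{i \in \Imag(W)} P_W^o(i)\, \delta_{W^{-1}_i} = \sum_{i=1}^n \lambda_i \delta_{p_i} = \MP$. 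This completes the construction.

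There is no genuine obstacle here: the whole content is the observation that finite support is exactly what lets the realizing channel have a finite output alphabet — one output symbol per atom of $\MP$ — and that balancedness is exactly the normalization that makes the rows of $W$ sum to one. The only points that need a word of care are that the atoms $p_i$ are truly distinct (so that the Dirac masses in $\MP_W$ do not merge and the weights $\lambda_i$ are recovered correctly) and that the $\lambda_i$ are strictly positive (so that $\Imag(W) = [n]$ and no atom is dropped), both of which are automatic from the definition of $\supp(\MP)$.
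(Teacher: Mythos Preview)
Your proof is correct and follows essentially the same approach as the paper: the paper takes $\mathcal{Y}=\supp(\MP)$ and sets $W(p|x)=|\mathcal{X}|\,\MP(p)\,p(x)$, which is exactly your construction after the relabeling $i\mapsto p_i$. The verifications that $W$ is a valid channel, that $P_W^o$ recovers the weights, and that $W^{-1}$ recovers the atoms are identical.
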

\begin{proof}
This proposition is known \cite{torgersen}, but we provide a proof for completeness.

The above discussion shows that if ${\MP}$ is the Blackwell measure of some channel with input alphabet $\mathcal{X}$, then it is balanced and finitely supported.

Now assume that ${\MP}$ is balanced and finitely supported, and let $\mathcal{Y}=\supp({\MP})$. Define the channel $W\in\DMC_{\mathcal{X},\mathcal{Y}}$ as $W(p|x)=|\mathcal{X}|{\MP}(p) p(x)$ for every $x\in\mathcal{X}$ and every $p\in\mathcal{Y}=\supp({\MP})$. For every $x\in\mathcal{X}$, we have:
$$\sum_{p\in\mathcal{Y}}W(p|x)=\sum_{p\in\supp({\MP})}|\mathcal{X}|p(x){\MP}(p)=|\mathcal{X}|\int_{\Delta_\mathcal{X}}p(x)\cdot d{\MP}(p)=|\mathcal{X}|\pi_{\mathcal{X}}(x)=1.$$
Therefore, $W$ is a valid channel. For every $p\in\mathcal{Y}$, we have
$$P_W^o(p)=\frac{1}{|\mathcal{X}|}\sum_{x\in \mathcal{X}}W(p|x)=\frac{1}{|\mathcal{X}|}\sum_{x\in \mathcal{X}}|\mathcal{X}|p(x){\MP}(p)=\sum_{x\in \mathcal{X}}p(x){\MP}(p)={\MP}(p)>0,$$
which implies that $\Imag(W)=\mathcal{Y}$. For every $(x,p)\in\mathcal{X}\times\mathcal{Y}$ we have:
$$W_p^{-1}(x)=\frac{W(p|x)}{|\mathcal{X}|P_W^o(p)}=\frac{|\mathcal{X}|{\MP}(p)p(x)}{{|\mathcal{X}|\MP}(p)}=p(x).$$
Therefore, $W_p^{-1}=p$ for every $p\in\mathcal{Y}$. For every Borel subset $B$ of $\Delta_{\mathcal{X}}$, we have:
$${\textstyle{\MP}_W}(B)=\sum_{\substack{p\in\Imag(W),\\W^{-1}_p\in B}}P_W^o(p)=\sum_{\substack{p\in\supp({\MP}),\\p\in B}}{\MP}(p)={\MP}(B).$$
We conclude that ${\MP}_W={\MP}$.
\end{proof}

\vspace*{3mm}
In \cite{RichardsonUrbanke}, equivalent representations for binary memoryless symmetric (BMS) channels (namely $L$, $D$ and $G$ densities) were provided. A necessary and sufficient condition for the degradation of a BMS channel $W'$ with respect to another BMS channel $W$ was given in \cite{RichardsonUrbanke} in terms of the $|D|$-densities of $W$ and $W'$. It immediately follows from this condition that two BMS channels are equivalent if and only if they have the same $|D|$-densities. One can deduce from this that two BMS channels (with finite output alphabets) are equivalent if and only if they have the same Blackwell measure. The following proposition shows that this is also true for channels with arbitrary (but finite) input and output alphabets:

\begin{myprop}
\label{propCharacOutputEquiv}
\cite{torgersen} Let $\mathcal{X},\mathcal{Y}$ and $\mathcal{Z}$ be three finite sets. Two channels $W\in\DMC_{\mathcal{X},\mathcal{Y}}$ and $W'\in\DMC_{\mathcal{X},\mathcal{Z}}$ are equivalent if and only if ${\MP}_W={\MP}_{W'}$.
\end{myprop}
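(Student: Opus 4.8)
\emph{Proof sketch (plan).} I would prove the two implications separately: the ``if'' part by an explicit construction (essentially contained in the proof of Proposition~\ref{propCharacPostMetaProb}), and the ``only if'' part by a Jensen-type argument showing degradation is monotone for convex functionals of the posterior, followed by a density argument.

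\textbf{The ``if'' direction.} Suppose $\MP_W=\MP_{W'}=:\MP$. Let $\hat W\in\DMC_{\mathcal{X},\supp(\MP)}$ be the canonical channel built from $\MP$ in the proof of Proposition~\ref{propCharacPostMetaProb}, i.e. $\hat W(p|x)=|\mathcal{X}|\MP(p)p(x)$, for which $\Imag(\hat W)=\supp(\MP)$, $\hat W^{-1}_p=p$ and $\MP_{\hat W}=\MP$. Since channel equivalence is an equivalence relation (reflexivity via the identity channel, transitivity via composition of degrading channels), it suffices to show that \emph{any} channel $W$ with Blackwell measure $\MP$ is equivalent to $\hat W$: then $W\equiv\hat W\equiv W'$. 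To see $\hat W$ is degraded from $W$, take $V\in\DMC_{\mathcal{Y},\supp(\MP)}$ with $V(p|y)=\mathds{1}_{p=W^{-1}_y}$ for $y\in\Imag(W)$ (arbitrary otherwise); using $W(y|x)=|\mathcal{X}|P_W^o(y)W^{-1}_y(x)$ on $\Imag(W)$ and $W(y|x)=0$ elsewhere, a one-line computation gives $V\circ W=\hat W$. To see $W$ is degraded from $\hat W$, take $V'\in\DMC_{\supp(\MP),\mathcal{Y}}$ with $V'(y|p)=P_W^o(y)/\MP(p)$ when $y\in\Imag(W)$ and $W^{-1}_y=p$, and $V'(y|p)=0$ otherwise; this is a valid channel since $\sum_{y\in\Imag(W),\,W^{-1}_y=p}P_W^o(y)=\MP(p)$, and one checks $V'\circ\hat W=W$.

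\textbf{The ``only if'' direction.} The plan is to show that degradation never increases $\int f\,d\MP$ for convex $f$. If $W'=V\circ W$, set $\mu(y,z)=P_W^o(y)V(z|y)$, so that for $z\in\Imag(W')$ one has $(W')^{-1}_z=\sum_{y\in\Imag(W)}\frac{\mu(y,z)}{P_{W'}^o(z)}W^{-1}_y$, a convex combination of the posteriors of $W$ (this is the computation already carried out just before the statement). Jensen's inequality then gives, for every continuous convex $f:\Delta_{\mathcal{X}}\to\mathbb{R}$,
\begin{align*}
\int_{\Delta_{\mathcal{X}}}f\cdot d\MP_{W'}
&=\sum_{z\in\Imag(W')}P_{W'}^o(z)\,f\!\left(\sum_{y}\tfrac{\mu(y,z)}{P_{W'}^o(z)}W^{-1}_y\right)\\
&\leq\sum_{z,y}\mu(y,z)\,f(W^{-1}_y)=\sum_{y\in\Imag(W)}P_W^o(y)\,f(W^{-1}_y)=\int_{\Delta_{\mathcal{X}}}f\cdot d\MP_{W}.
\end{align*}
If $W$ and $W'$ are equivalent, applying this with $W$ and $W'$ swapped gives equality for every continuous convex $f$, hence for every continuous concave $f$, hence for every continuous $f$ that is a difference of two convex functions.

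\textbf{The last step, which I expect to be the main obstacle.} It remains to upgrade ``equal integrals against all continuous convex functions'' to ``$\MP_W=\MP_{W'}$''. The idea: every polynomial $P$ on $\mathbb{R}^{\mathcal{X}}$, restricted to the compact convex set $\Delta_{\mathcal{X}}$, is a difference of convex functions, since $P=(P+Mq)-Mq$ with $q(p)=\sum_{x\in\mathcal{X}}p(x)^2$ exhibits such a decomposition once $M$ is large enough that $P+Mq$ has positive semidefinite Hessian on $\Delta_{\mathcal{X}}$ (the Hessian of $P$ is bounded there). Hence $\MP_W$ and $\MP_{W'}$ agree on all polynomials; polynomials are dense in $C(\Delta_{\mathcal{X}})$ by Stone--Weierstrass and the two integration functionals are bounded linear on $C(\Delta_{\mathcal{X}})$, so they agree on all continuous functions, whence $\MP_W=\MP_{W'}$ by the Riesz representation theorem. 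The subtlety worth stressing is exactly this passage: degradation only directly controls convex functionals of the posterior distribution, and one needs the difference-of-convex decomposition together with density to see that these functionals separate the (finitely supported, balanced) Blackwell measures.
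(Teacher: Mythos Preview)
Your proof is correct, but the ``only if'' direction takes a genuinely different route from the paper. The paper's argument in Appendix~\ref{appCharacOutputEquiv} is combinatorial: it shows that degradation forces $\conv(\supp(\MP_{W'}))\subset\conv(\supp(\MP_W))$, then peels off convex-extreme points of the support one by one, proving inductively that at each extreme point the two measures assign equal mass. This exploits the finite support in an essential way. Your argument instead establishes the Blackwell--Sherman--Stein monotonicity $\int f\,d\MP_{W'}\leq\int f\,d\MP_W$ for convex $f$ via Jensen, then closes with a difference-of-convex/Stone--Weierstrass/Riesz argument. This is more abstract but also more general: it would extend verbatim to channels with infinite output alphabets (where the Blackwell measures need not be finitely supported), whereas the paper's extreme-point induction would not. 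For the ``if'' direction, the paper builds the degrading channel directly between $W$ and $W'$, while you factor through the canonical channel $\hat W$; both work, and your version is slightly more modular. One minor point worth stating explicitly in your write-up: when you argue that $P+Mq$ is convex because its Hessian is PSD on $\Delta_{\mathcal{X}}$, you are using that PSD Hessian on a convex set implies convexity on that set (via the one-dimensional restriction), which is true but sometimes glossed over.
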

\begin{proof}
This proposition is known \cite{torgersen}, but we provide a proof in Appendix \ref{appCharacOutputEquiv} for completeness.
\end{proof}

\begin{mycor}
\label{corNonEquiv}
If $W\in\DMC_{\mathcal{X},\mathcal{Y}}$ and $\rank(W)>|\mathcal{Z}|$, then $W$ is not equivalent to any channel in $\DMC_{\mathcal{X},\mathcal{Z}}$.
\end{mycor}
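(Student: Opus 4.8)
The plan is to argue by contradiction using Proposition \ref{propCharacOutputEquiv} together with the definition of rank. Suppose $W\in\DMC_{\mathcal{X},\mathcal{Y}}$ is equivalent to some channel $W'\in\DMC_{\mathcal{X},\mathcal{Z}}$. Proposition \ref{propCharacOutputEquiv} then gives ${\MP}_W={\MP}_{W'}$, and in particular $\supp({\MP}_W)=\supp({\MP}_{W'})$, so $\rank(W)=\rank(W')$.

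Next I would bound $\rank(W')$ from above. By the construction of the Blackwell measure recalled before Proposition \ref{propCharacPostMetaProb}, ${\MP}_{W'}=\sum_{z\in\Imag(W')}P^o_{W'}(z)\cdot\delta_{W'^{-1}_z}$, so $\supp({\MP}_{W'})\subseteq\{W'^{-1}_z:\ z\in\Imag(W')\}$. Since $\Imag(W')\subseteq\mathcal{Z}$, the map $z\mapsto W'^{-1}_z$ shows that $|\supp({\MP}_{W'})|\leq|\Imag(W')|\leq|\mathcal{Z}|$, i.e.\ $\rank(W')\leq|\mathcal{Z}|$.

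Combining the two observations yields $\rank(W)=\rank(W')\leq|\mathcal{Z}|$, which contradicts the hypothesis $\rank(W)>|\mathcal{Z}|$. Hence no such $W'$ exists. There is really no hard step here: everything follows directly from Proposition \ref{propCharacOutputEquiv} and the explicit finite form of the Blackwell measure, so the only thing to be careful about is to state clearly the elementary inequality $\rank(W')\leq|\Imag(W')|\leq|\mathcal{Z}|$. In fact the corollary could equivalently be phrased as: the rank of a channel is invariant under equivalence (immediate from ${\MP}_W={\MP}_{W'}$), and the rank of any channel in $\DMC_{\mathcal{X},\mathcal{Z}}$ is at most $|\mathcal{Z}|$.
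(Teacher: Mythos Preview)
Your proof is correct and follows essentially the same approach as the paper: the paper's proof is the one-line observation that $\rank(W')=|\supp({\MP}_{W'})|\leq|\mathcal{Z}|$ for every $W'\in\DMC_{\mathcal{X},\mathcal{Z}}$, together with the (implicit) use of Proposition~\ref{propCharacOutputEquiv} to conclude that no such $W'$ can be equivalent to $W$. You have simply spelled out both steps more carefully.
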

\begin{proof}
Since $\rank(W')=|\supp({\MP}_{W'})|\leq|\mathcal{Z}|$ for every $W'\in\DMC_{\mathcal{X},\mathcal{Z}}$, it is impossible for $W$ to be equivalent to any channel $W'$ in $\DMC_{\mathcal{X},\mathcal{Z}}$.
\end{proof}

\begin{mycor}
If $|\mathcal{X}|=1$, all channels with input alphabet $\mathcal{X}$ are equivalent.
\end{mycor}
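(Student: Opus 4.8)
The plan is to reduce the claim to Proposition~\ref{propCharacOutputEquiv}, which says that two channels with the same input alphabet $\mathcal{X}$ are equivalent if and only if they have the same Blackwell measure. Thus it suffices to show that, when $|\mathcal{X}|=1$, every channel with input alphabet $\mathcal{X}$ has the same Blackwell measure.

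First I would observe that when $|\mathcal{X}|=1$, say $\mathcal{X}=\{x_0\}$, the simplex $\Delta_{\mathcal{X}}$ is $0$-dimensional: it consists of the single point $p_0$ determined by $p_0(x_0)=1$. Hence the Borel $\sigma$-algebra of $\Delta_{\mathcal{X}}$ is $\{\o,\{p_0\}\}$, and the only probability measure on it is the Dirac measure $\delta_{p_0}$. In particular, for every finite set $\mathcal{Y}$ and every $W\in\DMC_{\mathcal{X},\mathcal{Y}}$, the Blackwell measure ${\MP}_W$, being a probability measure on $\Delta_{\mathcal{X}}$, must equal $\delta_{p_0}$.

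Consequently, given any $W\in\DMC_{\mathcal{X},\mathcal{Y}}$ and $W'\in\DMC_{\mathcal{X},\mathcal{Z}}$, we have ${\MP}_W=\delta_{p_0}={\MP}_{W'}$, so Proposition~\ref{propCharacOutputEquiv} immediately gives that $W$ and $W'$ are equivalent.

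Alternatively, one can avoid appealing to Blackwell measures altogether and exhibit the degrading channels explicitly: with $\mathcal{X}=\{x_0\}$, define $V\in\DMC_{\mathcal{Y},\mathcal{Z}}$ by $V(z|y):=W'(z|x_0)$ for all $y\in\mathcal{Y}$ and $z\in\mathcal{Z}$; this is a valid channel since $\sum_{z\in\mathcal{Z}}V(z|y)=\sum_{z\in\mathcal{Z}}W'(z|x_0)=1$, and $(V\circ W)(z|x_0)=\sum_{y\in\mathcal{Y}}W'(z|x_0)W(y|x_0)=W'(z|x_0)$, so $W'=V\circ W$; symmetrically $W=V'\circ W'$ with $V'(y|z):=W(y|x_0)$, so each channel is degraded from the other. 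There is essentially no obstacle here — the statement is a degenerate-case corollary of Proposition~\ref{propCharacOutputEquiv}, and the only point needing (trivial) care is the verification that $\Delta_{\mathcal{X}}$ collapses to a single point when $|\mathcal{X}|=1$.
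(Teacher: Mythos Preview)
Your proposal is correct; the paper states this corollary without proof, placing it immediately after Proposition~\ref{propCharacOutputEquiv} as an obvious consequence, which is exactly your first argument (that $\Delta_{\mathcal{X}}$ collapses to a single point so all Blackwell measures coincide). Your explicit degrading-channel construction is a nice bonus that makes the result self-contained without invoking Blackwell measures.
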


\section{Space of equivalent channels from $\mathcal{X}$ to $\mathcal{Y}$}

\subsection{The $\DMC_{\mathcal{X},\mathcal{Y}}^{(o)}$ space}

\label{subsecDMCXYo}
Let $\mathcal{X}$ and $\mathcal{Y}$ be two finite sets. Define the relation $R_{\mathcal{X},\mathcal{Y}}^{(o)}$ on $\DMC_{\mathcal{X},\mathcal{Y}}$ as follows:
$$\forall W,W'\in \textstyle\DMC_{\mathcal{X},\mathcal{Y}},\;\; WR_{\mathcal{X},\mathcal{Y}}^{(o)}W'\;\;\Leftrightarrow\;\;W\;\text{is equivalent to}\;W'.$$

It is easy to see that $R_{\mathcal{X},\mathcal{Y}}^{(o)}$ is an equivalence relation on $\DMC_{\mathcal{X},\mathcal{Y}}$.

\begin{mydef}
The space of equivalent channels with input alphabet $\mathcal{X}$ and output alphabet $\mathcal{Y}$ is the quotient of the space of channels from $\mathcal{X}$ to $\mathcal{Y}$ by the equivalence relation:
$$\textstyle\DMC_{\mathcal{X},\mathcal{Y}}^{(o)}=\DMC_{\mathcal{X},\mathcal{Y}}/R_{\mathcal{X},\mathcal{Y}}^{(o)}.$$
We define the topology $\mathcal{T}_{\mathcal{X},\mathcal{Y}}^{(o)}$ on $\DMC_{\mathcal{X},\mathcal{Y}}^{(o)}$ as the quotient topology $\mathcal{T}_{\mathcal{X},\mathcal{Y}}/R_{\mathcal{X},\mathcal{Y}}^{(o)}$.
\end{mydef}

Unless we explicitly state otherwise, we always associate $\DMC_{\mathcal{X},\mathcal{Y}}^{(o)}$ with the quotient topology $\mathcal{T}_{\mathcal{X},\mathcal{Y}}^{(o)}$ .

For every $W\in\DMC_{\mathcal{X},\mathcal{Y}}$, let $\hat{W}\in\DMC_{\mathcal{X},\mathcal{Y}}^{(o)}$ be the $R_{\mathcal{X},\mathcal{Y}}^{(o)}$-equivalence class containing $W$.

\begin{mylem}
The projection mapping $\Proj:\DMC_{\mathcal{X},\mathcal{Y}}\rightarrow\DMC_{\mathcal{X},\mathcal{Y}}^{(o)}$ defined as $\Proj(W)=\hat{W}$ is continuous and closed.
\label{lemProjContClosed}
\end{mylem}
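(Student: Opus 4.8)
The plan is to treat continuity and closedness separately, with closedness being the only nontrivial part. Continuity is immediate: by definition $\mathcal{T}_{\mathcal{X},\mathcal{Y}}^{(o)}=\mathcal{T}_{\mathcal{X},\mathcal{Y}}/R_{\mathcal{X},\mathcal{Y}}^{(o)}$ is the quotient topology, i.e.\ the finest topology on $\DMC_{\mathcal{X},\mathcal{Y}}^{(o)}$ making $\Proj$ continuous, so in particular $\Proj$ is continuous.

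For closedness, I would first unwind the definition of the quotient topology: a set $\hat F\subset\DMC_{\mathcal{X},\mathcal{Y}}^{(o)}$ is closed if and only if $\Proj^{-1}(\hat F)$ is closed in $\DMC_{\mathcal{X},\mathcal{Y}}$. Hence it suffices to show that for every closed $F\subset\DMC_{\mathcal{X},\mathcal{Y}}$, the \emph{saturation} $\Proj^{-1}(\Proj(F))=\{W'\in\DMC_{\mathcal{X},\mathcal{Y}}:\ W'\ \text{is equivalent to some}\ W\in F\}$ is closed. The strategy is to exhibit this saturation as a continuous image of a compact set. Recall from Section \ref{secMetaProb} that, since $W$ and $W'$ here both have output alphabet $\mathcal{Y}$, the equivalence $W R_{\mathcal{X},\mathcal{Y}}^{(o)} W'$ holds if and only if there exist $V,V'\in\DMC_{\mathcal{Y},\mathcal{Y}}$ with $W'=V\circ W$ and $W=V'\circ W'$.

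Accordingly, set
$$\mathcal{K}=\left\{(W,W',V,V')\in F\times\DMC_{\mathcal{X},\mathcal{Y}}\times\DMC_{\mathcal{Y},\mathcal{Y}}\times\DMC_{\mathcal{Y},\mathcal{Y}}:\ W'=V\circ W,\ W=V'\circ W'\right\}.$$
Since $\DMC_{\mathcal{X},\mathcal{Y}}$ is compact, $F$ is compact; $\DMC_{\mathcal{Y},\mathcal{Y}}$ is compact as well; hence the ambient product $F\times\DMC_{\mathcal{X},\mathcal{Y}}\times\DMC_{\mathcal{Y},\mathcal{Y}}\times\DMC_{\mathcal{Y},\mathcal{Y}}$ is compact. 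Because channel composition is continuous and $\DMC_{\mathcal{X},\mathcal{Y}}$ is Hausdorff (being metric), each of the conditions $W'=V\circ W$ and $W=V'\circ W'$ cuts out a closed subset of this product: it is the locus where two continuous maps into a Hausdorff space agree, i.e.\ the preimage of the diagonal. Thus $\mathcal{K}$, an intersection of two closed sets inside a compact space, is compact. The coordinate projection $\pi\colon(W,W',V,V')\mapsto W'$ is continuous, and by the description of equivalence above we have $\pi(\mathcal{K})=\Proj^{-1}(\Proj(F))$. Consequently $\Proj^{-1}(\Proj(F))$ is compact, hence closed in the Hausdorff space $\DMC_{\mathcal{X},\mathcal{Y}}$, and therefore $\Proj(F)$ is closed in $\DMC_{\mathcal{X},\mathcal{Y}}^{(o)}$.

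The only point that genuinely requires attention is the identity $\pi(\mathcal{K})=\Proj^{-1}(\Proj(F))$, which rests on the observation that equivalence of two channels in $\DMC_{\mathcal{X},\mathcal{Y}}$ can always be witnessed by degrading channels with input and output alphabet $\mathcal{Y}$ — this is immediate from the definition of equivalence. (One could instead describe $R_{\mathcal{X},\mathcal{Y}}^{(o)}$ via Blackwell measures using Proposition \ref{propCharacOutputEquiv}, but that route would need continuity of $W\mapsto\MP_W$, which is not yet available; the argument above uses only continuity of composition and compactness of the DMC spaces, both established earlier.) Everything else is a routine application of the facts on compact and Hausdorff spaces recalled in Section II.
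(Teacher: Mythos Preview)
Your proof is correct and follows essentially the same approach as the paper. The paper organizes the argument slightly differently by first isolating as a separate lemma that the relation $R_{\mathcal{X},\mathcal{Y}}^{(o)}$ itself is a closed (in fact compact) subset of $(\DMC_{\mathcal{X},\mathcal{Y}})^2$, and then intersecting with $\DMC_{\mathcal{X},\mathcal{Y}}\times A$ before projecting; you fold the closed set $F$ directly into the construction of $\mathcal{K}$ --- but the underlying mechanism (continuity of composition, compactness of the DMC spaces, preimage of a diagonal, projection) is identical.
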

\begin{proof}
See Appendix \ref{appProjContClosed}.
\end{proof}

\begin{mycor}
For every $W\in\DMC_{\mathcal{X},\mathcal{Y}}$, $\hat{W}$ is a compact subset of $\DMC_{\mathcal{X},\mathcal{Y}}$.
\label{corEqClassCompact}
\end{mycor}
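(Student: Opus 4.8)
The plan is to deduce the corollary from Lemma \ref{lemProjContClosed} together with the fact, established earlier, that $\DMC_{\mathcal{X},\mathcal{Y}}$ is compact. The key observation is that the equivalence class $\hat{W}$, viewed as a subset of $\DMC_{\mathcal{X},\mathcal{Y}}$, is exactly the preimage under $\Proj$ of the singleton $\{\hat{W}\}\subset\DMC_{\mathcal{X},\mathcal{Y}}^{(o)}$; that is, $\hat{W}=\Proj^{-1}\big(\Proj(\{W\})\big)$.

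First I would note that $\{W\}$ is a closed subset of $\DMC_{\mathcal{X},\mathcal{Y}}$: indeed $\DMC_{\mathcal{X},\mathcal{Y}}$ is a metric space (with the metric $d_{\mathcal{X},\mathcal{Y}}$), hence Hausdorff, hence $T_1$, so all singletons are closed. Next, since $\Proj$ is a closed mapping by Lemma \ref{lemProjContClosed}, the image $\Proj(\{W\})=\{\hat{W}\}$ is closed in $\DMC_{\mathcal{X},\mathcal{Y}}^{(o)}$. Then, since $\Proj$ is continuous by the same lemma, the preimage $\Proj^{-1}(\{\hat{W}\})=\hat{W}$ is closed in $\DMC_{\mathcal{X},\mathcal{Y}}$.

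Finally, $\DMC_{\mathcal{X},\mathcal{Y}}\equiv(\Delta_{\mathcal{Y}})^{\mathcal{X}}$ is compact, and a closed subset of a compact space is compact (with respect to the relative topology). Therefore $\hat{W}$ is a compact subset of $\DMC_{\mathcal{X},\mathcal{Y}}$, as claimed.

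There is no real obstacle here; the statement is an immediate consequence of Lemma \ref{lemProjContClosed} and the compactness of $\DMC_{\mathcal{X},\mathcal{Y}}$. The only point worth spelling out is the identification $\hat{W}=\Proj^{-1}(\{\hat{W}\})$ and the use of the $T_1$ property to guarantee that singletons in $\DMC_{\mathcal{X},\mathcal{Y}}$ are closed, so that the closedness of $\Proj$ can be applied.
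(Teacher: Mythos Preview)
Your proof is correct and follows essentially the same argument as the paper: use that $\{W\}$ is closed, apply the closedness of $\Proj$ from Lemma~\ref{lemProjContClosed} to get $\{\hat{W}\}$ closed in the quotient, pull back via continuity of $\Proj$ to get $\hat{W}$ closed in $\DMC_{\mathcal{X},\mathcal{Y}}$, and conclude compactness from the compactness of $\DMC_{\mathcal{X},\mathcal{Y}}$. The paper's proof additionally remarks that $\DMC_{\mathcal{X},\mathcal{Y}}^{(o)}$ is compact, but this is not needed for the corollary and your omission of it is harmless.
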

\begin{proof}
Since $\DMC_{\mathcal{X},\mathcal{Y}}$ is compact, then $\DMC_{\mathcal{X},\mathcal{Y}}^{(o)}=\DMC_{\mathcal{X},\mathcal{Y}}/R_{\mathcal{X},\mathcal{Y}}^{(o)}$ is compact as well.

Let $\Proj: \DMC_{\mathcal{X},\mathcal{Y}}\rightarrow\DMC_{\mathcal{X},\mathcal{Y}}^{(o)}$ be as in Lemma \ref{lemProjContClosed}. Since $\Proj$ is closed and since $\{W\}$ is closed in $\DMC_{\mathcal{X},\mathcal{Y}}$, $\{\hat{W}\}=\Proj(\{W\})$ is closed in $\DMC_{\mathcal{X},\mathcal{Y}}^{(o)}$. Therefore, $\hat{W}=\Proj^{-1}(\{\hat{W}\})$ is closed in $\DMC_{\mathcal{X},\mathcal{Y}}$ because $\Proj$ is continuous. Now since $\DMC_{\mathcal{X},\mathcal{Y}}$ is compact, $\hat{W}$ is compact as well.
\end{proof}

\begin{mythe}
\label{theDMCXYo}
$\DMC_{\mathcal{X},\mathcal{Y}}^{(o)}$ is a compact, path-connected and metrizable space.
\end{mythe}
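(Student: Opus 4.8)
\section*{Proof proposal}

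The plan is to verify the three properties separately, in increasing order of difficulty, relying on the structural facts about the quotient already established.

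\emph{Compactness and path-connectedness.} These are immediate. We know from Section IV that $\DMC_{\mathcal{X},\mathcal{Y}}\equiv(\Delta_{\mathcal{Y}})^{\mathcal{X}}$ is compact and path-connected. Since $\DMC_{\mathcal{X},\mathcal{Y}}^{(o)}=\DMC_{\mathcal{X},\mathcal{Y}}/R_{\mathcal{X},\mathcal{Y}}^{(o)}$ is a quotient of this space, and the quotient of a compact (respectively path-connected) space is compact (respectively path-connected) — as recorded in the subsection on the quotient topology — both properties transfer immediately. Alternatively, compactness also follows from Lemma~\ref{lemProjContClosed}: $\Proj$ is continuous and surjective, and the continuous image of a compact space is compact.

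\emph{Metrizability.} This is the substantive part, and the route is through Theorem~\ref{theMetrSep}: it suffices to show that $\DMC_{\mathcal{X},\mathcal{Y}}^{(o)}$ is Hausdorff, regular and second-countable. To obtain all three at once I would invoke Theorem~\ref{theQuotientTop} applied to $T=\DMC_{\mathcal{X},\mathcal{Y}}$ with $R=R_{\mathcal{X},\mathcal{Y}}^{(o)}$. Its hypotheses are: (i) $T/R$ is upper semi-continuous, and (ii) every equivalence class is a compact subset of $T$. For (ii), Corollary~\ref{corEqClassCompact} says precisely that $\hat{W}$ is compact for every $W\in\DMC_{\mathcal{X},\mathcal{Y}}$. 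For (i), Lemma~\ref{lemUpperSemiCont} reduces upper semi-continuity of $T/R$ to $\Proj_R$ being a closed mapping, and this is exactly the content of Lemma~\ref{lemProjContClosed}. Finally, $\DMC_{\mathcal{X},\mathcal{Y}}$, being (homeomorphic to) a subset of $\mathbb{R}^{\mathcal{X}\times\mathcal{Y}}$ with the Euclidean topology, is a metric space — hence Hausdorff and regular (in fact $T_4$) — and is second-countable since $\mathbb{R}^n$ is. Theorem~\ref{theQuotientTop} then yields that $\DMC_{\mathcal{X},\mathcal{Y}}^{(o)}$ is Hausdorff, regular and second-countable, and Theorem~\ref{theMetrSep} concludes that it is metrizable (and separable).

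\emph{Main obstacle.} All the genuine work has, in effect, been pushed into Lemma~\ref{lemProjContClosed} (the projection is closed) and Corollary~\ref{corEqClassCompact} (equivalence classes are compact); given those, the present theorem is an assembly of the general quotient machinery from Section II. The one point requiring a little care is making sure the hypotheses of Theorem~\ref{theQuotientTop} are checked in the right order — closedness of $\Proj$ feeds into upper semi-continuity via Lemma~\ref{lemUpperSemiCont}, and compactness of the classes is needed independently — but no further computation is involved.
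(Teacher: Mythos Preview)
Your proposal is correct and follows essentially the same approach as the paper's own proof: compactness and path-connectedness are inherited from $\DMC_{\mathcal{X},\mathcal{Y}}$ via the quotient, and metrizability is obtained by combining Lemma~\ref{lemProjContClosed} and Corollary~\ref{corEqClassCompact} with Lemma~\ref{lemUpperSemiCont} to verify the hypotheses of Theorem~\ref{theQuotientTop}, and then concluding via Theorem~\ref{theMetrSep}. Your closing remark that the real work lies in Lemma~\ref{lemProjContClosed} and Corollary~\ref{corEqClassCompact} is exactly right.
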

\begin{proof}
Since $\DMC_{\mathcal{X},\mathcal{Y}}$ is compact and path-connected, $\DMC_{\mathcal{X},\mathcal{Y}}^{(o)}=\DMC_{\mathcal{X},\mathcal{Y}}/R_{\mathcal{X},\mathcal{Y}}^{(o)}$ is compact and path-connected as well.

Since the projection map $\Proj$ of Lemma \ref{lemProjContClosed} is closed, Lemma \ref{lemUpperSemiCont} implies that the quotient space $\DMC_{\mathcal{X},\mathcal{Y}}^{(o)}=\DMC_{\mathcal{X},\mathcal{Y}}/R_{\mathcal{X},\mathcal{Y}}^{(o)}$ is upper semi-continuous. On the other hand, Corollary \ref{corEqClassCompact} shows that all the members of $\DMC_{\mathcal{X},\mathcal{Y}}^{(o)}$ are compact in $\DMC_{\mathcal{X},\mathcal{Y}}$. Therefore, the conditions of Theorem \ref{theQuotientTop} are satisfied.

Since $\DMC_{\mathcal{X},\mathcal{Y}}$ is a metric space, it is Hausdorff and regular. Moreover, since it can be seen as a subspace of $\mathbb{R}^{|\mathcal{X}|\cdot|\mathcal{Y}|}$, it is also second-countable. By Theorem \ref{theQuotientTop} we get that $\DMC_{\mathcal{X},\mathcal{Y}}^{(o)}=\DMC_{\mathcal{X},\mathcal{Y}}/R_{\mathcal{X},\mathcal{Y}}^{(o)}$ is Hausdorff, regular and second-countable, and from Theorem \ref{theMetrSep} we conclude that $\DMC_{\mathcal{X},\mathcal{Y}}^{(o)}$ is separable and metrizable.
\end{proof}

\subsection{Canonical embedding and canonical identification}

Let $\mathcal{X},\mathcal{Y}_1$ and $\mathcal{Y}_2$ be three finite sets such that $|\mathcal{Y}_1|\leq |\mathcal{Y}_2|$. We will show that there is a canonical embedding from $\DMC_{\mathcal{X},\mathcal{Y}_1}^{(o)}$ to $\DMC_{\mathcal{X},\mathcal{Y}_2}^{(o)}$. In other words, there exists an explicitly constructable compact subset $A$ of $\DMC_{\mathcal{X},\mathcal{Y}_2}^{(o)}$ such that $A$ is homeomorphic to $\DMC_{\mathcal{X},\mathcal{Y}_1}^{(o)}$. $A$ and the homeomorphism depend only on $\mathcal{X},\mathcal{Y}_1$ and $\mathcal{Y}_2$ (this is why we say that they are canonical). Moreover, we can show that $A$ depends only on $|\mathcal{Y}_1|$, $\mathcal{X}$ and $\mathcal{Y}_2$.

\begin{mylem}
\label{lemEquivChannelInj}
For every $W\in\DMC_{\mathcal{X},\mathcal{Y}_1}$ and every injection $f$ from $\mathcal{Y}_1$ to $\mathcal{Y}_2$, $W$ is equivalent to $D_f\circ W$.
\end{mylem}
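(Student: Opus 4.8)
The plan is to show $W$ and $D_f \circ W$ are equivalent by exhibiting channels in both directions realizing the degradation, and then to conclude by invoking Proposition~\ref{propCharacOutputEquiv}. Actually, the cleanest route is the latter: compute the Blackwell measures of $W$ and $D_f\circ W$ and show they coincide, since $\MP_{W}=\MP_{D_f\circ W}$ immediately gives equivalence by Proposition~\ref{propCharacOutputEquiv}. So first I would write $W'=D_f\circ W \in \DMC_{\mathcal{X},\mathcal{Y}_2}$ and unwind the composition: for $x\in\mathcal{X}$ and $y_2\in\mathcal{Y}_2$, $W'(y_2|x)=\sum_{y_1\in\mathcal{Y}_1}D_f(y_2|y_1)W(y_1|x)$, which equals $W(y_1|x)$ if $y_2=f(y_1)$ for some (necessarily unique, by injectivity) $y_1\in\mathcal{Y}_1$, and equals $0$ otherwise.

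Next I would compute $P^o_{W'}$ and the posteriors. From the formula above, $P^o_{W'}(f(y_1))=\frac1{|\mathcal{X}|}\sum_x W'(f(y_1)|x)=\frac1{|\mathcal{X}|}\sum_x W(y_1|x)=P^o_W(y_1)$ for each $y_1\in\mathcal{Y}_1$, and $P^o_{W'}(y_2)=0$ for $y_2\notin f(\mathcal{Y}_1)$. Hence $\Imag(W')=f(\Imag(W))$, and for $y_1\in\Imag(W)$, setting $y_2=f(y_1)$, the posterior is $(W')^{-1}_{y_2}(x)=\frac{W'(y_2|x)}{|\mathcal{X}|P^o_{W'}(y_2)}=\frac{W(y_1|x)}{|\mathcal{X}|P^o_W(y_1)}=W^{-1}_{y_1}(x)$. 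Therefore the multiset of posterior distributions weighted by their output probabilities is identical for $W$ and $W'$: each point mass $P^o_W(y_1)\,\delta_{W^{-1}_{y_1}}$ in $\MP_W$ is matched by $P^o_{W'}(f(y_1))\,\delta_{(W')^{-1}_{f(y_1)}}=P^o_W(y_1)\,\delta_{W^{-1}_{y_1}}$ in $\MP_{W'}$, and the reindexing $y_1\mapsto f(y_1)$ is a bijection from $\Imag(W)$ to $\Imag(W')$. Thus $\MP_{W'}=\sum_{y_1\in\Imag(W)}P^o_W(y_1)\,\delta_{W^{-1}_{y_1}}=\MP_W$, and Proposition~\ref{propCharacOutputEquiv} yields that $W$ and $W'=D_f\circ W$ are equivalent.

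There is no serious obstacle here; the only point requiring a little care is the injectivity of $f$, which is used twice: once to ensure that for each $y_2\in f(\mathcal{Y}_1)$ there is a \emph{unique} preimage $y_1$ so that $W'(y_2|x)=W(y_1|x)$ rather than a sum of several terms, and once to ensure the reindexing of the support of the Blackwell measure is a bijection (so no masses get merged or split). For completeness I would also note the direct degradation picture, which makes the result intuitively transparent: $D_f\circ W$ is trivially degraded from $W$, and conversely $W = D_g \circ (D_f\circ W)$ for any map $g:\mathcal{Y}_2\to\mathcal{Y}_1$ with $g\circ f = \mathrm{id}_{\mathcal{Y}_1}$ (such a $g$ exists because $f$ is injective), using $D_g\circ D_f = D_{g\circ f} = D_{\mathrm{id}}$; this gives equivalence without even invoking Proposition~\ref{propCharacOutputEquiv}, and I would likely present this shorter argument as the main proof.
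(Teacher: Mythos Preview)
Your proposal is correct, and the short direct-degradation argument you give at the end is exactly the paper's proof: $D_f\circ W$ is degraded from $W$ by definition, and choosing any left inverse $f'$ of $f$ yields $W=D_{f'}\circ(D_f\circ W)$. The Blackwell-measure computation you sketch first is also correct but is a longer detour; since the lemma precedes and is used independently of the Blackwell-measure machinery in the paper's development, the elementary two-line degradation argument is the appropriate one to present.
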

\begin{proof}
Clearly $D_f\circ W$ is degraded from $W$. Now let $f'$ be any mapping from $\mathcal{Y}_2$ to $\mathcal{Y}_1$ such that $f'(f(y_1))=y_1$ for every $y_1\in\mathcal{Y}_1$. We have $W=(D_{f'}\circ D_f)\circ W=D_{f'}\circ(D_f\circ W)$, and so $W$ is also degraded from $D_f\circ W$.
\end{proof}

\begin{mycor}
\label{corEquivChannelInj}
For every $W,W'\in\DMC_{\mathcal{X},\mathcal{Y}_1}$ and every two injections $f,g$ from $\mathcal{Y}_1$ to $\mathcal{Y}_2$, we have:
$$W R_{\mathcal{X},\mathcal{Y}_1}^{(o)} W'\;\;\Leftrightarrow\;\; (D_f \circ W)R_{\mathcal{X},\mathcal{Y}_2}^{(o)}(D_{g} \circ W').$$
\end{mycor}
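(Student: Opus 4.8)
The plan is to deduce the corollary directly from Lemma \ref{lemEquivChannelInj} together with the fact that ``being equivalent'' is an equivalence relation on the disjoint union of all the spaces $\DMC_{\mathcal{X},\mathcal{Z}}$ (over finite $\mathcal{Z}$), whose restriction to any single $\DMC_{\mathcal{X},\mathcal{Z}}$ is, by definition, exactly $R^{(o)}_{\mathcal{X},\mathcal{Z}}$. To make this precise I would introduce a temporary notation: write $W_1\approx W_2$ when $W_1$ and $W_2$, possibly with different output alphabets, are equivalent. Reflexivity of $\approx$ uses the identity channel, symmetry is built into the definition, and transitivity follows from associativity of channel composition: if $W_2=V\circ W_1$, $W_1=V'\circ W_2$, $W_3=U\circ W_2$, $W_2=U'\circ W_3$, then $W_3=(U\circ V)\circ W_1$ and $W_1=(V'\circ U')\circ W_3$. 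Alternatively, transitivity of $\approx$ is immediate from Proposition \ref{propCharacOutputEquiv}, since $W_1\approx W_2\iff\MP_{W_1}=\MP_{W_2}$ and equality of Blackwell measures is obviously transitive.

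Granting this, the argument is very short. By Lemma \ref{lemEquivChannelInj}, $W\approx D_f\circ W$ and $W'\approx D_g\circ W'$ for the injections $f,g\colon\mathcal{Y}_1\to\mathcal{Y}_2$. For the forward implication I would assume $W\,R^{(o)}_{\mathcal{X},\mathcal{Y}_1}\,W'$, i.e. $W\approx W'$, and chain: $D_f\circ W\approx W\approx W'\approx D_g\circ W'$, so $D_f\circ W\approx D_g\circ W'$ by transitivity; since $D_f\circ W$ and $D_g\circ W'$ both live in $\DMC_{\mathcal{X},\mathcal{Y}_2}$, this is precisely $(D_f\circ W)\,R^{(o)}_{\mathcal{X},\mathcal{Y}_2}\,(D_g\circ W')$. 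For the converse I would assume $(D_f\circ W)\,R^{(o)}_{\mathcal{X},\mathcal{Y}_2}\,(D_g\circ W')$, i.e. $D_f\circ W\approx D_g\circ W'$, and chain the other way: $W\approx D_f\circ W\approx D_g\circ W'\approx W'$, so $W\approx W'$, which (as $W,W'\in\DMC_{\mathcal{X},\mathcal{Y}_1}$) reads $W\,R^{(o)}_{\mathcal{X},\mathcal{Y}_1}\,W'$.

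I do not expect any genuine obstacle here: the substantive content is entirely contained in Lemma \ref{lemEquivChannelInj}, which is already proved, and in the transitivity of $\approx$ across different output alphabets, which is either a one-line composition computation or an instant corollary of Proposition \ref{propCharacOutputEquiv}. The only point that merits an explicit sentence is the bookkeeping observation that $R^{(o)}_{\mathcal{X},\mathcal{Z}}$ is the restriction of the alphabet-free relation $\approx$ to $\DMC_{\mathcal{X},\mathcal{Z}}$, so that the chained equivalences --- which a priori live in the larger disjoint union --- legitimately descend to statements about $R^{(o)}_{\mathcal{X},\mathcal{Y}_1}$ and $R^{(o)}_{\mathcal{X},\mathcal{Y}_2}$.
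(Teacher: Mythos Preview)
Your proposal is correct and follows essentially the same approach as the paper's proof, which simply notes that since $W$ is equivalent to $D_f\circ W$ and $W'$ is equivalent to $D_g\circ W'$ (by Lemma \ref{lemEquivChannelInj}), transitivity gives the biconditional. Your version is just a more carefully unpacked rendering of the same one-line argument, with the bookkeeping about the alphabet-free relation $\approx$ made explicit.
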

\begin{proof}
Since $W$ is equivalent to $D_f\circ W$ and $W'$ is equivalent to $D_g\circ W'$, then $W$ is equivalent to $W'$ if and only if $D_f\circ W$ is equivalent to $D_g\circ W'$.
\end{proof}

\vspace*{3mm}

For every $W\in\DMC_{\mathcal{X},\mathcal{Y}_1}$, we denote the $R_{\mathcal{X},\mathcal{Y}_1}^{(o)}$-equivalence class of $W$ as $\hat{W}$, and for every $W\in\DMC_{\mathcal{X},\mathcal{Y}_2}$, we denote the $R_{\mathcal{X},\mathcal{Y}_2}^{(o)}$-equivalence class of $W$ as $\tilde{W}$.

\begin{myprop}
\label{propEmbedOutEquiv}
Let $f:\mathcal{Y}_1\rightarrow\mathcal{Y}_2$ be any fixed injection between $\mathcal{Y}_1$ and $\mathcal{Y}_2$. Define the mapping $F:\DMC_{\mathcal{X},\mathcal{Y}_1}^{(o)}\rightarrow \DMC_{\mathcal{X},\mathcal{Y}_2}^{(o)}$ as
$F(\hat{W})=\widetilde{D_f\circ W'}=\Proj_2(D_f\circ W'),$ where $W'\in \hat{W}$ and $\Proj_2:\DMC_{\mathcal{X},\mathcal{Y}_2}\rightarrow\DMC_{\mathcal{X},\mathcal{Y}_2}^{(o)}$ is the projection onto the $R_{\mathcal{X},\mathcal{Y}_2}^{(o)}$-equivalence classes. We have:
\begin{itemize}
\item $F$ is well defined, i.e., $\Proj_2(D_f\circ W')$ does not depend on $W'\in\hat{W}$.
\item $F$ is a homeomorphism between $\DMC_{\mathcal{X},\mathcal{Y}_1}^{(o)}$ and $F\big(\DMC_{\mathcal{X},\mathcal{Y}_1}^{(o)}\big)$.
\item $F$ does not depend on $f$, i.e., $F$ depends only on $\mathcal{X},\mathcal{Y}_1$ and $\mathcal{Y}_2$.
\item $F\big(\DMC_{\mathcal{X},\mathcal{Y}_1}^{(o)}\big)$ depends only on $|\mathcal{Y}_1|$, $\mathcal{X}$ and $\mathcal{Y}_2$.
\item For every $W'\in\hat{W}$ and every $W''\in F(\hat{W})$, $W'$ is equivalent to $W''$.
\end{itemize}
\end{myprop}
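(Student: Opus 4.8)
The plan is to establish the five bullet points in order, relying on Lemma~\ref{lemQuotientFunction}, Lemma~\ref{lemEquivChannelInj}, Corollary~\ref{corEquivChannelInj}, and Proposition~\ref{propCharacOutputEquiv}.

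First I would handle \emph{well-definedness and continuity} together. Define $G:\DMC_{\mathcal{X},\mathcal{Y}_1}\rightarrow\DMC_{\mathcal{X},\mathcal{Y}_2}^{(o)}$ by $G(W')=\Proj_2(D_f\circ W')$. This is a composition of continuous maps: $W'\mapsto D_f\circ W'$ is continuous (composition of channels is continuous, as noted in Section IV) and $\Proj_2$ is continuous. If $W'R_{\mathcal{X},\mathcal{Y}_1}^{(o)}W''$ then by Corollary~\ref{corEquivChannelInj} (with $g=f$) we get $(D_f\circ W')R_{\mathcal{X},\mathcal{Y}_2}^{(o)}(D_f\circ W'')$, so $G(W')=G(W'')$. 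Hence by Lemma~\ref{lemQuotientFunction}, $G$ descends to a continuous map $F:\DMC_{\mathcal{X},\mathcal{Y}_1}^{(o)}\rightarrow\DMC_{\mathcal{X},\mathcal{Y}_2}^{(o)}$ with $F(\hat W)=\Proj_2(D_f\circ W')$ for any $W'\in\hat W$, which is exactly the claimed formula. For \emph{injectivity}: if $F(\hat W)=F(\hat W')$, pick representatives $W'\in\hat W$, $W''\in\hat W'$; then $D_f\circ W'$ is equivalent to $D_f\circ W''$, and again Corollary~\ref{corEquivChannelInj} gives $W'R_{\mathcal{X},\mathcal{Y}_1}^{(o)}W''$, i.e.\ $\hat W=\hat W'$. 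For \emph{$F$ a homeomorphism onto its image}: $F$ is a continuous injection from the compact space $\DMC_{\mathcal{X},\mathcal{Y}_1}^{(o)}$ (Theorem~\ref{theDMCXYo}) into the Hausdorff space $\DMC_{\mathcal{X},\mathcal{Y}_2}^{(o)}$ (also Theorem~\ref{theDMCXYo}), so its image is compact, and a continuous bijection from a compact space to a Hausdorff space is a homeomorphism (a continuous bijection that is also a closed map, since closed subsets of a compact space are compact and compact subsets of a Hausdorff space are closed).

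Next, \emph{independence of $f$}: let $f,g$ be two injections $\mathcal{Y}_1\rightarrow\mathcal{Y}_2$ and let $F_f,F_g$ be the corresponding maps. Fix $\hat W$ and a representative $W'$. By Lemma~\ref{lemEquivChannelInj}, $W'$ is equivalent to $D_f\circ W'$ and to $D_g\circ W'$, hence $D_f\circ W'$ is equivalent to $D_g\circ W'$, so $\Proj_2(D_f\circ W')=\Proj_2(D_g\circ W')$, i.e.\ $F_f(\hat W)=F_g(\hat W)$. Thus $F$ depends only on $\mathcal{X},\mathcal{Y}_1,\mathcal{Y}_2$. The \emph{last bullet} is then immediate: for $W'\in\hat W$ and $W''\in F(\hat W)=\widetilde{D_f\circ W'}$, we have $W''$ equivalent to $D_f\circ W'$ which by Lemma~\ref{lemEquivChannelInj} is equivalent to $W'$; transitivity of equivalence finishes it.

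The one genuinely delicate point — and the main obstacle — is the fourth bullet, that $F\big(\DMC_{\mathcal{X},\mathcal{Y}_1}^{(o)}\big)$ depends only on $|\mathcal{Y}_1|$, $\mathcal{X}$, $\mathcal{Y}_2$ (not on the actual set $\mathcal{Y}_1$). Here I would argue as follows. Suppose $|\mathcal{Y}_1|=|\mathcal{Y}_1'|$ and fix injections $f:\mathcal{Y}_1\rightarrow\mathcal{Y}_2$ and $f':\mathcal{Y}_1'\rightarrow\mathcal{Y}_2$; let $A=F_f\big(\DMC_{\mathcal{X},\mathcal{Y}_1}^{(o)}\big)$ and $A'=F_{f'}\big(\DMC_{\mathcal{X},\mathcal{Y}_1'}^{(o)}\big)$. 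I claim $A=A'$. A point $\tilde V\in\DMC_{\mathcal{X},\mathcal{Y}_2}^{(o)}$ lies in $A$ iff it contains a representative of the form $D_f\circ W'$ with $W'\in\DMC_{\mathcal{X},\mathcal{Y}_1}$. Now observe that the channels of the form $D_f\circ W'$ are precisely those channels in $\DMC_{\mathcal{X},\mathcal{Y}_2}$ whose output support (the set of $y_2$ with $(D_f\circ W')(y_2|x)>0$ for some $x$) is contained in $f(\mathcal{Y}_1)$; equivalently, up to equivalence, this class is characterized by the property of being equivalent to \emph{some} channel with output alphabet of size $|\mathcal{Y}_1|$. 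By Corollary~\ref{corNonEquiv} and Proposition~\ref{propCharacOutputEquiv}, a channel $V\in\DMC_{\mathcal{X},\mathcal{Y}_2}$ is equivalent to some channel with output alphabet of size $n$ if and only if $\rank(V)\le n$. Hence $A=\{\tilde V:\rank(V)\le|\mathcal{Y}_1|\}$, a description that manifestly does not involve $\mathcal{Y}_1$ itself, only $|\mathcal{Y}_1|$, $\mathcal{X}$ and $\mathcal{Y}_2$; the same description gives $A'$, so $A=A'$. The point requiring care is checking that $\rank(V)\le n$ is both necessary (Corollary~\ref{corNonEquiv}) and sufficient (the explicit construction in the proof of Proposition~\ref{propCharacPostMetaProb} realizes the Blackwell measure by a channel whose output alphabet is $\supp(\MP_V)$, of size $\rank(V)\le n$, and then composing with any injection into a set of size $n$ and padding gives the required channel) for equivalence to a channel with output alphabet of size $n$.
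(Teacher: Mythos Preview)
Your proof is correct. For the first, second, third and fifth bullets your argument is essentially the same as the paper's: you use Corollary~\ref{corEquivChannelInj} for well-definedness and injectivity, Lemma~\ref{lemQuotientFunction} for continuity of the descended map, the compact-to-Hausdorff argument for the homeomorphism, and Lemma~\ref{lemEquivChannelInj} for independence of $f$ and the last bullet---exactly as the paper does.

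The one genuine difference is in the fourth bullet. The paper argues directly: given another set $\mathcal{Y}_1'$ with $|\mathcal{Y}_1'|=|\mathcal{Y}_1|$, it fixes a bijection $g:\mathcal{Y}_1'\to\mathcal{Y}_1$, sets $f'=f\circ g$, and observes that for any $W'\in\DMC_{\mathcal{X},\mathcal{Y}_1'}$ one has $D_{f'}\circ W'=D_f\circ(D_g\circ W')$, so $F'(\overline{W'})=F(\widehat{D_g\circ W'})\in F(\DMC_{\mathcal{X},\mathcal{Y}_1}^{(o)})$; the reverse inclusion is symmetric. You instead give an \emph{intrinsic} characterization of the image as $\{\tilde V\in\DMC_{\mathcal{X},\mathcal{Y}_2}^{(o)}:\rank(V)\le|\mathcal{Y}_1|\}$, which manifestly depends only on $|\mathcal{Y}_1|$. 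Your argument is valid (the ``sufficient'' direction does need the explicit construction from Proposition~\ref{propCharacPostMetaProb} together with Lemma~\ref{lemEquivChannelInj}, as you note), and it buys a bit more: it actually identifies the image, anticipating the description $\DMC_{\mathcal{X},[n]}^{(o)}=\{\hat W:\rank(\hat W)\le n\}$ that the paper states later in Section~\ref{secEquivSpace}. The paper's route is slightly more elementary, needing only the bijection trick and no appeal to Blackwell measures or rank.
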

\begin{proof}
Corollary \ref{corEquivChannelInj} implies that $\Proj_2(D_f \circ W)=\Proj_2(D_f \circ W')$ if and only if $W R_{\mathcal{X},\mathcal{Y}_1}^{(o)}W'$. Therefore, $\Proj_2(D_f\circ W')$ does not depend on $W'\in\hat{W}$, hence $F$ is well defined. Corollary \ref{corEquivChannelInj} also shows that $\Proj_2(D_f\circ W')$ does not depend on the particular choice of the injection $f$, hence it is canonical (i.e., it depends only on $\mathcal{X},\mathcal{Y}_1$ and $\mathcal{Y}_2$).

On the other hand, the mapping $W\rightarrow D_f \circ W$ is a continuous mapping from $\DMC_{\mathcal{X},\mathcal{Y}_1}$ to $\DMC_{\mathcal{X},\mathcal{Y}_2}$, and $\Proj_2$ is continuous. Therefore, the mapping $W\rightarrow \Proj_2(D_f \circ W)$ is a continuous mapping from $\DMC_{\mathcal{X},\mathcal{Y}_1}$ to $\DMC_{\mathcal{X},\mathcal{Y}_2}^{(o)}$. Now since $\Proj_2(D_f \circ W)$ depends only on the $R_{\mathcal{X},\mathcal{Y}_1}^{(o)}$-equivalence class $\hat{W}$ of $W$, Lemma \ref{lemQuotientFunction} implies that $F$ is continuous. Moreover, we can see from Corollary \ref{corEquivChannelInj} that $F$ is an injection.

For every closed subset $B$ of $\DMC_{\mathcal{X},\mathcal{Y}_1}^{(o)}$, $B$ is compact since $\DMC_{\mathcal{X},\mathcal{Y}_1}^{(o)}$ is compact, hence $F(B)$ is compact because $F$ is continuous. This implies that $F(B)$ is closed in $\DMC_{\mathcal{X},\mathcal{Y}_2}^{(o)}$ since $\DMC_{\mathcal{X},\mathcal{Y}_2}^{(o)}$ is Hausdorff (as it is metrizable). Therefore, $F$ is a closed mapping.

Now since $F$ is an injection that is both continuous and closed, we can deduce that $F$ is a homeomorphism between $\DMC_{\mathcal{X},\mathcal{Y}_1}^{(o)}$ and $F\big(\DMC_{\mathcal{X},\mathcal{Y}_1}^{(o)}\big)\subset \DMC_{\mathcal{X},\mathcal{Y}_2}^{(o)}$.

We would like now to show that $F\big(\DMC_{\mathcal{X},\mathcal{Y}_1}^{(o)}\big)$ depends only on $|\mathcal{Y}_1|$, $\mathcal{X}$ and $\mathcal{Y}_2$. Let $\mathcal{Y}_1'$ be a finite set such that $|\mathcal{Y}_1|=|\mathcal{Y}_1'|$. For every $W\in \DMC_{\mathcal{X},\mathcal{Y}_1'}$, let $\overline{W}\in\DMC_{\mathcal{X},\mathcal{Y}_1'}^{(o)}$ be the $R_{\mathcal{X},\mathcal{Y}_1'}^{(o)}$-equivalence class of $W$.

Let $g:\mathcal{Y}_1'\rightarrow \mathcal{Y}_1$ be a fixed bijection from $\mathcal{Y}_1'$ to $\mathcal{Y}_1$ and let $f'=f\circ g$. Define $F': \DMC_{\mathcal{X},\mathcal{Y}_1'}^{(o)}\rightarrow \DMC_{\mathcal{X},\mathcal{Y}_2}^{(o)}$ as $F'(\overline{W})=\widetilde{D_{f'}\circ W'}=\Proj_2(D_{f'}\circ W'),$ where $W'\in \overline{W}$. As above, $F'$ is well defined, and it is a homeomorphism from $\DMC_{\mathcal{X},\mathcal{Y}_1'}^{(o)}$ to $F'\big(\DMC_{\mathcal{X},\mathcal{Y}_1'}^{(o)}\big)$. We want to show that $F'\big(\DMC_{\mathcal{X},\mathcal{Y}_1'}^{(o)}\big)=F\big(\DMC_{\mathcal{X},\mathcal{Y}_1}^{(o)}\big)$. For every $\overline{W}\in \DMC_{\mathcal{X},\mathcal{Y}_1'}^{(o)}$, let $W'\in\overline{W}$. We have $$\textstyle F'(\overline{W})= \Proj_2(D_{f'}\circ W')=\Proj_2(D_f\circ(D_g\circ W'))=F\left(\widehat{D_g\circ W'}\right)\in F\big(\DMC_{\mathcal{X},\mathcal{Y}_1}^{(o)}\big).$$ Since this is true for every $\overline{W}\in \DMC_{\mathcal{X},\mathcal{Y}_1'}^{(o)}$, we deduce that $F'\big(\DMC_{\mathcal{X},\mathcal{Y}_1'}^{(o)}\big)\subset F\big(\DMC_{\mathcal{X},\mathcal{Y}_1}^{(o)}\big)$. By exchanging the roles of $\mathcal{Y}_1$ and $\mathcal{Y}_1'$ and using the fact that $f=f'\circ g^{-1}$, we get $F\big(\DMC_{\mathcal{X},\mathcal{Y}_1}^{(o)}\big)\subset F'\big(\DMC_{\mathcal{X},\mathcal{Y}_1'}^{(o)}\big)$. We conclude that $F\big(\DMC_{\mathcal{X},\mathcal{Y}_1}^{(o)}\big)=F'\big(\DMC_{\mathcal{X},\mathcal{Y}_1'}^{(o)}\big)$, which means that $F\big(\DMC_{\mathcal{X},\mathcal{Y}_1}^{(o)}\big)$ depends only on $|\mathcal{Y}_1|$, $\mathcal{X}$ and $\mathcal{Y}_2$.

Finally, for every $W'\in\hat{W}$ and every $W''\in F(\hat{W})=\widetilde{D_f\circ W'}$, $W''$ is equivalent to $D_f\circ W'$ and $D_f\circ W'$ is equivalent to $W'$ (by Lemma \ref{lemEquivChannelInj}), hence $W''$ is equivalent to $W'$.
\end{proof}

\begin{mycor}
\label{corIdentOutEquiv}
If $|\mathcal{Y}_1|=|\mathcal{Y}_2|$, there exists a canonical homeomorphism from $\DMC_{\mathcal{X},\mathcal{Y}_1}^{(o)}$ to $\DMC_{\mathcal{X},\mathcal{Y}_2}^{(o)}$ depending only on $\mathcal{X},\mathcal{Y}_1$ and $\mathcal{Y}_2$.
\end{mycor}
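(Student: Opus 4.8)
The plan is to deduce this corollary directly from Proposition \ref{propEmbedOutEquiv}. The key observation is that when $|\mathcal{Y}_1|=|\mathcal{Y}_2|$, the injection $f:\mathcal{Y}_1\rightarrow\mathcal{Y}_2$ provided by that proposition is actually a bijection, and hence the embedding $F:\DMC_{\mathcal{X},\mathcal{Y}_1}^{(o)}\rightarrow\DMC_{\mathcal{X},\mathcal{Y}_2}^{(o)}$ ought to be surjective, so it becomes a homeomorphism onto all of $\DMC_{\mathcal{X},\mathcal{Y}_2}^{(o)}$.

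Concretely, I would first recall that Proposition \ref{propEmbedOutEquiv} already gives a canonical map $F$ that is a homeomorphism between $\DMC_{\mathcal{X},\mathcal{Y}_1}^{(o)}$ and $F\big(\DMC_{\mathcal{X},\mathcal{Y}_1}^{(o)}\big)$, and that $F$ depends only on $\mathcal{X},\mathcal{Y}_1$ and $\mathcal{Y}_2$. So the only thing left to check is that $F$ is onto. Fix a bijection $f:\mathcal{Y}_1\rightarrow\mathcal{Y}_2$ and let $\tilde V\in\DMC_{\mathcal{X},\mathcal{Y}_2}^{(o)}$ be arbitrary, with representative $V\in\DMC_{\mathcal{X},\mathcal{Y}_2}$. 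Since $f$ is a bijection, $f^{-1}:\mathcal{Y}_2\rightarrow\mathcal{Y}_1$ is well defined, and $W:=D_{f^{-1}}\circ V\in\DMC_{\mathcal{X},\mathcal{Y}_1}$. Then $D_f\circ W=D_f\circ D_{f^{-1}}\circ V=D_{f\circ f^{-1}}\circ V=D_{\mathrm{id}_{\mathcal{Y}_2}}\circ V=V$, so $F(\hat W)=\Proj_2(D_f\circ W)=\Proj_2(V)=\tilde V$. Hence $F$ is surjective, and being already a homeomorphism onto its image, it is a homeomorphism from $\DMC_{\mathcal{X},\mathcal{Y}_1}^{(o)}$ onto $\DMC_{\mathcal{X},\mathcal{Y}_2}^{(o)}$. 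Canonicity (dependence only on $\mathcal{X},\mathcal{Y}_1,\mathcal{Y}_2$) is inherited verbatim from Proposition \ref{propEmbedOutEquiv}.

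I do not anticipate a real obstacle here: the corollary is essentially a one-line specialization of the preceding proposition, and the only mildly delicate point is being careful that $D_f\circ D_{f^{-1}}=D_{\mathrm{id}}$ is the identity channel on $\DMC_{\mathcal{X},\mathcal{Y}_2}$, which follows from the already-noted identity $D_g\circ D_f=D_{g\circ f}$ together with the fact that $D_{\mathrm{id}}$ acts as the identity under composition. One could alternatively invoke Lemma \ref{lemEquivChannelInj} applied to the injection $f^{-1}$ to see that $W=D_{f^{-1}}\circ V$ is equivalent to $V$, which combined with the last bullet of Proposition \ref{propEmbedOutEquiv} again shows $F(\hat W)=\tilde V$; either route is routine.
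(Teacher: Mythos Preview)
Your argument is correct and follows essentially the same route as the paper: both use the bijection $f$ and its inverse $f^{-1}$ together with the identity $D_f\circ D_{f^{-1}}=D_{\mathrm{id}}$ to upgrade the embedding $F$ of Proposition~\ref{propEmbedOutEquiv} to a surjection, hence a homeomorphism onto all of $\DMC_{\mathcal{X},\mathcal{Y}_2}^{(o)}$. The paper phrases this by explicitly defining the inverse map $F'(\tilde V)=\widehat{D_{f^{-1}}\circ V}$ and checking $F'\circ F=\mathrm{id}$ and $F\circ F'=\mathrm{id}$, whereas you check surjectivity directly; these are equivalent and equally routine.
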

\begin{proof}
Let $f$ be a bijection from $\mathcal{Y}_1$ to $\mathcal{Y}_2$. Define the mapping $F:\DMC_{\mathcal{X},\mathcal{Y}_1}^{(o)}\rightarrow \DMC_{\mathcal{X},\mathcal{Y}_2}^{(o)}$ as
$F(\hat{W})=\widetilde{D_f\circ W'}=\Proj_2(D_f\circ W'),$ where $W'\in \hat{W}$ and $\Proj_2:\DMC_{\mathcal{X},\mathcal{Y}_2}\rightarrow\DMC_{\mathcal{X},\mathcal{Y}_2}^{(o)}$ is the projection onto the $R_{\mathcal{X},\mathcal{Y}_2}^{(o)}$-equivalence classes.

Also, define the mapping $F':\DMC_{\mathcal{X},\mathcal{Y}_2}^{(o)}\rightarrow \DMC_{\mathcal{X},\mathcal{Y}_1}^{(o)}$ as
$F'(\tilde{V})=\widehat{D_{f^{-1}}\circ V'}=\Proj_1(D_{f^{-1}}\circ V'),$ where $V'\in \tilde{V}$ and $\Proj_1:\DMC_{\mathcal{X},\mathcal{Y}_1}\rightarrow\DMC_{\mathcal{X},\mathcal{Y}_1}^{(o)}$ is the projection onto the $R_{\mathcal{X},\mathcal{Y}_1}^{(o)}$-equivalence classes.

Proposition \ref{propEmbedOutEquiv} shows that $F$ and $F'$ are well defined.

For every $W\in\DMC_{\mathcal{X},\mathcal{Y}_1}$, we have:
\begin{align*}
F'(F(\hat{W}))&\stackrel{(a)}{=}F'(\widetilde{D_f\circ W})\stackrel{(b)}{=}\widehat{D_{f^{-1}}\circ (D_f\circ W)}=\hat{W},
\end{align*}
where (a) follows from the fact that $W\in \hat{W}$ and (b) follows from the fact that $D_f\circ W\in \widetilde{D_f\circ W}$.

We can similarly show that $F(F'(\tilde{V}))=\tilde{V}$ for every $\tilde{V}\in\DMC_{\mathcal{X},\mathcal{Y}_2}^{(o)}$. Therefore, both $F$ and $F'$ are bijections. Proposition \ref{propEmbedOutEquiv} now implies that $F$ is a homeomorphism from $\DMC_{\mathcal{X},\mathcal{Y}_1}^{(o)}$ to $F\big(\DMC_{\mathcal{X},\mathcal{Y}_1}^{(o)}\big)=\DMC_{\mathcal{X},\mathcal{Y}_2}^{(o)}$. Moreover, $F$ depends only on $\mathcal{X},\mathcal{Y}_1$ and $\mathcal{Y}_2$.
\end{proof}

\vspace*{3mm}

Corollary \ref{corIdentOutEquiv} allows us to identify $\DMC_{\mathcal{X},\mathcal{Y}_1}^{(o)}$ with $\DMC_{\mathcal{X},\mathcal{Y}_2}^{(o)}$ whenever $|\mathcal{Y}_1|=|\mathcal{Y}_2|$. In the rest of this paper, we identify $\DMC_{\mathcal{X},\mathcal{Y}}^{(o)}$ with $\DMC_{\mathcal{X},[n]}^{(o)}$ through the canonical identification, where $n=|\mathcal{Y}|$ and $[n]=\{1,\ldots,n\}$.

Moreover, for every $1\leq n\leq m$, Proposition \ref{propEmbedOutEquiv} allows us to identify $\DMC_{\mathcal{X},[n]}^{(o)}$ with the canonical subspace of $\DMC_{\mathcal{X},[m]}^{(o)}$ that is homeomorphic to $\DMC_{\mathcal{X},[n]}^{(o)}$. In the rest of this paper, we consider that $\DMC_{\mathcal{X},[n]}^{(o)}$ is a compact subspace of $\DMC_{\mathcal{X},[m]}^{(o)}$.

\vspace*{3mm}

Intuitively, $\DMC_{\mathcal{X},[n]}^{(o)}$ has a ``lower dimension" compared to $\DMC_{\mathcal{X},[m]}^{(o)}$. So one expects that the interior of $\DMC_{\mathcal{X},[n]}^{(o)}$ in $(\DMC_{\mathcal{X},[m]}^{(o)},\mathcal{T}_{\mathcal{X},[m]}^{(o)})$ is empty if $m>n$. The following proposition shows that this intuition is accurate.

\begin{myprop}
\label{propInteriorEmptyDMCXno}
If $|\mathcal{X}|\geq 2$, then for every $1\leq n<m$, the interior of $\DMC_{\mathcal{X},[n]}^{(o)}$ in $(\DMC_{\mathcal{X},[m]}^{(o)},\mathcal{T}_{\mathcal{X},[m]}^{(o)})$ is empty.
\end{myprop}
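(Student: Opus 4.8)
The plan is to reduce the statement to a density result about ordinary channels in $\DMC_{\mathcal{X},[m]}$ and then establish that density result by a one-parameter deformation argument. Since the interior of $\DMC_{\mathcal{X},[n]}^{(o)}$ is empty exactly when its complement is dense, it suffices to show that every nonempty open set $\hat{O}\subset\DMC_{\mathcal{X},[m]}^{(o)}$ contains a class lying outside $\DMC_{\mathcal{X},[n]}^{(o)}$. Let $\Proj:\DMC_{\mathcal{X},[m]}\to\DMC_{\mathcal{X},[m]}^{(o)}$ be the projection; it is continuous, so $O:=\Proj^{-1}(\hat{O})$ is a nonempty open subset of $\DMC_{\mathcal{X},[m]}$. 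By the last item of Proposition \ref{propEmbedOutEquiv}, every class in the canonical copy of $\DMC_{\mathcal{X},[n]}^{(o)}$ contains a channel equivalent to some channel in $\DMC_{\mathcal{X},[n]}$, and by Corollary \ref{corNonEquiv} no channel of rank $>n$ has this property. Since $m>n$, it is therefore enough to exhibit inside the arbitrary nonempty open set $O$ a channel $W'$ with $\rank(W')=m$; in other words, it suffices to prove that the set of full-rank channels is dense in $\DMC_{\mathcal{X},[m]}$ whenever $|\mathcal{X}|\geq 2$.

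Next I would produce a single full-rank reference channel $W^*\in\DMC_{\mathcal{X},[m]}$. Because $|\mathcal{X}|\geq 2$, the simplex $\Delta_{\mathcal{X}}$ has dimension at least $1$, so one can choose $m$ distinct points $p_1,\dots,p_m\in\Delta_{\mathcal{X}}$ lying on a line segment through $\pi_{\mathcal{X}}$, together with weights $\lambda_1,\dots,\lambda_m>0$ summing to $1$ with $\sum_i\lambda_i p_i=\pi_{\mathcal{X}}$. Then $\MP^*:=\sum_i\lambda_i\delta_{p_i}$ is balanced and finitely supported with $|\supp(\MP^*)|=m$, so by Proposition \ref{propCharacPostMetaProb} it is the Blackwell measure of some channel; relabelling the output alphabet of that channel by a bijection (which changes neither the Blackwell measure, by Lemma \ref{lemEquivChannelInj}, nor the rank) yields $W^*\in\DMC_{\mathcal{X},[m]}$ with $\rank(W^*)=m$.

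Finally I would deform an arbitrary channel toward $W^*$. Given $W_0\in\DMC_{\mathcal{X},[m]}$, set $W_t=(1-t)W_0+tW^*$ for $t\in[0,1]$; by convexity $W_t\in\DMC_{\mathcal{X},[m]}$, and $W_t\to W_0$ as $t\to 0^+$. For $t\in(0,1]$ and any output letter $y$, column $y$ of $W_t$ dominates $t$ times the nonzero column $y$ of $W^*$ entrywise, so $P_{W_t}^o(y)>0$; hence $\Imag(W_t)=[m]$ and the posteriors $(W_t)^{-1}_y$ are the normalizations of the columns of $W_t$. For a fixed pair $y\neq y'$, equality of these two posteriors would force proportionality of columns $y$ and $y'$ of $W_t$, hence vanishing of every $2\times 2$ minor built from these columns; choose indices $x_0,x_0'\in\mathcal{X}$ for which the corresponding minor of the columns of $W^*$ is nonzero, which is possible since $W^*$ has rank $m$ and so its columns $y,y'$ are not proportional. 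That same minor, evaluated on $W_t$, is a polynomial in $t$ of degree at most $2$ which is nonzero at $t=1$, hence has at most two roots. Taking the union over the finitely many pairs, there is a finite set $Z\subset(0,1]$ such that $\rank(W_t)=m$ for every $t\in(0,1]\setminus Z$; letting $t\to 0^+$ along $(0,1]\setminus Z$ exhibits $W_0$ as a limit of full-rank channels, which finishes the argument.

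I expect the main obstacle to be the genericity step in the last paragraph: one must translate ``the posteriors are distinct'' into non-proportionality of columns, isolate a single $2\times 2$ minor (a quadratic in $t$) whose non-vanishing certifies non-proportionality, and verify that this quadratic is nonzero at $t=1$ precisely because $W^*$ was chosen of full rank. The verification that $\Imag(W_t)=[m]$ for $t>0$ and the construction of $W^*$ (the only place $|\mathcal{X}|\geq 2$ is used) are routine by comparison.
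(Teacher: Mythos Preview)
Your argument is correct. Both your proof and the paper's share the same opening reduction: pull back a nonempty open set to $\DMC_{\mathcal{X},[m]}$ via the projection and then show that channels of rank $m$ are dense there, so that the open set must contain a class outside $\DMC_{\mathcal{X},[n]}^{(o)}$ by Corollary~\ref{corNonEquiv}.

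Where the two proofs diverge is in how density of full-rank channels is established. The paper perturbs a given channel $W$ directly: it deforms $P_W^o$ and the posteriors $W_y^{-1}$ by hand, using a two-parameter family $(\delta,\delta')$ and a system of auxiliary vectors $\{v_y\}$ chosen to keep the barycenter at $\pi_{\mathcal{X}}$, and checks that for small parameters the resulting channel has $m$ distinct posteriors. Your approach instead fixes once and for all a full-rank reference channel $W^*$ (built from a balanced finitely supported measure with $m$ atoms via Proposition~\ref{propCharacPostMetaProb}) and runs the straight-line homotopy $W_t=(1-t)W_0+tW^*$; the key observation that equality of two posteriors forces a $2\times 2$ minor of the column matrix to vanish turns the problem into the non-vanishing of finitely many degree-$\leq 2$ polynomials in $t$, each nonzero at $t=1$. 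Your route is more algebraic and avoids the bookkeeping of the paper's explicit perturbation; the paper's route is more self-contained in that it never needs to manufacture a global reference channel. Both are short once the reduction is in place.
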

\begin{proof}
See Appendix \ref{appInteriorEmptyDMCXno}.
\end{proof}

\section{Space of equivalent channels}

\label{secEquivSpace}

We would like to form the space of all equivalent channels having the same input alphabet $\mathcal{X}$. The previous section showed that if $|\mathcal{Y}_1|=|\mathcal{Y}_2|$, there is a canonical identification between $\DMC_{\mathcal{X},\mathcal{Y}_1}^{(o)}$ and $\DMC_{\mathcal{X},\mathcal{Y}_2}^{(o)}$. This shows that if we are interested in equivalent channels, it is sufficient to study the spaces $\DMC_{\mathcal{X},[n]}$ and $\DMC_{\mathcal{X},[n]}^{(o)}$ for every $n\geq 1$. Define the space $$\textstyle\DMC_{\mathcal{X},\ast}={\displaystyle\coprod_{n\geq 1}} \DMC_{\mathcal{X},[n]}.$$
The subscript $\ast$ indicates that the output alphabets of the considered channels are arbitrary but finite.

\vspace*{3mm}

We define the equivalence relation $R_{\mathcal{X},\ast}^{(o)}$ on $\DMC_{\mathcal{X},\ast}$ as follows:
$$\forall W,W'\in \textstyle\DMC_{\mathcal{X},\ast},\;\; WR_{\mathcal{X},\ast}^{(o)}W'\;\;\Leftrightarrow\;\;W\;\text{is equivalent to}\;W'.$$

\begin{mydef}
The space of equivalent channels with input alphabet $\mathcal{X}$ is the quotient of the space of channels with input alphabet $\mathcal{X}$ by the equivalence relation:
$$\textstyle\DMC_{\mathcal{X},\ast}^{(o)}=\DMC_{\mathcal{X},\ast}/R_{\mathcal{X},\ast}^{(o)}.$$
\end{mydef}

For every $n\geq 1$ and every $W,W'\in \DMC_{\mathcal{X},[n]}$, we have $W R_{\mathcal{X},\ast}^{(o)}W'$ if and only if $W R_{\mathcal{X},[n]}^{(o)}W'$ by definition. Therefore, $\DMC_{\mathcal{X},[n]}/R_{\mathcal{X},\ast}^{(o)}$ can be canonically identified with $\DMC_{\mathcal{X},[n]}/R_{\mathcal{X},[n]}^{(o)}=\DMC_{\mathcal{X},[n]}^{(o)}$. But since we identified $\DMC_{\mathcal{X},[n]}^{(o)}$ to its image through the canonical embedding in $\DMC_{\mathcal{X},[m]}^{(o)}$ for every $m\geq n$, we have to make sure that these identifications are consistent with each other.

Remember that for every $m\geq n\geq 1$ and every $W\in\DMC_{\mathcal{X},[n]}$, we identified $\hat{W}$ with $\widetilde{D_f\circ W}$, where $f$ is any injection from $[n]$ to $[m]$, $\hat{W}$ is the $R_{\mathcal{X},[n]}^{(o)}$-equivalence class of $W$ and $\widetilde{D_f\circ W}$ is the $R_{\mathcal{X},[m]}^{(o)}$-equivalence class of $D_f\circ W$. Since $D_f\circ W$ is equivalent to $W$ (by Lemma \ref{lemEquivChannelInj}), $W$ is $R_{\mathcal{X},\ast}^{(o)}$-equivalent to $D_f\circ W$ for every $W\in\DMC_{\mathcal{X},[n]}^{(o)}$. We conclude that identifying $\DMC_{\mathcal{X},[n]}^{(o)}$ to its image through the canonical embedding in $\DMC_{\mathcal{X},[m]}^{(o)}$ for every $m\geq n\geq 1$ is consistent with identifying $\DMC_{\mathcal{X},[n]}/R_{\mathcal{X},\ast}^{(o)}$ to $\DMC_{\mathcal{X},[n]}^{(o)}$ for every $n\geq 1$. Hence, we can write
$$\textstyle\DMC_{\mathcal{X},\ast}^{(o)}={\displaystyle\bigcup_{n\geq 1}}\DMC_{\mathcal{X},[n]}^{(o)}.$$

For any $W,W'\in \DMC_{\mathcal{X},\ast}$, Proposition \ref{propCharacOutputEquiv} shows that $W R_{\mathcal{X},\ast}^{(o)} W'$ if and only if ${\MP}_{W}={\MP}_{W'}$. Therefore, for every $\hat{W}\in \DMC_{\mathcal{X},\ast}^{(o)}$, we can define the Blackwell measure of $\hat{W}$ as ${\MP}_{\hat{W}}:={\MP}_{W'}$ for any $W'\in\hat{W}$. We also define the rank of $\hat{W}$ as $\rank(\hat{W})=|\supp({\MP}_{\hat{W}})|$. Due to Proposition \ref{propCharacOutputEquiv}, we have
$$\textstyle\DMC_{\mathcal{X},[n]}^{(o)}=\{\hat{W}\in\DMC_{\mathcal{X},\ast}^{(o)}:\;\rank(\hat{W})\leq n\}.$$

A subset $A$ of $\DMC_{\mathcal{X},\ast}^{(o)}$ is said to be \emph{rank-bounded} if there exists $n\geq 1$ such that $A\subset \DMC_{\mathcal{X},[n]}^{(o)}$. $A$ is \emph{rank-unbounded} if it is not rank-bounded.

\subsection{Natural topologies on $\DMC_{\mathcal{X},\ast}^{(o)}$}

\label{subsecNaturalTop}

Since $\DMC_{\mathcal{X},\ast}^{(o)}$ is the quotient of $\DMC_{\mathcal{X},\ast}$ and since $\DMC_{\mathcal{X},\ast}$ was not given any topology, there is no ``standard topology" on $\DMC_{\mathcal{X},\ast}^{(o)}$.

However, there are many properties that one may require from any ``reasonable" topology on $\DMC_{\mathcal{X},\ast}^{(o)}$. For example, one may require the continuity of all mappings that are relevant to information theory such as capacity, mutual information, probability of error of any fixed code, optimal probability of error of a given rate and blocklength, channel sums and products, etc ... The continuity of these mappings under different topologies on $\DMC_{\mathcal{X},\ast}^{(o)}$ is studied in \cite{RajContTop}.

In this paper, we focus on one particular requirement that we consider the most basic property required from any ``acceptable" topology on $\DMC_{\mathcal{X},\ast}^{(o)}$:

\begin{mydef}
A topology $\mathcal{T}$ on $\DMC_{\mathcal{X},\ast}^{(o)}$ is said to be \emph{natural} if it induces the quotient topology $\mathcal{T}_{\mathcal{X},[n]}^{(o)}$ on $\DMC_{\mathcal{X},[n]}^{(o)}$ for every $n\geq 1$.
\end{mydef}

The reason why we consider such topology as natural is because $\DMC_{\mathcal{X},[n]}^{(o)}$ is subset of $\DMC_{\mathcal{X},\ast}^{(o)}$ and the quotient topology $\mathcal{T}_{\mathcal{X},[n]}^{(o)}$ is the ``standard" and ``most natural" topology on $\DMC_{\mathcal{X},[n]}^{(o)}$. Therefore, we do not want to induce any non-standard topology on $\DMC_{\mathcal{X},[n]}^{(o)}$ by relativization.

Before discussing any particular natural topology, we would like to discuss a few properties that are common to all natural topologies.

\begin{myprop}
Every natural topology is $\sigma$-compact, separable and path-connected.
\end{myprop}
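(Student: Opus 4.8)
The plan is to reduce everything to the three properties established for the pieces $\DMC_{\mathcal{X},[n]}^{(o)}$ in Theorem~\ref{theDMCXYo} (compact, path-connected and metrizable, hence also separable), exploiting the decomposition $\DMC_{\mathcal{X},\ast}^{(o)}=\bigcup_{n\geq 1}\DMC_{\mathcal{X},[n]}^{(o)}$. The one preliminary observation that does the real work is this: if $\mathcal{T}$ is natural, then by definition the subspace topology that $\mathcal{T}$ induces on $\DMC_{\mathcal{X},[n]}^{(o)}$ is exactly $\mathcal{T}_{\mathcal{X},[n]}^{(o)}$. Since compactness, path-connectedness and separability of a subset are intrinsic properties of its subspace topology, it follows that — regardless of which natural topology $\mathcal{T}$ we picked — each $\DMC_{\mathcal{X},[n]}^{(o)}$ is a compact, path-connected, separable subset of $(\DMC_{\mathcal{X},\ast}^{(o)},\mathcal{T})$.

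Given this, $\sigma$-compactness is immediate: $\DMC_{\mathcal{X},\ast}^{(o)}$ is the union of the countable family of compact subspaces $\{\DMC_{\mathcal{X},[n]}^{(o)}\}_{n\geq 1}$. Separability follows from the fact recalled in Section II that a countable union of separable subspaces is separable, applied to the same family. For path-connectedness, recall from Section~\ref{secEquivSpace} that under the canonical identifications the family forms an increasing chain, $\DMC_{\mathcal{X},[n]}^{(o)}\subseteq\DMC_{\mathcal{X},[m]}^{(o)}$ for $n\leq m$; in particular $\DMC_{\mathcal{X},[1]}^{(o)}$, which is a single point (there is a unique channel from $\mathcal{X}$ to $[1]$), is contained in every $\DMC_{\mathcal{X},[n]}^{(o)}$, so $\bigcap_{n\geq 1}\DMC_{\mathcal{X},[n]}^{(o)}\neq\o$. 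Since a union of path-connected subsets with nonempty common intersection is path-connected (fact from the preliminaries), $\DMC_{\mathcal{X},\ast}^{(o)}$ is path-connected.

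The only point that genuinely uses the hypothesis that $\mathcal{T}$ is natural, rather than arbitrary, is the first one — that the subspace topology on each $\DMC_{\mathcal{X},[n]}^{(o)}$ is precisely $\mathcal{T}_{\mathcal{X},[n]}^{(o)}$, so that the conclusions of Theorem~\ref{theDMCXYo} transfer verbatim to these subspaces of $\DMC_{\mathcal{X},\ast}^{(o)}$. That is also the only step I expect to require any care; everything after it is bookkeeping with facts recalled in Section II, so I do not anticipate a real obstacle.
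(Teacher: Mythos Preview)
Your proposal is correct and follows essentially the same argument as the paper: both use that each $\DMC_{\mathcal{X},[n]}^{(o)}$ is compact, separable and path-connected in the induced (natural) topology, then assemble $\sigma$-compactness and separability from the countable union and path-connectedness from the nonempty common intersection $\DMC_{\mathcal{X},[1]}^{(o)}$. Your explicit remark that the naturalness hypothesis is what guarantees the subspace topology on each piece agrees with $\mathcal{T}_{\mathcal{X},[n]}^{(o)}$ is a welcome clarification that the paper leaves implicit.
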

\begin{proof}
Since $\DMC_{\mathcal{X},\ast}^{(o)}$ is the countable union of compact and separable subspaces (namely $\{\DMC_{\mathcal{X},[n]}^{(o)}\}_{n\geq 1}$), $\DMC_{\mathcal{X},\ast}^{(o)}$ is $\sigma$-compact and separable.

On the other hand, since $\displaystyle\bigcap_{n\geq 1}\textstyle\DMC_{\mathcal{X},[n]}^{(o)}=\DMC_{\mathcal{X},[1]}^{(o)}\neq\o$ and since $\DMC_{\mathcal{X},[n]}^{(o)}$ is path-connected for every $n\geq1$, the union $\textstyle\DMC_{\mathcal{X},\ast}^{(o)}={\displaystyle\bigcup_{n\geq 1}}\DMC_{\mathcal{X},[n]}^{(o)}$ is path-connected.
\end{proof}

\begin{myprop}
\label{propNaturallyOpenUnbounded}
If $|\mathcal{X}|\geq 2$ and $\mathcal{T}$ is a natural topology, every open set is rank-unbounded.
\end{myprop}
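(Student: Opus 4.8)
The plan is to argue by contradiction and reduce everything to Proposition~\ref{propInteriorEmptyDMCXno}. Suppose $U$ is a nonempty open set in $(\DMC_{\mathcal{X},\ast}^{(o)},\mathcal{T})$ that is rank-bounded, so that $U\subset\DMC_{\mathcal{X},[n]}^{(o)}$ for some $n\geq 1$ (the empty set is trivially rank-bounded, so the statement is really about nonempty open sets). First I would fix $m=n+1$, so that $1\leq n<m$. Recalling from the beginning of Section~\ref{secEquivSpace} that the canonical embeddings are mutually consistent, $\DMC_{\mathcal{X},[n]}^{(o)}$ is a (compact) subspace of $\DMC_{\mathcal{X},[m]}^{(o)}$, and hence $U\subset\DMC_{\mathcal{X},[n]}^{(o)}\subset\DMC_{\mathcal{X},[m]}^{(o)}$; in particular $U\cap\DMC_{\mathcal{X},[m]}^{(o)}=U$.

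Next I would invoke the naturality of $\mathcal{T}$: by definition it induces the quotient topology $\mathcal{T}_{\mathcal{X},[m]}^{(o)}$ on $\DMC_{\mathcal{X},[m]}^{(o)}$. Therefore $U=U\cap\DMC_{\mathcal{X},[m]}^{(o)}$ is open in $(\DMC_{\mathcal{X},[m]}^{(o)},\mathcal{T}_{\mathcal{X},[m]}^{(o)})$. But $U$ is also contained in $\DMC_{\mathcal{X},[n]}^{(o)}$, so $U$ is an open subset of $\DMC_{\mathcal{X},[m]}^{(o)}$ that is contained in $\DMC_{\mathcal{X},[n]}^{(o)}$; since the interior is the largest such open set, $U$ lies in the interior of $\DMC_{\mathcal{X},[n]}^{(o)}$ relative to $(\DMC_{\mathcal{X},[m]}^{(o)},\mathcal{T}_{\mathcal{X},[m]}^{(o)})$. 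By Proposition~\ref{propInteriorEmptyDMCXno}, that interior is empty because $|\mathcal{X}|\geq 2$ and $n<m$; hence $U=\o$, contradicting the choice of $U$, and the argument is complete.

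There is essentially no hard step once Proposition~\ref{propInteriorEmptyDMCXno} is available; the only point requiring a little care is the bookkeeping of identifications, namely that the copy of $\DMC_{\mathcal{X},[n]}^{(o)}$ sitting inside $\DMC_{\mathcal{X},\ast}^{(o)}$ is the very same copy sitting inside $\DMC_{\mathcal{X},[m]}^{(o)}$ to which Proposition~\ref{propInteriorEmptyDMCXno} applies — this is exactly the consistency of the canonical embeddings established at the start of Section~\ref{secEquivSpace}. The genuine content is hidden in Proposition~\ref{propInteriorEmptyDMCXno}: informally, any equivalence class in $\DMC_{\mathcal{X},[n]}^{(o)}$ can be approximated in $\mathcal{T}_{\mathcal{X},[m]}^{(o)}$ by channels of strictly larger rank, obtained by splitting one output letter of a representative into two slightly different ones, which is possible precisely when $|\mathcal{X}|\geq 2$; but since that proposition is already proved, we simply cite it.
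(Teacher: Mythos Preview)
Your proof is correct and follows essentially the same approach as the paper: assume a nonempty rank-bounded open set $U\subset\DMC_{\mathcal{X},[n]}^{(o)}$, use naturality to see that $U\cap\DMC_{\mathcal{X},[n+1]}^{(o)}$ is open in $\DMC_{\mathcal{X},[n+1]}^{(o)}$, and then invoke Proposition~\ref{propInteriorEmptyDMCXno} to force $U=\o$. Your write-up is slightly cleaner in that you observe $U\cap\DMC_{\mathcal{X},[n+1]}^{(o)}=U$ directly, but the substance is identical.
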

\begin{proof}
Assume to the contrary that there exists a non-empty open set $U\in \mathcal{T}$ such that $U\subset \DMC_{\mathcal{X},[n]}^{(o)}$ for some $n\geq 1$. $U\cap \DMC_{\mathcal{X},[n+1]}^{(o)}$ is open in $\DMC_{\mathcal{X},[n+1]}^{(o)}$ because $\mathcal{T}$ is natural. On the other hand, $U\cap \DMC_{\mathcal{X},[n+1]}^{(o)}\subset U\subset \DMC_{\mathcal{X},[n]}^{(o)}$. Proposition \ref{propInteriorEmptyDMCXno} now implies that $U\cap \DMC_{\mathcal{X},[n+1]}^{(o)}=\o$. Therefore,
$$\textstyle U=U\cap \DMC_{\mathcal{X},[n]}^{(o)}\subset U\cap \DMC_{\mathcal{X},[n+1]}^{(o)}=\o,$$ which is a contradiction.
\end{proof}

\begin{mycor}
\label{corInteriorEmptyDMCXno}
If $|\mathcal{X}|\geq 2$ and $\mathcal{T}$ is a natural topology, then for every $n\geq 1$, the interior of $\DMC_{\mathcal{X},[n]}^{(o)}$ in $(\DMC_{\mathcal{X},\ast}^{(o)},\mathcal{T})$ is empty.
\end{mycor}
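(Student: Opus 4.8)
The plan is to derive Corollary \ref{corInteriorEmptyDMCXno} directly from Proposition \ref{propNaturallyOpenUnbounded}. The interior of $\DMC_{\mathcal{X},[n]}^{(o)}$ in $(\DMC_{\mathcal{X},\ast}^{(o)},\mathcal{T})$ is, by definition, the largest $\mathcal{T}$-open subset contained in $\DMC_{\mathcal{X},[n]}^{(o)}$. So I would argue by contradiction: suppose this interior is non-empty. Then there is a non-empty open set $U\in\mathcal{T}$ with $U\subset\DMC_{\mathcal{X},[n]}^{(o)}$, which means $U$ is rank-bounded. This contradicts Proposition \ref{propNaturallyOpenUnbounded}, which says that under the hypothesis $|\mathcal{X}|\geq 2$ every non-empty open set in a natural topology is rank-unbounded. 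Hence the interior must be empty.

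Concretely, the proof is one short paragraph: let $U$ be the interior of $\DMC_{\mathcal{X},[n]}^{(o)}$ in $(\DMC_{\mathcal{X},\ast}^{(o)},\mathcal{T})$; then $U$ is open in $\mathcal{T}$ and $U\subset\DMC_{\mathcal{X},[n]}^{(o)}$, so $U$ is rank-bounded; by Proposition \ref{propNaturallyOpenUnbounded}, $U$ cannot be both non-empty and rank-bounded, so $U=\o$. There is essentially no obstacle here — all the real work (invoking Proposition \ref{propInteriorEmptyDMCXno} about the quotient topologies $\mathcal{T}_{\mathcal{X},[m]}^{(o)}$) was already done in proving Proposition \ref{propNaturallyOpenUnbounded}, and this corollary is merely a restatement of that proposition in the language of interiors.

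\begin{proof}
Let $U$ be the interior of $\DMC_{\mathcal{X},[n]}^{(o)}$ in $(\DMC_{\mathcal{X},\ast}^{(o)},\mathcal{T})$. Then $U$ is an open set in $\mathcal{T}$ satisfying $U\subset\DMC_{\mathcal{X},[n]}^{(o)}$, so $U$ is rank-bounded. By Proposition \ref{propNaturallyOpenUnbounded}, every non-empty open set in a natural topology is rank-unbounded when $|\mathcal{X}|\geq 2$. Therefore $U$ must be empty, i.e., the interior of $\DMC_{\mathcal{X},[n]}^{(o)}$ in $(\DMC_{\mathcal{X},\ast}^{(o)},\mathcal{T})$ is empty.
\end{proof}
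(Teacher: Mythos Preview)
Your proof is correct and matches the paper's intended approach: the corollary is stated immediately after Proposition~\ref{propNaturallyOpenUnbounded} without a separate proof precisely because it is the direct restatement you describe. There is nothing to add.
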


\begin{myprop}
\label{propNaturaIsNotBaire}
If $|\mathcal{X}|\geq 2$ and $\mathcal{T}$ is a Hausdorff natural topology, then $(\DMC_{\mathcal{X},\ast}^{(o)},\mathcal{T})$ is not a Baire space.
\end{myprop}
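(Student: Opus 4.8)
\emph{Proof proposal.} The plan is to exhibit $\DMC_{\mathcal{X},\ast}^{(o)}$ as a countable union of closed sets with empty interior and then read off a failure of the Baire property directly from the definition. Recall that we have already established $\DMC_{\mathcal{X},\ast}^{(o)}=\bigcup_{n\geq 1}\DMC_{\mathcal{X},[n]}^{(o)}$, a countable union, and that $\DMC_{\mathcal{X},\ast}^{(o)}\neq\o$.

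First I would show that for every $n\geq 1$, $\DMC_{\mathcal{X},[n]}^{(o)}$ is closed in $(\DMC_{\mathcal{X},\ast}^{(o)},\mathcal{T})$. Since $\mathcal{T}$ is natural, the topology it induces on $\DMC_{\mathcal{X},[n]}^{(o)}$ by relativization is $\mathcal{T}_{\mathcal{X},[n]}^{(o)}$, which is compact by Theorem \ref{theDMCXYo}. Hence $\DMC_{\mathcal{X},[n]}^{(o)}$ is a compact subset of the Hausdorff space $(\DMC_{\mathcal{X},\ast}^{(o)},\mathcal{T})$, and a compact subset of a Hausdorff space is closed.

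Next, for each $n\geq 1$ set $U_n=\DMC_{\mathcal{X},\ast}^{(o)}\setminus \DMC_{\mathcal{X},[n]}^{(o)}$. By the previous step each $U_n$ is open. Moreover each $U_n$ is dense: since $\cl(U_n)=\DMC_{\mathcal{X},\ast}^{(o)}\setminus \big(\DMC_{\mathcal{X},[n]}^{(o)}\big)^{\circ}$ and Corollary \ref{corInteriorEmptyDMCXno} gives $\big(\DMC_{\mathcal{X},[n]}^{(o)}\big)^{\circ}=\o$, we get $\cl(U_n)=\DMC_{\mathcal{X},\ast}^{(o)}$. Now $\bigcap_{n\geq 1}U_n=\DMC_{\mathcal{X},\ast}^{(o)}\setminus\bigcup_{n\geq 1}\DMC_{\mathcal{X},[n]}^{(o)}=\o$, which is not dense in the nonempty space $\DMC_{\mathcal{X},\ast}^{(o)}$. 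Thus the intersection of the countable family $(U_n)_{n\geq 1}$ of dense open sets fails to be dense, so by definition $(\DMC_{\mathcal{X},\ast}^{(o)},\mathcal{T})$ is not a Baire space.

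There is no real obstacle remaining in this argument: the substantive input is Corollary \ref{corInteriorEmptyDMCXno} (equivalently Proposition \ref{propInteriorEmptyDMCXno}), which has already been proved and is where the hypothesis $|\mathcal{X}|\geq 2$ is used. The Hausdorff hypothesis enters only through the step that the compact pieces $\DMC_{\mathcal{X},[n]}^{(o)}$ are closed, which is what makes the complements $U_n$ open; everything else is bookkeeping with the identity $\DMC_{\mathcal{X},\ast}^{(o)}=\bigcup_{n\geq 1}\DMC_{\mathcal{X},[n]}^{(o)}$.
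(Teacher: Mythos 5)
Your proposal is correct and follows exactly the same line of reasoning as the paper's own proof: both arguments observe that each $\DMC_{\mathcal{X},[n]}^{(o)}$ is compact (by naturality) hence closed (by the Hausdorff hypothesis), invoke Corollary \ref{corInteriorEmptyDMCXno} to get that each complement is dense and open, and note that the intersection of these complements is empty and hence fails to be dense. The only difference is cosmetic bookkeeping.
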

\begin{proof}
Fix $n\geq 1$. Since $\mathcal{T}$ is natural, $\DMC_{\mathcal{X},[n]}^{(o)}$ is a compact subset of $(\DMC_{\mathcal{X},\ast}^{(o)},\mathcal{T})$. But $\mathcal{T}$ is Hausdorff, so $\DMC_{\mathcal{X},[n]}^{(o)}$ is a closed subset of $(\DMC_{\mathcal{X},\ast}^{(o)},\mathcal{T})$. Therefore, $\DMC_{\mathcal{X},\ast}^{(o)}\setminus\DMC_{\mathcal{X},[n]}^{(o)}$ is open.

On the other hand, Corollary \ref{corInteriorEmptyDMCXno} shows that the interior of $\DMC_{\mathcal{X},[n]}^{(o)}$ in $(\DMC_{\mathcal{X},\ast}^{(o)},\mathcal{T})$ is empty. Therefore, $\DMC_{\mathcal{X},\ast}^{(o)}\setminus\DMC_{\mathcal{X},[n]}^{(o)}$ is dense in $(\DMC_{\mathcal{X},\ast}^{(o)},\mathcal{T})$.

Now since
$$\bigcap_{n\geq 1} {\textstyle\left(\DMC_{\mathcal{X},\ast}^{(o)}\setminus\DMC_{\mathcal{X},[n]}^{(o)}\right)}={\DMC}_{\mathcal{X},\ast}^{(o)}\setminus {\textstyle\left({\displaystyle\bigcup_{n\geq 1}}\DMC_{\mathcal{X},[n]}^{(o)}\right)}=\o,$$
and since $\DMC_{\mathcal{X},\ast}^{(o)}\setminus\DMC_{\mathcal{X},[n]}^{(o)}$ is open and dense in $(\DMC_{\mathcal{X},\ast}^{(o)},\mathcal{T})$ for every $n\geq 1$, we conclude that $(\DMC_{\mathcal{X},\ast}^{(o)},\mathcal{T})$ is not a Baire space.
\end{proof}

\begin{mycor}
\label{corNaturalNotComplete}
If $|\mathcal{X}|\geq 2$, no natural topology on $\DMC_{\mathcal{X},\ast}^{(o)}$ can be completely metrizable.
\end{mycor}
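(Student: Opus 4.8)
The plan is to derive this corollary directly from Proposition \ref{propNaturaIsNotBaire} together with the standard fact, recalled in the preliminaries, that every completely metrizable space is a Baire space. The argument is a short proof by contradiction, so no genuinely new work is needed beyond assembling facts already established in the excerpt.

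First I would suppose, for contradiction, that $\mathcal{T}$ is a natural topology on $\DMC_{\mathcal{X},\ast}^{(o)}$ that is completely metrizable. Since a completely metrizable topology is in particular metrizable, and since every metric topology is $T_4$ and hence Hausdorff, the topology $\mathcal{T}$ is Hausdorff. This is the only point where one must be slightly careful: Proposition \ref{propNaturaIsNotBaire} is stated under the hypothesis that $\mathcal{T}$ is a \emph{Hausdorff} natural topology, and complete metrizability supplies exactly this hypothesis for free.

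Next I would invoke Proposition \ref{propNaturaIsNotBaire}, whose hypotheses are now all met ($|\mathcal{X}|\geq 2$ and $\mathcal{T}$ Hausdorff natural), to conclude that $(\DMC_{\mathcal{X},\ast}^{(o)},\mathcal{T})$ is \emph{not} a Baire space. On the other hand, the list of facts in the subsection on Polish and Baire spaces records that every completely metrizable space \emph{is} Baire, so $(\DMC_{\mathcal{X},\ast}^{(o)},\mathcal{T})$ must be Baire. This contradiction shows that no natural topology can be completely metrizable when $|\mathcal{X}|\geq 2$.

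There is no real obstacle here; the substance of the argument lies entirely in Proposition \ref{propNaturaIsNotBaire} (and ultimately in Proposition \ref{propInteriorEmptyDMCXno}, via Corollary \ref{corInteriorEmptyDMCXno}), which exhibits the countable family of open dense sets $\DMC_{\mathcal{X},\ast}^{(o)}\setminus\DMC_{\mathcal{X},[n]}^{(o)}$ whose intersection is empty. The corollary is then just the observation that complete metrizability would force the Baire property, contradicting that failure. If one wanted, one could also remark in passing that the same reasoning rules out $\DMC_{\mathcal{X},\ast}^{(o)}$ being homeomorphic to any open subspace of a Polish space under a natural topology, since open subspaces of Baire spaces are Baire; but the stated corollary needs only the direct contradiction above.
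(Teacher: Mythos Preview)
Your proposal is correct and follows essentially the same approach as the paper: the paper's proof is the one-line observation that complete metrizability implies both Hausdorff and Baire, which contradicts Proposition~\ref{propNaturaIsNotBaire}. Your write-up simply unpacks this implication in more detail.
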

\begin{proof}
The corollary follows from Proposition \ref{propNaturaIsNotBaire} and the fact that every completely metrizable topology is both Hausdorff and Baire.
\end{proof}

\begin{myprop}
\label{propNaturalNotLocallyCompact}
If $|\mathcal{X}|\geq 2$ and $\mathcal{T}$ is a Hausdorff natural topology, then $(\DMC_{\mathcal{X},\ast}^{(o)},\mathcal{T})$ is not locally compact anywhere, i.e., for every $\hat{W}\in \DMC_{\mathcal{X},\ast}^{(o)}$, there is no compact neighborhood of $\hat{W}$ in $(\DMC_{\mathcal{X},\ast}^{(o)},\mathcal{T})$.
\end{myprop}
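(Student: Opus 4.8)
The plan is to argue by contradiction, running the argument of Proposition \ref{propNaturaIsNotBaire} inside a compact neighborhood. Suppose some $\hat{W}\in\DMC_{\mathcal{X},\ast}^{(o)}$ admits a compact neighborhood $K$. Since $\mathcal{T}$ is Hausdorff, $K$ with its relative topology is a compact Hausdorff space, hence a Baire space. Because $K$ is a neighborhood of $\hat{W}$, there is an open set containing $\hat{W}$ and contained in $K$, so the interior $V:=K^{\circ}$ of $K$ in $(\DMC_{\mathcal{X},\ast}^{(o)},\mathcal{T})$ is a nonempty open set containing $\hat{W}$; moreover $V\subseteq K$, so $V$ is an open subspace of the Baire space $K$ and is therefore itself a Baire space.

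Next I would reproduce, inside $V$, the open--dense decomposition of Proposition \ref{propNaturaIsNotBaire}. For each $n\geq 1$, the set $\DMC_{\mathcal{X},[n]}^{(o)}$ is compact (by naturality of $\mathcal{T}$), hence closed in $\DMC_{\mathcal{X},\ast}^{(o)}$ (by the Hausdorff property), so $D_n:=V\setminus\DMC_{\mathcal{X},[n]}^{(o)}$ is open in $V$. It is also dense in $V$: since $V$ is open in $\DMC_{\mathcal{X},\ast}^{(o)}$, any nonempty subset of $V$ that is open in $V$ is open in $\DMC_{\mathcal{X},\ast}^{(o)}$; but by Corollary \ref{corInteriorEmptyDMCXno} the set $\DMC_{\mathcal{X},[n]}^{(o)}$ has empty interior in $\DMC_{\mathcal{X},\ast}^{(o)}$, so it can contain no such subset, i.e. $V\cap\DMC_{\mathcal{X},[n]}^{(o)}$ has empty interior in $V$, which is exactly the statement that $D_n$ is dense in $V$.

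Finally, applying the Baire property of $V$ to the countable family $(D_n)_{n\geq 1}$ of dense open subsets, the intersection $\bigcap_{n\geq 1}D_n$ should be dense in $V$. But $\bigcap_{n\geq 1}D_n = V\setminus\bigcup_{n\geq 1}\DMC_{\mathcal{X},[n]}^{(o)} = V\setminus\DMC_{\mathcal{X},\ast}^{(o)} = \o$, which is not dense in the nonempty space $V$ -- a contradiction. Hence no $\hat{W}$ admits a compact neighborhood, which is the claim.

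I do not expect a genuinely difficult step here; the one point that needs care is the passage from $K$ to the open set $V=K^{\circ}$. One cannot run the argument on $K$ directly, because the ``empty interior'' conclusions of Corollary \ref{corInteriorEmptyDMCXno} concern interiors computed in $\DMC_{\mathcal{X},\ast}^{(o)}$, and only when $V$ is open in $\DMC_{\mathcal{X},\ast}^{(o)}$ do interiors (and hence density) computed relative to $V$ agree with those computed in $\DMC_{\mathcal{X},\ast}^{(o)}$. Combining this observation with the two cited facts -- every compact Hausdorff space is Baire, and every open subspace of a Baire space is Baire -- is what makes the contradiction go through.
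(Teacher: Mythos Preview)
Your proposal is correct and follows essentially the same approach as the paper's proof: assume a compact neighborhood $K$, pass to an open set $U\subset K$ containing $\hat{W}$ (you take $U=K^{\circ}$, the paper just picks any such $U$), note that $U$ is Baire as an open subset of the compact Hausdorff space $K$, and derive a contradiction from $\bigcap_{n\geq 1}(U\setminus\DMC_{\mathcal{X},[n]}^{(o)})=\o$ via Corollary~\ref{corInteriorEmptyDMCXno}. Your explicit remark about why one must work in an \emph{open} $V$ (so that relative interiors agree with ambient ones) is a nice clarification that the paper leaves implicit.
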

\begin{proof}
Assume to the contrary that there exists a compact neighborhood $K$ of $\hat{W}$. There exists an open set $U$ such that $\hat{W}\in U\subset K$.

Since $K$ is compact and Hausdorff, it is a Baire space. Moreover, since $U$ is an open subset of $K$, $U$ is also a Baire space.

Fix $n\geq 1$. Since the interior of $\DMC_{\mathcal{X},[n]}^{(o)}$ in $(\DMC_{\mathcal{X},\ast}^{(o)},\mathcal{T})$ is empty, the interior of $U\cap\DMC_{\mathcal{X},[n]}^{(o)}$ in $U$ is also empty. Therefore, $U\setminus\DMC_{\mathcal{X},[n]}^{(o)}$ is dense in $U$. On the other hand, since $\mathcal{T}$ is natural, $\DMC_{\mathcal{X},[n]}^{(o)}$ is compact which implies that it is closed because $\mathcal{T}$ is Hausdorff. Therefore, $U\setminus\DMC_{\mathcal{X},[n]}^{(o)}$ is open in $U$. Now since
$$\bigcap_{n\geq 1} {\textstyle\left(U\setminus\DMC_{\mathcal{X},[n]}^{(o)}\right)}=U\setminus {\textstyle\left({\displaystyle\bigcup_{n\geq 1}}\DMC_{\mathcal{X},[n]}^{(o)}\right)}=\o,$$
and since $U\setminus\DMC_{\mathcal{X},[n]}^{(o)}$ is open and dense in $U$ for every $n\geq 1$, $U$ is not Baire, which is a contradiction. Therefore, there is no compact neighborhood of $\hat{W}$ in $(\DMC_{\mathcal{X},\ast}^{(o)},\mathcal{T})$.
\end{proof}

\section{Strong topology on $\DMC_{\mathcal{X},\ast}^{(o)}$}

\label{subsecStrongTop}

The first natural topology that we study is the \emph{strong topology} $\mathcal{T}_{s,\mathcal{X},\ast}^{(o)}$ on $\DMC_{\mathcal{X},\ast}^{(o)}$, which is the finest natural topology.

Since the spaces $\{\DMC_{\mathcal{X},[n]}\}_{n\geq 1}$ are disjoint and since there is no a priori way to (topologically) compare channels in $\DMC_{\mathcal{X},[n]}$ with channels in $\DMC_{\mathcal{X},[n']}$ for $n\neq n'$, the ``most natural" topology that we can define on $\DMC_{\mathcal{X},\ast}$ is the disjoint union topology $\mathcal{T}_{s,\mathcal{X},\ast}:=\displaystyle\bigoplus_{n\geq 1}\mathcal{T}_{\mathcal{X},[n]}$. Clearly, the space $(\DMC_{\mathcal{X},\ast},\mathcal{T}_{s,\mathcal{X},\ast})$ is disconnected. Moreover, $\mathcal{T}_{s,\mathcal{X},\ast}$ is metrizable because it is the disjoint union of metrizable spaces. It is also $\sigma$-compact because it is the union of countably many compact spaces.

We added the subscript $s$ to emphasize the fact that $\mathcal{T}_{s,\mathcal{X},\ast}$ is a strong topology (remember that the disjoint union topology is the \emph{finest} topology that makes the canonical injections continuous).

\begin{mydef}
We define the strong topology $\mathcal{T}_{s,\mathcal{X},\ast}^{(o)}$ on $\DMC_{\mathcal{X},\ast}^{(o)}$ as the quotient topology $\mathcal{T}_{s,\mathcal{X},\ast}/R_{\mathcal{X},\ast}^{(o)}$.

We call open and closed sets in $(\DMC_{\mathcal{X},\ast}^{(o)},\mathcal{T}_{s,\mathcal{X},\ast}^{(o)})$ as strongly open and strongly closed sets respectively.
\end{mydef}

Let $\Proj:\DMC_{\mathcal{X},\ast}\rightarrow\DMC_{\mathcal{X},\ast}^{(o)}$ be the projection onto the $R_{\mathcal{X},\ast}^{(o)}$-equivalence classes, and for every $n\geq 1$ let $\Proj_n:\DMC_{\mathcal{X},[n]}\rightarrow\DMC_{\mathcal{X},[n]}^{(o)}$ be the projection onto the $R_{\mathcal{X},[n]}^{(o)}$-equivalence classes. Due to the identifications that we made in Section \ref{secEquivSpace}, we have $\Proj(W)=\Proj_n(W)$ for every $W\in\DMC_{\mathcal{X},[n]}$. Therefore, for every $U\subset \DMC_{\mathcal{X},\ast}^{(o)}$, we have
$$\textstyle\Proj^{-1}(U)={\displaystyle\coprod_{n\geq 1}}\Proj_n^{-1}(U\cap\DMC_{\mathcal{X},[n]}^{(o)}).$$
Hence,
\begin{align*}
\textstyle U\in\mathcal{T}_{s,\mathcal{X},\ast}^{(o)}\;\;&\stackrel{(a)}{\Leftrightarrow}\;\;\textstyle\Proj^{-1}(U)\in\mathcal{T}_{s,\mathcal{X},\ast}\\
&\textstyle\stackrel{(b)}{\Leftrightarrow}\;\;\Proj^{-1}(U)\cap\DMC_{\mathcal{X},[n]} \in\mathcal{T}_{\mathcal{X},[n]},\;\;\forall n\geq 1\\
&\textstyle\Leftrightarrow\;\;\left({\displaystyle\coprod_{n'\geq 1}}\Proj_{n'}^{-1}(U\cap\DMC_{\mathcal{X},[n']}^{(o)})\right)\cap\DMC_{\mathcal{X},[n]} \in\mathcal{T}_{\mathcal{X},[n]},\;\;\forall n\geq 1\\
&\textstyle\Leftrightarrow\;\;\Proj_n^{-1}(U\cap\DMC_{\mathcal{X},[n]}^{(o)}) \in\mathcal{T}_{\mathcal{X},[n]},\;\;\forall n\geq 1\\
&\textstyle\stackrel{(c)}{\Leftrightarrow}\;\;U\cap\DMC_{\mathcal{X},[n]}^{(o)} \in\mathcal{T}_{\mathcal{X},[n]}^{(o)},\;\;\forall n\geq 1,
\end{align*}
where (a) and (c) follow from the properties of the quotient topology, and (b) follows from the properties of the disjoint union topology.

We conclude that $U\subset \DMC_{\mathcal{X},\ast}^{(o)}$ is strongly open in $\DMC_{\mathcal{X},\ast}^{(o)}$ if and only if $U\cap \DMC_{\mathcal{X},[n]}^{(o)}$ is open in $\DMC_{\mathcal{X},[n]}^{(o)}$ for every $n\geq 1$. This shows that the topology on $\DMC_{\mathcal{X},[n]}^{(o)}$ that is inherited from $(\DMC_{\mathcal{X},\ast}^{(o)},\mathcal{T}_{s,\mathcal{X},\ast}^{(o)})$ is exactly $\mathcal{T}_{\mathcal{X},[n]}^{(o)}$. Therefore, $\mathcal{T}_{s,\mathcal{X},\ast}^{(o)}$ is a natural topology. On the other hand, if $\mathcal{T}$ is an arbitrary natural topology and $U\in\mathcal{T}$, then $U\cap\DMC_{\mathcal{X},[n]}^{(o)}$ is open in $\DMC_{\mathcal{X},[n]}^{(o)}$ for every $n\geq 1$, so $U\in\mathcal{T}_{s,\mathcal{X},\ast}^{(o)}$. We conclude that $\mathcal{T}_{s,\mathcal{X},\ast}^{(o)}$ is the finest natural topology.

\vspace*{3mm}

We can also characterize the strongly closed subsets of $\DMC_{\mathcal{X},\ast}^{(o)}$ in terms of the closed sets of the $\DMC_{\mathcal{X},[n]}^{(o)}$ spaces:
\begin{align*}
\textstyle F\;\text{is strongly closed in}\;\DMC_{\mathcal{X},\ast}^{(o)}\;\;&\Leftrightarrow\;\;\textstyle\DMC_{\mathcal{X},\ast}^{(o)}\setminus F\;\text{is strongly open in}\;\textstyle\DMC_{\mathcal{X},\ast}^{(o)}\\
&\textstyle\Leftrightarrow\;\;\left(\DMC_{\mathcal{X},\ast}^{(o)}\setminus F\right)\cap\DMC_{\mathcal{X},[n]}^{(o)}\;\text{is open in}\;\DMC_{\mathcal{X},[n]}^{(o)},\;\;\forall n\geq 1\\
&\textstyle\Leftrightarrow\;\;\DMC_{\mathcal{X},[n]}^{(o)}\setminus \left(F\cap\DMC_{\mathcal{X},[n]}^{(o)}\right)\;\text{is open in}\;\DMC_{\mathcal{X},[n]}^{(o)},\;\;\forall n\geq 1\\
&\textstyle\Leftrightarrow\;\;F\cap\DMC_{\mathcal{X},[n]}^{(o)}\;\text{is closed in}\;\DMC_{\mathcal{X},[n]}^{(o)},\;\;\forall n\geq 1.
\end{align*}

Since $\DMC_{\mathcal{X},[n]}^{(o)}$ is metrizable for every $n\geq 1$, it is also normal. We can use this fact to prove that the strong topology on $\DMC_{\mathcal{X},\ast}^{(o)}$ is normal:

\begin{mylem}
\label{lemDMCXoNorm}
$(\DMC_{\mathcal{X},\ast}^{(o)},\mathcal{T}_{s,\mathcal{X},\ast}^{(o)})$ is normal.
\end{mylem}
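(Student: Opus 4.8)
The plan is to establish normality directly from the definition: given two disjoint strongly closed sets $A$ and $B$ in $\DMC_{\mathcal{X},\ast}^{(o)}$, I must build disjoint strongly open sets $U \supset A$ and $V \supset B$. The key tool is the characterization just derived: a set is strongly closed iff its intersection with each $\DMC_{\mathcal{X},[n]}^{(o)}$ is closed there, and strongly open iff its intersection with each $\DMC_{\mathcal{X},[n]}^{(o)}$ is open there. Since each $\DMC_{\mathcal{X},[n]}^{(o)}$ is metrizable, hence normal, I can separate $A \cap \DMC_{\mathcal{X},[n]}^{(o)}$ from $B \cap \DMC_{\mathcal{X},[n]}^{(o)}$ inside $\DMC_{\mathcal{X},[n]}^{(o)}$ for each $n$. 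The difficulty is that these separations must be made \emph{compatibly} across $n$, since $\DMC_{\mathcal{X},[n]}^{(o)} \subset \DMC_{\mathcal{X},[n+1]}^{(o)}$ and the separating sets at level $n+1$ restrict to something at level $n$ that need not match what was chosen at level $n$.

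**Main steps.** First I would recall that each $\DMC_{\mathcal{X},[n]}^{(o)}$ is compact metrizable, hence normal, and also that a closed subspace of a normal space is normal, so each closed subset $A \cap \DMC_{\mathcal{X},[n]}^{(o)}$, $B \cap \DMC_{\mathcal{X},[n]}^{(o)}$ is well-behaved. The construction is then by induction on $n$: having separated $A$ from $B$ within $\DMC_{\mathcal{X},[n]}^{(o)}$ by open sets $U_n, V_n$ with $\cl(U_n) \cap \cl(V_n) = \o$ (using the strengthened form of normality quoted in the Preliminaries, that disjoint closed sets in a normal space have disjoint \emph{closed} neighborhoods), I would extend to level $n+1$. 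At stage $n+1$ the closed set to be put inside an open neighborhood is $(A \cap \DMC_{\mathcal{X},[n+1]}^{(o)}) \cup \cl_{n}(U_n)$ — here $\cl_n(U_n)$ is closed in $\DMC_{\mathcal{X},[n]}^{(o)}$ which is closed in $\DMC_{\mathcal{X},[n+1]}^{(o)}$, so this union is closed in $\DMC_{\mathcal{X},[n+1]}^{(o)}$ — and similarly $(B \cap \DMC_{\mathcal{X},[n+1]}^{(o)}) \cup \cl_n(V_n)$ on the other side; these two closed sets are disjoint because $\cl_n(U_n) \cap \cl_n(V_n) = \o$, $A$ and $B$ are disjoint, and $A \cap \DMC_{\mathcal{X},[n]}^{(o)} \subset U_n$, $B \cap \DMC_{\mathcal{X},[n]}^{(o)} \subset V_n$. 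Normality of $\DMC_{\mathcal{X},[n+1]}^{(o)}$ gives open sets $U_{n+1} \supset (A \cap \DMC_{\mathcal{X},[n+1]}^{(o)}) \cup \cl_n(U_n)$ and $V_{n+1} \supset (B \cap \DMC_{\mathcal{X},[n+1]}^{(o)}) \cup \cl_n(V_n)$ with disjoint closures, and crucially $U_{n+1} \cap \DMC_{\mathcal{X},[n]}^{(o)} \supset U_n$ (and likewise for $V$), so the sets are nested.

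**Conclusion.** Setting $U = \bigcup_{n \geq 1} U_n$ and $V = \bigcup_{n \geq 1} V_n$, I claim $U$ and $V$ are the desired strongly open sets. For strong openness: $U \cap \DMC_{\mathcal{X},[m]}^{(o)} = \bigcup_{n \geq m} (U_n \cap \DMC_{\mathcal{X},[m]}^{(o)})$, and because the $U_n$ are nested over $\DMC_{\mathcal{X},[m]}^{(o)}$ this equals $\bigcup_{n \geq m} U_n \cap \DMC_{\mathcal{X},[m]}^{(o)}$, a union of sets each open in $\DMC_{\mathcal{X},[m]}^{(o)}$, hence open there; by the characterization, $U$ is strongly open (same for $V$). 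We have $A \subset U$ since $A = \bigcup_n (A \cap \DMC_{\mathcal{X},[n]}^{(o)}) \subset \bigcup_n U_n = U$, and similarly $B \subset V$. Finally $U \cap V = \o$: any point lies in some $\DMC_{\mathcal{X},[m]}^{(o)}$, and if it were in both $U$ and $V$ it would lie in $U_n \cap V_{n'} \cap \DMC_{\mathcal{X},[m]}^{(o)}$ for some $n, n'$; taking $k = \max(n, n', m)$, nestedness puts the point in $U_k \cap V_k$, contradicting $\cl(U_k) \cap \cl(V_k) = \o$. The main obstacle, as anticipated, is precisely arranging the nestedness $U_n \subset U_{n+1}$ across levels, which is why the induction folds the \emph{closure} $\cl_n(U_n)$ (not just $U_n$) into the closed set being separated at the next stage — using the disjoint-closed-neighborhoods strengthening of normality to keep the closures disjoint so the inductive hypothesis is preserved. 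Together with $T_1$ (which is inherited since each point lies in some metrizable, hence $T_1$, subspace $\DMC_{\mathcal{X},[n]}^{(o)}$, and singletons closed there are closed in every $\DMC_{\mathcal{X},[m]}^{(o)}$, $m\geq n$, hence strongly closed), this would also give the $T_4$ assertion mentioned in the abstract, though the lemma as stated only claims normality.
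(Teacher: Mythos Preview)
Your proposal is correct and follows essentially the same approach as the paper's proof: an inductive construction of nested separating open sets $U_n,V_n$ in each $\DMC_{\mathcal{X},[n]}^{(o)}$, carrying along closed neighborhoods (your $\cl_n(U_n)$ plays the role of the paper's $K_n$) so that the induction step can absorb the previous level and preserve disjointness, then taking $U=\bigcup_n U_n$ and $V=\bigcup_n V_n$. The verification of strong openness via the characterization of $\mathcal{T}_{s,\mathcal{X},\ast}^{(o)}$, containment of $A,B$, and disjointness of $U,V$ match the paper's argument essentially line for line.
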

\begin{proof}
See Appendix \ref{appDMCXoNorm}.
\end{proof}

\vspace*{3mm}

The following theorem shows that the strong topology satisfies many desirable properties.

\begin{mythe}
\label{theDMCXo}
$(\DMC_{\mathcal{X},\ast}^{(o)},\mathcal{T}_{s,\mathcal{X},\ast}^{(o)})$ is a compactly generated, sequential and $T_4$ space.
\end{mythe}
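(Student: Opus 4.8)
The plan is to establish the three properties of $(\DMC_{\mathcal{X},\ast}^{(o)},\mathcal{T}_{s,\mathcal{X},\ast}^{(o)})$ separately, drawing on the structural description of the strong topology as a quotient of the disjoint union $\DMC_{\mathcal{X},\ast}=\coprod_{n\geq 1}\DMC_{\mathcal{X},[n]}$ and on the facts collected in the preliminaries. First I would dispatch $T_4$: the space is normal by Lemma~\ref{lemDMCXoNorm}, so it only remains to check $T_1$, i.e., that singletons $\{\hat{W}\}$ are closed. By the characterization of strongly closed sets derived just above the theorem, $\{\hat{W}\}$ is strongly closed iff $\{\hat{W}\}\cap\DMC_{\mathcal{X},[n]}^{(o)}$ is closed in $\DMC_{\mathcal{X},[n]}^{(o)}$ for every $n$; each such intersection is either empty or the singleton $\{\hat{W}\}$, and singletons are closed in the metrizable (hence $T_1$) spaces $\DMC_{\mathcal{X},[n]}^{(o)}$. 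Hence $(\DMC_{\mathcal{X},\ast}^{(o)},\mathcal{T}_{s,\mathcal{X},\ast}^{(o)})$ is $T_1$ and normal, i.e., $T_4$.

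Next, sequentiality: the disjoint union $\mathcal{T}_{s,\mathcal{X},\ast}$ is metrizable (stated in Section~\ref{subsecStrongTop}), hence first-countable, hence sequential; and the preliminaries record that the quotient of a sequential space is sequential. Since $\mathcal{T}_{s,\mathcal{X},\ast}^{(o)}$ is by definition the quotient $\mathcal{T}_{s,\mathcal{X},\ast}/R_{\mathcal{X},\ast}^{(o)}$, it is sequential. For compact generation, I would invoke the fact (from Section~\ref{subsecCompactlyGenerated}) that a Hausdorff quotient of a compactly generated space is compactly generated. The space $(\DMC_{\mathcal{X},\ast},\mathcal{T}_{s,\mathcal{X},\ast})$ is metrizable, hence compactly generated; and its quotient $(\DMC_{\mathcal{X},\ast}^{(o)},\mathcal{T}_{s,\mathcal{X},\ast}^{(o)})$ is Hausdorff because it is $T_4$ (just proved) and $T_4\Rightarrow T_2$. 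Therefore the quotient is compactly generated.

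The main subtlety to handle carefully is the $T_2$/Hausdorff prerequisite for the compact-generation argument: one must be sure the $T_4$ claim does not itself secretly rely on compact generation, which it does not here — normality comes from Lemma~\ref{lemDMCXoNorm} (proved in the appendix via the normality of the pieces $\DMC_{\mathcal{X},[n]}^{(o)}$ and the disjoint-union/quotient structure), and $T_1$ comes from the elementary singleton argument above. So the logical order is: prove $T_1$; combine with Lemma~\ref{lemDMCXoNorm} to get $T_4$, hence Hausdorff; then deduce compact generation from the metrizability of $\mathcal{T}_{s,\mathcal{X},\ast}$ plus the Hausdorff-quotient fact; and separately deduce sequentiality from the quotient-of-sequential fact. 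I expect no genuine obstacle — every ingredient is either in the preliminaries or in Lemma~\ref{lemDMCXoNorm} — the only care needed is keeping the dependency chain acyclic.
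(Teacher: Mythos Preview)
Your proposal is correct and follows essentially the same approach as the paper's own proof: establish $T_1$ via the singleton-intersection argument, combine with Lemma~\ref{lemDMCXoNorm} for $T_4$, then use metrizability of the disjoint union to get sequentiality (quotient of sequential) and compact generation (Hausdorff quotient of compactly generated). The only difference is the order of presentation---the paper does sequential first, then $T_4$, then compactly generated---but the logical content and dependencies are identical.
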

\begin{proof}
Since $(\DMC_{\mathcal{X},\ast},\mathcal{T}_{s,\mathcal{X},\ast})$ is metrizable, it is sequential. Therefore, $(\DMC_{\mathcal{X},\ast}^{(o)},\mathcal{T}_{s,\mathcal{X},\ast}^{(o)})$, which is the quotient of a sequential space, is sequential.

Let us now show that $\DMC_{\mathcal{X},\ast}^{(o)}$ is $T_4$. Fix $\hat{W}\in\DMC_{\mathcal{X},\ast}^{(o)}$. For every $n\geq 1$, $\{\hat{W}\}\cap \DMC_{\mathcal{X},[n]}^{(o)}$ is either $\{\hat{W}\}$ or $\o$ depending on whether $\hat{W}\in \DMC_{\mathcal{X},[n]}^{(o)}$ or not. Since $\DMC_{\mathcal{X},[n]}^{(o)}$ is metrizable, it is $T_1$ and so singletons are closed in $\DMC_{\mathcal{X},[n]}^{(o)}$. We conclude that in all cases, $\{\hat{W}\}\cap \DMC_{\mathcal{X},[n]}^{(o)}$ is closed in $\DMC_{\mathcal{X},[n]}^{(o)}$ for every $n\geq 1$. Therefore, $\{\hat{W}\}$ is strongly closed in $\DMC_{\mathcal{X},\ast}^{(o)}$. This shows that $(\DMC_{\mathcal{X},\ast}^{(o)},\mathcal{T}_{s,\mathcal{X},\ast}^{(o)})$ is $T_1$. On the other hand, Lemma \ref{lemDMCXoNorm} shows that $(\DMC_{\mathcal{X},\ast}^{(o)},\mathcal{T}_{s,\mathcal{X},\ast}^{(o)})$ is normal. This means that $(\DMC_{\mathcal{X},\ast}^{(o)},\mathcal{T}_{s,\mathcal{X},\ast}^{(o)})$ is $T_4$, which implies that it is Hausdorff.

Now since $(\DMC_{\mathcal{X},\ast},\mathcal{T}_{s,\mathcal{X},\ast})$ is metrizable, it is compactly generated. On the other hand, the quotient space $(\DMC_{\mathcal{X},\ast}^{(o)},\mathcal{T}_{s,\mathcal{X},\ast}^{(o)})$ was shown to be Hausdorff. We conclude that $(\DMC_{\mathcal{X},\ast}^{(o)},\mathcal{T}_{s,\mathcal{X},\ast}^{(o)})$ is compactly generated.
\end{proof}

\begin{mycor}
If $|\mathcal{X}|\geq 2$, $(\DMC_{\mathcal{X},\ast}^{(o)},\mathcal{T}_{s,\mathcal{X},\ast}^{(o)})$ is not locally compact anywhere.
\end{mycor}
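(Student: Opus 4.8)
The plan is to derive this directly from Proposition \ref{propNaturalNotLocallyCompact}, which states that any \emph{Hausdorff natural} topology on $\DMC_{\mathcal{X},\ast}^{(o)}$ fails to be locally compact at every point when $|\mathcal{X}|\geq 2$. So the only work is to check that the strong topology $\mathcal{T}_{s,\mathcal{X},\ast}^{(o)}$ meets both hypotheses. First, naturality: this was already established in Section \ref{subsecStrongTop}, where we showed that the topology inherited by $\DMC_{\mathcal{X},[n]}^{(o)}$ from $(\DMC_{\mathcal{X},\ast}^{(o)},\mathcal{T}_{s,\mathcal{X},\ast}^{(o)})$ is exactly $\mathcal{T}_{\mathcal{X},[n]}^{(o)}$ for every $n\geq 1$ (indeed $\mathcal{T}_{s,\mathcal{X},\ast}^{(o)}$ is the \emph{finest} natural topology). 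Second, the Hausdorff property: Theorem \ref{theDMCXo} shows that $(\DMC_{\mathcal{X},\ast}^{(o)},\mathcal{T}_{s,\mathcal{X},\ast}^{(o)})$ is $T_4$, and $T_4\Rightarrow T_2$.

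With both conditions verified, Proposition \ref{propNaturalNotLocallyCompact} applies verbatim and yields that for every $\hat{W}\in\DMC_{\mathcal{X},\ast}^{(o)}$ there is no compact neighborhood of $\hat{W}$ in $(\DMC_{\mathcal{X},\ast}^{(o)},\mathcal{T}_{s,\mathcal{X},\ast}^{(o)})$, which is precisely the assertion. Since this is a straightforward specialization of an already proved proposition, there is no real obstacle; the only thing to be careful about is making sure the two hypotheses of Proposition \ref{propNaturalNotLocallyCompact} are explicitly invoked (naturality and Hausdorffness), both of which are consequences of material established earlier in this section.
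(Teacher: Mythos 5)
Your proposal is correct and follows exactly the same route as the paper: apply Proposition \ref{propNaturalNotLocallyCompact}, noting that $\mathcal{T}_{s,\mathcal{X},\ast}^{(o)}$ is natural (established in Section \ref{subsecStrongTop}) and Hausdorff (a consequence of being $T_4$ via Theorem \ref{theDMCXo}). Nothing else to add.
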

\begin{proof}
Since $\mathcal{T}_{s,\mathcal{X},\ast}^{(o)}$ is a natural Hausdorff topology, Proposition \ref{propNaturalNotLocallyCompact} implies that $\mathcal{T}_{s,\mathcal{X},\ast}^{(o)}$ is not locally compact anywhere.
\end{proof}

\vspace*{3mm}

Although $(\DMC_{\mathcal{X},\ast},\mathcal{T}_{s,\mathcal{X},\ast})$ is second-countable (because it is a $\sigma$-compact metrizable space), the quotient space $(\DMC_{\mathcal{X},\ast}^{(o)},\mathcal{T}_{s,\mathcal{X},\ast}^{(o)})$ is not second-countable. In fact, we will show later that $(\DMC_{\mathcal{X},\ast}^{(o)},\mathcal{T}_{s,\mathcal{X},\ast}^{(o)})$ fails to be first-countable (and hence it is not metrizable). This is one manifestation of the strength of the topology $\mathcal{T}_{s,\mathcal{X},\ast}^{(o)}$. In order to show that $(\DMC_{\mathcal{X},\ast}^{(o)},\mathcal{T}_{s,\mathcal{X},\ast}^{(o)})$ is not first-countable, we need to characterize the converging sequences in $(\DMC_{\mathcal{X},\ast}^{(o)},\mathcal{T}_{s,\mathcal{X},\ast}^{(o)})$.

A sequence $(\hat{W}_n)_{n\geq 1}$ in $\DMC_{\mathcal{X},\ast}^{(o)}$ is said to be \emph{rank-bounded} if $\rank(\hat{W}_n)$ is bounded. $(\hat{W}_n)_{n\geq 1}$ is \emph{rank-unbounded} if it is not bounded.

The following proposition shows that every rank-unbounded sequence does not converge in $(\DMC_{\mathcal{X},\ast}^{(o)},\mathcal{T}_{s,\mathcal{X},\ast}^{(o)})$.
\begin{myprop}
\label{propCharacConvSeq}
A sequence $(\hat{W}_n)_{n\geq 0}$ converges in $(\DMC_{\mathcal{X},\ast}^{(o)},\mathcal{T}_{s,\mathcal{X},\ast}^{(o)})$ if and only if there exists $m\geq 1$ such that $\hat{W}_n\in \DMC_{\mathcal{X},[m]}^{(o)}$ for every $n\geq 0$, and $(\hat{W}_n)_{n\geq 0}$ converges in $(\DMC_{\mathcal{X},[m]}^{(o)},\mathcal{T}_{\mathcal{X},[m]}^{(o)})$.
\end{myprop}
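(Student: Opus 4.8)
The plan is to prove both directions of the equivalence. The ``if'' direction is essentially immediate: if all $\hat{W}_n$ lie in $\DMC_{\mathcal{X},[m]}^{(o)}$ and the sequence converges there in $\mathcal{T}_{\mathcal{X},[m]}^{(o)}$, then since $\mathcal{T}_{s,\mathcal{X},\ast}^{(o)}$ is natural (as established just above the proposition), it induces $\mathcal{T}_{\mathcal{X},[m]}^{(o)}$ on $\DMC_{\mathcal{X},[m]}^{(o)}$, and convergence in a subspace implies convergence in the whole space. So the real content is the ``only if'' direction, which by Proposition~\ref{propCharacConvSeq}'s own phrasing is equivalent to the statement that every \emph{rank-unbounded} sequence fails to converge. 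I would prove the contrapositive: suppose $(\hat{W}_n)_{n\geq 0}$ converges to some $\hat{W}\in\DMC_{\mathcal{X},\ast}^{(o)}$; I must show the sequence is rank-bounded, i.e.\ lies in some $\DMC_{\mathcal{X},[m]}^{(o)}$, and then that it converges within that subspace.

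The key idea is to exploit the characterization of strongly open (equivalently strongly closed) sets: a set $F$ is strongly closed iff $F\cap\DMC_{\mathcal{X},[k]}^{(o)}$ is closed in $\DMC_{\mathcal{X},[k]}^{(o)}$ for every $k\geq 1$. I would argue by contradiction: assume $(\hat{W}_n)$ is rank-unbounded, so after passing to a subsequence (which also must converge to $\hat{W}$) we may assume $\rank(\hat{W}_n)$ is strictly increasing; in particular the $\hat{W}_n$ are pairwise distinct and, choosing the subsequence carefully, we may also assume $\hat{W}\neq\hat{W}_n$ for all $n$ and that at most one $\hat{W}_n$ lies in any given $\DMC_{\mathcal{X},[k]}^{(o)}$. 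Now consider the set $A=\{\hat{W}_n:n\geq 0\}$. For every $k\geq 1$, $A\cap\DMC_{\mathcal{X},[k]}^{(o)}$ is finite, hence closed in $\DMC_{\mathcal{X},[k]}^{(o)}$ (since $\DMC_{\mathcal{X},[k]}^{(o)}$ is metrizable, hence $T_1$); therefore $A$ is strongly closed. The same reasoning applies to any subset of $A$, so in fact every subset of $A$ is strongly closed, which means $A$ is a strongly closed discrete subspace. But a convergent sequence $(\hat{W}_n)\to\hat{W}$ with all terms distinct from the limit forces $\hat{W}\in\cl(A)=A$ (since $A$ is closed) and every neighborhood of $\hat{W}$ to contain infinitely many $\hat{W}_n$ — contradicting that $A$ is discrete (the singleton $A\setminus\{\text{one point}\}$ being strongly closed gives an open neighborhood of $\hat{W}$ omitting all but one term). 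This contradiction shows the sequence must be rank-bounded, say $\hat{W}_n\in\DMC_{\mathcal{X},[m]}^{(o)}$ for all $n$ (take $m=\max_n\rank(\hat{W}_n)$).

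It remains to show that, once we know $(\hat{W}_n)\subset\DMC_{\mathcal{X},[m]}^{(o)}$, the sequence converges \emph{in} $(\DMC_{\mathcal{X},[m]}^{(o)},\mathcal{T}_{\mathcal{X},[m]}^{(o)})$. First, the limit $\hat{W}$ itself must lie in $\DMC_{\mathcal{X},[m]}^{(o)}$: since $\mathcal{T}_{s,\mathcal{X},\ast}^{(o)}$ is Hausdorff (Theorem~\ref{theDMCXo}) and $\DMC_{\mathcal{X},[m]}^{(o)}$ is compact (because $\mathcal{T}_{s,\mathcal{X},\ast}^{(o)}$ is natural), $\DMC_{\mathcal{X},[m]}^{(o)}$ is strongly closed, so it contains the limit of any of its sequences. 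Then, because $\mathcal{T}_{s,\mathcal{X},\ast}^{(o)}$ induces exactly $\mathcal{T}_{\mathcal{X},[m]}^{(o)}$ on $\DMC_{\mathcal{X},[m]}^{(o)}$ (naturality), convergence of $(\hat{W}_n)$ to $\hat{W}$ in the ambient space is equivalent to convergence in the subspace. This completes the ``only if'' direction.

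The step I expect to be the main obstacle is the argument that a rank-unbounded sequence cannot converge — specifically, being careful that the set of sequence terms is strongly closed and strongly discrete. The subtlety is that one really needs \emph{every} subset of $\{\hat{W}_n\}$ to be strongly closed, and this requires that each $\DMC_{\mathcal{X},[k]}^{(o)}$ meets the sequence in only finitely many terms; since $\rank(\hat{W}_n)\to\infty$ along the chosen subsequence and $\DMC_{\mathcal{X},[k]}^{(o)}=\{\hat{V}:\rank(\hat{V})\leq k\}$, this is automatic, but it should be stated explicitly. Everything else is a routine application of the naturality of $\mathcal{T}_{s,\mathcal{X},\ast}^{(o)}$, the Hausdorff property from Theorem~\ref{theDMCXo}, and the compactness of the $\DMC_{\mathcal{X},[n]}^{(o)}$ spaces.
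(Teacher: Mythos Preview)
Your proposal is correct and follows essentially the same route as the paper: both pass to a subsequence of strictly increasing rank, observe that its set of terms meets each $\DMC_{\mathcal{X},[k]}^{(o)}$ in a finite (hence closed) set, conclude the set is strongly closed, and derive a contradiction with convergence. Two minor points to clean up: ``at most one $\hat{W}_n$ lies in any given $\DMC_{\mathcal{X},[k]}^{(o)}$'' should read ``at most finitely many'' (strictly increasing ranks $r_0<r_1<\cdots$ still put several terms in $\DMC_{\mathcal{X},[k]}^{(o)}$ when $k$ is large), though you only use finiteness anyway; and once you have arranged $\hat{W}\notin A$ and shown $A$ strongly closed, the contradiction $\hat{W}\in\cl(A)=A$ is already immediate, so the discreteness detour is superfluous.
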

\begin{proof}
Assume that a sequence $(\hat{W}_n)_{n\geq 0}$ in $\DMC_{\mathcal{X},\ast}^{(o)}$ is rank-unbounded. This cannot happen unless $|\mathcal{X}|\geq 2$. In order to show that $(\hat{W}_n)_{n\geq 0}$ does not converge, it is sufficient to show that there exists a subsequence of $(\hat{W}_n)_{n\geq 0}$ which does not converge.

Let $(\hat{W}_{n_k})_{k\geq 0}$ be any subsequence of $(\hat{W}_n)_{n\geq 0}$ where the rank strictly increases, i.e., $\rank(W_{n_k})<\rank(W_{n_{k'}})$ for every $0\leq k<k'$. We will show that $(\hat{W}_{n_k})_{k\geq 0}$ does not converge.

Assume to the contrary that $(\hat{W}_{n_k})_{k\geq 0}$ converges to $\hat{W}\in\DMC_{\mathcal{X},\ast}^{(o)}$. Define the set $$A=\{\hat{W}_{n_k}:\;k\geq 0\}\setminus\{\hat{W}\}.$$ For every $m\geq 1$, the set $A\cap \DMC_{\mathcal{X},[m]}^{(o)}$ contains finitely many points. This means that $A\cap \DMC_{\mathcal{X},[m]}^{(o)}$ is a finite union of singletons (which are closed in $\DMC_{\mathcal{X},[m]}^{(o)}$), hence $A\cap \DMC_{\mathcal{X},[m]}^{(o)}$ is closed in $\DMC_{\mathcal{X},[m]}^{(o)}$ for every $m\geq 1$. Therefore $A$ is closed in $(\DMC_{\mathcal{X},\ast}^{(o)},\mathcal{T}_{s,\mathcal{X},\ast}^{(o)})$.

Now define $U=\DMC_{\mathcal{X},\ast}^{(o)}\setminus A$. Since $A$ is strongly closed, $U$ is strongly open. Moreover, $U$ contains $\hat{W}$, so $U$ is a neighborhood of $\hat{W}$. Therefore, there exists $k_0\geq 0$ such that  $\hat{W}_{n_k}\in U$ for every $k\geq k_0$. Now since the rank of $(\hat{W}_{n_k})_{k\geq 0}$  strictly increases, we can find $k\geq k_0$ such that $\rank(\hat{W}_{n_k})>\rank(\hat{W})$. This means that $\hat{W}_{n_k}\neq\hat{W}$ and so $\hat{W}_{n_k}\in A$. Therefore, $\hat{W}_{n_k}\notin U$ which is a contradiction.

We conclude that every converging sequence in $(\DMC_{\mathcal{X},\ast}^{(o)},\mathcal{T}_{s,\mathcal{X},\ast}^{(o)})$ must be rank-bounded.

Now let $(\hat{W}_n)_{n\geq 0}$ be a rank-bounded sequence in $\DMC_{\mathcal{X},\ast}^{(o)}$, i.e., there exists $m\geq 1$ such that  $\hat{W}_n\in \DMC_{\mathcal{X},[m]}^{(o)}$ for every $n\geq 0$. If $(\hat{W}_n)_{n\geq 0}$ converges in $(\DMC_{\mathcal{X},\ast}^{(o)},\mathcal{T}_{s,\mathcal{X},\ast}^{(o)})$ then it converges in $\DMC_{\mathcal{X},[m]}^{(o)}$ since $\DMC_{\mathcal{X},[m]}^{(o)}$ is strongly closed.

Conversely, assume that $(\hat{W}_n)_{n\geq 0}$ converges in $(\DMC_{\mathcal{X},[m]}^{(o)},\mathcal{T}_{\mathcal{X},[m]}^{(o)})$ to $\hat{W}\in \DMC_{\mathcal{X},[m]}^{(o)}$. Let $O$ be any neighborhood of $\hat{W}$ in $(\DMC_{\mathcal{X},\ast}^{(o)},\mathcal{T}_{s,\mathcal{X},\ast}^{(o)})$. There exists a strongly open set $U$ such that $\hat{W}\in U\subset O$. Since $U\cap \DMC_{\mathcal{X},[m]}^{(o)}$ is open in $(\DMC_{\mathcal{X},[m]}^{(o)},\mathcal{T}_{\mathcal{X},[m]}^{(o)})$, there exists $n_0>0$ such that $\hat{W}_n\in  U\cap \DMC_{\mathcal{X},[m]}^{(o)}$ for every $n\geq n_0$. This implies that $\hat{W}_n\in  O$ for every $n\geq n_0$. Therefore $(\hat{W}_n)_{n\geq 0}$ converges to $\hat{W}$ in $(\DMC_{\mathcal{X},\ast}^{(o)},\mathcal{T}_{s,\mathcal{X},\ast}^{(o)})$.
\end{proof}

\begin{mycor}
\label{corNotFirstCountable}
If $|\mathcal{X}|\geq 2$, $(\DMC_{\mathcal{X},\ast}^{(o)},\mathcal{T}_{s,\mathcal{X},\ast}^{(o)})$ is not first-countable anywhere, i.e., for every $\hat{W}\in\DMC_{\mathcal{X},\ast}^{(o)}$, there is no countable neighborhood basis of $\hat{W}$.
\end{mycor}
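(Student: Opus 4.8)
The plan is to argue by contradiction, combining the fact that non-empty open sets in a natural topology are rank-unbounded (Proposition \ref{propNaturallyOpenUnbounded}) with the characterization of convergent sequences in the strong topology (Proposition \ref{propCharacConvSeq}). Suppose, for contradiction, that some $\hat{W}\in\DMC_{\mathcal{X},\ast}^{(o)}$ has a countable neighborhood basis $\{O_k\}_{k\geq 1}$. First I would replace it by a decreasing one: setting $O_k':=O_1\cap\cdots\cap O_k$ gives neighborhoods of $\hat{W}$ (finite intersections of neighborhoods are neighborhoods) that are nested, $O_1'\supseteq O_2'\supseteq\cdots$, and still form a neighborhood basis of $\hat{W}$. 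So without loss of generality we may assume $O_1\supseteq O_2\supseteq\cdots$.

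Next, for each $k\geq 1$ I would exhibit a point $\hat{W}_k\in O_k$ with $\rank(\hat{W}_k)>k$. Indeed, since $O_k$ is a neighborhood of $\hat{W}$, there is a non-empty open set $U_k$ with $\hat{W}\in U_k\subseteq O_k$. Because $|\mathcal{X}|\geq 2$ and $\mathcal{T}_{s,\mathcal{X},\ast}^{(o)}$ is natural, Proposition \ref{propNaturallyOpenUnbounded} shows that $U_k$ is rank-unbounded, i.e.\ $U_k\not\subseteq\DMC_{\mathcal{X},[k]}^{(o)}$; hence there exists $\hat{W}_k\in U_k\subseteq O_k$ with $\rank(\hat{W}_k)>k$. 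The passage from the neighborhood $O_k$ to the open set $U_k$ inside it is the one point requiring a little care, since a neighborhood basis consists of neighborhoods that need not themselves be open.

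Finally I would check that the sequence $(\hat{W}_k)_{k\geq 1}$ converges to $\hat{W}$: given any neighborhood $O$ of $\hat{W}$, choose $k_0$ with $O_{k_0}\subseteq O$; then by the nesting, for all $k\geq k_0$ we have $\hat{W}_k\in O_k\subseteq O_{k_0}\subseteq O$. On the other hand $\rank(\hat{W}_k)>k\to\infty$, so $(\hat{W}_k)_{k\geq 1}$ is rank-unbounded, and Proposition \ref{propCharacConvSeq} states that no rank-unbounded sequence converges in $(\DMC_{\mathcal{X},\ast}^{(o)},\mathcal{T}_{s,\mathcal{X},\ast}^{(o)})$. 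This contradiction shows that $\hat{W}$ admits no countable neighborhood basis; since $\hat{W}$ was arbitrary, $(\DMC_{\mathcal{X},\ast}^{(o)},\mathcal{T}_{s,\mathcal{X},\ast}^{(o)})$ is not first-countable anywhere. (In particular, being not first-countable, it is not metrizable.) The argument is entirely routine once Propositions \ref{propNaturallyOpenUnbounded} and \ref{propCharacConvSeq} are in hand, so I do not anticipate any genuine obstacle.
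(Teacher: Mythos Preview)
Your proposal is correct and follows essentially the same argument as the paper: assume a countable neighborhood basis, use Proposition~\ref{propNaturallyOpenUnbounded} to pick $\hat{W}_k$ of rank $>k$ inside the $k$-th basic neighborhood, observe that the resulting sequence converges to $\hat{W}$ by the basis property, and derive a contradiction with Proposition~\ref{propCharacConvSeq}. The only cosmetic difference is that the paper first passes to open sets $U_n'\subset O_n$ and then intersects them, whereas you intersect the neighborhoods first and then pass to open subsets; both orderings work equally well.
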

\begin{proof}
Fix $\hat{W}\in \DMC_{\mathcal{X},\ast}^{(o)}$ and assume to the contrary that $\hat{W}$ admits a countable neighborhood basis $\{O_n\}_{n\geq 1}$ in $(\DMC_{\mathcal{X},\ast}^{(o)},\mathcal{T}_{s,\mathcal{X},\ast}^{(o)})$. For every $n\geq 1$, let $U_n'$ be a strongly open set such that $\hat{W}\in U_n'\subset O_n$. Define $\displaystyle U_n=\bigcap_{i=1}^n U_n'$. $U_n$ is strongly open because it is the intersection of finitely many strongly open sets. Moreover, $U_n\subset O_m$ for every $n\geq m$.

For every $n\geq 1$, Proposition \ref{propNaturallyOpenUnbounded} implies that $U_n$ (which is non-empty and strongly open) is rank-unbounded, so it cannot be contained in $\DMC_{\mathcal{X},[n]}^{(o)}$. Hence there exists $\hat{W}_n\in U_n$ such that $\hat{W}_n\notin \DMC_{\mathcal{X},[n]}^{(o)}$.

Since $\hat{W}_n\notin \DMC_{\mathcal{X},[n]}^{(o)}$, we have $\rank(\hat{W}_n)>n$ for every $n\geq 1$. Therefore, $(\hat{W}_n)_{n\geq 1}$ is rank-unbounded. Proposition \ref{propCharacConvSeq} implies that $(\hat{W}_n)_{n\geq 1}$ does not converge in $(\DMC_{\mathcal{X},\ast}^{(o)},\mathcal{T}_{s,\mathcal{X},\ast}^{(o)})$.

Now let $O$ be a neighborhood of $\hat{W}$ in $(\DMC_{\mathcal{X},\ast}^{(o)},\mathcal{T}_{s,\mathcal{X},\ast}^{(o)})$. Since $\{O_n\}_{n\geq 1}$ is a neighborhood basis for $\hat{W}$, there exists $n_0\geq 1$ such that $O_{n_0}\subset O$. For every $n\geq n_0$, we have $\hat{W}_n\in U_n\subset O_{n_0}\subset O$. This means that $(\hat{W}_n)_{n\geq 1}$ converges to $\hat{W}$ in $(\DMC_{\mathcal{X},\ast}^{(o)},\mathcal{T}_{s,\mathcal{X},\ast}^{(o)})$ which is a contradiction. Therefore, $\hat{W}$ does not admit a countable neighborhood basis in $(\DMC_{\mathcal{X},\ast}^{(o)},\mathcal{T}_{s,\mathcal{X},\ast}^{(o)})$.
\end{proof}

\subsection{Compact subspaces of $(\DMC_{\mathcal{X},\ast}^{(o)},\mathcal{T}_{s,\mathcal{X},\ast}^{(o)})$}

It is well known that a compact subset of $\mathbb{R}$ is compact if and only if it is closed and bounded. The following proposition shows that a similar statement holds for $(\DMC_{\mathcal{X},\ast}^{(o)},\mathcal{T}_{s,\mathcal{X},\ast}^{(o)})$.

\begin{myprop}
\label{propCharacCompactDMCXos}
A subspace of $(\DMC_{\mathcal{X},\ast}^{(o)},\mathcal{T}_{s,\mathcal{X},\ast}^{(o)})$ is compact if and only if it is rank-bounded and strongly closed.
\end{myprop}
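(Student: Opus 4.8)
The plan is to prove the two implications separately. The \emph{if} direction will follow almost immediately from Theorem \ref{theDMCXYo} together with naturality of the strong topology, while the \emph{only if} direction splits into ``strongly closed'' (an easy consequence of Hausdorffness) and ``rank-bounded,'' which is the one place where a genuine argument is needed.

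First I would handle the \emph{if} direction. Suppose $A\subseteq\DMC_{\mathcal{X},\ast}^{(o)}$ is rank-bounded and strongly closed. Rank-boundedness gives an $m\geq 1$ with $A\subseteq\DMC_{\mathcal{X},[m]}^{(o)}$. Since $\mathcal{T}_{s,\mathcal{X},\ast}^{(o)}$ is natural, the topology it induces on $\DMC_{\mathcal{X},[m]}^{(o)}$ is exactly $\mathcal{T}_{\mathcal{X},[m]}^{(o)}$, and since $A$ is strongly closed, $A=A\cap\DMC_{\mathcal{X},[m]}^{(o)}$ is closed in $(\DMC_{\mathcal{X},[m]}^{(o)},\mathcal{T}_{\mathcal{X},[m]}^{(o)})$. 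By Theorem \ref{theDMCXYo} this space is compact, so its closed subset $A$ is compact; as compactness is intrinsic, $A$ is a compact subspace of $\DMC_{\mathcal{X},\ast}^{(o)}$.

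For the \emph{only if} direction, let $A$ be compact. Since $(\DMC_{\mathcal{X},\ast}^{(o)},\mathcal{T}_{s,\mathcal{X},\ast}^{(o)})$ is Hausdorff (Theorem \ref{theDMCXo}), $A$ is strongly closed. To see $A$ is rank-bounded, argue by contradiction: if not, then $\{\rank(\hat{W}):\hat{W}\in A\}$ is unbounded, so we may pick $\hat{V}_k\in A$ with $\rank(\hat{V}_1)<\rank(\hat{V}_2)<\cdots$, and set $S=\{\hat{V}_k:k\geq 1\}$. For any subset $T\subseteq S$ and any $m\geq 1$, the intersection $T\cap\DMC_{\mathcal{X},[m]}^{(o)}$ contains only those $\hat{V}_k$ with $\rank(\hat{V}_k)\leq m$, hence is finite and therefore closed in the metrizable (so $T_1$) space $\DMC_{\mathcal{X},[m]}^{(o)}$; by the characterization of strongly closed sets established just before this proposition, every subset of $S$ is strongly closed. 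Thus $S$ is strongly closed and inherits the discrete topology. But $S$ is then a closed subset of the compact set $A$, hence compact, while an infinite discrete space is never compact (its cover by singletons has no finite subcover) --- a contradiction. Hence $A$ is rank-bounded.

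The only mildly delicate point is the rank-boundedness argument, and the idea that makes it go through is that a set whose ranks are unbounded can be thinned to one that meets every level $\DMC_{\mathcal{X},[m]}^{(o)}$ in a finite set, so that this thinned set together with all of its subsets is strongly closed; this exhibits an infinite closed discrete subspace inside $A$, which is incompatible with compactness. Everything else is bookkeeping with naturality and the standard facts about compact, Hausdorff and metrizable spaces recalled in Section II.
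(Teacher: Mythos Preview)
Your proof is correct, and the \emph{if} direction matches the paper exactly. For the \emph{only if} direction, however, you take a genuinely different and more elementary route than the paper.

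The paper argues rank-boundedness by invoking Proposition~\ref{propCharacConvSeq} (no rank-unbounded sequence converges) together with the general fact that a countably compact sequential Hausdorff space is sequentially compact: a strictly rank-increasing sequence in $A$ would then have a convergent subsequence, contradicting Proposition~\ref{propCharacConvSeq}. Your argument bypasses both of these ingredients. You observe directly that a set $S$ with strictly increasing ranks meets each level $\DMC_{\mathcal{X},[m]}^{(o)}$ in a finite set, so every subset of $S$ is strongly closed; hence $S$ carries the discrete subspace topology, and being a closed infinite discrete subset of a compact space is impossible. This is cleaner: it uses nothing beyond the level-wise characterization of strongly closed sets and elementary compactness, whereas the paper's route relies on the sequential structure established in Theorem~\ref{theDMCXo} and on Proposition~\ref{propCharacConvSeq}. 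The underlying combinatorial observation (finite trace on each level forces closedness) is the same one the paper uses inside the proof of Proposition~\ref{propCharacConvSeq}, but you apply it directly rather than through the detour of convergent sequences.
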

\begin{proof}
If $|\mathcal{X}|=1$, all channels are equivalent to each other and so $\DMC_{\mathcal{X},\ast}^{(o)}=\DMC_{\mathcal{X},[1]}^{(o)}$ consists of a single point. Therefore, all subsets of $\DMC_{\mathcal{X},\ast}^{(o)}$ are rank-bounded, compact and strongly closed.

Assume now that $|\mathcal{X}|\geq 2$.
Let $A$ be a subspace of $(\DMC_{\mathcal{X},\ast}^{(o)},\mathcal{T}_{s,\mathcal{X},\ast}^{(o)})$. If $A$ is rank-bounded and strongly closed, then there exists $n\geq 1$ such that $A\subset \DMC_{\mathcal{X},[n]}^{(o)}$. Since $A$ is strongly closed, then $A=A\cap \DMC_{\mathcal{X},[n]}^{(o)}$ is closed in $\DMC_{\mathcal{X},[n]}^{(o)}$ which is compact. Therefore, $A$ is compact.

Now let $A$ be a compact subspace of $(\DMC_{\mathcal{X},\ast}^{(o)},\mathcal{T}_{s,\mathcal{X},\ast}^{(o)})$. Since $(\DMC_{\mathcal{X},\ast}^{(o)},\mathcal{T}_{s,\mathcal{X},\ast}^{(o)})$ is Hausdorff, $A$ is strongly closed. It remains to show that $A$ is rank-bounded.

Assume to the contrary that $A$ is rank-unbounded. We can construct a sequence $(\hat{W}_n)_{n\geq 0}$ in $A$ where the rank is strictly increasing, i.e., $\rank(\hat{W}_n)<\rank(\hat{W}_{n'})$ for every $0\leq n<n'$. Since the rank of $(\hat{W}_n)_{n\geq 0}$ is strictly increasing, every subsequence of $(\hat{W}_n)_{n\geq 0}$ is rank-unbounded. Proposition \ref{propCharacConvSeq} implies that every subsequence of $(\hat{W}_n)_{n\geq 0}$ does not converge in $(\DMC_{\mathcal{X},\ast}^{(o)},\mathcal{T}_{s,\mathcal{X},\ast}^{(o)})$. On the other hand, we have:
\begin{itemize}
\item $A$ is countably compact because it is compact.
\item Since $A$ is strongly closed and since $(\DMC_{\mathcal{X},\ast}^{(o)},\mathcal{T}_{s,\mathcal{X},\ast}^{(o)})$ is a sequential space, $A$ is sequential.
\item $A$ is Hausdorff because $(\DMC_{\mathcal{X},\ast}^{(o)},\mathcal{T}_{s,\mathcal{X},\ast}^{(o)})$ is Hausdorff.
\end{itemize}
Now since every countably compact sequential Hausdorff space is sequentially compact \cite{SequentialSpace}, $A$ must be sequentially compact. Therefore, $(\hat{W}_n)_{n\geq 0}$ has a converging subsequence which is a contradiction. We conclude that $A$ must be rank-bounded.
\end{proof}

\section{The noisiness metric on $\DMC$ spaces}

Theorem \ref{theDMCXYo} implies that $\DMC_{\mathcal{X},[n]}^{(o)}$ is metrizable for every $n\geq 1$. One might ask whether the spaces $\DMC_{\mathcal{X},[n]}^{(o)}$ are ``simultaneously metrizable" in the sense that we can define a metric $d_n$ on $\DMC_{\mathcal{X},[n]}^{(o)}$ for every $n\geq 1$ in such a way that $d_n$ is the restriction of $d_{n+1}$ for every $n\geq 1$. If this is the case, we can then define a metric on $\DMC_{\mathcal{X},\ast}^{(o)}={\displaystyle\bigcup_{n\geq 1}}\DMC_{\mathcal{X},[n]}^{(o)}$ as $d(\hat{W},\hat{W}')=d_n(\hat{W},\hat{W}')$ for any $n\geq 1$ satisfying $\hat{W},\hat{W}'\in \DMC_{\mathcal{X},[n]}^{(o)}$. In this section we will show that such metrics can be constructed.

\subsection{Noisiness metric on $\DMC_{\mathcal{X},\mathcal{Y}}^{(o)}$}

For every $m\geq 1$, let $\Delta_{[m]\times\mathcal{X}}$ be the space of probability distributions on $[m]\times\mathcal{X}$.

Let $\mathcal{Y}$ be a finite set and let $W\in\DMC_{\mathcal{X},\mathcal{Y}}$. For every $p\in \Delta_{[m]\times\mathcal{X}}$, define $P_c(p,W)$ as follows:
\begin{equation}
\label{eqProbCorrectGuess}
P_c(p,W)=\sup_{D\in \DMC_{\mathcal{Y},[m]}}\sum_{\substack{u\in[m],\\x\in\mathcal{X},\\y\in\mathcal{Y}}}p(u,x)W(y|x)D(u|y).
\end{equation}
$P_c(p,W)$ can be interpreted as follows: let $(U,X)$ be a pair of random variables distributed according to $p$, send $X$ through the channel $W$, and let $Y$ be the output of $W$ in such a way that $U-X-Y$ is a Markov chain. Let $\hat{U}$ be the estimate of $U$ obtained by applying a random decoder $D\in\DMC_{\mathcal{Y},[m]}$. In this interpretation, $p$ can be seen as a random encoder. The probability of correctly guessing $U$ by using the decoder $D$ is given by $$\displaystyle \sum_{\substack{u\in[m],\\x\in\mathcal{X},\\y\in\mathcal{Y}}}p(u,x)W(y|x)D(u|y).$$ Therefore, $P_c(p,W)$ is the optimal probability of correctly guessing $U$ from $Y$. Note that we can take the supremum in \eqref{eqProbCorrectGuess} over only deterministic channels $D\in\DMC_{\mathcal{Y},[m]}$ because we can always choose an optimal decoder that is deterministic.

It is well known that if $W$ is degraded from $W'$, then $P_c(p,W)\leq P_c(p,W')$ for every $p\in\Delta_{[m]\times\mathcal{X}}$ and every $m\geq 1$. It was shown in \cite{Buscemi} that the converse is also true. Therefore, $W$ is equivalent to $W'$ if and only if $P_c(p,W)=P_c(p,W')$ for every $p\in\Delta_{[m]\times\mathcal{X}}$ and every $m\geq 1$. This shows that the quantity $P_c(p,W)$ depends only on the $R_{\mathcal{X},\mathcal{Y}}^{(o)}$-equivalence class of $W$. Therefore, if $\hat{W}\in\DMC_{\mathcal{X},\mathcal{Y}}^{(o)}$, we can define $P_c(p,\hat{W}):=P_c(p,W')$ for any $W'\in \hat{W}$.

Define the \emph{noisiness distance} $d_{\mathcal{X},\mathcal{Y}}^{(o)}: \DMC_{\mathcal{X},\mathcal{Y}}^{(o)}\times \DMC_{\mathcal{X},\mathcal{Y}}^{(o)}\rightarrow\mathbb{R}^+$ as follows:
$$d_{\mathcal{X},\mathcal{Y}}^{(o)}(\hat{W}_1,\hat{W}_2)=\sup_{\substack{m\geq 1,\\p\in\Delta_{[m]\times\mathcal{X}}}}|P_c(p,\hat{W}_1)-P_c(p,\hat{W}_2)|.$$
It is easy to see that $0\leq d_{\mathcal{X},\mathcal{Y}}^{(o)}(\hat{W}_1,\hat{W}_2)\leq 1 $ for every $\hat{W}_1,\hat{W}_2\in \DMC_{\mathcal{X},\mathcal{Y}}^{(o)}$. Moreover, we have:
\begin{itemize}
\item $d_{\mathcal{X},\mathcal{Y}}^{(o)}(\hat{W},\hat{W})=0$ for every $\hat{W}\in\DMC_{\mathcal{X},\mathcal{Y}}^{(o)}$.
\item For every $\hat{W}_1,\hat{W}_2\in \DMC_{\mathcal{X},\mathcal{Y}}^{(o)}$, if $d_{\mathcal{X},\mathcal{Y}}^{(o)}(\hat{W}_1,\hat{W}_2)=0$, then $P_c(p,\hat{W}_1)=P_c(p,\hat{W}_2)$ for every $p\in\Delta_{[m]\times\mathcal{X}}$ and every $m\geq 1$, which implies that the channels in $\hat{W}_1$ are equivalent to the channels in $\hat{W}_2$, hence $\hat{W}_1=\hat{W}_2$.
\item $d_{\mathcal{X},\mathcal{Y}}^{(o)}(\hat{W}_1,\hat{W}_2)=d_{\mathcal{X},\mathcal{Y}}^{(o)}(\hat{W}_2,\hat{W}_1)$ for every $\hat{W}_1,\hat{W}_2\in \DMC_{\mathcal{X},\mathcal{Y}}^{(o)}$.
\item $d_{\mathcal{X},\mathcal{Y}}^{(o)}(\hat{W}_1,\hat{W}_3)\leq d_{\mathcal{X},\mathcal{Y}}^{(o)}(\hat{W}_1,\hat{W}_2)+d_{\mathcal{X},\mathcal{Y}}^{(o)}(\hat{W}_2,\hat{W}_3)$ for every $\hat{W}_1,\hat{W}_2,\hat{W}_3\in \DMC_{\mathcal{X},\mathcal{Y}}^{(o)}$.
\end{itemize}
This shows that $d_{\mathcal{X},\mathcal{Y}}^{(o)}$ is a metric on $\DMC_{\mathcal{X},\mathcal{Y}}^{(o)}$. $d_{\mathcal{X},\mathcal{Y}}^{(o)}$ is called the noisiness metric because it compares the ``noisiness" of $\hat{W}_1$ with that of $\hat{W}_2$: if $P_c(p,\hat{W}_1)$ is close to $P_c(p,\hat{W}_2)$ for every random encoder $p$, then $\hat{W}_1$ and $\hat{W}_2$ have close ``noisiness levels".

A natural question to ask is whether the metric topology on $\DMC_{\mathcal{X},\mathcal{Y}}^{(o)}$ that is induced by $d_{\mathcal{X},\mathcal{Y}}^{(o)}$ is the same as the quotient topology $\mathcal{T}_{\mathcal{X},\mathcal{Y}}^{(o)}$ that we defined in Section \ref{subsecDMCXYo}. To answer this question, we need the following lemma.

\begin{mylem}
\label{lemRelDistance}
For every $W_1,W_2\in\DMC_{\mathcal{X},\mathcal{Y}}$, we have:
$$d_{\mathcal{X},\mathcal{Y}}^{(o)}(\hat{W}_1,\hat{W}_2)\leq d_{\mathcal{X},\mathcal{Y}}(W_1,W_2),$$
where $\hat{W}_1$ and $\hat{W}_2$ are the $R_{\mathcal{X},\mathcal{Y}}^{(o)}$-equivalence classes of $W_1$ and $W_2$ respectively.
\end{mylem}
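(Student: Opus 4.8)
The plan is to reduce the inequality to a per-input-symbol total-variation estimate. Fix $m\geq 1$ and $p\in\Delta_{[m]\times\mathcal{X}}$; it suffices to show $|P_c(p,\hat{W}_1)-P_c(p,\hat{W}_2)|\leq d_{\mathcal{X},\mathcal{Y}}(W_1,W_2)$ and then take the supremum over $m$ and $p$. By the symmetry of the right-hand side I would assume without loss of generality that $P_c(p,\hat{W}_1)\geq P_c(p,\hat{W}_2)$, and pick a deterministic decoder $D\in\DMC_{\mathcal{Y},[m]}$ attaining the supremum in \eqref{eqProbCorrectGuess} for $W_1$ (the supremum is over the finite set of deterministic channels, so it is attained). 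Since $D$ is also an admissible, though not necessarily optimal, decoder for $W_2$, I get
\begin{equation*}
P_c(p,\hat{W}_1)-P_c(p,\hat{W}_2)\ \leq\ \sum_{\substack{u\in[m],\\x\in\mathcal{X},\\y\in\mathcal{Y}}}p(u,x)\big(W_1(y|x)-W_2(y|x)\big)D(u|y).
\end{equation*}

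Next I would rewrite the right-hand side as $\sum_{x\in\mathcal{X}}p_{\mathcal{X}}(x)\sum_{y\in\mathcal{Y}}\big(W_1(y|x)-W_2(y|x)\big)g_x(y)$, where $p_{\mathcal{X}}(x)=\sum_{u}p(u,x)$ and $g_x(y)=\sum_{u}\frac{p(u,x)}{p_{\mathcal{X}}(x)}D(u|y)$ when $p_{\mathcal{X}}(x)>0$ (and $g_x\equiv 0$ otherwise). The point is that $g_x(y)\in[0,1]$, being a convex combination of the numbers $D(u|y)\in[0,1]$. Hence, for each fixed $x$, discarding the terms with $W_1(y|x)<W_2(y|x)$ (which are nonpositive since $g_x\geq 0$) and bounding $g_x(y)\leq 1$ on the remaining terms gives
\begin{equation*}
\sum_{y\in\mathcal{Y}}\big(W_1(y|x)-W_2(y|x)\big)g_x(y)\ \leq\ \sum_{y:\,W_1(y|x)\geq W_2(y|x)}\big(W_1(y|x)-W_2(y|x)\big)\ =\ \frac12\sum_{y\in\mathcal{Y}}|W_1(y|x)-W_2(y|x)|\ \leq\ d_{\mathcal{X},\mathcal{Y}}(W_1,W_2),
\end{equation*}
where the equality is the standard fact that the positive and negative parts of the difference of two probability distributions on $\mathcal{Y}$ have equal mass, namely half the $\ell_1$ distance. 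Averaging this over $x$ against the weights $p_{\mathcal{X}}(x)$, which sum to $1$, yields $P_c(p,\hat{W}_1)-P_c(p,\hat{W}_2)\leq d_{\mathcal{X},\mathcal{Y}}(W_1,W_2)$; interchanging the roles of $W_1$ and $W_2$ covers the other case, and taking the supremum over $m$ and $p$ finishes the proof.

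I do not expect a genuine obstacle here: the only real idea is to use one and the same decoder for both channels, namely the one optimal for whichever channel has the larger $P_c$, which converts the difference of two optima into a difference along a common decoder and hence into a total-variation quantity. The minor points requiring care are the attainment of the supremum over decoders (immediate, since the deterministic decoders form a finite set) and the degenerate input symbols with $p_{\mathcal{X}}(x)=0$, which contribute nothing to either side.
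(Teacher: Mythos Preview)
Your proposal is correct and follows essentially the same route as the paper: fix the same decoder for both channels (the paper works with an arbitrary $D$ and takes the supremum at the end, you pick an optimal deterministic $D$ up front), bound the resulting difference term by the positive part of $W_1(\cdot|x)-W_2(\cdot|x)$, identify this with half the $\ell_1$ distance, and average over $x$. Your $g_x$ rewriting is a cosmetic repackaging of the paper's step $D(u|y)\leq\sum_{u'}D(u'|y)=1$; the substance is identical.
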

\begin{proof}
See Appendix \ref{appRelDistance}.
\end{proof}

\begin{myprop}
\label{propEQuivTXYodXYo}
$(\DMC_{\mathcal{X},\mathcal{Y}}^{(o)},d_{\mathcal{X},\mathcal{Y}}^{(o)})$ and $(\DMC_{\mathcal{X},\mathcal{Y}}^{(o)},\mathcal{T}_{\mathcal{X},\mathcal{Y}}^{(o)})$ are topologically equivalent.
\end{myprop}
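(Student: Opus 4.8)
The plan is to show that the identity mapping on the common underlying set $\DMC_{\mathcal{X},\mathcal{Y}}^{(o)}$ is a homeomorphism between the quotient topology $\mathcal{T}_{\mathcal{X},\mathcal{Y}}^{(o)}$ and the metric topology $\mathcal{U}_{d_{\mathcal{X},\mathcal{Y}}^{(o)}}$ induced by the noisiness distance. Since the identity is trivially a bijection, it suffices to establish continuity in one direction and then invoke the standard fact that a continuous bijection from a compact space onto a Hausdorff space is a homeomorphism.

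First I would show that $\mathrm{id}:(\DMC_{\mathcal{X},\mathcal{Y}}^{(o)},\mathcal{T}_{\mathcal{X},\mathcal{Y}}^{(o)})\rightarrow(\DMC_{\mathcal{X},\mathcal{Y}}^{(o)},\mathcal{U}_{d_{\mathcal{X},\mathcal{Y}}^{(o)}})$ is continuous. By Lemma \ref{lemRelDistance}, for all $W_1,W_2\in\DMC_{\mathcal{X},\mathcal{Y}}$ we have $d_{\mathcal{X},\mathcal{Y}}^{(o)}(\hat{W}_1,\hat{W}_2)\leq d_{\mathcal{X},\mathcal{Y}}(W_1,W_2)$, so the projection $\Proj:\DMC_{\mathcal{X},\mathcal{Y}}\rightarrow\DMC_{\mathcal{X},\mathcal{Y}}^{(o)}$ is (uniformly) continuous when the target carries the metric $d_{\mathcal{X},\mathcal{Y}}^{(o)}$. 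Moreover, equivalent channels have the same value of $P_c(p,\cdot)$ for every $p$ and hence noisiness distance $0$, so $\Proj$ is constant on each $R_{\mathcal{X},\mathcal{Y}}^{(o)}$-class. Lemma \ref{lemQuotientFunction} then produces a continuous transcendent map from $(\DMC_{\mathcal{X},\mathcal{Y}}^{(o)},\mathcal{T}_{\mathcal{X},\mathcal{Y}}^{(o)})$ to $(\DMC_{\mathcal{X},\mathcal{Y}}^{(o)},\mathcal{U}_{d_{\mathcal{X},\mathcal{Y}}^{(o)}})$; tracking definitions, this map is exactly the set-theoretic identity, so $\mathcal{U}_{d_{\mathcal{X},\mathcal{Y}}^{(o)}}\subset\mathcal{T}_{\mathcal{X},\mathcal{Y}}^{(o)}$.

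Next I would close the loop by compactness. By Theorem \ref{theDMCXYo}, $(\DMC_{\mathcal{X},\mathcal{Y}}^{(o)},\mathcal{T}_{\mathcal{X},\mathcal{Y}}^{(o)})$ is compact, and $(\DMC_{\mathcal{X},\mathcal{Y}}^{(o)},\mathcal{U}_{d_{\mathcal{X},\mathcal{Y}}^{(o)}})$ is Hausdorff since it is a metric space (recall that the point-separating property of $d_{\mathcal{X},\mathcal{Y}}^{(o)}$ was already verified via Proposition \ref{propCharacOutputEquiv} and the converse of the data-processing inequality from \cite{Buscemi}). A continuous bijection from a compact space to a Hausdorff space is closed, hence a homeomorphism; applying this to the identity map of the previous step yields $\mathcal{T}_{\mathcal{X},\mathcal{Y}}^{(o)}=\mathcal{U}_{d_{\mathcal{X},\mathcal{Y}}^{(o)}}$, which is the assertion.

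I do not anticipate a genuine obstacle: the substantive inputs are the inequality of Lemma \ref{lemRelDistance}, the universal property of the quotient topology, and the compactness/metrizability of $\DMC_{\mathcal{X},\mathcal{Y}}^{(o)}$ from Theorem \ref{theDMCXYo}, all already available. The only points needing a little care are the bookkeeping that the map delivered by Lemma \ref{lemQuotientFunction} really is the identity on the underlying set (so that continuity translates into the inclusion of topologies in the correct direction), and the observation that without compactness one would instead have to argue that $\Proj$ is open in order to get the reverse inclusion directly, which is why the compact-to-Hausdorff route is the cleaner one.
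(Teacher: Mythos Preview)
Your proposal is correct and follows essentially the same route as the paper: use Lemma~\ref{lemRelDistance} together with Lemma~\ref{lemQuotientFunction} to make the identity continuous from $(\DMC_{\mathcal{X},\mathcal{Y}}^{(o)},\mathcal{T}_{\mathcal{X},\mathcal{Y}}^{(o)})$ to $(\DMC_{\mathcal{X},\mathcal{Y}}^{(o)},d_{\mathcal{X},\mathcal{Y}}^{(o)})$, and then upgrade to a homeomorphism via compactness of the domain and Hausdorffness of the target. The paper merely unpacks the compact-to-Hausdorff step by hand (closed $\Rightarrow$ compact $\Rightarrow$ image compact $\Rightarrow$ closed) rather than citing the general fact, but the argument is the same.
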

\begin{proof}
Consider the projection mapping $\Proj: \DMC_{\mathcal{X},\mathcal{Y}}\rightarrow \DMC_{\mathcal{X},\mathcal{Y}}^{(o)}$ defined as $\Proj(W)=\hat{W}$, where $\hat{W}$ is the $R_{\mathcal{X},\mathcal{Y}}^{(o)}$-equivalence class of $W$.

Lemma \ref{lemRelDistance} implies that $\Proj$ is a continuous mapping from $(\DMC_{\mathcal{X},\mathcal{Y}},d_{\mathcal{X},\mathcal{Y}})$ to $(\DMC_{\mathcal{X},\mathcal{Y}}^{(o)},d_{\mathcal{X},\mathcal{Y}}^{(o)})$. Now since $\Proj(W)=\Proj(W')$ whenever $W R_{\mathcal{X},\mathcal{Y}}^{(o)} W'$, Lemma \ref{lemQuotientFunction} implies that the identity mapping $id: \DMC_{\mathcal{X},\mathcal{Y}}^{(o)}\rightarrow \DMC_{\mathcal{X},\mathcal{Y}}^{(o)}$ is continuous from $(\DMC_{\mathcal{X},\mathcal{Y}}^{(o)},\mathcal{T}_{\mathcal{X},\mathcal{Y}}^{(o)})$ to $(\DMC_{\mathcal{X},\mathcal{Y}}^{(o)},d_{\mathcal{X},\mathcal{Y}}^{(o)})$. We have:
\begin{itemize}
\item For every $U\subset \DMC_{\mathcal{X},\mathcal{Y}}^{(o)}$ that is open in $(\DMC_{\mathcal{X},\mathcal{Y}}^{(o)},d_{\mathcal{X},\mathcal{Y}}^{(o)})$, $U=id^{-1}(U)\in \mathcal{T}_{\mathcal{X},\mathcal{Y}}^{(o)}$ because $id$ is a continuous mapping from $(\DMC_{\mathcal{X},\mathcal{Y}}^{(o)},\mathcal{T}_{\mathcal{X},\mathcal{Y}}^{(o)})$ to $(\DMC_{\mathcal{X},\mathcal{Y}}^{(o)},d_{\mathcal{X},\mathcal{Y}}^{(o)})$.
\item For every $U\in \mathcal{T}_{\mathcal{X},\mathcal{Y}}^{(o)}$, the set $\DMC_{\mathcal{X},\mathcal{Y}}^{(o)}\setminus U$ is closed in $(\DMC_{\mathcal{X},\mathcal{Y}}^{(o)},\mathcal{T}_{\mathcal{X},\mathcal{Y}}^{(o)})$ which is compact. Therefore, $\DMC_{\mathcal{X},\mathcal{Y}}^{(o)}\setminus U$ is a compact subset of $(\DMC_{\mathcal{X},\mathcal{Y}}^{(o)}, \mathcal{T}_{\mathcal{X},\mathcal{Y}}^{(o)})$. Now since $id$ is continuous from $(\DMC_{\mathcal{X},\mathcal{Y}}^{(o)},\mathcal{T}_{\mathcal{X},\mathcal{Y}}^{(o)})$ to $(\DMC_{\mathcal{X},\mathcal{Y}}^{(o)},d_{\mathcal{X},\mathcal{Y}}^{(o)})$, $\DMC_{\mathcal{X},\mathcal{Y}}^{(o)}\setminus U=id(\DMC_{\mathcal{X},\mathcal{Y}}^{(o)}\setminus U)$ is a compact subset of $(\DMC_{\mathcal{X},\mathcal{Y}}^{(o)},d_{\mathcal{X},\mathcal{Y}}^{(o)})$ which is Hausdorff (because it is metric). This shows that $\DMC_{\mathcal{X},\mathcal{Y}}^{(o)}\setminus U$ is closed in $(\DMC_{\mathcal{X},\mathcal{Y}}^{(o)},d_{\mathcal{X},\mathcal{Y}}^{(o)})$, which implies that $U$ is open in $(\DMC_{\mathcal{X},\mathcal{Y}}^{(o)},d_{\mathcal{X},\mathcal{Y}}^{(o)})$.
\end{itemize}
We conclude that $U\subset \DMC_{\mathcal{X},\mathcal{Y}}^{(o)}$ is open in $(\DMC_{\mathcal{X},\mathcal{Y}}^{(o)},d_{\mathcal{X},\mathcal{Y}}^{(o)})$ if and only if it is open in $(\DMC_{\mathcal{X},\mathcal{Y}}^{(o)},\mathcal{T}_{\mathcal{X},\mathcal{Y}}^{(o)})$.
\end{proof}

\begin{mycor}
$(\DMC_{\mathcal{X},\mathcal{Y}}^{(o)},d_{\mathcal{X},\mathcal{Y}}^{(o)})$ is a compact path-connected metric space.
\end{mycor}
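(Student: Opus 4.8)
The plan is to simply combine the two results that immediately precede this corollary, since every assertion in the statement is either already established or depends only on the underlying topology. First, I would recall that $d_{\mathcal{X},\mathcal{Y}}^{(o)}$ has already been checked to satisfy the three axioms of a metric (positivity with $d_{\mathcal{X},\mathcal{Y}}^{(o)}(\hat W_1,\hat W_2)=0\Leftrightarrow\hat W_1=\hat W_2$, symmetry, and the triangle inequality) in the discussion between its definition and Lemma~\ref{lemRelDistance}; so $(\DMC_{\mathcal{X},\mathcal{Y}}^{(o)},d_{\mathcal{X},\mathcal{Y}}^{(o)})$ is indeed a metric space, and it only remains to verify compactness and path-connectedness.

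Next I would invoke Proposition~\ref{propEQuivTXYodXYo}, which states that the metric topology on $\DMC_{\mathcal{X},\mathcal{Y}}^{(o)}$ induced by $d_{\mathcal{X},\mathcal{Y}}^{(o)}$ is exactly the quotient topology $\mathcal{T}_{\mathcal{X},\mathcal{Y}}^{(o)}$. Since compactness and path-connectedness are purely topological properties — they are preserved under homeomorphism, and here the identity map is a homeomorphism between the two topologies — it suffices to know that $(\DMC_{\mathcal{X},\mathcal{Y}}^{(o)},\mathcal{T}_{\mathcal{X},\mathcal{Y}}^{(o)})$ enjoys them. But this is precisely the content of Theorem~\ref{theDMCXYo}, which asserts that $\DMC_{\mathcal{X},\mathcal{Y}}^{(o)}$ with the quotient topology is compact, path-connected and metrizable. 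Transporting these properties back through Proposition~\ref{propEQuivTXYodXYo} gives that $(\DMC_{\mathcal{X},\mathcal{Y}}^{(o)},d_{\mathcal{X},\mathcal{Y}}^{(o)})$ is a compact, path-connected metric space.

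There is essentially no obstacle in this final step: the genuine work was carried out earlier, in establishing that $d_{\mathcal{X},\mathcal{Y}}^{(o)}$ is a metric (using the characterization of equivalence via $P_c$ from \cite{Buscemi}), in Lemma~\ref{lemRelDistance} bounding $d_{\mathcal{X},\mathcal{Y}}^{(o)}$ by $d_{\mathcal{X},\mathcal{Y}}$, and in the compactness/Hausdorff argument of Proposition~\ref{propEQuivTXYodXYo} showing the two topologies coincide. The corollary is then a one-line consequence, and I would present it as such.
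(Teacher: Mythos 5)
Your proof is correct and follows exactly the route the paper intends: $d_{\mathcal{X},\mathcal{Y}}^{(o)}$ was already verified to be a metric, Proposition~\ref{propEQuivTXYodXYo} identifies the metric topology with $\mathcal{T}_{\mathcal{X},\mathcal{Y}}^{(o)}$, and Theorem~\ref{theDMCXYo} supplies compactness and path-connectedness of that topology. Nothing more is needed.
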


\vspace*{3mm}

The reader might be wondering why we considered and studied the quotient topology $\mathcal{T}_{\mathcal{X},\mathcal{Y}}^{(o)}$ while it is possible to explicitly define a metric on the space $\DMC_{\mathcal{X},\mathcal{Y}}^{(o)}$. There are two reasons:
\begin{itemize}
\item The definition of $d_{\mathcal{X},\mathcal{Y}}^{(o)}$ does not seem to be intuitive at the first sight and it is not clear why one would adopt it as a standard metric on $\DMC_{\mathcal{X},\mathcal{Y}}^{(o)}$. Just being a metric is not convincing enough. On the other hand, the existence of a natural standard topology on $\DMC_{\mathcal{X},\mathcal{Y}}$ makes the quotient topology the most natural starting point.
\item If one wants to show that a mapping $f:\DMC_{\mathcal{X},\mathcal{Y}}^{(o)}\rightarrow S$ is continuous from $(\DMC_{\mathcal{X},\mathcal{Y}}^{(o)},d_{\mathcal{X},\mathcal{Y}}^{(o)})$ to a topological space $(S,\mathcal{V})$, it is much easier to prove it through the quotient topology $\mathcal{T}_{\mathcal{X},\mathcal{Y}}^{(o)}$ rather than proving it directly using the metric $d_{\mathcal{X},\mathcal{Y}}^{(o)}$. Therefore, it is important to show the topological equivalence between $(\DMC_{\mathcal{X},\mathcal{Y}}^{(o)},d_{\mathcal{X},\mathcal{Y}}^{(o)})$ and $(\DMC_{\mathcal{X},\mathcal{Y}}^{(o)},\mathcal{T}_{\mathcal{X},\mathcal{Y}}^{(o)})$.
\end{itemize}

It is worth mentioning that in the proof of Proposition \ref{propEQuivTXYodXYo}, the only topological property of $(\DMC_{\mathcal{X},\mathcal{Y}}^{(o)},\mathcal{T}_{\mathcal{X},\mathcal{Y}}^{(o)})$ that we used is its compactness. This means that we do not need Lemma \ref{lemProjContClosed} to prove Theorem \ref{theDMCXYo}. An alternative proof of Theorem \ref{theDMCXYo} would be to show the compactness and path-connectedness by inheriting those properties from $\DMC_{\mathcal{X},\mathcal{Y}}$, and then show that $(\DMC_{\mathcal{X},\mathcal{Y}}^{(o)},\mathcal{T}_{\mathcal{X},\mathcal{Y}}^{(o)})$ is topologically equivalent to $(\DMC_{\mathcal{X},\mathcal{Y}}^{(o)},d_{\mathcal{X},\mathcal{Y}}^{(o)})$ as in Proposition \ref{propEQuivTXYodXYo}.

The main reason why we restricted ourselves to topological methods in Section \ref{subsecDMCXYo} is because they might be useful if one wants to generalize our results to spaces of non-discrete channels. It might not be easy to find an explicit metric for those spaces, or even worse, those spaces might fail to be metrizable. Therefore, one might want to prove weaker topological properties such as being Hausdorff and/or regular. In such cases, the methods of Section \ref{subsecDMCXYo} might be useful.

\subsection{Noisiness metric on $\DMC_{\mathcal{X},\ast}^{(o)}$}

\label{subsecMetricDMCXo}

For every $\hat{W}_1,\hat{W}_2\in\DMC_{\mathcal{X},\ast}^{(o)}$, define the \emph{noisiness metric} on $\DMC_{\mathcal{X},\ast}^{(o)}$ as follows: $$d_{\mathcal{X},\ast}^{(o)}(\hat{W},\hat{W}'):=d_{\mathcal{X},[n]}^{(o)}(\hat{W},\hat{W}')\;\text{where}\;n\geq 1\;\text{satisfies}\;\hat{W},\hat{W}'\in\textstyle\DMC_{\mathcal{X},[n]}^{(o)}.$$
$d_{\mathcal{X},\ast}^{(o)}(\hat{W},\hat{W}')$ is well defined because $d_{\mathcal{X},[n]}^{(o)}(\hat{W},\hat{W}')$ does not depend on $n\geq 1$ as long as $\hat{W},\hat{W}'\in\textstyle\DMC_{\mathcal{X},[n]}^{(o)}$. We can also express $d_{\mathcal{X},\ast}^{(o)}$ as follows:
$$d_{\mathcal{X},\ast}^{(o)}(\hat{W}_1,\hat{W}_2)=\sup_{\substack{m\geq 1,\\p\in\Delta_{[m]\times\mathcal{X}}}}|P_c(p,\hat{W}_1)-P_c(p,\hat{W}_2)|.$$

 It is easy to see that $d_{\mathcal{X},\ast}^{(o)}$ is a metric on $\DMC_{\mathcal{X},\ast}^{(o)}$. Let $\mathcal{T}_{\mathcal{X},\ast}^{(o)}$ be the metric topology on $\DMC_{\mathcal{X},\ast}^{(o)}$ that is induced by $d_{\mathcal{X},\ast}^{(o)}$.  We call $\mathcal{T}_{\mathcal{X},\ast}^{(o)}$ the \emph{noisiness topology} on $\DMC_{\mathcal{X},\ast}^{(o)}$. 

Clearly, $\mathcal{T}_{\mathcal{X},\ast}^{(o)}$ is natural because the restriction of $d_{\mathcal{X},\ast}^{(o)}$ on $\DMC_{\mathcal{X},[n]}^{(o)}$ is exactly $d_{\mathcal{X},[n]}^{(o)}$, and the topology induced by $d_{\mathcal{X},[n]}^{(o)}$ is $\mathcal{T}_{\mathcal{X},[n]}^{(o)}$. If $|\mathcal{X}|\geq 2$, Proposition \ref{propNaturalNotLocallyCompact} and Corollary \ref{corNaturalNotComplete} imply that $(\DMC_{\mathcal{X},\ast}^{(o)},d_{\mathcal{X},\ast}^{(o)})$ is not complete nor locally compact.

Since $\mathcal{T}_{s,\mathcal{X},\ast}^{(o)}$ is the finest natural topology, $\mathcal{T}_{s,\mathcal{X},\ast}^{(o)}$ is finer than $\mathcal{T}_{\mathcal{X},\ast}^{(o)}$. On the other hand, if $|\mathcal{X}|\geq 2$, $\mathcal{T}_{\mathcal{X},\ast}^{(o)}$ is metrizable and $\mathcal{T}_{s,\mathcal{X},\ast}^{(o)}$ is not (because it is not first-countable). Therefore, if $|\mathcal{X}|\geq 2$, the strong topology $\mathcal{T}_{s,\mathcal{X},\ast}^{(o)}$ is strictly finer than the noisiness topology $\mathcal{T}_{\mathcal{X},\ast}^{(o)}$.

It is worth mentioning that Propositions \ref{propCharacConvSeq} and \ref{propCharacCompactDMCXos} do not hold for $(\DMC_{\mathcal{X},\ast}^{(o)}, \mathcal{T}_{\mathcal{X},\ast}^{(o)})$. It is easy to find a rank-unbounded sequence $\{\hat{W}_n\}_{n\geq 0}$ which converges in $(\DMC_{\mathcal{X},\ast}^{(o)}, \mathcal{T}_{\mathcal{X},\ast}^{(o)})$ to a point $\hat{W}\in \DMC_{\mathcal{X},\ast}^{(o)}$. The set $\{\hat{W}_n:\;n\geq 0\}\cup\{\hat{W}\}$ is clearly compact and rank-unbounded.

\section{Topologies from Blackwell measures}

We saw in Section \ref{subsecStrongTop} that for every $\hat{W}\in\DMC_{\mathcal{X},\ast}^{(o)}$, a Blackwell measure ${\MP}_{\hat{W}}$ on $\Delta_{\mathcal{X}}$ is defined. Moreover, Proposition \ref{propCharacOutputEquiv} implies that $\hat{W}$ is uniquely determined by ${\MP}_{\hat{W}}$. Therefore, each $R_{\mathcal{X},\ast}^{(o)}$-equivalence class in $\DMC_{\mathcal{X},\ast}^{(o)}$ can be identified with its Blackwell measure. On the other hand, Proposition \ref{propCharacPostMetaProb} shows that the collection of Blackwell measures of the channels with input alphabet $\mathcal{X}$ is the same as the collection of balanced and finitely supported meta-probability measures on $\mathcal{X}$.

Therefore, the mapping $\hat{W}\rightarrow{\MP}_{\hat{W}}$ is a bijection from $\DMC_{\mathcal{X},\ast}^{(o)}$ to $\mathcal{MP}_{bf}(\mathcal{X})$. We call this mapping \emph{the canonical bijection} from $\DMC_{\mathcal{X},\ast}^{(o)}$ to $\mathcal{MP}_{bf}(\mathcal{X})$. Similarly, the inverse mapping is called the \emph{canonical bijection} from $\mathcal{MP}_{bf}(\mathcal{X})$ to $\DMC_{\mathcal{X},\ast}^{(o)}$.

Since $\Delta_{\mathcal{X}}$ is a metric space, there are many standard ways to construct topologies on $\mathcal{MP}(\mathcal{X})$. If we choose any of these standard topologies on $\mathcal{MP}(\mathcal{X})$ and then relativize it to the subspace $\mathcal{MP}_{bf}(\mathcal{X})$, we can construct topologies on $\DMC_{\mathcal{X},\ast}^{(o)}$ through the canonical bijection.

We saw in Section \ref{subsecConvMeasuresTopology} that there are three topologies that can be constructed on $\mathcal{MP}(\mathcal{X})$: the total variation topology, the strong convergence topology, and the weak-$\ast$ topology. But since every measure in $\mathcal{MP}_{bf}(\mathcal{X})$ is a finitely supported measure, strong convergence and total variation convergence are equivalent in $\mathcal{MP}_{bf}(\mathcal{X})$ (see Section \ref{subsecConvMeasuresTopology}). Therefore, it is sufficient to study the total-variation topology and the weak-$\ast$ topology. We will start by studying the weak-$\ast$ topology.

\subsection{Weak-$\ast$ topology}

\label{subsecWeakStar}

We first note that in the case of binary input channels, the weak-$\ast$ topology is equivalent to the topology induced by the convergence in distribution of $D$-densities (or $L$-densities, or $G$-densities) that was defined in \cite{RichardsonUrbanke}. Note also that the weak-$\ast$ topology is equivalent to the topology that is induced by the Le Cam deficiency distance \cite{LeCam}.

Consider the topology on $\DMC_{\mathcal{X},\ast}^{(o)}$ that is obtained by transporting the weak-$\ast$ topology from $\mathcal{MP}_{bf}(\mathcal{X})$ to $\DMC_{\mathcal{X},\ast}^{(o)}$ through the canonical bijection $F_{\can}$, i.e., we let $U\subset \DMC_{\mathcal{X},\ast}^{(o)}$ be open if and only if $F_{\can}^{-1}(U)$ is weakly-$\ast$ open. We will call this topology \emph{the weak-$\ast$ topology on $\DMC_{\mathcal{X},\ast}^{(o)}$}.

In this section, we show that the weak-$\ast$ topology is the same as the noisiness topology $\mathcal{T}_{\mathcal{X},\ast}^{(o)}$. We will show this using the Wasserstein metric.

Since $\Delta_{\mathcal{X}}$ is complete and separable, the $1^{st}$-Wasserstein distance metrizes the weak-$\ast$ topology \cite{WassersteinMetric}. Therefore, in order to show that the weak-$\ast$ topology and the noisiness topology $\mathcal{T}_{\mathcal{X},\ast}^{(o)}$ are the same, it is sufficient to show that the canonical bijection $F_{\can}$ from $(\mathcal{MP}_{bf}(\mathcal{X}),W_1)$ to $(\DMC_{\mathcal{X},\ast}^{(o)},d_{\mathcal{X},\ast}^{(o)})$ is a homeomorphism.

Note that since $\Delta_{\mathcal{X}}$ is compact, the metric space $(\mathcal{MP}(\mathcal{X}),W_1)$ is compact as well \cite{WassersteinMetric}.

\begin{mylem}
\label{lemDMCXoWasserstein}
For every $\hat{W},\hat{W}'\in\DMC_{\mathcal{X},\ast}^{(o)}$, we have $d_{\mathcal{X},\ast}^{(o)}(\hat{W},\hat{W}')\leq |\mathcal{X}|\cdot W_1({\MP}_{\hat{W}},{\MP}_{\hat{W}'})$.
\end{mylem}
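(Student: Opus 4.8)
The plan is to reduce the inequality to a statement about the two fixed-output-alphabet representatives, and then to exploit the variational formula for $P_c$ together with a Wasserstein-coupling argument. First I would pick $n\geq 1$ large enough that $\hat{W},\hat{W}'\in\DMC_{\mathcal{X},[n]}^{(o)}$, so that $d_{\mathcal{X},\ast}^{(o)}(\hat{W},\hat{W}')=\sup_{m\geq 1,\;p\in\Delta_{[m]\times\mathcal{X}}}|P_c(p,\hat{W})-P_c(p,\hat{W}')|$. Fix an arbitrary $m\geq 1$ and $p\in\Delta_{[m]\times\mathcal{X}}$; it then suffices to bound $|P_c(p,\hat{W})-P_c(p,\hat{W}')|$ by $|\mathcal{X}|\cdot W_1({\MP}_{\hat W},{\MP}_{\hat W'})$, since the right-hand side does not depend on $m$ or $p$, and then take the supremum.

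The key step is to rewrite $P_c(p,\hat{W})$ in terms of the Blackwell measure ${\MP}_{\hat W}$. Writing $W\in\hat W$ with output alphabet $\Imag(W)$, and using $W(y|x)=|\mathcal{X}|P_W^o(y)W_y^{-1}(x)$, the quantity $\sum_{u,x,y}p(u,x)W(y|x)D(u|y)$ becomes $|\mathcal{X}|\sum_{y}P_W^o(y)\sum_{u}\bigl(\sum_x p(u,x)W_y^{-1}(x)\bigr)D(u|y)$. For a fixed posterior $q\in\Delta_{\mathcal{X}}$ the optimal (MAP) decoder picks, for each $y$, a value $u$ maximizing $\sum_x p(u,x)q(x)$; hence $P_c(p,\hat W)=|\mathcal{X}|\int_{\Delta_{\mathcal{X}}}g_p(q)\,d{\MP}_{\hat W}(q)$, where $g_p(q):=\max_{u\in[m]}\sum_{x\in\mathcal{X}}p(u,x)q(x)$. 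The crucial observation is that $g_p$ is Lipschitz in $q$ with constant $1$ with respect to the total variation metric on $\Delta_{\mathcal{X}}$ — more precisely, for $q,q'\in\Delta_{\mathcal{X}}$, $|g_p(q)-g_p(q')|\leq\max_u|\sum_x p(u,x)(q(x)-q'(x))|\leq \sum_x|q(x)-q'(x)|=2\|q-q'\|_{TV}$, but in fact since each $\sum_x p(u,x)(q(x)-q'(x))$ is a difference of two probabilities it is bounded by $\|q-q'\|_{TV}\leq$ the metric $d$ used in $W_1$; one needs to be careful to match the Lipschitz constant with whichever metric on $\Delta_{\mathcal{X}}$ induces $W_1$, and the factor of $|\mathcal{X}|$ in the statement gives the necessary slack.

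Given the $1$-Lipschitz (up to the metric normalization) bound on $g_p$, I would finish by the standard coupling argument: for any coupling $\gamma\in\Gamma({\MP}_{\hat W},{\MP}_{\hat W'})$,
\[
|P_c(p,\hat W)-P_c(p,\hat W')| = |\mathcal{X}|\left|\int_{\Delta_{\mathcal{X}}\times\Delta_{\mathcal{X}}}\bigl(g_p(q)-g_p(q')\bigr)\,d\gamma(q,q')\right|\leq |\mathcal{X}|\int_{\Delta_{\mathcal{X}}\times\Delta_{\mathcal{X}}} d(q,q')\,d\gamma(q,q'),
\]
and taking the infimum over $\gamma$ yields $|P_c(p,\hat W)-P_c(p,\hat W')|\leq |\mathcal{X}|\cdot W_1({\MP}_{\hat W},{\MP}_{\hat W'})$. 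Taking the supremum over $m$ and $p$ completes the proof. The main obstacle I anticipate is purely bookkeeping: verifying the exact Lipschitz constant of $g_p$ against the specific metric on $\Delta_{\mathcal{X}}$ that is used to define $W_1$ (the paper uses the total variation metric $\frac12\|\cdot\|_1$ on $\Delta_{\mathcal{X}}$), and checking that the constant $|\mathcal{X}|$ in the statement indeed absorbs any discrepancy; once the right normalization is pinned down, the coupling step is routine. Everything else (the MAP-decoder identity for $P_c$, measurability of $g_p$, boundedness) is elementary.
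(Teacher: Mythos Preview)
Your approach is correct and genuinely different from the paper's. You rewrite $P_c(p,\hat W)=|\mathcal{X}|\int_{\Delta_{\mathcal{X}}}g_p\,d\MP_{\hat W}$ with $g_p(q)=\max_{u}\sum_x p(u,x)q(x)$, observe that $g_p$ is $1$-Lipschitz with respect to the total-variation metric $d(q,q')=\frac12\|q-q'\|_1$, and then invoke the coupling (Kantorovich--Rubinstein) characterization of $W_1$ directly. The paper instead takes any coupling $\gamma$ of $\MP_{\hat W}$ and $\MP_{\hat W'}$, uses the finitely many atoms $(p,p')$ of $\gamma$ as a \emph{common output alphabet} $\mathcal{Y}$, builds explicit representatives $W,W'\in\DMC_{\mathcal{X},\mathcal{Y}}$ with $W(p,p'|x)=|\mathcal{X}|\gamma(p,p')p(x)$ and $W'(p,p'|x)=|\mathcal{X}|\gamma(p,p')p'(x)$, and then applies Lemma~\ref{lemRelDistance} to bound $d_{\mathcal{X},\ast}^{(o)}(\hat W,\hat W')\leq d_{\mathcal{X},\mathcal{Y}}(W,W')$, which computes to $|\mathcal{X}|\int d(p,p')\,d\gamma$. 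Your route is shorter and sidesteps Lemma~\ref{lemRelDistance} entirely; the paper's route avoids deriving the Blackwell-measure formula for $P_c$ and keeps everything at the level of channels.

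One small correction to your bookkeeping: your parenthetical justification that $\sum_x p(u,x)(q(x)-q'(x))$ is ``a difference of two probabilities'' is not quite right, since $\sum_x p(u,x)$ need not equal $1$. The clean argument is that the coefficients $a_x:=p(u,x)$ lie in $[0,1]$ and $\sum_x(q(x)-q'(x))=0$, so $\bigl|\sum_x a_x(q(x)-q'(x))\bigr|\leq\sum_{x:\,q(x)>q'(x)}(q(x)-q'(x))=\|q-q'\|_{TV}=d(q,q')$. With this, $g_p$ is exactly $1$-Lipschitz for the metric used in $W_1$, and the factor $|\mathcal{X}|$ comes solely from the $|\mathcal{X}|$ in front of the integral, as you wrote.
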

\begin{proof}
See Appendix \ref{appDMCXoWasserstein}.
\end{proof}

\vspace*{3mm}

Lemma \ref{lemDMCXoWasserstein} can also be expressed as follows: for every ${\MP},{\MP}'\in\mathcal{MP}_{bf}(\mathcal{X})$, we have $d_{\mathcal{X},\ast}^{(o)}(F_{\can}({\MP}),F_{\can}({\MP}'))\leq |\mathcal{X}|\cdot W_1({\MP},{\MP}')$. This shows that the canonical bijection $F_{\can}$ is continuous. Therefore, the weak-$\ast$ topology is at least as strong as $\mathcal{T}_{\mathcal{X},\ast}^{(o)}$. It remains to show that $F_{\can}^{-1}$ is continuous. One approach to prove the continuity of $F_{\can}^{-1}$ is to find a lower bound of $d_{\mathcal{X},\ast}^{(o)}(\hat{W},\hat{W}')$ in terms of the Wasserstein metric, but this is tedious. We will follow another approach in order to show that the canonical bijection $F_{\can}$ is a homeomorphism. We need the following proposition:

\begin{myprop}
\label{propClosureFinSupported}
The  weak-$\ast$ closure of $\mathcal{MP}_{bf}(\mathcal{X})$ is $\mathcal{MP}_{b}(\mathcal{X})$.
\end{myprop}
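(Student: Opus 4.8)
The plan is to prove the two inclusions $\cl^{\ast}(\mathcal{MP}_{bf}(\mathcal{X})) \subseteq \mathcal{MP}_{b}(\mathcal{X})$ and $\mathcal{MP}_{b}(\mathcal{X}) \subseteq \cl^{\ast}(\mathcal{MP}_{bf}(\mathcal{X}))$ separately. For the first inclusion, the key observation is that the ``balanced'' condition is a weak-$\ast$ closed condition: for each $x\in\mathcal{X}$, the coordinate evaluation $p\mapsto p(x)$ is a bounded continuous function on $\Delta_{\mathcal{X}}$, so the map $\MP\mapsto \int_{\Delta_{\mathcal{X}}} p(x)\,d\MP(p)$ is weak-$\ast$ continuous on $\mathcal{MP}(\mathcal{X})=\mathcal{P}(\Delta_{\mathcal{X}})$. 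Hence the set $\{\MP : \int p(x)\,d\MP(p) = \pi_{\mathcal{X}}(x)\text{ for all }x\in\mathcal{X}\} = \mathcal{MP}_{b}(\mathcal{X})$ is an intersection of finitely many weak-$\ast$ closed sets, therefore weak-$\ast$ closed. Since $\mathcal{MP}_{bf}(\mathcal{X}) \subseteq \mathcal{MP}_{b}(\mathcal{X})$, its closure is contained in $\mathcal{MP}_{b}(\mathcal{X})$.

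For the reverse inclusion, I would show that every balanced meta-probability measure $\MP$ is a weak-$\ast$ limit of a sequence of balanced finitely supported measures. Since $\Delta_{\mathcal{X}}$ is compact (hence a Polish space) and the weak-$\ast$ topology on $\mathcal{P}(\Delta_{\mathcal{X}})$ is metrized by $W_1$, it suffices to produce, for each $\varepsilon>0$, some $\MP'\in\mathcal{MP}_{bf}(\mathcal{X})$ with $W_1(\MP,\MP')<\varepsilon$. The natural construction is to partition $\Delta_{\mathcal{X}}$ into finitely many Borel cells $\{B_i\}$ each of diameter less than $\varepsilon$, and replace the restriction of $\MP$ to each cell by a point mass of the same total weight located at the \emph{barycenter} $b_i := \frac{1}{\MP(B_i)}\int_{B_i} p\,d\MP(p) \in \Delta_{\mathcal{X}}$ (using convexity of $\Delta_{\mathcal{X}}$, with cells of zero measure discarded). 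Call the resulting measure $\MP' = \sum_i \MP(B_i)\,\delta_{b_i}$. Choosing the coupling that keeps mass within each cell gives $W_1(\MP,\MP') \le \sum_i \int_{B_i} d(p, b_i)\,d\MP(p) < \varepsilon$ since $b_i \in B_i$... here one must be slightly careful: $b_i$ need not lie in $B_i$, but it lies in $\conv(B_i)$, which has the same diameter as $B_i$ when $B_i$ is itself convex, so I would take the cells $B_i$ to be convex (e.g. intersect $\Delta_{\mathcal{X}}$ with a fine grid of half-spaces). Crucially, $\MP'$ is \emph{balanced}: $\int p\,d\MP'(p) = \sum_i \MP(B_i)\, b_i = \sum_i \int_{B_i} p\,d\MP(p) = \int_{\Delta_{\mathcal{X}}} p\,d\MP(p) = \pi_{\mathcal{X}}$. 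And $\MP'$ is finitely supported by construction, so $\MP'\in\mathcal{MP}_{bf}(\mathcal{X})$.

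The main obstacle is the interplay between the approximation and the balanced constraint: a naive discretization (rounding each sample to a nearby grid point) would destroy the balanced property, and one would have to correct it, which is awkward. The barycenter trick resolves this cleanly because averaging over each cell exactly preserves the global mean while collapsing the support to a finite set; the only technical care needed is ensuring the cells are convex (so that barycenters stay within diameter $\varepsilon$ of the cell and within $\Delta_{\mathcal{X}}$) and handling cells of $\MP$-measure zero. Combining the two inclusions gives $\cl^{\ast}(\mathcal{MP}_{bf}(\mathcal{X})) = \mathcal{MP}_{b}(\mathcal{X})$, as claimed.
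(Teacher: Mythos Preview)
Your proof is correct. The first inclusion (that $\mathcal{MP}_{b}(\mathcal{X})$ is weak-$\ast$ closed) is argued exactly as in the paper: the coordinate maps $p\mapsto p(x)$ are bounded and continuous, so the balancedness constraint is the intersection of finitely many closed level sets.

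For the density direction, your argument is genuinely different from the paper's and in fact more streamlined. The paper also partitions $\Delta_{\mathcal{X}}$ into small Borel cells $A_i$, but it picks an \emph{arbitrary} representative $p_i\in A_i$ in each cell to form a finitely supported $\MP_f$; this $\MP_f$ is $W_1$-close to $\MP$ but need not be balanced, and the paper then repairs the balance by mixing $\MP_f$ with a single Dirac mass $\delta_{p'}$ of weight $\epsilon/6$, where $p'$ is chosen so that the mean shifts back to $\pi_{\mathcal{X}}$ (and one has to check that $p'\in\Delta_{\mathcal{X}}$, which requires the extra bookkeeping with the lower and upper bounds $l_{x,i},u_{x,i}$). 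Your barycenter construction sidesteps this entire correction step: by placing the mass of each convex cell at its conditional mean you preserve $\int p\,d\MP(p)=\pi_{\mathcal{X}}$ exactly, at the modest cost of needing the cells to be convex so that the barycenter stays within the cell's diameter. Both approaches work; yours trades the balance-repair computation for a mild geometric requirement on the partition, which is easy to satisfy since $\Delta_{\mathcal{X}}$ is a simplex and can be cut by a fine Euclidean grid into convex pieces.
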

\begin{proof}
See appendix \ref{appClosureFinSupported}.
\end{proof}

\begin{mythe}
\label{theTopEquivalenceNoisinesWeakStar}
The weak-$\ast$ topology on $\DMC_{\mathcal{X},\ast}^{(o)}$ is the same as the noisiness topology $\mathcal{T}_{\mathcal{X},\ast}^{(o)}$.
\end{mythe}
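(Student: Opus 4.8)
The plan is to show that the canonical bijection $F_{\can}:(\mathcal{MP}_{bf}(\mathcal{X}),W_1)\rightarrow(\DMC_{\mathcal{X},\ast}^{(o)},d_{\mathcal{X},\ast}^{(o)})$ is a homeomorphism; since $W_1$ metrizes the weak-$\ast$ topology on $\mathcal{MP}(\mathcal{X})$ (and hence its restriction metrizes the relative weak-$\ast$ topology on $\mathcal{MP}_{bf}(\mathcal{X})$), this immediately identifies the weak-$\ast$ topology on $\DMC_{\mathcal{X},\ast}^{(o)}$ with $\mathcal{T}_{\mathcal{X},\ast}^{(o)}$. Lemma \ref{lemDMCXoWasserstein} already gives one direction: $F_{\can}$ is (Lipschitz, hence) continuous. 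So the whole task reduces to proving continuity of $F_{\can}^{-1}$, equivalently that $F_{\can}$ is an open map, equivalently (since both are metric, hence first-countable and sequential) that $F_{\can}$ is sequentially a homeomorphism: whenever $\hat{W}_n\to\hat{W}$ in $(\DMC_{\mathcal{X},\ast}^{(o)},d_{\mathcal{X},\ast}^{(o)})$, we must have $\MP_{\hat{W}_n}\to\MP_{\hat{W}}$ weakly-$\ast$, i.e. $W_1(\MP_{\hat{W}_n},\MP_{\hat{W}})\to 0$.

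The key idea for this direction is a compactness-and-uniqueness argument that avoids the tedious direct lower bound on $d_{\mathcal{X},\ast}^{(o)}$ in terms of $W_1$. First I would observe that the whole picture sits inside the compact metric space $(\mathcal{MP}(\mathcal{X}),W_1)$. Suppose $\hat{W}_n\to\hat{W}$ in the noisiness metric but $\MP_{\hat{W}_n}\not\to\MP_{\hat{W}}$ weakly-$\ast$. Passing to a subsequence, $W_1(\MP_{\hat{W}_n},\MP_{\hat{W}})\geq\epsilon>0$ for all $n$; by compactness of $(\mathcal{MP}(\mathcal{X}),W_1)$, pass to a further subsequence along which $\MP_{\hat{W}_n}$ converges weakly-$\ast$ to some $\MP_\infty\in\mathcal{MP}(\mathcal{X})$. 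By Proposition \ref{propClosureFinSupported}, the weak-$\ast$ closure of $\mathcal{MP}_{bf}(\mathcal{X})$ is $\mathcal{MP}_b(\mathcal{X})$, so $\MP_\infty\in\mathcal{MP}_b(\mathcal{X})$; and by continuity of $W_1$, $W_1(\MP_\infty,\MP_{\hat{W}})\geq\epsilon>0$, so $\MP_\infty\neq\MP_{\hat{W}}$. The plan is then to derive a contradiction by showing that $\MP_\infty$ must nevertheless ``induce the same noisiness profile'' as $\hat{W}$. Concretely, I would extend the definition of $P_c(p,\cdot)$ to all of $\mathcal{MP}_b(\mathcal{X})$ (for balanced meta-probability measures, $P_c(p,\MP)$ has the natural integral form $\sup_{D}\int_{\Delta_{\mathcal{X}}}\big(\sum_{u}p(u,\cdot)$-weighted$\big)\,d\MP$, or more simply as the expected maximum-a-posteriori success rate), note that $\MP\mapsto P_c(p,\MP)$ is weak-$\ast$ continuous for each fixed $p$ (the relevant integrand is continuous and bounded on $\Delta_{\mathcal{X}}$ after optimizing over the finitely many deterministic decoders — a finite supremum of continuous functions), hence $P_c(p,\MP_{\hat{W}_n})\to P_c(p,\MP_\infty)$; meanwhile $d_{\mathcal{X},\ast}^{(o)}(\hat{W}_n,\hat{W})\to 0$ forces $P_c(p,\MP_{\hat{W}_n})\to P_c(p,\MP_{\hat{W}})$ uniformly in $p$. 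Therefore $P_c(p,\MP_\infty)=P_c(p,\MP_{\hat{W}})$ for every $m\geq 1$ and every $p\in\Delta_{[m]\times\mathcal{X}}$. Finally, by the Buscemi-type characterization (\cite{Buscemi}, as cited after \eqref{eqProbCorrectGuess}), which extends to balanced meta-probability measures, equality of $P_c(p,\cdot)$ for all test encoders forces $\MP_\infty=\MP_{\hat{W}}$, contradicting $\MP_\infty\neq\MP_{\hat{W}}$.

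The main obstacle, and the step that needs genuine care, is the claim that $P_c(p,\cdot)$ extends continuously to $\mathcal{MP}_b(\mathcal{X})$ in the weak-$\ast$ sense and that the ``$P_c$ determines the measure'' uniqueness statement remains valid on $\mathcal{MP}_b(\mathcal{X})$ rather than only on $\mathcal{MP}_{bf}(\mathcal{X})$. For continuity, the point is that $P_c(p,\MP)=\int_{\Delta_{\mathcal{X}}}g_p\,d\MP$ where $g_p(q)=\max_{u\in[m]}\sum_{x\in\mathcal{X}}p(u,x)\,\tfrac{q(x)}{\cdots}$ — one has to write $P_c$ as an integral of a fixed bounded continuous function of the posterior $q$ against $\MP$, using that the optimal decoder is the MAP decoder and that ``probability of MAP being correct'' is, for a given received-symbol posterior $q$, an explicit continuous function of $q$ (a maximum over a finite set, rescaled appropriately by the balancedness normalization). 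Once $P_c(p,\MP)$ is exhibited in this integral form with $g_p$ bounded and continuous on the compact set $\Delta_{\mathcal{X}}$, weak-$\ast$ continuity is immediate from the definition of weak-$\ast$ convergence. For the uniqueness on $\mathcal{MP}_b(\mathcal{X})$, the Blackwell/Le Cam theory (and the reference \cite{Buscemi}) gives that two balanced meta-probability measures with the same $P_c$-profile are equal, since the $P_c$-profile is essentially the Blackwell order data and a balanced measure is its own barycentric decomposition; alternatively one can note that the functions $g_p$, as $m$ and $p$ range, are dense enough (they include, up to affine combinations, all functions $q\mapsto\max_x a_x q(x)$) to separate measures on $\Delta_{\mathcal{X}}$. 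I would prove these two facts as a short lemma (placed, say, in an appendix) and then assemble the contradiction above. With that lemma in hand the argument is complete: $F_{\can}$ is a continuous bijection between the relevant spaces whose inverse is sequentially continuous between metric spaces, hence a homeomorphism, so the weak-$\ast$ topology equals $\mathcal{T}_{\mathcal{X},\ast}^{(o)}$.
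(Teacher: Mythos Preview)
Your proposal is correct. Both your argument and the paper's rest on Lemma~\ref{lemDMCXoWasserstein}, Proposition~\ref{propClosureFinSupported}, and the weak-$\ast$ compactness of $\mathcal{MP}_b(\mathcal{X})$; where they differ is in how continuity of $F_{\can}^{-1}$ is obtained. The paper extends $F_{\can}$ by continuity to a map $\overline{F}:\mathcal{MP}_b(\mathcal{X})\to\overline{\DMC}_{\mathcal{X},\ast}^{(o)}$ into the $d_{\mathcal{X},\ast}^{(o)}$-completion, observes that a continuous map from a compact space to a Hausdorff space is closed, checks surjectivity, declares $\overline{F}$ a homeomorphism, and restricts; this packaging yields as a bonus the identification of the completion $(\overline{\DMC}_{\mathcal{X},\ast}^{(o)},\overline{d}_{\mathcal{X},\ast}^{(o)})$ with $(\mathcal{MP}_b(\mathcal{X}),W_1)$. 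Your route bypasses the completion and argues directly by subsequences, which is a bit more elementary and---more importantly---surfaces the one substantive step that the paper's argument also requires but leaves implicit: injectivity of $\overline{F}$ on $\mathcal{MP}_b(\mathcal{X})$, which is exactly your ``$P_c$-profile determines the measure'' lemma. For that lemma the clean formula you are looking for is
\[
P_c(p,\MP)=|\mathcal{X}|\int_{\Delta_{\mathcal{X}}} g_p(q)\,d\MP(q),\qquad g_p(q)=\max_{u\in[m]}\sum_{x\in\mathcal{X}}p(u,x)\,q(x),
\]
with no denominator; $g_p$ is bounded and continuous (a finite max of linear maps), which gives weak-$\ast$ continuity immediately. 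As $m$ and $p$ vary, positive multiples of the $g_p$ exhaust all functions $q\mapsto\max_u\langle a_u,q\rangle$ with $a_u\in\mathbb{R}_+^{\mathcal{X}}$; since on the simplex $\langle b,q\rangle=\langle b+c\mathbf{1},q\rangle-c$ and constants integrate identically against any two probability measures, one recovers all piecewise-linear convex functions on $\Delta_{\mathcal{X}}$, whose differences are uniformly dense in $C(\Delta_{\mathcal{X}})$. Thus the $g_p$ already separate probability measures on $\Delta_{\mathcal{X}}$ (balancedness is not even needed for the uniqueness step, only for $\MP_\infty\in\mathcal{MP}_b(\mathcal{X})$), and your contradiction goes through.
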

\begin{proof}
Let $(\overline{\DMC}_{\mathcal{X},\ast}^{(o)}, \overline{d}_{\mathcal{X},\ast}^{(o)})$ be a completion of $(\DMC_{\mathcal{X},\ast}^{(o)},d_{\mathcal{X},\ast}^{(o)})$. Since $\mathcal{MP}_{b}(\mathcal{X})$ is the weak-$\ast$ closure of $\mathcal{MP}_{bf}(\mathcal{X})$ (Proposition \ref{propClosureFinSupported}), we can extend the canonical bijection $F_{\can}: \mathcal{MP}_{bf}(\mathcal{X})\rightarrow \DMC_{\mathcal{X},\ast}^{(o)}$ to a mapping $\overline{F}: \mathcal{MP}_{b}(\mathcal{X})\rightarrow \overline{\DMC}_{\mathcal{X},\ast}^{(o)}$ as follows:
\begin{equation}
\label{eqExtensionFunction}
\overline{F}({\MP})=\lim_{n\to\infty}F_{\can}({\MP}_n),
\end{equation}
where $({\MP}_n)_{n\geq 0}$ is any sequence in $\mathcal{MP}_{bf}(\mathcal{X})$ that converges to ${\MP}\in \mathcal{MP}_{b}(\mathcal{X})$, and where the limit in \eqref{eqExtensionFunction} is taken inside $\overline{\DMC}_{\mathcal{X},\ast}^{(o)}$. In order to show that $\overline{F}$ is well defined, we have to make sure that the limit in \eqref{eqExtensionFunction} exists and that it does not depend on the sequence $({\MP}_n)_{n\geq 0}$.

Since the sequence $({\MP}_n)_{n\geq 0}$ converges, it is a Cauchy sequence. Therefore, for every $\epsilon>0$ there exists $n_0>0$ such that for every $n_1,n_2\geq 1$ we have $\displaystyle W_1({\MP}_{n_1},{\MP}_{n_2})<\frac{\epsilon}{|\mathcal{X}|}$. By Lemma \ref{lemDMCXoWasserstein}, we have $$ \overline{d}_{\mathcal{X},\ast}^{(o)}(F_{\can}({\MP}_{n_1}),F_{\can}({\MP}_{n_2}))=d_{\mathcal{X},\ast}^{(o)}(F_{\can}({\MP}_{n_1}),F_{\can}({\MP}_{n_2}))\leq |\mathcal{X}|\cdot W_1({\MP}_{n_1},{\MP}_{n_2})<\epsilon.$$
Therefore, $(F_{\can}({\MP}_n))_{n\geq 0}$ is a Cauchy sequence in $(\overline{\DMC}_{\mathcal{X},\ast}^{(o)}, \overline{d}_{\mathcal{X},\ast}^{(o)})$ which is complete, hence the limit in \eqref{eqExtensionFunction} exists. Now assume that $({\MP}_n')_{n\geq 0}$ is another sequence in $\mathcal{MP}_{bf}(\mathcal{X})$ which converges to ${\MP}$. We have:
\begin{align*}
\lim_{n\to\infty} \overline{d}_{\mathcal{X},\ast}^{(o)}\big(F_{\can}({\MP}_n),F_{\can}({\MP}_n')\big)&=\lim_{n\to\infty} d_{\mathcal{X},\ast}^{(o)}\big(F_{\can}({\MP}_n),F_{\can}({\MP}_n')\big)\\
&\stackrel{(a)}{\leq} \lim_{n\to\infty} |\mathcal{X}|\cdot W_1({\MP}_n,{\MP}_n')\stackrel{(b)}{=}0,
\end{align*}
where (a) follows from Lemma \ref{lemDMCXoWasserstein} and (b) follows from the fact that $({\MP}_n)_{n\geq 0}$ and $({\MP}_n')_{n\geq 0}$ converge to the same point. Therefore, $(F_{\can}({\MP}_n))_{n\geq 0}$ and $(F_{\can}({\MP}_n'))_{n\geq 0}$ converge to the same point in $\overline{\DMC}_{\mathcal{X},\ast}^{(o)}$. We conclude that $\overline{F}$ is well defined.

Now fix ${\MP},{\MP}'\in\mathcal{MP}_b(\mathcal{X})$ and let $({\MP}_n)_{n\geq 0}$ and $({\MP}_n')_{n\geq 0}$ be two sequences in $\mathcal{MP}_{bf}(\mathcal{X})$ that converge to ${\MP}$ and ${\MP}'$ respectively. We have:
\begin{align*}
\overline{d}_{\mathcal{X},\ast}^{(o)}\left(\overline{F}({\MP}),\overline{F}({\MP}')\right)&=\overline{d}_{\mathcal{X},\ast}^{(o)}\left(\lim_{n\to\infty}F_{\can}({\MP}_n),\lim_{n\to\infty}F_{\can}({\MP}'_n)\right)\\
&\stackrel{(a)}{=}\lim_{n\to\infty}\overline{d}_{\mathcal{X},\ast}^{(o)}(F_{\can}({\MP}_n),F_{\can}({\MP}_n'))\\
&=\lim_{n\to\infty}d_{\mathcal{X},\ast}^{(o)}(F_{\can}({\MP}_n),F_{\can}({\MP}_n'))\\
&\stackrel{(b)}{\leq}\lim_{n\to\infty}|\mathcal{X}|\cdot W_1({\MP}_n,{\MP}_n')\stackrel{(c)}{=}|\mathcal{X}|\cdot W_1({\MP},{\MP}'),
\end{align*}
where (a) and (c) follow from the fact that metric distances are continuous, and (b) follows from Lemma \ref{lemDMCXoWasserstein}. Therefore, $\overline{F}$ is continuous from $(\mathcal{MP}_b(\mathcal{X}),W_1)$ to $(\overline{\DMC}_{\mathcal{X},\ast}^{(o)}, \overline{d}_{\mathcal{X},\ast}^{(o)})$. Moreover, since $\mathcal{MP}_b(\mathcal{X})$ is weakly-$\ast$ closed in $\mathcal{MP}(\mathcal{X})$ which is compact, $\mathcal{MP}_b(\mathcal{X})$ is compact under the weak-$\ast$ topology. Therefore for every weakly-$\ast$ closed subset $A$ of $\mathcal{MP}_b(\mathcal{X})$, $A$ is compact and so $\overline{F}(A)$ is compact in $(\overline{\DMC}_{\mathcal{X},\ast}^{(o)}, \overline{d}_{\mathcal{X},\ast}^{(o)})$ which is Hausdorff. This implies that $\overline{F}(A)$ is closed in $(\overline{\DMC}_{\mathcal{X},\ast}^{(o)}, \overline{d}_{\mathcal{X},\ast}^{(o)})$ for every weakly-$\ast$ closed subset $A$ of $\mathcal{MP}_b(\mathcal{X})$. Therefore, $\overline{F}$ is both continuous and closed. In particular, $\overline{F}(\mathcal{MP}_b(\mathcal{X}))$ is closed in $(\overline{\DMC}_{\mathcal{X},\ast}^{(o)}, \overline{d}_{\mathcal{X},\ast}^{(o)})$. But $\overline{F}(\mathcal{MP}_b(\mathcal{X}))\supset \overline{F}(\mathcal{MP}_{bf}(\mathcal{X}))=F_{\can}(\mathcal{MP}_{bf}(\mathcal{X}))=\DMC_{\mathcal{X},\ast}^{(o)}$, and $\DMC_{\mathcal{X},\ast}^{(o)}$ is dense in $(\overline{\DMC}_{\mathcal{X},\ast}^{(o)}, \overline{d}_{\mathcal{X},\ast}^{(o)})$. Therefore, we must have $\overline{F}(\mathcal{MP}_b(\mathcal{X}))=\overline{\DMC}_{\mathcal{X},\ast}^{(o)}$. We conclude that $\overline{F}$ is a homeomorphism from $(\mathcal{MP}_b(\mathcal{X}),W_1)$ to $(\overline{\DMC}_{\mathcal{X},\ast}^{(o)}, \overline{d}_{\mathcal{X},\ast}^{(o)})$.

Now since $\overline{F}\big(\mathcal{MP}_{bf}(\mathcal{X})\big)=\DMC_{\mathcal{X},\ast}^{(o)}$, the restriction of $\overline{F}$ to $\mathcal{MP}_{bf}(\mathcal{X})$ is a homeomorphism from $(\mathcal{MP}_{bf}(\mathcal{X}),W_1)$ to $(\DMC_{\mathcal{X},\ast}^{(o)},d_{\mathcal{X},\ast}^{(o)})$. But the restriction of $\overline{F}$ to $\mathcal{MP}_{bf}(\mathcal{X})$ is nothing but $F_{\can}$. We conclude that the canonical bijection is a homeomorphism from $(\mathcal{MP}_{bf}(\mathcal{X}),W_1)$ to $(\DMC_{\mathcal{X},\ast}^{(o)},d_{\mathcal{X},\ast}^{(o)})$. Therefore, the weak-$\ast$ topology on $\DMC_{\mathcal{X},\ast}^{(o)}$ is the same as the noisiness topology $\mathcal{T}_{\mathcal{X},\ast}^{(o)}$.
\end{proof}

\vspace*{3mm}

Since $(\mathcal{MP}_b(\mathcal{X}),W_1)$ is homeomorphic to $(\overline{\DMC}_{\mathcal{X},\ast}^{(o)}, \overline{d}_{\mathcal{X},\ast}^{(o)})$, we can interpret this by saying that $\overline{\DMC}_{\mathcal{X},\ast}^{(o)}$ is the space of all equivalent channels with input alphabet $\mathcal{X}$ and arbitrary output alphabet (with arbitrary cardinality). Moreover, since $\DMC_{\mathcal{X},\ast}^{(o)}$ is dense in $(\overline{\DMC}_{\mathcal{X},\ast}^{(o)}, \overline{d}_{\mathcal{X},\ast}^{(o)})$, we can say that any channel with input alphabet $\mathcal{X}$ can be approximated in the noisiness/weak-$\ast$ sense by a channel having a finite output alphabet.

\subsection{Total variation topology}
The \emph{total-variation metric distance} $d_{TV,\mathcal{X},\ast}^{(o)}$ on $\DMC_{\mathcal{X},\ast}^{(o)}$ is defined as
$$d_{TV,\mathcal{X},\ast}^{(o)}(\hat{W},\hat{W}')=\|{\MP}_{\hat{W}}-{\MP}_{\hat{W}'}\|_{TV}.$$

The \emph{total-variation topology} $\mathcal{T}_{TV,\mathcal{X},\ast}^{(o)}$ is the metric topology that is induced by $d_{TV,\mathcal{X},\ast}^{(o)}$ on $\DMC_{\mathcal{X},\ast}^{(o)}$. We will refer to the open sets (respectively, closed sets, compact sets, \ldots) of $\mathcal{T}_{TV,\mathcal{X},\ast}^{(o)}$ as TV-open (respectively, TV-closed, TV-compact, \ldots). The same notation is also used for open sets of $\mathcal{MP}_{bf}(\mathcal{X})$, $\mathcal{MP}_{b}(\mathcal{X})$ and $\mathcal{MP}(\mathcal{X})$ in the total variation topology.

\begin{myprop}
\label{propTotalVarNotNatural}
If $|\mathcal{X}|\geq 2$ and $n\geq 2$, then $\DMC_{\mathcal{X},[n]}^{(o)}$ is not TV-compact in $\DMC_{\mathcal{X},\ast}^{(o)}$.
\end{myprop}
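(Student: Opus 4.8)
The plan is to show that $\DMC_{\mathcal{X},[n]}^{(o)}$, equipped with the metric $d_{TV,\mathcal{X},\ast}^{(o)}$ restricted from $\DMC_{\mathcal{X},\ast}^{(o)}$, fails to be sequentially compact; since it is a metric space, this is the same as failing to be compact. Concretely, I would construct a sequence $(\hat{W}_k)_{k\geq 0}$ of rank-$2$ channels whose pairwise TV-distances are all equal to $1$. No subsequence of such a sequence can be Cauchy, hence none converges to any point of $\DMC_{\mathcal{X},\ast}^{(o)}$, let alone to a point of $\DMC_{\mathcal{X},[n]}^{(o)}$, which gives the conclusion for every $n\geq 2$.

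To build the sequence I would pass through Blackwell measures. Fix two distinct symbols $x_0\neq x_1$ in $\mathcal{X}$ (available because $|\mathcal{X}|\geq 2$), and for $0<\epsilon\leq 1/|\mathcal{X}|$ set $p_\epsilon^{\pm}=\pi_{\mathcal{X}}\pm\epsilon(\mathds{1}_{\{x_0\}}-\mathds{1}_{\{x_1\}})\in\Delta_{\mathcal{X}}$. The meta-probability measure $\MP_\epsilon=\tfrac{1}{2}\delta_{p_\epsilon^{+}}+\tfrac{1}{2}\delta_{p_\epsilon^{-}}$ is finitely supported with exactly two atoms (since $\epsilon>0$ forces $p_\epsilon^{+}\neq p_\epsilon^{-}$) and is balanced because $\tfrac12 p_\epsilon^{+}+\tfrac12 p_\epsilon^{-}=\pi_{\mathcal{X}}$, so $\MP_\epsilon\in\mathcal{MP}_{bf}(\mathcal{X})$; by Proposition \ref{propCharacPostMetaProb} it is $\MP_{\hat{W}_\epsilon}$ for some $\hat{W}_\epsilon\in\DMC_{\mathcal{X},\ast}^{(o)}$, and $\rank(\hat{W}_\epsilon)=2$, so $\hat{W}_\epsilon\in\DMC_{\mathcal{X},[n]}^{(o)}$ whenever $n\geq 2$.

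Finally I would pick a strictly decreasing null sequence $\epsilon_k\in(0,1/|\mathcal{X}|]$ (for instance $\epsilon_k=\frac{1}{(k+1)|\mathcal{X}|}$). For $k\neq k'$ the four distributions $p_{\epsilon_k}^{+},p_{\epsilon_k}^{-},p_{\epsilon_{k'}}^{+},p_{\epsilon_{k'}}^{-}$ are pairwise distinct, so $\supp(\MP_{\epsilon_k})\cap\supp(\MP_{\epsilon_{k'}})=\o$ and hence $\|\MP_{\epsilon_k}-\MP_{\epsilon_{k'}}\|_{TV}=1$, i.e. $d_{TV,\mathcal{X},\ast}^{(o)}(\hat{W}_{\epsilon_k},\hat{W}_{\epsilon_{k'}})=1$. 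Thus $(\hat{W}_{\epsilon_k})_{k\geq 0}$ is a sequence in $\DMC_{\mathcal{X},[n]}^{(o)}$ with no Cauchy (hence no convergent) subsequence, so $\DMC_{\mathcal{X},[n]}^{(o)}$ is not TV-sequentially compact, and therefore not TV-compact. There is no real obstacle here: the only points needing care are checking that $p_\epsilon^{\pm}$ stays in the simplex and that $\MP_\epsilon$ is genuinely balanced and finitely supported so that Proposition \ref{propCharacPostMetaProb} applies, and both are immediate. Equivalently, one can observe that the canonical bijection $\hat{W}\mapsto\MP_{\hat{W}}$ is a TV-isometry onto $(\mathcal{MP}_{bf}(\mathcal{X}),\|\cdot\|_{TV})$ carrying $\DMC_{\mathcal{X},[n]}^{(o)}$ onto the measures with at most $n$ atoms, and run the same construction there.
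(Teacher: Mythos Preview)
Your proposal is correct and follows essentially the same approach as the paper: build a sequence of rank-$2$ Blackwell measures of the form $\tfrac12\delta_{q_k}+\tfrac12\delta_{q_k'}$ with $\tfrac12 q_k+\tfrac12 q_k'=\pi_{\mathcal{X}}$ and pairwise disjoint supports, so that all mutual TV-distances equal $1$, and conclude via the equivalence of compactness and sequential compactness in metric spaces. The paper uses the abstract parametrization $q_k=\tfrac1k p+(1-\tfrac1k)\pi_{\mathcal{X}}$ for an arbitrary pair $p,p'$ averaging to $\pi_{\mathcal{X}}$, while you make the explicit choice $p_\epsilon^{\pm}=\pi_{\mathcal{X}}\pm\epsilon(\mathds{1}_{\{x_0\}}-\mathds{1}_{\{x_1\}})$; these are the same construction up to that specialization.
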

\begin{proof}
Let $p,p'\in\Delta_{\mathcal{X}}$ be such that $p\neq p'$ and $\displaystyle\frac{1}{2}p+\frac{1}{2}p'=\pi_{\mathcal{X}}$, where $\pi_{\mathcal{X}}$ is the uniform distribution on $\mathcal{X}$. For every $n\geq 1$, define $p_n,p_n'\in\Delta_{\mathcal{X}}$ as
$$p_n=\frac{1}{n}p+\left(1-\frac{1}{n}\right)\pi_{\mathcal{X}},$$
and
$$p_n'=\frac{1}{n}p'+\left(1-\frac{1}{n}\right)\pi_{\mathcal{X}}.$$
Clearly, $\displaystyle\frac{1}{2}p_n+\frac{1}{2}p'_n=\pi_{\mathcal{X}}$ for every $n\geq 1$.

Now let $\MP_n=\frac{1}{2}\delta_{p_n}+\frac{1}{2}\delta_{p_n'}$, where $\delta_{p_n}$ and $\delta_{p_n'}$ are Dirac measures centered at $p_n$ and $p_n'$ respectively. Clearly, $\MP_n$ is balanced and finitely supported for every $n\geq 1$. Let $\hat{W}_n=F_{\can}(\MP_n)$. We have $$|\supp({\MP}_{\hat{W}_n})|=|\supp({\MP}_n)|=|\{p_n,p_n'\}|=2.$$ Therefore, $\hat{W}_n\in\DMC_{\mathcal{X},[2]}^{(o)}\subset \DMC_{\mathcal{X},[m]}^{(o)}$ for every $n\geq 1$ and every $m\geq 2$. It is easy to see that $d_{TV,\mathcal{X},\ast}^{(o)}(\hat{W}_{n_1},\hat{W}_{n_2})=\|\MP_{n_1}-\MP_{n_2}\|_{TV}=1$ for every $n_2>n_1\geq 1$. Therefore, no subsequence of $(\MP_n)_{n\geq 1}$ can converge. This means that $\DMC_{\mathcal{X},[m]}^{(o)}$ is not sequentially compact for any $m\geq 2$. Now since $\mathcal{T}_{TV,\mathcal{X},\ast}^{(o)}$ is metrizable, we conclude that $\DMC_{\mathcal{X},[n]}^{(o)}$ is not compact for any $n\geq 2$.
\end{proof}

\begin{mycor}
If $|\mathcal{X}|\geq 2$, then $\mathcal{T}_{TV,\mathcal{X},\ast}^{(o)}$ is not a natural topology.
\end{mycor}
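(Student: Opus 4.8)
The plan is to derive a contradiction from the assumption that $\mathcal{T}_{TV,\mathcal{X},\ast}^{(o)}$ is natural, by pitting the compactness of $\DMC_{\mathcal{X},[n]}^{(o)}$ under the quotient topology against Proposition \ref{propTotalVarNotNatural}. First I would recall that, by definition, if $\mathcal{T}_{TV,\mathcal{X},\ast}^{(o)}$ were natural then it would induce the quotient topology $\mathcal{T}_{\mathcal{X},[n]}^{(o)}$ on the subspace $\DMC_{\mathcal{X},[n]}^{(o)}$ for every $n\geq 1$. Since $|\mathcal{X}|\geq 2$, it suffices to work with $n=2$, for which $\DMC_{\mathcal{X},[2]}^{(o)}$ is a well-defined subspace of $\DMC_{\mathcal{X},\ast}^{(o)}$.

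Next I would invoke Theorem \ref{theDMCXYo}, which guarantees that $(\DMC_{\mathcal{X},[2]}^{(o)},\mathcal{T}_{\mathcal{X},[2]}^{(o)})$ is compact. The key observation is that compactness of a subspace is an intrinsic property: a subset $A$ of a topological space is compact precisely when $A$ equipped with its relative topology is compact, irrespective of the ambient space. Hence, if the relativization of $\mathcal{T}_{TV,\mathcal{X},\ast}^{(o)}$ to $\DMC_{\mathcal{X},[2]}^{(o)}$ coincides with $\mathcal{T}_{\mathcal{X},[2]}^{(o)}$, then $\DMC_{\mathcal{X},[2]}^{(o)}$ would be a compact subspace of $(\DMC_{\mathcal{X},\ast}^{(o)},\mathcal{T}_{TV,\mathcal{X},\ast}^{(o)})$, i.e., TV-compact in $\DMC_{\mathcal{X},\ast}^{(o)}$.

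Finally, this contradicts Proposition \ref{propTotalVarNotNatural} applied with $n=2$ (valid since $|\mathcal{X}|\geq 2$), which asserts precisely that $\DMC_{\mathcal{X},[2]}^{(o)}$ is \emph{not} TV-compact in $\DMC_{\mathcal{X},\ast}^{(o)}$. Therefore $\mathcal{T}_{TV,\mathcal{X},\ast}^{(o)}$ cannot be natural. I do not expect any real obstacle here: the substantive work is entirely contained in Proposition \ref{propTotalVarNotNatural}, and the remaining step is the routine transfer of the compactness of the pieces $\DMC_{\mathcal{X},[n]}^{(o)}$ into the ambient space via the definition of naturality.
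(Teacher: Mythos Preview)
Your proposal is correct and follows exactly the same approach as the paper: assuming naturality would force $\DMC_{\mathcal{X},[2]}^{(o)}$ to carry the compact quotient topology, contradicting Proposition \ref{propTotalVarNotNatural}. The paper compresses this into a single sentence, while you spell out the routine transfer of compactness through the definition of naturality, but the logic is identical.
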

\begin{proof}
If $\mathcal{T}_{TV,\mathcal{X},\ast}^{(o)}$ were natural, $\DMC_{\mathcal{X},[2]}^{(o)}$ would be compact, and this is not the case.
\end{proof}

\vspace*{3mm}

Since the noisiness topology is the same as the weak-$\ast$ topology, $\mathcal{T}_{\mathcal{X},\ast}^{(o)}$ is coarser than $\mathcal{T}_{TV,\mathcal{X},\ast}^{(o)}$. On the other hand, since $\mathcal{T}_{\mathcal{X},\ast}^{(o)}$ is natural and $\mathcal{T}_{TV,\mathcal{X},\ast}^{(o)}$ is not, $\mathcal{T}_{\mathcal{X},\ast}^{(o)}$ is strictly coarser than $\mathcal{T}_{TV,\mathcal{X},\ast}^{(o)}$ when $|\mathcal{X}|\geq 2$.

Note that the sequence $(\MP_n)_{n\geq 1}$ in the proof of Proposition \ref{propTotalVarNotNatural} converges in the strong topology because of Proposition \ref{propCharacConvSeq}. Therefore, $\mathcal{T}_{s,\mathcal{X},\ast}^{(o)}$ is not finer than $\mathcal{T}_{TV,\mathcal{X},\ast}^{(o)}$.

Although $\mathcal{T}_{TV,\mathcal{X},\ast}^{(o)}$ is not a natural topology itself, it has many properties of natural topologies.

\begin{myprop}
\label{propTVOpenRankUnbounded}
If $|\mathcal{X}|\geq 2$, every non-empty TV-open subset of $\DMC_{\mathcal{X},\ast}^{(o)}$ is rank-unbounded.
\end{myprop}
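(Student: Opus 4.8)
The plan is to transport the claim through the canonical bijection $F_{\can}$ and reduce it to a purely measure-theoretic statement: for every $\MP_0\in\mathcal{MP}_{bf}(\mathcal{X})$, every $\epsilon>0$ and every $N\geq 1$ there is $\MP\in\mathcal{MP}_{bf}(\mathcal{X})$ with $\|\MP_0-\MP\|_{TV}<\epsilon$ and $|\supp(\MP)|\geq N$. This suffices: given a non-empty TV-open $U$, pick $\hat{W}_0\in U$ and an $\epsilon>0$ such that the $d_{TV,\mathcal{X},\ast}^{(o)}$-ball of radius $\epsilon$ around $\hat{W}_0$ lies in $U$, and apply the statement to $\MP_0=\MP_{\hat{W}_0}$; since $\rank\big(F_{\can}(\MP)\big)=|\supp(\MP)|\geq N$ and $d_{TV,\mathcal{X},\ast}^{(o)}\big(\hat{W}_0,F_{\can}(\MP)\big)=\|\MP_0-\MP\|_{TV}<\epsilon$, the set $U$ contains points of arbitrarily large rank and hence cannot be contained in any $\DMC_{\mathcal{X},[n]}^{(o)}=\{\hat{W}\in\DMC_{\mathcal{X},\ast}^{(o)}:\rank(\hat{W})\leq n\}$, i.e. $U$ is rank-unbounded. (Unlike the proof of Proposition \ref{propNaturallyOpenUnbounded}, which used emptiness of the interior of $\DMC_{\mathcal{X},[n]}^{(o)}$ in the quotient topology, here one needs a direct construction since the TV topology is not natural.)

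For the measure-theoretic statement the idea is mass splitting that preserves the balance constraint $\int p\,d\MP(p)=\pi_{\mathcal{X}}$. If $|\supp(\MP_0)|\geq 2$, fix two distinct atoms $p_1,p_2$ of $\MP_0$ and let $m=\tfrac12(p_1+p_2)$; lying in the relative interior of the chord $[p_1,p_2]\subseteq\Delta_{\mathcal{X}}$, $m$ is not an extreme point of $\Delta_{\mathcal{X}}$, so a nondegenerate sub-segment of $\Delta_{\mathcal{X}}$ through $m$ is available. If instead $|\supp(\MP_0)|=1$, balancedness forces $\MP_0=\delta_{\pi_{\mathcal{X}}}$, and since $|\mathcal{X}|\geq 2$ the point $m:=\pi_{\mathcal{X}}$ again lies in the relative interior of $\Delta_{\mathcal{X}}$. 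In either case choose $N$ distinct points $q_1,\dots,q_N\in\Delta_{\mathcal{X}}$ on that segment, distinct from the finitely many atoms of $\MP_0$ and from $m$, with $\tfrac1N\sum_i q_i=m$ (parametrizing the segment by a scalar, the constraint becomes $\sum_i t_i=0$, which still leaves an $(N-1)$-dimensional family of admissible choices, so the finitely many forbidden configurations can be avoided). Then pick $\eta>0$ with $\eta<\epsilon/2$ and $\eta\leq\min\{\MP_0(p_1),\MP_0(p_2)\}$ (in the one-atom case just $2\eta\leq 1$), and set
$$\MP=\MP_0-\eta\delta_{p_1}-\eta\delta_{p_2}+\frac{2\eta}{N}\sum_{i=1}^{N}\delta_{q_i}$$
(with the evident variant $\MP_0-2\eta\delta_{\pi_{\mathcal{X}}}+\tfrac{2\eta}{N}\sum_i\delta_{q_i}$ in the one-atom case). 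A routine check then gives that $\MP$ is a finitely supported probability measure; that it is balanced, since the added barycenter contribution $\tfrac{2\eta}{N}\sum_i q_i=2\eta m=\eta(p_1+p_2)$ exactly cancels the removed contribution $\eta p_1+\eta p_2$; that $\|\MP_0-\MP\|_{TV}=\tfrac12\big(\eta+\eta+\sum_i\tfrac{2\eta}{N}\big)=2\eta<\epsilon$, using that the TV distance of finitely supported measures is half the $\ell^1$-distance of the mass functions; and that $|\supp(\MP)|\geq N$.

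The one point needing genuine care---the main obstacle---is exactly this case distinction: one cannot split the mass at a single arbitrary atom $p_*$ of $\MP_0$, because if $p_*$ is a vertex of $\Delta_{\mathcal{X}}$ it is an extreme point, and then the only probability measure on $\Delta_{\mathcal{X}}$ with barycenter $p_*$ is $\delta_{p_*}$ itself, leaving no room for a local balanced redistribution. Routing the perturbation through the midpoint of two distinct atoms (always non-extreme) and handling the degenerate measure $\delta_{\pi_{\mathcal{X}}}$ separately via the interior point $\pi_{\mathcal{X}}$ removes the difficulty; everything else is elementary. The only external inputs are the identification of $\DMC_{\mathcal{X},\ast}^{(o)}$ with $\mathcal{MP}_{bf}(\mathcal{X})$ from Propositions \ref{propCharacPostMetaProb} and \ref{propCharacOutputEquiv}, the description of $\DMC_{\mathcal{X},[n]}^{(o)}$ recalled in Section \ref{secEquivSpace}, and the definition of $d_{TV,\mathcal{X},\ast}^{(o)}$.
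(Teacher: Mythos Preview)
Your argument is correct. Both your proof and the paper's transport the question to $\mathcal{MP}_{bf}(\mathcal{X})$ via the canonical bijection and then produce, near any given $\MP_0$, balanced finitely supported measures of arbitrarily large support within any TV-ball. The difference is in how the perturbation is built. You redistribute mass \emph{locally}: you remove a little mass from two existing atoms $p_1,p_2$ and spread it over $N$ new points on the chord through their midpoint, which forces the case split (two or more atoms versus the degenerate $\delta_{\pi_{\mathcal{X}}}$) and the care about extreme points that you rightly flag. The paper instead perturbs \emph{globally} by taking a convex combination
\[
\MP_n=\Big(1-\tfrac{\epsilon}{4n}\Big)\MP_{\hat{W}}+\tfrac{\epsilon}{8n^2}\sum_{i=1}^n\big(\delta_{p_i}+\delta_{p_i'}\big),
\]
where the pairs $(p_i,p_i')$ from Proposition~\ref{propTotalVarNotNatural} each average to $\pi_{\mathcal{X}}$. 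Since both $\MP_{\hat{W}}$ and the added measure are balanced with barycenter $\pi_{\mathcal{X}}$, any convex combination is automatically balanced; no case analysis or extreme-point discussion is needed, and the TV bound is immediate. Your approach buys a self-contained construction that does not rely on the specific sequence from the previous proposition, at the cost of the case split; the paper's approach is shorter and reuses earlier work.
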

\begin{proof}
Let $U$ be a non-empty TV-open set of $\DMC_{\mathcal{X},\ast}^{(o)}$. Let $\hat{W}\in U$ and let $\epsilon>0$ be such that $\hat{W}'\in U$ whenever $d_{TV,\mathcal{X},\ast}^{(o)}(\hat{W},\hat{W}')<\epsilon$.

Let $p$, $p'$, $(p_n)_{n\geq 1}$ and $(p_n')_{n\geq 1}$ be as in Proposition \ref{propTotalVarNotNatural}. For every $n\geq 1$, define $\MP_n\in\mathcal{MP}(\mathcal{X})$ as follows:
$${\MP}_n=\left(1-\frac{\epsilon}{4n}\right){\MP}_{\hat{W}} +\frac{\epsilon}{8n^2}\cdot\sum_{i=1}^n (\delta_{p_i}+\delta_{p_i'}).$$
Clearly, $\MP_n$ is balanced and finitely supported, so $\MP_n\in \mathcal{MP}_{bf}(\mathcal{X})$. Moreover, $$d_{TV,\mathcal{X},\ast}^{(o)}(F_{\can}({\MP}_n),\hat{W}) =\|{\MP}_n-{\MP}_{\hat{W}}\|_{TV}\leq\frac{\epsilon}{2n}<\epsilon.$$ Therefore, $F_{\can}(\MP_n)\in U$ for every $n\geq 1$. On the other hand, $\supp(\MP_n)\supset\{p_i,p_i':\; 1\leq i\leq n\}$, which means that $|\supp(\MP_n)|\geq 2n$ and so $F_{\can}(\MP_n)\notin\DMC_{\mathcal{X},[n]}^{(o)}$ for every $n\geq 1$. We conclude that $U$ is rank-unbounded.
\end{proof}

\begin{mycor}
If $|\mathcal{X}|\geq 2$, the TV-interior of $\DMC_{\mathcal{X},[n]}^{(o)}$ in $\DMC_{\mathcal{X},\ast}^{(o)}$ is empty.
\end{mycor}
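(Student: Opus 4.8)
The plan is to obtain this corollary as an immediate consequence of Proposition~\ref{propTVOpenRankUnbounded}, mirroring the way Corollary~\ref{corInteriorEmptyDMCXno} is derived from Proposition~\ref{propNaturallyOpenUnbounded}. I would first denote by $U$ the TV-interior of $\DMC_{\mathcal{X},[n]}^{(o)}$ in $(\DMC_{\mathcal{X},\ast}^{(o)},\mathcal{T}_{TV,\mathcal{X},\ast}^{(o)})$, i.e.\ the largest TV-open subset contained in $\DMC_{\mathcal{X},[n]}^{(o)}$, and argue by contradiction by assuming $U\neq\o$.

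The key step is the following trivial tension. On the one hand, $U$ is TV-open and non-empty by assumption. On the other hand, $U\subset\DMC_{\mathcal{X},[n]}^{(o)}=\{\hat{W}\in\DMC_{\mathcal{X},\ast}^{(o)}:\;\rank(\hat{W})\leq n\}$, so $U$ is rank-bounded. But Proposition~\ref{propTVOpenRankUnbounded} asserts that, when $|\mathcal{X}|\geq 2$, every non-empty TV-open subset of $\DMC_{\mathcal{X},\ast}^{(o)}$ is rank-unbounded. This contradicts the rank-boundedness of $U$, hence $U=\o$, which is exactly the claim.

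There is essentially no obstacle here: all the work has already been carried out in Proposition~\ref{propTVOpenRankUnbounded}, whose proof exhibits, arbitrarily TV-close to any point, Blackwell measures of arbitrarily large support by adding small balanced Dirac perturbations $\frac{\epsilon}{8n^2}\sum_{i=1}^n(\delta_{p_i}+\delta_{p_i'})$ to ${\MP}_{\hat{W}}$. If one wished to prove the corollary without invoking that proposition, one would simply redo the same construction: fix $\hat{W}\in\DMC_{\mathcal{X},[n]}^{(o)}$ and, for each $\epsilon>0$, produce a balanced finitely supported meta-probability measure within total-variation distance $\epsilon$ of ${\MP}_{\hat{W}}$ whose support has more than $n$ points, so that the corresponding channel lies in the $\epsilon$-ball around $\hat{W}$ but outside $\DMC_{\mathcal{X},[n]}^{(o)}$; this shows no TV-ball around $\hat{W}$ is contained in $\DMC_{\mathcal{X},[n]}^{(o)}$, hence the TV-interior is empty.
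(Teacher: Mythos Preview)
Your proposal is correct and matches the paper's approach exactly: the paper states this corollary immediately after Proposition~\ref{propTVOpenRankUnbounded} with no separate proof, indicating it is meant to follow directly from that proposition in precisely the way you describe.
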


\vspace*{3mm}

Note that the sequence $(F_{\can}(\MP_n))_{n\geq 1}$ in the proof of Proposition \ref{propTVOpenRankUnbounded} is rank-unbounded and converges in total variation to $\hat{W}$. On the other hand, Proposition \ref{propCharacConvSeq} implies that $(F_{\can}(\MP_n))_{n\geq 1}$ does not converge in $(\DMC_{\mathcal{X},\ast}^{(o)},\mathcal{T}_{s,\mathcal{X},\ast}^{(o)})$. We conclude that $\mathcal{T}_{TV,\mathcal{X},\ast}^{(o)}$ is not finer than $\mathcal{T}_{s,\mathcal{X},\ast}^{(o)}$.

Although $\DMC_{\mathcal{X},[n]}^{(o)}$ is not TV-compact if $|\mathcal{X}|\geq 2$ and $n\geq 2$, it is TV-complete:
\begin{myprop}
For every $n\geq 1$, $\DMC_{\mathcal{X},[n]}^{(o)}$ is TV-complete in $\DMC_{\mathcal{X},\ast}^{(o)}$.
\end{myprop}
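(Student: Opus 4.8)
The plan is to pass to Blackwell measures and exploit the completeness of $(\mathcal{P}(\Delta_{\mathcal{X}}),\|\cdot\|_{TV})$ recorded in Section III. Recall that the canonical bijection $F_{\can}:\mathcal{MP}_{bf}(\mathcal{X})\to\DMC_{\mathcal{X},\ast}^{(o)}$ satisfies $d_{TV,\mathcal{X},\ast}^{(o)}(F_{\can}({\MP}),F_{\can}({\MP}'))=\|{\MP}-{\MP}'\|_{TV}$ by the very definition of $d_{TV,\mathcal{X},\ast}^{(o)}$, so $F_{\can}$ is an isometry; moreover, since $\rank(\hat{W})=|\supp({\MP}_{\hat{W}})|$ and $\DMC_{\mathcal{X},[n]}^{(o)}=\{\hat{W}:\rank(\hat{W})\leq n\}$, it carries $\mathcal{M}_n:=\{{\MP}\in\mathcal{MP}_{bf}(\mathcal{X}):|\supp({\MP})|\leq n\}$ onto $\DMC_{\mathcal{X},[n]}^{(o)}$. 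Since a closed subspace of a complete metric space is complete, it therefore suffices to prove that $\mathcal{M}_n$ is closed in $(\mathcal{P}(\Delta_{\mathcal{X}}),\|\cdot\|_{TV})$. So I would take a sequence $({\MP}_k)_{k\geq 0}$ in $\mathcal{M}_n$ converging in total variation to some ${\MP}\in\mathcal{P}(\Delta_{\mathcal{X}})$ and show that ${\MP}$ is balanced and supported on at most $n$ points.

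Balancedness is the easy half: for each $x\in\mathcal{X}$ the map $p\mapsto p(x)$ is bounded and continuous on $\Delta_{\mathcal{X}}$, hence total-variation convergence forces $\int p(x)\,d{\MP}_k(p)\to\int p(x)\,d{\MP}(p)$, and the left-hand side equals $\tfrac{1}{|\mathcal{X}|}$ for every $k$ because each ${\MP}_k$ is balanced, so $\int p\,d{\MP}(p)=\pi_{\mathcal{X}}$. For the support bound I would argue by contradiction: if ${\MP}$ is not supported on at most $n$ points, then (decomposing ${\MP}$ into its atomic and non-atomic parts, and splitting the non-atomic part into arbitrarily many pieces of positive mass via the standard splitting property of non-atomic Borel measures on the Polish space $\Delta_{\mathcal{X}}$) there exist $n+1$ pairwise disjoint Borel sets $B_1,\dots,B_{n+1}$ with $c:=\min_j{\MP}(B_j)>0$. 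Choosing $k$ with $\|{\MP}_k-{\MP}\|_{TV}<c/2$ gives ${\MP}_k(B_j)\geq c/2>0$ for every $j$, so each $B_j$ must contain an atom of ${\MP}_k$; disjointness then yields $n+1$ distinct atoms, contradicting ${\MP}_k\in\mathcal{M}_n$. Hence ${\MP}\in\mathcal{M}_n$, so $\mathcal{M}_n$ is TV-closed, and transporting back through $F_{\can}$ (with Proposition \ref{propCharacPostMetaProb} identifying the limit measure as the Blackwell measure of a unique equivalence class, which then lies in $\DMC_{\mathcal{X},[n]}^{(o)}$) shows that $\DMC_{\mathcal{X},[n]}^{(o)}$ is TV-complete.

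The hard part is the support bound: one must guarantee that the constraint $\rank\leq n$ is not destroyed in the limit. The mechanism is that total-variation convergence transfers positive mass onto any finite family of pairwise disjoint sets, after which pigeonhole on the at-most-$n$ atoms of ${\MP}_k$ closes the argument; the only technical point worth spelling out is excluding a non-atomic component of the limit, which is exactly where the splitting property of non-atomic measures on Polish spaces is used. Everything else — completeness of $\mathcal{P}(\Delta_{\mathcal{X}})$ under total variation, the isometry property of $F_{\can}$, and continuity and boundedness of $p\mapsto p(x)$ — is already available.
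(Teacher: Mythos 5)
Your proof is correct and follows the same overall route as the paper's: pass to Blackwell measures via the isometry $F_{\can}$, reduce TV-completeness of $\DMC_{\mathcal{X},[n]}^{(o)}$ to TV-closedness of $\mathcal{M}_n=\mathcal{MP}_{b,n}(\mathcal{X})=\{\MP\in\mathcal{MP}_b(\mathcal{X}):|\supp(\MP)|\le n\}$ inside the complete space $(\mathcal{P}(\Delta_{\mathcal{X}}),\|\cdot\|_{TV})$, and then show that a TV-limit of members of $\mathcal{M}_n$ stays in $\mathcal{M}_n$. Where you differ is in how you close that last step, and your version is tighter. The paper jumps from $\MP\notin\mathcal{MP}_{b,n}(\mathcal{X})$ directly to the existence of $n+1$ distinct atoms of $\MP$; this silently ignores both the case where $\MP$ fails membership only because it is not balanced, and, more substantively, the case where $\MP$ has a non-atomic component (so $|\supp(\MP)|>n$ without $\MP$ possessing $n+1$ atoms, since the topological support of a non-atomic measure is infinite). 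You handle both omissions: balancedness of the limit follows from TV-continuity of $\MP\mapsto\int_{\Delta_\mathcal{X}}p(x)\,d\MP(p)$, and you dispose of a possible non-atomic part by manufacturing $n+1$ pairwise disjoint Borel sets of positive $\MP$-mass via the splitting property of non-atomic measures, after which pigeonhole on the at-most-$n$ atoms of any TV-close $\MP_k$ gives the contradiction. As a minor remark, the splitting property of a non-atomic finite measure holds on any measurable space, so the Polishness of $\Delta_{\mathcal{X}}$ is not actually needed there; the hypothesis is harmless but superfluous. In short: same reduction, but your closing argument is more complete than the paper's.
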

\begin{proof}
Let $\mathcal{MP}_{b,n}(\mathcal{X})$ be the set of balanced meta-probability measures whose support is of size at most $n$:
$$\mathcal{MP}_{b,n}(\mathcal{X})=\{\MP\in\mathcal{MP}_b(\mathcal{X}):\;|\supp(\MP)|\leq n\}.$$
Since $(\DMC_{\mathcal{X},[n]}^{(o)},d_{TV,\mathcal{X},\ast}^{(o)})$ is isometric to $(\mathcal{MP}_{b,n}(\mathcal{X}),\|\cdot\|_{TV})$, and since $(\mathcal{MP}(\mathcal{X}),\|\cdot\|_{TV})$ is complete, it is sufficient to show that $\mathcal{MP}_{b,n}(\mathcal{X})$ is TV-closed in $\mathcal{MP}(\mathcal{X})$.

Let $\MP$ be in the TV-closure of $\mathcal{MP}_{b,n}(\mathcal{X})$. Since we are working in a metric space, there exists a sequence $(\MP_m)_{m\geq 0}$ in $\mathcal{MP}_{b,n}(\mathcal{X})$ that TV-converges to $\MP$. Assume that $\MP\notin\mathcal{MP}_{b,n}(\mathcal{X})$. There exist $p_1,\ldots,p_{n+1}\in\Delta_{\mathcal{X}}$ that are pairwise different and which satisfy $\MP(p_i)>0$ for every $1\leq i\leq n+1$. Since $(\MP_m)_{m\geq 0}$ TV-converges to $\MP$, there exists $m_0\geq 0$ such that $\MP_{m_0}(p_i)>0$ for every $1\leq i\leq n+1$. This contradicts the fact $\MP_{m_0}\in\mathcal{MP}_{b,n}(\mathcal{X})$. Therefore, $\MP\in\mathcal{MP}_{b,n}(\mathcal{X})$ for every $\MP$ in the TV-closure of $\mathcal{MP}_{b,n}(\mathcal{X})$. This shows that $\mathcal{MP}_{b,n}(\mathcal{X})$ is TV-closed. Therefore, $\DMC_{\mathcal{X},[n]}^{(o)}$ is TV-complete in $\DMC_{\mathcal{X},\ast}^{(o)}$.
\end{proof}

\begin{myprop}
If $|\mathcal{X}|\geq 2$, $(\DMC_{\mathcal{X},\ast}^{(o)},\mathcal{T}_{TV,\mathcal{X},\ast}^{(o)})$ is neither Baire nor locally compact anywhere.
\end{myprop}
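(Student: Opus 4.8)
The plan is to reproduce, essentially verbatim, the arguments of Propositions \ref{propNaturaIsNotBaire} and \ref{propNaturalNotLocallyCompact}, replacing ``natural/Hausdorff'' by ``total variation'' throughout. The two ingredients we need are already available: first, the immediately preceding proposition shows that $\DMC_{\mathcal{X},[n]}^{(o)}$ is TV-complete, and since a complete subspace of a metric space is always closed, $\DMC_{\mathcal{X},[n]}^{(o)}$ is TV-closed in $(\DMC_{\mathcal{X},\ast}^{(o)},\mathcal{T}_{TV,\mathcal{X},\ast}^{(o)})$; second, the corollary following Proposition \ref{propTVOpenRankUnbounded} shows that the TV-interior of $\DMC_{\mathcal{X},[n]}^{(o)}$ in $\DMC_{\mathcal{X},\ast}^{(o)}$ is empty. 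Set $G_n=\DMC_{\mathcal{X},\ast}^{(o)}\setminus\DMC_{\mathcal{X},[n]}^{(o)}$; then each $G_n$ is TV-open (complement of a TV-closed set) and TV-dense (its complement has empty interior).

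For the Baire part: since $\DMC_{\mathcal{X},\ast}^{(o)}=\bigcup_{n\geq 1}\DMC_{\mathcal{X},[n]}^{(o)}$, we have $\bigcap_{n\geq 1} G_n=\o$, which is not TV-dense. Hence the countable family of dense open sets $\{G_n\}_{n\geq 1}$ has non-dense intersection, so $(\DMC_{\mathcal{X},\ast}^{(o)},\mathcal{T}_{TV,\mathcal{X},\ast}^{(o)})$ is not a Baire space.

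For the local compactness part: assume for contradiction that some $\hat{W}$ has a TV-compact neighborhood $K$, and pick a TV-open $U$ with $\hat{W}\in U\subset K$. Being compact and Hausdorff (it is metric), $K$ is Baire, and since $U$ is an open subset of a Baire space, $U$ is Baire. For each $n$, the set $U\cap G_n=U\setminus\DMC_{\mathcal{X},[n]}^{(o)}$ is open in $U$ (because $\DMC_{\mathcal{X},[n]}^{(o)}$ is TV-closed) and dense in $U$ (because $G_n$ is dense and $U$ is open, so $\overline{U\cap G_n}\supset U$). But $\bigcap_{n\geq 1}(U\cap G_n)=U\cap\bigcap_{n\geq 1} G_n=\o$ is not dense in $U$, contradicting the fact that $U$ is Baire. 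Therefore no point of $\DMC_{\mathcal{X},\ast}^{(o)}$ has a TV-compact neighborhood.

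\textbf{Main obstacle.} There is essentially no obstacle remaining: all the real work (empty TV-interior of $\DMC_{\mathcal{X},[n]}^{(o)}$ via Proposition \ref{propTVOpenRankUnbounded}, and TV-completeness hence TV-closedness of $\DMC_{\mathcal{X},[n]}^{(o)}$ via the preceding proposition) has already been carried out. The only point to be careful about is making sure $\DMC_{\mathcal{X},[n]}^{(o)}$ is genuinely \emph{closed} (not merely complete as an abstract metric space), which is exactly where the ``complete subspace of a metric space is closed'' fact from the preliminaries is invoked; with that in hand, the Baire-category bookkeeping is the same as in the natural-topology case.
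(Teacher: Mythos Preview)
Your proposal is correct and follows essentially the same approach as the paper: the paper's proof simply notes that since $\DMC_{\mathcal{X},[n]}^{(o)}$ is TV-complete it is TV-closed, and since it has empty TV-interior, ``the same techniques that were used for natural topologies in Section \ref{subsecNaturalTop} can be applied for $\mathcal{T}_{TV,\mathcal{X},\ast}^{(o)}$.'' You have spelled out those techniques in full, which is exactly what the paper intends.
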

\begin{proof}
Since $\DMC_{\mathcal{X},[n]}^{(o)}$ is TV-complete, it is TV-closed. Since it also has empty TV-interior, the same techniques that were used for natural topologies in Section \ref{subsecNaturalTop} can be applied for $\mathcal{T}_{TV,\mathcal{X},\ast}^{(o)}$.
\end{proof}
\vspace*{3mm}

The above proposition shows that $(\DMC_{\mathcal{X},\ast}^{(o)},\mathcal{T}_{TV,\mathcal{X},\ast}^{(o)})$ cannot be completely metrized. Note that since $(\DMC_{\mathcal{X},\ast}^{(o)},d_{TV,\mathcal{X},\ast}^{(o)})$ is isometric to $(\mathcal{MP}_{bf}(\mathcal{X}),\|\cdot\|_{TV})$, and since $(\mathcal{MP}(\mathcal{X}),\|\cdot\|_{TV})$ is complete, the completion of $(\DMC_{\mathcal{X},\ast}^{(o)},d_{TV,\mathcal{X},\ast}^{(o)})$ is isometric to the closure of $\mathcal{MP}_{bf}(\mathcal{X})$ in $(\mathcal{MP}(\mathcal{X}),\|\cdot\|_{TV})$. It can be shown that the TV-closure of $\mathcal{MP}_{bf}(\mathcal{X})$ in $\mathcal{MP}(\mathcal{X})$ is the set of all balanced and countably supported meta-probability measures on $\mathcal{X}$. Therefore, the completion of $(\DMC_{\mathcal{X},\ast}^{(o)},d_{TV,\mathcal{X},\ast}^{(o)})$ can be thought of as the space of equivalent channels from $\mathcal{X}$ to a countably infinite output alphabet. This allows us to say that any channel with input alphabet $\mathcal{X}$ and a countable output alphabet can be approximated in the total variation sense by a channel having a finite output alphabet.

\section{The natural Borel $\sigma$-algebra on $\DMC_{\mathcal{X},\ast}^{(o)}$}

Let $\mathcal{T}$ be a Hausdorff natural topology on $\DMC_{\mathcal{X},\ast}^{(o)}$. Since $\mathcal{T}_{s,\mathcal{X},\ast}^{(o)}$ is the finest natural topology, we have $\mathcal{T}\subset \mathcal{T}_{s,\mathcal{X},\ast}^{(o)}$. Therefore, $\mathcal{B}(\mathcal{T})\subset \mathcal{B}(\mathcal{T}_{s,\mathcal{X},\ast}^{(o)})$.

On the other hand, for every $U\in \mathcal{T}_{s,\mathcal{X},\ast}^{(o)}$ and every $n\geq 1$, we have $U\cap\DMC_{\mathcal{X},[n]}^{(o)}\in\mathcal{T}_{\mathcal{X},[n]}^{(o)}$. But $\mathcal{T}$ is a natural topology, so there must exist $U_n\in \mathcal{T}$ such that $U_n\cap\DMC_{\mathcal{X},[n]}^{(o)}=U\cap\DMC_{\mathcal{X},[n]}^{(o)}$. Since $U_n\in\mathcal{T}$, we have $U_n\in\mathcal{B}(\mathcal{T})$. Moreover, $\DMC_{\mathcal{X},[n]}^{(o)}$ is $\mathcal{T}$-closed (because it is compact and $\mathcal{T}$ is Hausdorff). Therefore, $\DMC_{\mathcal{X},[n]}^{(o)}\in\mathcal{B}(\mathcal{T})$. This implies that $U\cap\DMC_{\mathcal{X},[n]}^{(o)}=U_n\cap\DMC_{\mathcal{X},[n]}^{(o)}\in\mathcal{B}(T)$, hence
$$U=\bigcup_{n\geq 1}(U\cap {\DMC}_{\mathcal{X},[n]}^{(o)})\in\mathcal{B}(T).$$
Since this is true for every $U\in \mathcal{T}_{s,\mathcal{X},\ast}^{(o)}$, we have $\mathcal{T}_{s,\mathcal{X},\ast}^{(o)}\subset\mathcal{B}(\mathcal{T})$ which implies that $\mathcal{B}(\mathcal{T}_{s,\mathcal{X},\ast}^{(o)})\subset\mathcal{B}(\mathcal{T})$. We conclude that all Hausdorff natural topologies on $\DMC_{\mathcal{X},\ast}^{(o)}$ have the same $\sigma$-algebra.  This $\sigma$-algebra deserves to be called the \emph{natural Borel $\sigma$-algebra} on $\DMC_{\mathcal{X},\ast}^{(o)}$.

Note that for every $n\geq 1$, the inclusion mapping $i_n: \DMC_{\mathcal{X},[n]}^{(o)}\rightarrow\DMC_{\mathcal{X},\ast}^{(o)}$ is continuous from $(\DMC_{\mathcal{X},[n]}^{(o)},\mathcal{T}_{\mathcal{X},[n]}^{(o)})$ to $(\DMC_{\mathcal{X},\ast}^{(o)},\mathcal{T}_{s,\mathcal{X},\ast}^{(o)})$, hence it is measurable. Therefore, for every $B\in \mathcal{B}(\mathcal{T}_{s,\mathcal{X},\ast}^{(o)})$, we have $i_n^{-1}(B)=B\cap\DMC_{\mathcal{X},[n]}^{(o)}\in\mathcal{B}(\mathcal{T}_{\mathcal{X},[n]}^{(o)})$. In the following, we show a converse for this statement.

Fix $n\geq 1$ and let $U\in\mathcal{T}_{\mathcal{X},[n]}^{(o)}$. There exists $U'\in\mathcal{T}_{s,\mathcal{X},\ast}^{(o)}$ such that $U=U'\cap\DMC_{\mathcal{X},[n]}^{(o)}$. Since $U'$ and $\DMC_{\mathcal{X},[n]}^{(o)}$ are respectively open and closed in the topology $\mathcal{T}_{s,\mathcal{X},\ast}^{(o)}$, they are both in its Borel $\sigma$-algebra. Therefore, $U=U'\cap\DMC_{\mathcal{X},[n]}^{(o)}\in\mathcal{B}(\mathcal{T}_{s,\mathcal{X},\ast}^{(o)})$ for every $U\in\mathcal{T}_{\mathcal{X},[n]}^{(o)}$. This means that $\mathcal{T}_{\mathcal{X},[n]}^{(o)}\subset \mathcal{B}(\mathcal{T}_{s,\mathcal{X},\ast}^{(o)})$ and $\mathcal{B}(\mathcal{T}_{\mathcal{X},[n]}^{(o)})\subset \mathcal{B}(\mathcal{T}_{s,\mathcal{X},\ast}^{(o)})$ for every $n\geq 1$.

Assume now that $A\subset\DMC_{\mathcal{X},\ast}^{(o)}$ satisfies $A\cap\DMC_{\mathcal{X},[n]}^{(o)} \in\mathcal{B}(\mathcal{T}_{\mathcal{X},[n]}^{(o)})$ for every $n\geq 1$. This implies that $A\cap\DMC_{\mathcal{X},[n]}^{(o)} \in\mathcal{B}(\mathcal{T}_{s,\mathcal{X},\ast}^{(o)})$ for every $n\geq 1$, hence
$$A=\bigcup_{n\geq 1}(A\cap{\DMC}_{\mathcal{X},[n]}^{(o)}) \in\mathcal{B}(\mathcal{T}_{s,\mathcal{X},\ast}^{(o)}).$$
We conclude that a subset $A$ of $\DMC_{\mathcal{X},\ast}^{(o)}$ is in the natural Borel $\sigma$-algebra if and only if $A\cap\DMC_{\mathcal{X},[n]}^{(o)} \in\mathcal{B}(\mathcal{T}_{\mathcal{X},[n]}^{(o)})$ for every $n\geq 1$.

\section{Conclusion}

The fact that the noisiness and weak-$\ast$ topologies are the same gives us more freedom in proving theorems. Statements that can be hard to prove using the weak-$\ast$ formulation might be easier to prove using the noisiness formulation. For example, the convergence of the polarization process is slightly easier to prove in the noisiness formulation \cite{RajPolarConvTop}.

The strong topology is too strong to be adopted as the ``standard natural topology". However, it can still be useful because it is relatively easy to work with as it has a quotient formulation. Moreover, since it is finer than the noisiness/weak-$\ast$ topology, many statements that are true for the strong topology are also true for coarser topologies, e.g., any sequence that converges in the strong topology also converges in the noisiness/weak-$\ast$ topology.

Although the total variation topology is not natural, it can still be useful because it is finer than the noisiness/weak-$\ast$ topology.

Many interesting questions remain open: Are all natural topologies Hausdorff? Can we find more topological properties that are common for all natural topologies? Is there a coarsest natural topology? Is there a natural topology that is coarser than the noisiness/weak-$\ast$ one?

Finding meaningful measures on $\DMC_{\mathcal{X},\ast}^{(o)}$ might be challenging. One might be tempted to require that the measure of $\DMC_{\mathcal{X},[n]}^{(o)}$ should be zero because it is ``finite dimensional" whereas $\DMC_{\mathcal{X},\ast}^{(o)}$ is ``infinite dimensional". On the other hand, if $\DMC_{\mathcal{X},[n]}^{(o)}$ has a zero measure for every $n\geq 1$, the whole space $\DMC_{\mathcal{X},\ast}^{(o)}$ will have a zero measure because it is a countable union of these subspaces. Nevertheless, statements such as ``the property X is true for almost all channels" can still make sense. One possible definition of null-sets is as follows: for every set $A$ in the natural Borel $\sigma$-algebra, we say that $A$ is a null-set if and only if there exists $n_0\geq 1$ such that $P_n\left(\Proj_n^{-1}(A\cap\DMC_{\mathcal{X},[n]}^{(o)})\right)=0$ for every $n\geq n_0$, where $\Proj_n$ is the projection onto the $R_{\mathcal{X},[n]}^{(o)}$-equivalence classes and $P_n$ is the uniform probability measure on $\DMC_{\mathcal{X},[n]}\equiv (\Delta_{[n]})^{\mathcal{X}}$. Another possible definition, which is weaker, is to say that $A$ is a null-set if and only if $\displaystyle \lim_{n\to\infty}P_n\left({\Proj}_n^{-1}(A\cap{\DMC}_{\mathcal{X},[n]}^{(o)})\right)=0$.

It is worth mentioning that the standard weak-$\ast$ (and in particular the $L$-density) approach is not possible in the quantum setting because there is no quantum analogue for the conditional probability of the input given the output. On the other hand, since the strong topology and the noisiness metric are defined in terms of the forward transition probabilities, they can be generalized to the quantum setting.

Another notion of equivalence is the Shannon-equivalence that allows randomization at both the input and the output, as well as shared randomness between the transmitter and the receiver \cite{ShannonDegrad}. The Shannon deficiency that was introduced in \cite{RaginskyShannon} compares a particular channel with the Shannon-equivalence-class of another channel, but it is not a metric distance between Shannon-equivalence-classes. In \cite{RajShan}, we provide a characterization of the Shannon ordering and we prove that some of the results of this paper holds for the space of Shannon-equivalent channels.

In \cite{RajInputDegrad}, we introduce the notions of input-degradedness and input-equivalence. A channel $W$ is said to be input-degraded from another channel $W'$ if $W$ can be simulated from $W'$ by local operations at the input. In \cite{RajInputDegrad}, we provide a characterization of input-degradedness and and we prove that many of the results of this paper hold for the space of input-equivalent channels.

\section*{Acknowledgment}

I would like to thank Emre Telatar and Mohammad Bazzi for helpful discussions. I am also grateful to Maxim Raginsky for his comments.

\appendices

\section{Proof of Proposition \ref{propCharacOutputEquiv}}
\label{appCharacOutputEquiv}

For every $A\subset \Delta_{\mathcal{X}}$, let $\conv(A)$ be the convex hull of $A$. We say that $p\in A$ is \emph{convex-extreme} if it is an extreme point of $\conv(A)$, i.e., for every $p_1,\ldots,p_n\in\conv(A)$ and every $\lambda_1,\ldots,\lambda_n>0$ satisfying $\displaystyle\sum_{i=1}^n \lambda_i=1$ and $\displaystyle\sum_{i=1}^n \lambda_ip_i=p$, we have $p_1=\ldots=p_n=p$. It is easy to see that if $A$ is finite, then the convex-extreme points of $A$ coincide with the extreme points of $\conv(A)$. We denote the set of convex-extreme points of $A$ as $\CE(A)$.

Let $W\in\DMC_{\mathcal{X},\mathcal{Y}}$ and $W'\in\DMC_{\mathcal{X},\mathcal{Z}}$ be such that $W'$ is degraded from $W$. There exists $V\in\DMC_{\mathcal{Y},\mathcal{Z}}$ such that $W'=V\circ W$. Let $X$ be a random variable uniformly distributed in $\mathcal{X}$, let $Y$ be the output of $W$ when $X$ is the input, and let $Z$ be the output of $V$ when $Y$ is the input in such a way that $X-Y-Z$ is a Markov chain. Clearly, $P_{Z|X}(z|x)=W'(z|x)$ for every $(x,z)\in\mathcal{X}\times\mathcal{Z}$.

For every $z\in\mathcal{Z}$, we have:
\begin{equation}
\label{eqPWprimeoz}
P_{W'}^o(z)=P_Z(z)=\sum_{y\in\mathcal{Y}}P_Y(y)P_{Z|Y}(z|y)=\sum_{y\in\Imag(W)} V(z|y) P_W^o(y).
\end{equation}

Define $V^{-1}\in\DMC_{\Imag(W'),\Imag(W)}$ as $$V^{-1}(y|z)=P_{Y|Z}(y|z)=\frac{P_{Y}(y)P_{Z|Y}(z|y)}{P_Z(z)}=\frac{V(z|y) P_W^o(y)}{\displaystyle\sum_{y'\in\Imag(W)}V(z|y') P_W^o(y')}.$$
Note that for every $(y,z)\in\Imag(W)\times\Imag(W')$, we have $V^{-1}(y|z)=0$ if and only if $V(z|y)=0$.

For every $(x,z)\in\mathcal{X}\times\Imag(W')$, we have:
\begin{equation}
\begin{aligned}
W'^{-1}_z(x)&=P_{X|Z}(x|z)=\sum_{y\in\mathcal{Y}}P_{X,Y|Z}(x,y|z)=\sum_{\substack{y\in\mathcal{Y},\\P_Y(y)>0}}P_{X,Y|Z}(x,y|z)\\
&=\sum_{y\in\Imag(W)}P_{Y|Z}(y|z)P_{X|Y,Z}(x|y,z)\stackrel{(a)}{=}\sum_{y\in\Imag(W)}V^{-1}(y|z)P_{X|Y}(x|y)\\
&=\sum_{y\in\Imag(W)}V^{-1}(y|z)W_y^{-1}(x),
\end{aligned}
\label{eqConvexHullWzm1}
\end{equation}
where (a) follows from the fact that $X-Y-Z$ is a Markov chain.

Equation \eqref{eqConvexHullWzm1} shows that for every $z\in\Imag(W')$, we have $$W'^{-1}_z\in\conv(\{W^{-1}_y:\;y\in\Imag(W)\})=\conv(\supp({\MP}_W)).$$ Therefore,
\begin{equation}
\label{eqConvexHullDegrad}
\conv(\supp({\MP}_{W'}))=\conv(\{W'^{-1}_z:\;z\in\Imag(W')\})\subset \conv(\supp({\MP}_W)).
\end{equation}

\vspace*{3mm}

Now for every $p\in\Delta_{\mathcal{X}}$, define $$\mathcal{Y}_p:=\{y\in\Imag(W):\; W^{-1}_y=p\}.$$
Similarly, $$\mathcal{Z}_p:=\{z\in\Imag(W'):\; W'^{-1}_z=p\}.$$

Let $p_{ext}\in\CE(\supp({\MP}_W))$ and let $z\in\Imag(W')$. Equation \eqref{eqConvexHullWzm1} shows that if $z\in\mathcal{Z}_{p_{ext}}$, then $V^{-1}(y|z)=0$ for every $y\in \Imag(W)\setminus\mathcal{Y}_{p_{ext}}$. Now since $V^{-1}(y|z)=0 \Leftrightarrow V(z|y)=0$ for every $(y,z)\in\Imag(W)\times\Imag(W')$, we deduce that if $z\in\mathcal{Z}_{p_{ext}}$ then $V(z|y)=0$ for every $y\in \Imag(W)\setminus\mathcal{Y}_{p_{ext}}$. Therefore,
\begin{equation}
\label{eqProbConvexExtreme}
\begin{aligned}
{\MP}_{W'}(p_{ext})&=\sum_{z\in\mathcal{Z}_{p_{ext}}} P_{W'}^o(z)\stackrel{(a)}{=}\sum_{z\in\mathcal{Z}_{p_{ext}}} \sum_{y\in\Imag(W)} V(z|y) P_W^o(y)\\
&\stackrel{(b)}{=}\sum_{z\in\mathcal{Z}_{p_{ext}}} \sum_{y\in\mathcal{Y}_{p_{ext}}} V(z|y) P_W^o(y)\leq \sum_{z\in\Imag(W')} \sum_{y\in\mathcal{Y}_{p_{ext}}} V(z|y) P_W^o(y)\\
&=\sum_{y\in\mathcal{Y}_{p_{ext}}} P_W^o(y)={\MP}_{W}(p_{ext}),
\end{aligned}
\end{equation}
where (a) follows from Equation \eqref{eqPWprimeoz}, and (b) follows from the fact that for every $y\in \Imag(W)\setminus\mathcal{Y}_{p_{ext}}$, we have $V(z|y)=0$.

Now assume that $W$ and $W'$ are equivalent. Equation \eqref{eqConvexHullDegrad} (applied twice) implies that we must have $\conv(\supp({\MP}_{W'}))= \conv(\supp({\MP}_W))$ which implies that $\supp({\MP}_{W'})$ and $\supp({\MP}_{W})$ have the same convex-extreme points. Now fix a convex-extreme point $p_{ext}\in\CE(\supp({\MP}_{W'}))=\CE(\supp({\MP}_{W}))$. Equation \eqref{eqProbConvexExtreme} (applied twice) implies that ${\MP}_W(p_{ext})={\MP}_{W'}(p_{ext})$. By using Equation \eqref{eqProbConvexExtreme} again we obtain:
$$\sum_{z\in\mathcal{Z}_{p_{ext}}} \sum_{y\in\mathcal{Y}_{p_{ext}}} V(z|y) P_W^o(y)= \sum_{z\in\Imag(W')} \sum_{y\in\mathcal{Y}_{p_{ext}}} V(z|y) P_W^o(y),$$
hence
$$\sum_{z\in \Imag(W')\setminus\mathcal{Z}_{p_{ext}}} \sum_{y\in\mathcal{Y}_{p_{ext}}} V(z|y) P_W^o(y)= 0.$$
But $P_W^o(y)>0$ for every $y\in\mathcal{Y}_{p_{ext}}$. Therefore, for every $z\in \Imag(W')\setminus\mathcal{Z}_{p_{ext}}$ and every $y\in\mathcal{Y}_{p_{ext}}$, we must have $V(z|y)=0$ (which implies that $V^{-1}(y|z)=0$). We conclude that for every $z\in \Imag(W')\setminus\mathcal{Z}_{p_{ext}}$, we can rewrite Equations \eqref{eqPWprimeoz} and \eqref{eqConvexHullWzm1} as:
$$P_{W'}^o(z)=\sum_{y\in\Imag(W)\setminus\mathcal{Y}_{p_{ext}}} V(z|y) P_W^o(y),$$
and
$$W'^{-1}_z=\sum_{y\in\Imag(W)\setminus\mathcal{Y}_{p_{ext}}}V^{-1}(y|z)W_y^{-1}.$$
We can now repeat the above argument but on $\supp({\MP}_W)\setminus\{p_{ext}\}$ and $\supp({\MP}_{W'})\setminus\{p_{ext}\}$ instead of $\supp({\MP}_W)$ and $\supp({\MP}_{W'})$. We deduce that $\conv(\supp({\MP}_W)\setminus\{p_{ext}\})=\conv(\supp({\MP}_{W'})\setminus\{p_{ext}\})$ so $\supp({\MP}_W)\setminus\{p_{ext}\}$ and $\supp({\MP}_{W'})\setminus\{p_{ext}\}$ have the same convex-extreme points. We can also prove that ${\MP}_W(p_{ext}')={\MP}_{W'}(p_{ext}')$ for every $p_{ext}'\in\CE(\supp({\MP}_{W'})\setminus\{p_{ext}\})=\CE(\supp({\MP}_{W})\setminus\{p_{ext}\})$.

Notice that any point of $\supp({\MP}_W)$ (respectively $\supp({\MP}_{W'})$) becomes convex-extreme after removing a finite number of elements from $\supp({\MP}_W)$ (respectively $\supp({\MP}_{W'})$). Therefore, after inductively applying the above argument a finite number of times, we can deduce that $\supp({\MP}_W)=\supp({\MP}_{W'})$ and ${\MP}_W(p)={\MP}_{W'}(p)$ for every $p\in \supp({\MP}_W)=\supp({\MP}_{W'})$, hence ${\MP}_W={\MP}_{W'}$.

\vspace*{3mm}

Now let $W\in\DMC_{\mathcal{X},\mathcal{Y}}$ and $W'\in\DMC_{\mathcal{X},\mathcal{Z}}$ be any two channels satisfying ${\MP}_W={\MP}_{W'}$. We have $\supp({\MP}_W)=\supp({\MP}_{W'})$, and for every $p\in\supp({\MP}_W)=\supp({\MP}_{W'})$, we have
$$\sum_{y\in\mathcal{Y}_p}P_W^o(y)={\MP}_W(p)={\MP}_{W'}(p)=\sum_{z\in\mathcal{Z}_p}P_{W'}^o(z).$$

Define the channel $V\in\DMC_{\mathcal{Y},\mathcal{Z}}$ as
$$V(z|y)=\begin{cases}\displaystyle\frac{1}{|\mathcal{Z}|}\quad&\text{if}\;y\notin\Imag(W),\\\displaystyle\frac{P^o_{W'}(z)}{{\MP}_{W'}(W^{-1}_y)}\quad&\text{if}\;y\in\Imag(W)\;\text{and}\;z\in\mathcal{Z}_{W^{-1}_y},\\0\quad&\text{otherwise}.\end{cases}$$
A simple calculation shows that $\displaystyle\sum_{z\in\mathcal{Z}}V(z|y)=1$ for every $y\in\mathcal{Y}$, so $V$ is a valid channel.

Notice that for every $(y,z)\in\Imag(W)\times\Imag(W')$, we have:
$$z\in\mathcal{Z}_{W^{-1}_y}\;\;\Leftrightarrow \;\; W'^{-1}_z=W^{-1}_y\;\;\Leftrightarrow \;\; y\in\mathcal{Y}_{W'^{-1}_z}.$$
Moreover, if $z\in\Imag(W')\;\text{and}\; y\in\mathcal{Y}_{W'^{-1}_z}$, we have ${\MP}_{W'}(W^{-1}_y)={\MP}_{W}(W'^{-1}_z)$. Therefore, we can rewrite $V$ as:
$$V(z|y)=\begin{cases}\displaystyle\frac{P^o_{W'}(z)}{{\MP}_{W}(W'^{-1}_z)}\quad&\text{if}\;z\in\Imag(W')\;\text{and}\; y\in\mathcal{Y}_{W'^{-1}_z},\\\displaystyle\frac{1}{|\mathcal{Z}|}\quad&\text{if}\;y\notin\Imag(W),\\0\quad&\text{otherwise}.\end{cases}$$

Let $W''=V\circ W\in\DMC_{\mathcal{X},\mathcal{Z}}$. For every $z\in\mathcal{Z}\setminus\Imag(W')$, Equation \eqref{eqPWprimeoz} implies that:
\begin{align*}
P_{W''}^o(z)&=\sum_{y\in\Imag(W)} V(z|y) P_W^o(y)\stackrel{(a)}{=}0=P^o_{W'}(z),
\end{align*}
where (a) follows from the fact that $V(z|y)=0$ if $y\in\Imag(W)$ and $z\notin \Imag(W')$.

On the other hand, for every $z\in\Imag(W')$, Equation \eqref{eqPWprimeoz} implies that:
\begin{align*}
P_{W''}^o(z)&=\sum_{y\in\Imag(W)} V(z|y) P_W^o(y)=
\sum_{y\in\mathcal{Y}_{W'^{-1}_z}} \frac{P^o_{W'}(z)}{{\MP}_{W}(W'^{-1}_z)}P_W^o(y)\\
&=\frac{P^o_{W'}(z)}{{\MP}_{W}(W'^{-1}_z)}\sum_{y\in\mathcal{Y}_{W'^{-1}_z}}P_W^o(y)=\frac{P^o_{W'}(z)}{{\MP}_{W}(W'^{-1}_z)} {\MP}_{W}(W'^{-1}_z)=P^o_{W'}(z).
\end{align*}
Therefore, $P_{W''}^o(z)=P_{W'}^o(z)$ for every $z\in\mathcal{Z}$, which implies that $\Imag(W'')=\Imag(W')$.

Now define $V^{-1}\in\DMC_{\Imag(W''),\Imag(W)}$ as $$V^{-1}(y|z)=\frac{V(z|y) P_W^o(y)}{\displaystyle\sum_{y'\in\Imag(W)}V(z|y') P_W^o(y')}.$$

Equation \eqref{eqConvexHullWzm1} implies that for every $z\in\Imag(W'')=\Imag(W')$, we have: 
\begin{align*}
W''^{-1}_z&=\sum_{y\in\Imag(W)}V^{-1}(y|z)W_y^{-1}\stackrel{(a)}{=}\sum_{y\in \mathcal{Y}_{W'^{-1}_z}}V^{-1}(y|z)W_y^{-1}\\
&=\sum_{y\in \mathcal{Y}_{W'^{-1}_z}}V^{-1}(y|z)W'^{-1}_z\stackrel{(b)}{=}\sum_{y\in \Imag(W)}V^{-1}(y|z)W'^{-1}_z=W'^{-1}_z,
\end{align*}
where (a) and (b) follow from the fact that for every $(y,z)\in\Imag(W)\times\Imag(W'')$, we have $V^{-1}(y|z)=0$ if and only if $V(z|y)=0$.

We conclude that $P_{W''}^o=P_{W'}^o$, and for every $z\in\Imag(W'')=\Imag(W')$, we have $W''^{-1}_z=W'^{-1}_z$. Therefore, $W'=W''=V\circ W$ and so $W'$ is degraded from $W$. By exchanging the roles of $W$ and $W'$ we get that $W$ is also degraded from $W'$, hence $W$ and $W'$ are equivalent.

\section{Proof of Lemma \ref{lemProjContClosed}}
\label{appProjContClosed}

We need the following lemma:

\begin{mylem}
The relation $R_{\mathcal{X},\mathcal{Y}}^{(o)}$ is closed in $\DMC_{\mathcal{X},\mathcal{Y}}\times \DMC_{\mathcal{X},\mathcal{Y}}$.
\label{lemRClosed}
\end{mylem}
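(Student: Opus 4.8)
The plan is to write $R_{\mathcal{X},\mathcal{Y}}^{(o)}$ as the intersection of two ``one-directional degradedness'' relations and to show that each of these is closed because it is the continuous image of a compact space. Concretely, let
$$D=\left\{(W,W')\in\DMC_{\mathcal{X},\mathcal{Y}}\times\DMC_{\mathcal{X},\mathcal{Y}}:\;W'\;\text{is degraded from}\;W\right\}.$$
Since $W$ and $W'$ share the output alphabet $\mathcal{Y}$, we have $W'$ degraded from $W$ if and only if $W'=V\circ W$ for some $V\in\DMC_{\mathcal{Y},\mathcal{Y}}$. Hence $D$ is exactly the image of the mapping
$$\Phi:\DMC_{\mathcal{X},\mathcal{Y}}\times\DMC_{\mathcal{Y},\mathcal{Y}}\rightarrow\DMC_{\mathcal{X},\mathcal{Y}}\times\DMC_{\mathcal{X},\mathcal{Y}},\qquad \Phi(W,V)=(W,\,V\circ W).$$

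The key steps, in order, are: (i) recall from Section IV that the composition mapping $(W,V)\mapsto V\circ W$ from $\DMC_{\mathcal{X},\mathcal{Y}}\times\DMC_{\mathcal{Y},\mathcal{Y}}$ to $\DMC_{\mathcal{X},\mathcal{Y}}$ is continuous, so $\Phi$ is continuous; (ii) note that $\DMC_{\mathcal{X},\mathcal{Y}}$ and $\DMC_{\mathcal{Y},\mathcal{Y}}$ are compact, hence $\DMC_{\mathcal{X},\mathcal{Y}}\times\DMC_{\mathcal{Y},\mathcal{Y}}$ is compact, and therefore $D=\Phi\big(\DMC_{\mathcal{X},\mathcal{Y}}\times\DMC_{\mathcal{Y},\mathcal{Y}}\big)$ is compact; (iii) since $\DMC_{\mathcal{X},\mathcal{Y}}\times\DMC_{\mathcal{X},\mathcal{Y}}$ is a metric space, hence Hausdorff, the compact set $D$ is closed. (iv) The swap mapping $\tau(W,W')=(W',W)$ is a homeomorphism of $\DMC_{\mathcal{X},\mathcal{Y}}\times\DMC_{\mathcal{X},\mathcal{Y}}$ onto itself, and the set $D'=\{(W,W'):\;W\;\text{is degraded from}\;W'\}$ equals $\tau(D)$, so $D'$ is closed as well. (v) Finally, $W$ and $W'$ are equivalent precisely when $W'$ is degraded from $W$ and $W$ is degraded from $W'$, i.e.
$$R_{\mathcal{X},\mathcal{Y}}^{(o)}=D\cap D',$$
which is an intersection of two closed sets, hence closed in $\DMC_{\mathcal{X},\mathcal{Y}}\times\DMC_{\mathcal{X},\mathcal{Y}}$.

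I do not expect a serious obstacle here; the one point that genuinely carries the argument is the compactness of the ``intermediate'' channel space $\DMC_{\mathcal{Y},\mathcal{Y}}$, since it is compactness (together with continuity of composition and the Hausdorff property of the target) that lets us pass from ``image of $\Phi$'' to ``closed set''. Without compactness the image of a continuous map need not be closed, so this is the step I would be most careful to spell out. Everything else — continuity of $\Phi$, the homeomorphism $\tau$, and the fact that a compact subset of a Hausdorff space is closed — is standard and has been recorded in the Preliminaries.
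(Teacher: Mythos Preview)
Your proposal is correct and rests on the same core ingredients as the paper's proof: continuity of the composition map, compactness of $\DMC_{\mathcal{X},\mathcal{Y}}$ and $\DMC_{\mathcal{Y},\mathcal{Y}}$, and the fact that compact subsets of a Hausdorff (metric) space are closed.

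The packaging is slightly different. The paper treats both degradation directions simultaneously: it defines a single continuous map
\[
f(W,W',V,V')=(W,\,V'\circ W',\,W',\,V\circ W)
\]
from $(\DMC_{\mathcal{X},\mathcal{Y}})^2\times(\DMC_{\mathcal{Y},\mathcal{Y}})^2$ into $(\DMC_{\mathcal{X},\mathcal{Y}})^4$, takes the preimage of the ``double diagonal'' $A=\{(W,W,W',W')\}$ (a closed set), and then projects onto the first two coordinates to obtain $R_{\mathcal{X},\mathcal{Y}}^{(o)}$ as a compact, hence closed, set. Your argument instead handles one direction at a time, showing that the one-sided degradation relation $D$ is the continuous image of a compact set, invokes the swap homeomorphism for the other direction, and intersects. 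Both routes are equally valid; yours is a bit more modular and avoids the auxiliary diagonal set, while the paper's version dispatches both directions in a single stroke. Your identification of the compactness of $\DMC_{\mathcal{Y},\mathcal{Y}}$ as the load-bearing point is exactly right.
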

\begin{proof}
Define the mapping $f:(\DMC_{\mathcal{X},\mathcal{Y}})^2\times(\DMC_{\mathcal{Y},\mathcal{Y}})^2\rightarrow (\DMC_{\mathcal{X},\mathcal{Y}})^4$ as:
$$f(W,W',V,V')=(W,V'\circ W',W',V\circ W).$$
$f$ is continuous because channel composition is continuous.

Define the set $A\subset (\DMC_{\mathcal{X},\mathcal{Y}})^4$ as:
$$\textstyle A:=\{(W,W,W',W'):\; W,W'\in\DMC_{\mathcal{X},\mathcal{Y}}\}.$$
It is easy to see that $A$ is a closed subset of $(\DMC_{\mathcal{X},\mathcal{Y}})^4$. We have:
$$\textstyle f^{-1}(A)=\{(W,W',V,V')\in (\DMC_{\mathcal{X},\mathcal{Y}})^2\times (\DMC_{\mathcal{Y},\mathcal{Y}})^2:\; V'\circ W'=W,\;V\circ W=W'\}.$$

Since $f$ is continuous and since $A$ is a closed subset of $(\DMC_{\mathcal{X},\mathcal{Y}})^4$, $f^{-1}(A)$ is a closed subset of $(\DMC_{\mathcal{X},\mathcal{Y}})^2\times (\DMC_{\mathcal{Y},\mathcal{Y}})^2$ which is compact. Therefore, $f^{-1}(A)$ is compact.

Now define the mapping $g: (\DMC_{\mathcal{X},\mathcal{Y}})^2\times (\DMC_{\mathcal{Y},\mathcal{Y}})^2\rightarrow (\DMC_{\mathcal{X},\mathcal{Y}})^2$ as follows:
$$g(W,W',V,V')=(W,W').$$
Since $g$ is continuous and since $f^{-1}(A)$ is compact, $g(f^{-1}(A))$ is a compact subset of $\DMC_{\mathcal{X},\mathcal{Y}}^2$. Now notice that
\begin{align*}
g(f^{-1}(A))&=\textstyle \big\{(W,W')\in (\DMC_{\mathcal{X},\mathcal{Y}})^2:\; \exists V,V'\in\DMC_{\mathcal{Y},\mathcal{Y}},\; (W,W',V,V')\in f^{-1}(A)\big\}\\
&=\textstyle \big\{(W,W')\in (\DMC_{\mathcal{X},\mathcal{Y}})^2:\; \exists V,V'\in\DMC_{\mathcal{Y},\mathcal{Y}},\; V'\circ W'=W,\; V\circ W=W'\big\}\\
&=\textstyle \big\{(W,W')\in (\DMC_{\mathcal{X},\mathcal{Y}})^2:\; W\;\text{is equivalent to}\; W'\big\}\\
&=\textstyle \big\{(W,W')\in (\DMC_{\mathcal{X},\mathcal{Y}})^2:\; W R_{\mathcal{X},\mathcal{Y}}^{(o)} W'\big\}=R_{\mathcal{X},\mathcal{Y}}^{(o)}.
\end{align*}
We conclude that $R_{\mathcal{X},\mathcal{Y}}^{(o)}$ is compact, hence it is also closed because $ (\DMC_{\mathcal{X},\mathcal{Y}})^2$ is a metric space.
\end{proof}

\vspace*{3mm}

Now we are ready to prove Lemma \ref{lemProjContClosed}:

Let $\Proj:\DMC_{\mathcal{X},\mathcal{Y}}\rightarrow\DMC_{\mathcal{X},\mathcal{Y}}^{(o)}$ be defined as $\Proj(W)=\hat{W}$. The continuity of $\Proj$ follows from the definition of the quotient topology.

Now let $A$ be a closed subset of $\DMC_{\mathcal{X},\mathcal{Y}}$. We want to show that $\Proj(A)$ is closed.

Since $A$ is closed in $\DMC_{\mathcal{X},\mathcal{Y}}$, the set $\DMC_{\mathcal{X},\mathcal{Y}}\times A$ is closed in $(\DMC_{\mathcal{X},\mathcal{Y}})^2$. On the other hand, $R_{\mathcal{X},\mathcal{Y}}^{(o)}$ is closed in $(\DMC_{\mathcal{X},\mathcal{Y}})^2$ by Lemma \ref{lemRClosed}. Therefore, $(\DMC_{\mathcal{X},\mathcal{Y}}\times A)\cap R_{\mathcal{X},\mathcal{Y}}^{(o)}$ is closed in $(\DMC_{\mathcal{X},\mathcal{Y}})^2$ which is compact, hence $(\DMC_{\mathcal{X},\mathcal{Y}}\times A)\cap R_{\mathcal{X},\mathcal{Y}}^{(o)}$ is compact. We have:
\begin{align*}
\textstyle(\DMC_{\mathcal{X},\mathcal{Y}}\times A)\cap R_{\mathcal{X},\mathcal{Y}}^{(o)}&=\textstyle\big\{(W,W')\in(\DMC_{\mathcal{X},\mathcal{Y}})^2:\; W R_{\mathcal{X},\mathcal{Y}}^{(o)} W'\;\text{and}\;W'\in A\big\}.
\end{align*}

Now define the mapping $g:(\DMC_{\mathcal{X},\mathcal{Y}})^2\rightarrow\DMC_{\mathcal{X},\mathcal{Y}}$ as
$$g(W,W')=W.$$
Let $A_R:=g\big((\DMC_{\mathcal{X},\mathcal{Y}}\times A)\cap R_{\mathcal{X},\mathcal{Y}}^{(o)}\big)$. Since $g$ is continuous and since $(\DMC_{\mathcal{X},\mathcal{Y}}\times A)\cap R_{\mathcal{X},\mathcal{Y}}^{(o)}$ is compact, $A_R$ is also compact. We have:
\begin{align*}
\textstyle A_R&=\big\{W\in \textstyle\DMC_{\mathcal{X},\mathcal{Y}}:\;\exists W'\in A,\; W R_{\mathcal{X},\mathcal{Y}}^{(o)} W'\big\}=\Proj^{-1}(\Proj(A)).
\end{align*}

Since $\DMC_{\mathcal{X},\mathcal{Y}}$ is a metric space and since $A_R$ is compact, $\Proj^{-1}(\Proj(A))=A_R$ is closed in $\DMC_{\mathcal{X},\mathcal{Y}}$. On the other hand, we have $\Proj^{-1}\big(\DMC_{\mathcal{X},\mathcal{Y}}^{(o)}\setminus \Proj(A)\big)=\DMC_{\mathcal{X},\mathcal{Y}}\setminus \Proj^{-1}(\Proj(A))$, hence $\Proj^{-1}\big(\DMC_{\mathcal{X},\mathcal{Y}}^{(o)}\setminus \Proj(A)\big)$ is open in $\DMC_{\mathcal{X},\mathcal{Y}}$, which implies that $\DMC_{\mathcal{X},\mathcal{Y}}^{(o)}\setminus \Proj(A)$ is open in $\DMC_{\mathcal{X},\mathcal{Y}}^{(o)}$. Therefore, $\Proj(A)$ is closed in $\DMC_{\mathcal{X},\mathcal{Y}}^{(o)}$.

\section{Proof of Proposition \ref{propInteriorEmptyDMCXno}}

\label{appInteriorEmptyDMCXno}

let $\hat{U}$ be an arbitrary non-empty open subset of $(\DMC_{\mathcal{X},[m]}^{(o)},\mathcal{T}_{\mathcal{X},[m]}^{(o)})$ and let $\Proj$ be the projection onto the $R_{\mathcal{X},[m]}^{(o)}$-equivalence classes. $\Proj^{-1}(\hat{U})$ is open in the metric space $(\DMC_{\mathcal{X},[m]},d_{\mathcal{X},[m]})$. Therefore, there exists $W\in \DMC_{\mathcal{X},[m]}$ and $\epsilon>0$ such that $\Proj^{-1}(\hat{U})$ contains the open ball of center $W$ and radius $\epsilon$.

We will show that there exists $W'\in \DMC_{\mathcal{X},[m]}$ such that $\rank(W')=m>n$ and $d_{\mathcal{X},[m]}(W,W')<\epsilon$. If $\rank(W)=m$, take $W'=W$.

Assume that $\rank(W)<m$. Let $P_W^o\in\Delta_{[m]}$, $\Imag(W)$ and $\{W_y^{-1}:\;y\in\Imag(W)\}$ be as in Section \ref{secMetaProb}.

Let $\{v_y\}_{y\in[m]}$ be a collection of $m$ vectors in $\mathbb{R}^{\mathcal{X}}$ such that:
\begin{itemize}
\item $\displaystyle\sum_{y\in\Imag(W)}P_{W}^o(y)\cdot v_y=0$.
\item $\displaystyle\sum_{y\in[m]\setminus\Imag(W)}v_y=0$.
\item For every $y\in[m]$, $\displaystyle\sum_{x\in\mathcal{X}}v_y(x)=0$.
\item The vectors $\{v_y\}_{y\in[m]}$ are pairwise different.
\end{itemize}
Such collection can always be found.

Let $0<\delta,\delta'<1$ and define $P_{W'}^o\in\mathbb{R}^{[m]}$ as follows:
$$P_{W'}^o(y)=\begin{cases}\displaystyle (1-\delta)P_W^o(y)\quad&\text{if}\;y\in\Imag(W),\\\displaystyle\frac{\delta}{m-|\Imag(W)|}\quad&\text{otherwise}. \end{cases}$$
Clearly, $P_{W'}^o\in\Delta_{[m]}$ and $P_{W'}^o(y)>0$ for every $y\in[m]$. Now for every $y\in [m]$, define $W'^{-1}_y$ as follows:
\begin{align*}
W'^{-1}_y=\begin{cases}\displaystyle (1-\delta)W^{-1}_y + \delta\pi_{\mathcal{X}}+\delta' v_y \quad&\text{if}\;y\in\Imag(W),\\\pi_{\mathcal{X}}+\delta' v_y\quad&\text{otherwise}, \end{cases}
\end{align*}
where $\pi_{\mathcal{X}}\in\Delta_{\mathcal{X}}$ is the uniform probability distribution on $\mathcal{X}$. A simple calculation shows that $\displaystyle\sum_{y\in [m]} P_{W'}^o(y) W'^{-1}_y=\pi_{\mathcal{X}}$, and for every $y\in[m]$ we have $\displaystyle\sum_{x\in\mathcal{X}} W'^{-1}_y(x)=1$.

Notice that for $y\in\Imag(W)$, since $0<\delta<1$, $(1-\delta)W^{-1}_y + \delta\pi_{\mathcal{X}}$ lies inside the interior of the probability distribution simplex $\Delta_{\mathcal{X}}$. This means that for $\delta'$ small enough, $(1-\delta)W^{-1}_y + \delta\pi_{\mathcal{X}}+\delta'v_y\in \Delta_{\mathcal{X}}$ for every $y\in\Imag(W)$, and $\pi_{\mathcal{X}}+\delta' v_y\in\Delta_{\mathcal{X}}$ for every $y\notin\Imag(W)$. For every $0<\delta<1$, choose $\delta':=\delta'(\delta)$ so that $0<\delta'<\delta$ and $W'^{-1}_y\in\Delta_{\mathcal{X}}$ for every $y\in[m]$.

It is easy to see that for $\delta$ small enough, $W'^{-1}_{y_1}\neq W'^{-1}_{y_2}$ for every $y_1,y_2\in[m]$ satisfying $y_1\neq y_2$. Define the channel $W'\in\DMC_{\mathcal{X},[m]}$ as follows:
$$W'(y|x)=|\mathcal{X}|P_{W'}^o(y)W'^{-1}_y(x).$$
Since $P_{W'}^o(y)>0$ for every $y\in[m]$, we have $\supp({\MP}_{W'})=\{W'^{-1}_y:\;y\in[m]\}$. Therefore, there exists $\delta_0>0$ such for every $0<\delta<\delta_0$, we have $\rank(W')=m$. On the other hand, we have $\displaystyle \lim_{\delta\to 0} P_{W'}^o=P_{W}^o$ and $\displaystyle \lim_{\delta\to 0} W'^{-1}_y=W^{-1}_y$ for every $y\in\Imag(W)$. Therefore, $\displaystyle \lim_{\delta\to 0} W'=W$ (where the limit is taken in $(\DMC_{\mathcal{X},[m]},d_{\mathcal{X},[m]})$). This shows that there exists $W'\in\DMC_{\mathcal{X},[m]}$ such that $\rank(W')=m>n$ and $d_{\mathcal{X},[m]}(W,W')<\epsilon$, which means that $W'\in\Proj^{-1}(\hat{U})$ and $W'$ is not equivalent to any channel in $\DMC_{\mathcal{X},[n]}$ (see Corollary \ref{corNonEquiv}). Therefore, $\Proj(W')\in \hat{U}$ and $\Proj(W')\notin\DMC_{\mathcal{X},[n]}^{(o)}$ because $W'$ is not equivalent to any channel in $\DMC_{\mathcal{X},[n]}$. This shows that every non-empty open subset of $\DMC_{\mathcal{X},[m]}^{(o)}$ is not contained in $\DMC_{\mathcal{X},[n]}^{(o)}$. We conclude that the interior of $\DMC_{\mathcal{X},[n]}^{(o)}$ in $\DMC_{\mathcal{X},[m]}^{(o)}$ is empty.

\section{Proof of Lemma \ref{lemDMCXoNorm}}

\label{appDMCXoNorm}

Define $\DMC_{\mathcal{X},[0]}^{(o)}=\o$, which is strongly closed in $\DMC_{\mathcal{X},\ast}^{(o)}$.

Let $A$ and $B$ be two disjoint strongly closed subsets of $\DMC_{\mathcal{X},\ast}^{(o)}$. For every $n\geq 0$, let $A_n=A\cap \DMC_{\mathcal{X},[n]}^{(o)}$ and $B_n=B\cap \DMC_{\mathcal{X},[n]}^{(o)}$. Since $A$ and $B$ are strongly closed in $\DMC_{\mathcal{X},\ast}^{(o)}$, $A_n$ and $B_n$ are closed in $\DMC_{\mathcal{X},[n]}^{(o)}$. Moreover, $A_n\cap B_n\subset A\cap B=\o$.

Construct the sequences $(U_n)_{n\geq 0},(U_n')_{n\geq 0},(K_n)_{n\geq 0}$ and $(K_n')_{n\geq 0}$ recursively as follows:

$U_0=U_0'=K_0=K_0'=\o\subset\DMC_{\mathcal{X},[0]}^{(o)}$. Since $A_0=B_0=\o$, we have $A_0\subset U_0\subset K_0$ and $B_0\subset U_0'\subset K_0'$. Moreover, $U_0$ and $U_0'$ are open in $\DMC_{\mathcal{X},[0]}^{(o)}$, $K_0$ and $K_0'$ are closed in $\DMC_{\mathcal{X},[0]}^{(o)}$, and $K_0\cap K_0'=\o$.

Now let $n\geq 1$ and assume that we constructed $(U_i)_{0\leq i< n},(U_i')_{0\leq i< n},(K_i)_{0\leq i< n}$ and $(K_i')_{0\leq i< n}$ such that for every $0\leq i< n$, we have $A_i\subset U_i\subset K_i\subset\DMC_{\mathcal{X},[i]}^{(o)}$, $B_i\subset U_i'\subset K_i'\subset \DMC_{\mathcal{X},[i]}^{(o)}$, $U_i$ and $U_i'$ are open in $\DMC_{\mathcal{X},[i]}^{(o)}$, $K_i$ and $K_i'$ are closed in $\DMC_{\mathcal{X},[i]}^{(o)}$, and $K_i\cap K_i'=\o$. Moreover, assume that $K_i\subset U_{i+1}$ and $K_i'\subset U_{i+1}'$ for every $0\leq i<n-1$.

Let $C_n=A_n\cup K_{n-1}$ and $D_n=B_n\cup K_{n-1}'$. Since $K_{n-1}$ and $K_{n-1}'$ are closed in $\DMC_{\mathcal{X},[n-1]}^{(o)}$ and since $\DMC_{\mathcal{X},[n-1]}^{(o)}$ is closed in $\DMC_{\mathcal{X},[n]}^{(o)}$, we can see that $K_{n-1}$ and $K_{n-1}'$ are closed in $\DMC_{\mathcal{X},[n]}^{(o)}$. Therefore, $C_n$ and $D_n$ are closed in $\DMC_{\mathcal{X},[n]}^{(o)}$. Moreover, we have
\begin{align*}
C_n\cap D_n&=(A_n\cup K_{n-1})\cap(B_n\cup K_{n-1}')\\
&=(A_n\cap B_n)\cup(A_n\cap K_{n-1}')\cup (K_{n-1}\cap B_n)\cup(K_{n-1}\cap K_{n-1}')\\
&\stackrel{(a)}{=}\textstyle \left(A_n\cap K_{n-1}'\cap \DMC_{\mathcal{X},[n-1]}^{(o)}\right)\cup \left(K_{n-1}\cap \DMC_{\mathcal{X},[n-1]}^{(o)}\cap B_n\right)\\
&=(A_{n-1}\cap K_{n-1}')\cup (K_{n-1}\cap B_{n-1})\subset (K_{n-1}\cap K_{n-1}')\cup (K_{n-1}\cap K_{n-1}')=\o,
\end{align*}
where (a) follows from the fact that $A_n\cap B_n=K_{n-1}\cap K_{n-1}'=\o$ and the fact that $K_{n-1}\subset \DMC_{\mathcal{X},[n-1]}^{(o)}$ and $K_{n-1}'\subset \DMC_{\mathcal{X},[n-1]}^{(o)}$.

Since $\DMC_{\mathcal{X},[n]}^{(o)}$ is normal (because it is metrizable), and since $C_n$ and $D_n$ are closed disjoint subsets of $\DMC_{\mathcal{X},[n]}^{(o)}$, there exist two sets $U_n,U_n'\subset \DMC_{\mathcal{X},[n]}^{(o)}$ that are open in $\DMC_{\mathcal{X},[n]}^{(o)}$ and two sets $K_n,K_n'\subset \DMC_{\mathcal{X},[n]}^{(o)}$ that are closed in $\DMC_{\mathcal{X},[n]}^{(o)}$ such that $C_n\subset U_n\subset K_n$, $D_n\subset U_n'\subset K_n'$ and $K_n\cap K_n'=\o$. Clearly, $A_n\subset U_n\subset K_n\subset \DMC_{\mathcal{X},[n]}^{(o)}$, $B_n\subset U_n'\subset K_n'\subset \DMC_{\mathcal{X},[n]}^{(o)}$, $K_{n-1}\subset U_n$ and $K_{n-1}'\subset U_n'$. This concludes the recursive construction.

Now define $\displaystyle U=\bigcup_{n\geq 0}U_n=\bigcup_{n\geq 1}U_n$ and $\displaystyle U'=\bigcup_{n\geq 0}U_n'=\bigcup_{n\geq 1}U_n'$. Since $A_n\subset U_n$ for every $n\geq 1$, we have 
\begin{align*}
\textstyle A=A\cap\DMC_{\mathcal{X},\ast}^{(o)}=A\cap\left({\displaystyle\bigcup_{n\geq 1}}\DMC_{\mathcal{X},[n]}^{(o)}\right)={\displaystyle\bigcup_{n\geq 1}}\left(A\cap \DMC_{\mathcal{X},[n]}^{(o)}\right)={\displaystyle\bigcup_{n\geq 1}} A_n\subset {\displaystyle\bigcup_{n\geq 1}} U_n =U.
\end{align*}
Moreover, for every $n\geq 1$ we have
\begin{align*}
\textstyle U\cap \DMC_{\mathcal{X},[n]}^{(o)}=\left({\displaystyle\bigcup_{i\geq 1} U_i}\right)\cap \DMC_{\mathcal{X},[n]}^{(o)}\stackrel{(a)}{=}\left({\displaystyle\bigcup_{i\geq n} U_i}\right)\cap \DMC_{\mathcal{X},[n]}^{(o)}={\displaystyle\bigcup_{i\geq n} \left(U_i\cap \textstyle\DMC_{\mathcal{X},[n]}^{(o)}\right)},
\end{align*}
where (a) follows from the fact that $U_i\subset K_i\subset U_{i+1}$ for every $i\geq 0$, which means that the sequence $(U_i)_{i\geq 1}$ is increasing.

For every $i\geq n$, we have $\DMC_{\mathcal{X},[n]}^{(o)}\subset \DMC_{\mathcal{X},[i]}^{(o)}$ and $U_i$ is open in $\DMC_{\mathcal{X},[i]}^{(o)}$, hence $U_i\cap \DMC_{\mathcal{X},[n]}^{(o)}$ is open in $\DMC_{\mathcal{X},[n]}^{(o)}$. Therefore, $U\cap \DMC_{\mathcal{X},[n]}^{(o)}=\displaystyle\bigcup_{i\geq n} \left(U_i\cap \textstyle\DMC_{\mathcal{X},[n]}^{(o)}\right)$ is open in $\DMC_{\mathcal{X},[n]}^{(o)}$. Since this is true for every $n\geq 1$, we conclude that $U$ is strongly open in $\DMC_{\mathcal{X},\ast}^{(o)}$.

We can similarly show that $B\subset U'$ and that $U'$ is strongly open in $\DMC_{\mathcal{X},\ast}^{(o)}$. Finally, we have
\begin{align*}
U\cap U'=\left(\bigcup_{n\geq 1} U_n\right)\cap \left(\bigcup_{n'\geq 1} U_{n'}'\right)=\bigcup_{n\geq 1, n'\geq 1}(U_n\cap U_{n'}')\stackrel{(a)}{=}\bigcup_{n\geq 1}(U_n\cap U_n')
&\subset\bigcup_{n\geq 1}(K_n\cap K_n')=\o,
\end{align*}
where (a) follows from the fact that for every $n\geq 1$ and every $n'\geq 1$, we have $$U_n\cap U_{n'}'\subset U_{\max\{n,n'\}}\cap U_{\max\{n,n'\}}'$$ because $(U_n)_{n\geq 1}$ and $(U_n')_{n\geq 1}$ are increasing. We conclude that $(\DMC_{\mathcal{X},\ast}^{(o)},\mathcal{T}_{s,\mathcal{X},\ast}^{(o)})$ is normal.

\section{Proof of Lemma \ref{lemRelDistance}}
\label{appRelDistance}

Let $W_1,W_2\in\DMC_{\mathcal{X},\mathcal{Y}}$, and let $\hat{W}_1$ and $\hat{W}_2$ be the $R_{\mathcal{X},\mathcal{Y}}^{(o)}$-equivalence classes of $W_1$ and $W_2$ respectively.

Fix $m\geq 1$, $p\in\Delta_{[m]\times\mathcal{X}}$ and $D\in \DMC_{\mathcal{Y},[m]}$. We have:

\begin{align*}
\sum_{\substack{u\in[m],\\x\in\mathcal{X},\\y\in\mathcal{Y}}}&p(u,x) W_1(y|x)D(u|y)\\
&=\Bigg(\sum_{\substack{u\in[m],\\x\in\mathcal{X},\\y\in\mathcal{Y}}}p(u,x)W_2(y|x)D(u|y)\Bigg)+\sum_{\substack{u\in[m],\\x\in\mathcal{X},\\y\in\mathcal{Y}}}p(u,x)\cdot\big(W_1(y|x)-W_2(y|x)\big)\cdot D(u|y)\\
&\leq \Bigg(\sup_{D'\in\DMC_{\mathcal{Y},[m]}} \sum_{\substack{u\in[m],\\x\in\mathcal{X},\\y\in\mathcal{Y}}}p(u,x)W_2(y|x)D'(u|y)\Bigg) + \sum_{\substack{u\in[m],\\x\in\mathcal{X},\\y\in\mathcal{Y}}}p(u,x)\cdot\big(W_1(y|x)-W_2(y|x)\big)\cdot D(u|y)\\
&\leq P_c(p,W_2) + \sum_{\substack{u\in[m],\\x\in\mathcal{X}}}p(u,x)\cdot \sum_{\substack{y\in\mathcal{Y}:\\W_1(y|x)> W_2(y|x)}} \big(W_1(y|x)-W_2(y|x)\big)\cdot \left(\sum_{u'\in[m]} D(u'|y)\right)\\
&= P_c(p,W_2) + \sum_{\substack{u\in[m],\\x\in\mathcal{X}}}p(u,x)\cdot\Bigg( \sum_{\substack{y\in\mathcal{Y}:\\W_1(y|x)> W_2(y|x)}}\big(W_1(y|x)-W_2(y|x)\big)\Bigg)\\
&\stackrel{(a)}{\leq} P_c(p,W_2) + \sum_{\substack{u\in[m],\\x\in\mathcal{X}}}p(u,x)\cdot d_{\mathcal{X},\mathcal{Y}}(W_1,W_2)=P_c(p,W_2) + d_{\mathcal{X},\mathcal{Y}}(W_1,W_2),
\end{align*}
where (a) follows from the fact that
\begin{align*}
\sum_{\substack{y\in\mathcal{Y}:\\W_1(y|x)> W_2(y|x)}}\big(W_1(y|x)-W_2(y|x)\big)&=\frac{1}{2}\sum_{y\in\mathcal{Y}}\big|W_1(y|x)-W_2(y|x)\big|\\
&\leq \frac{1}{2}\sup_{x\in\mathcal{X}}\sum_{y\in\mathcal{Y}}\big|W_1(y|x)-W_2(y|x)\big|=d_{\mathcal{X},\mathcal{Y}}(W_1,W_2).
\end{align*}

Therefore,
$$P_c(p,W_1)= \sup_{D\in\DMC_{\mathcal{Y},[m]}}\sum_{\substack{u\in[m],\\x\in\mathcal{X},\\y\in\mathcal{Y}}}p(u,x) W_1(y|x)D(u|y)\leq P_c(p,W_2) + d_{\mathcal{X},\mathcal{Y}}(W_1,W_2).$$
Similarly, we can show that $P_c(p,W_2)\leq P_c(p,W_1)+ d_{\mathcal{X},\mathcal{Y}}(W_1,W_2)$, hence $$|P_c(p,W_1)-P_c(p,W_2)|\leq d_{\mathcal{X},\mathcal{Y}}(W_1,W_2).$$

We conclude that
\begin{align*}
d_{\mathcal{X},\mathcal{Y}}^{(o)}(\hat{W}_1,\hat{W}_2)&=\sup_{\substack{m\geq 1,\\p\in\Delta_{[m]\times\mathcal{X}}}}|P_c(p,\hat{W}_1)-P_c(p,\hat{W}_2)|\\
&=\sup_{\substack{m\geq 1,\\p\in\Delta_{[m]\times\mathcal{X}}}}|P_c(p,W_1)-P_c(p,W_2)|\\
&\leq d_{\mathcal{X},\mathcal{Y}}(W_1,W_2).
\end{align*}

\section{Proof of Lemma \ref{lemDMCXoWasserstein}}

\label{appDMCXoWasserstein}
Let $\gamma\in\Gamma({\MP}_{\hat{W}},{\MP}_{\hat{W}'})$ be a measure on $\Delta_{\mathcal{X}}\times \Delta_{\mathcal{X}}$ that couples ${\MP}_{\hat{W}}$ and ${\MP}_{\hat{W}'}$.

Let $S=\supp({\MP}_{\hat{W}})$ and $S'=\supp({\MP}_{\hat{W}'})$ be the supports of $\hat{W}$ and $\hat{W}'$ respectively. Since ${\MP}_{\hat{W}}$ and ${\MP}_{\hat{W}'}$ are finitely supported, $\gamma$ is also finitely supported and its support is a subset of $S\times S'$. Therefore, there exists a collection of coefficients $\alpha_{p,p'}\in[0,1]$ such that
$$\gamma=\sum_{\substack{p\in S,\\p'\in S'}}\alpha_{p,p'}\delta_{(p,p')},$$
where $\delta_{(p,p')}$ is a Dirac measure centered at $(p,p')\in\Delta_{\mathcal{X}}\times \Delta_{\mathcal{X}}$. Since ${\MP}_{\hat{W}}$ and ${\MP}_{\hat{W}'}$ are the marginals of $\gamma$ on the first and the second factors respectively, we have $\displaystyle{\MP}_{\hat{W}}(p)=\sum_{p'\in S'}\alpha_{p,p'}$ for every $p\in S$. Similarly, $\displaystyle{\MP}_{\hat{W}'}(p')=\sum_{p\in S}\alpha_{p,p'}$ for every $p'\in S'$.

Let $\mathcal{Y}=S\times S'$ and define the channels $W,W'\in\DMC_{\mathcal{X},\mathcal{Y}}$ as:
$$W(p,p'|x)=|\mathcal{X}|\alpha_{p,p'} \cdot p(x),$$
and
$$W'(p,p'|x)=|\mathcal{X}|\alpha_{p,p'} \cdot p'(x).$$

For every $x\in\mathcal{X}$, we have
\begin{align*}
\sum_{(p,p')\in\mathcal{Y}}W(p,p'|x)&=|\mathcal{X}|\sum_{(p,p')\in S\times S'} \alpha_{p,p'} \cdot p(x)= |\mathcal{X}|\sum_{p\in S} {\MP}_{\hat{W}}(p) \cdot p(x)\\
&=|\mathcal{X}|\int_{\Delta_{\mathcal{X}}}p(x)\cdot d{\MP}_{\hat{W}}(p)=|\mathcal{X}|\frac{1}{|\mathcal{X}|}=1.
\end{align*}
Similarly, $\displaystyle\sum_{(p,p')\in\mathcal{Y}}W'(p,p'|x)=1$. Therefore, $W$ and $W'$ are valid channels.

For every $(p,p')\in\mathcal{Y}$, we have
$$P_W^o(p,p')=\sum_{x\in\mathcal{X}}\frac{1}{|\mathcal{X}|}W(p,p'|x)=\sum_{x\in \mathcal{X}}\alpha_{p,p'}\cdot p(x)=\alpha_{p,p'}.$$
Therefore, $\Imag(W)=\{(p,p')\in\mathcal{Y}:\;\alpha_{p,p'}>0\}$. For every $(p,p')\in\Imag(W)$ and every $x\in\mathcal{X}$, we have:
$$W^{-1}_{p,p'}(x)=\frac{W(p,p'|x)}{|\mathcal{X}|P_W^o(p,p')}=\frac{|\mathcal{X}|\alpha_{p,p'} \cdot p(x)}{|\mathcal{X}|\alpha_{p,p'}}=p(x),$$
hence $W^{-1}_{p,p'}=p$ for every $(p,p')\in\Imag(W)$, which shows that $\supp({\MP}_W)\subset S$. Similarly, we can show that $$\Imag(W')=\{(p,p')\in\mathcal{Y}:\;\alpha_{p,p'}>0\},$$ $\supp({\MP}_{W'})\subset S'$, and for every $(p,p')\in\mathcal{Y}$, $P_{W'}^o(p,p')=\alpha_{p,p'}$ and $W'^{-1}_{p,p'}=p'$.

For every $p\in S$, we have:
$${\MP}_W(p)=\sum_{\substack{y\in\Imag(W),\\W^{-1}_y=p}}P_W^o(y)=\sum_{\substack{p'\in S',\\\alpha_{p,p'}>0}}\alpha_{p,p'}=\sum_{p'\in S'}\alpha_{p,p'}={\MP}_{\hat{W}}(p)>0.$$
This shows that $\supp({\MP}_W)=S=\supp({\MP}_{\hat{W}})$ and ${\MP}_W(p)={\MP}_{\hat{W}}(p)$ for every $p\in S$. Therefore, ${\MP}_W={\MP}_{\hat{W}}$ and so $W$ is equivalent to every channel in $\hat{W}$. Similarly, we can show that ${\MP}_{W'}={\MP}_{\hat{W}'}$ and $W'$ is equivalent to every channel in $\hat{W}'$.

Let $\tilde{W}$ and $\tilde{W}'$ be the $R_{\mathcal{X},\mathcal{Y}}^{(o)}$-equivalence classes of $W$ and $W'$ respectively. We can write $\hat{W}=\tilde{W}$ and $\hat{W}'=\tilde{W}'$ because of the canonical identification of $\DMC_{\mathcal{X},\mathcal{Y}}^{(o)}$ with $\DMC_{\mathcal{X},[n]}^{(o)}$, where $n=|\mathcal{Y}|$. We have:
\begin{align*}
d_{\mathcal{X},\ast}^{(o)}(\hat{W},\hat{W}')&=d_{\mathcal{X},\mathcal{Y}}^{(o)}(\tilde{W},\tilde{W}')\stackrel{(a)}{\leq} d_{\mathcal{X},\mathcal{Y}}(W,W')=\frac{1}{2}\max_{x\in\mathcal{X}}\sum_{(p,p')\in\mathcal{Y}}|W(p,p'|x)-W'(p,p'|x)|\\
&=\frac{1}{2}\max_{x\in\mathcal{X}}\sum_{\substack{p\in S,\\p'\in S'}}\Big||\mathcal{X}|\alpha_{p,p'}\cdot p(x) - |\mathcal{X}|\alpha_{p,p'}\cdot p'(x) \Big|=\frac{1}{2}|\mathcal{X}|\max_{x\in\mathcal{X}} \sum_{\substack{p\in S,\\p'\in S'}} \alpha_{p,p'}\cdot |p(x) - p'(x)|\\
&\leq \frac{1}{2}|\mathcal{X}|\sum_{x\in\mathcal{X}}\sum_{\substack{p\in S,\\p'\in S'}} \alpha_{p,p'}\cdot |p(x) - p'(x)|=\frac{1}{2}|\mathcal{X}|\sum_{\substack{p\in S,\\p'\in S'}} \alpha_{p,p'} \sum_{x\in\mathcal{X}} |p(x) - p'(x)|\\
&=\frac{1}{2}|\mathcal{X}|\sum_{\substack{p\in S,\\p'\in S'}} \alpha_{p,p'} \|p-p'\|_1=|\mathcal{X}|\sum_{\substack{p\in S,\\p'\in S'}} \alpha_{p,p'}d(p,p')=|\mathcal{X}|\int_{\Delta_{\mathcal{X}}\times\Delta_{\mathcal{X}}} d(p,p')\cdot d\gamma(p,p'),
\end{align*}
where (a) follows from Lemma \ref{lemRelDistance}, and $d(p,p')=\frac{1}{2}\|p-p'\|_1$ is the total variation distance between $p$ and $p'$. Therefore,

\begin{align*}
d_{\mathcal{X},\ast}^{(o)}(\hat{W},\hat{W}')\leq |\mathcal{X}|\inf_{\gamma\in\Gamma({\MP}_{\hat{W}},{\MP}_{\hat{W}'})} \int_{\Delta_{\mathcal{X}}\times\Delta_{\mathcal{X}}} d(p,p')\cdot d\gamma(p,p')=|\mathcal{X}|\cdot W_1({\MP}_{\hat{W}},{\MP}_{\hat{W}'}).
\end{align*}

\section{Proof of Proposition \ref{propClosureFinSupported}}
\label{appClosureFinSupported}

If $|\mathcal{X}|=1$, $\Delta_{\mathcal{X}}$ consists of a single probability distribution and $\mathcal{MP}(\mathcal{X})$ consists of a single meta-probability measure which is balanced and finitely supported, so  $\mathcal{MP}(\mathcal{X})=\mathcal{MP}_{b}(\mathcal{X})=\mathcal{MP}_{bf}(\mathcal{X})$.

Now assume that $|\mathcal{X}|\geq 2$. We start by showing that $\mathcal{MP}_{b}(\mathcal{X})$ is weakly-$\ast$ closed.

For every $x\in\mathcal{X}$. Consider the mapping $f_x:\Delta_{\mathcal{X}}\rightarrow\mathbb{R}$ defined as $f_x(p)=p(x)$. Clearly, $f_x$ is bounded and continuous. Therefore, the mapping $$F_x:\mathcal{MP}(\mathcal{X})\rightarrow \mathbb{R}$$ defined as
$$F_x({\MP})= \int_{\Delta_{\mathcal{X}}} f_x d{\MP}=\int_{\Delta_{\mathcal{X}}} p(x)\cdot d{\MP}(p)$$
is continuous in the weak-$\ast$ topology. Therefore, $\displaystyle F_x^{-1}\left(\left\{\frac{1}{|\mathcal{X}|}\right\}\right)$ is weakly-$\ast$ closed. It is easy to see that $\mathcal{MP}_{b}(\mathcal{X})=\displaystyle\bigcap_{x\in\mathcal{X}} F_x^{-1}\left(\left\{\frac{1}{|\mathcal{X}|}\right\}\right)$. This proves that $\mathcal{MP}_{b}(\mathcal{X})$, which is the finite intersection of weakly-$\ast$ closed sets, is weakly-$\ast$ closed.

It remains to show that $\mathcal{MP}_{bf}(\mathcal{X})$ is weakly-$\ast$ dense in $\mathcal{MP}_{b}(\mathcal{X})$. We will show that for every $\epsilon>0$ and every ${\MP}\in \mathcal{MP}_{b}(\mathcal{X})$, there exists ${\MP}'\in \mathcal{MP}_{bf}(\mathcal{X})$ such that $W_1({\MP},{\MP}')<\epsilon$.

Fix $0<\epsilon<1$ and let ${\MP}\in\mathcal{MP}_b(\mathcal{X})$ be any balanced meta-probability measure on $\mathcal{X}$, i.e., for every $x\in\mathcal{X}$ we have
$$\int_{\Delta_{\mathcal{X}}}p(x)d{\MP}(p)=\frac{1}{|\mathcal{X}|}.$$

Now fix $x\in\mathcal{X}$. By the definition of the Lebesgue integral, there exists a finite partition $\{B_{x,i}\}_{1\leq i\leq k_x}$ of $\Delta_{\mathcal{X}}$ and a sequence of positive numbers $(b_{x,i})_{1\leq i\leq k_x}$ such that for every $1\leq i\leq k_x$, $B_{x,i}$ is a Borel set of $\Delta_{\mathcal{X}}$, $b_{x,i}\leq p(x)$ for every $p\in B_{x,i}$, and
$$\sum_{i=1}^{k_x} b_{x,i}{\MP}(B_{x,i}) \geq \left(\int_{\Delta_{\mathcal{X}}}p(x)\cdot d{\MP}(p)\right)-\frac{\epsilon}{12|\mathcal{X}|}=\frac{1}{|\mathcal{X}|}-\frac{\epsilon}{12|\mathcal{X}|}.$$
By applying the same reasoning on the function $1-p(x)\geq 0$, we can find a finite partition $\{C_{x,i}\}_{1\leq i\leq m_x}$ of $\Delta_{\mathcal{X}}$ and a sequence of positive numbers $(c_{x,i})_{1\leq i\leq m_x}$ such that for every $1\leq i\leq m_x$, $C_{x,i}$ is a Borel set of $\Delta_{\mathcal{X}}$, $c_{x,i}\geq p(x)$ for every $p\in C_{x,i}$ and
$$\sum_{i=1}^{m_x} c_{x,i}{\MP}(C_{x,i}) \leq \left(\int_{\Delta_{\mathcal{X}}}p(x)\cdot d{\MP}(p)\right)+\frac{\epsilon}{12|\mathcal{X}|}=\frac{1}{|\mathcal{X}|}+\frac{\epsilon}{12|\mathcal{X}|}.$$

Let $d$ be the total variation distance on $\Delta_{\mathcal{X}}$, i.e., $d(p,p')=\frac{1}{2}\|p-p'\|_1$. Since $\Delta_{\mathcal{X}}$ is compact, it can be covered by a finite number of open balls of radius $\frac{\epsilon}{4}$, i.e., there exist $h$ points $p_1',\ldots,p_h'$ such that $\displaystyle \Delta_{\mathcal{X}}=\bigcup_{i=1}^h B_{\frac{\epsilon}{4}}(p_i')=\bigcup_{i=1}^h\left\{p\in\Delta_{\mathcal{X}}:\;d(p,p_i')<\frac{\epsilon}{4}\right\}$. For every $1\leq i\leq h$, define the set $$D_i=B_{\frac{\epsilon}{4}}(p_i')\setminus\left(\bigcup_{1\leq j<i} B_{\frac{\epsilon}{4}}(p_j')\right).$$ Clearly, the sets $\{D_i\}_{1\leq i\leq h}$ are disjoint Borel sets that cover $\Delta_{\mathcal{X}}$. Let $\displaystyle n=h\times \prod_{x\in\mathcal{X}} (k_x \cdot m_x)$, and let $A_1,\ldots,A_n$ be the Borel sets obtained by intersecting the sets in the collections $\{D_1,\ldots,D_h\}$, $\{B_{x,i}\}_{1\leq i\leq k_x}$ and $\{C_{x,i}\}_{1\leq i\leq m_x}$ for every $x\in\mathcal{X}$. In other words,
\begin{align*}
\{A_i:&\;1\leq i\leq n\}\\
&=\left\{D_i \cap \bigcap_{x\in\mathcal{X}} (B_{x,i_x}\cap C_{x,j_x}) :\; 1\leq i\leq h,\;\text{and}\;\forall x\in\mathcal{X},\;1\leq i_x\leq k_x\;\text{and}\;1\leq j_x\leq m_x\right\}.
\end{align*}

For every $1\leq i\leq n$, let $l_{x,i}=b_{x,i'}$ where $i'$ is the unique integer satisfying $1\leq i'\leq k_x$ and $A_i\subset B_{x,i'}$. Similarly, let $u_{x,i}=c_{x,i''}$ where $i''$ is the unique integer satisfying $1\leq i''\leq k_x$ and $A_i\subset C_{x,i''}$. Clearly, $l_{x,i} \leq p(x)\leq u_{x,i}$ for every $x\in A_i$. Moreover,
$$\sum_{i=1}^n l_{x,i}{\MP}(A_i) =\sum_{i=1}^{k_x} b_{x,i}{\MP}(B_{x,i}) \geq \frac{1}{|\mathcal{X}|}-\frac{\epsilon}{12|\mathcal{X}|},$$
and
$$\sum_{i=1}^n u_{x,i}{\MP}(A_i) =\sum_{i=1}^{m_x} c_{x,i}{\MP}(C_{x,i}) \leq \frac{1}{|\mathcal{X}|}+\frac{\epsilon}{12|\mathcal{X}|}.$$

For every $1\leq i\leq n$, choose $p_i\in A_i$ arbitrarily. Let $j_i$ be the unique integer such that $A_i\subset D_{j_i}$. Since $D_{j_i}\subset B_{\frac{\epsilon}{4}}(p_{j_i}')$, we have $\displaystyle d(p,p_{j_i}')<\frac{\epsilon}{4}$ for every $p\in A_i$. Therefore, $\displaystyle d(p,p_i)\leq d(p,p_{j_i}')+d(p_{j_i}',p_i)<\frac{\epsilon}{2}$ for every $p\in A_i$.

Define the mapping $f:\Delta_{\mathcal{X}}\rightarrow\Delta_{\mathcal{X}}$ as $f(p)=p_i$ for every $p\in A_i$. Clearly, $d(p,f(p))<\frac{\epsilon}{2}$ for every $p\in\Delta_{\mathcal{X}}$.

Now  let ${\MP}_f=f_{\#}({\MP})$, where $f_{\#}({\MP})$ is the push-forward measure of ${\MP}$ by the mapping $f$, i.e., ${\MP}_f(B)=(f_{\#}({\MP}))(B)={\MP}\big(f^{-1}(B)\big)$ for every Borel set $B$ of $\Delta_{\mathcal{X}}$. We have:
$${\MP}_f(B)=\sum_{p_i\in B} {\MP}\big(f^{-1}(\{p_i\})\big)=\sum_{p_i\in B} {\MP}(A_i)=\sum_{p_i\in B} \alpha_i,$$
where $\alpha_i={\MP}(A_i)$ for every $1\leq i\leq n$. Therefore, ${\MP}_f$ is finitely supported and $$\supp({\MP}_f)\subset \{p_i:\; 1\leq i\leq n\}.$$ Moreover, ${\MP}_f(p_i)=\alpha_i$ for every $1\leq i\leq n$.

Now define the mapping $f_\times:\Delta_{\mathcal{X}}\rightarrow\Delta_{\mathcal{X}}\times \Delta_{\mathcal{X}}$ as $f_\times(p)=(p,f(p))$, and define the measure $\gamma_f$ on $\Delta_{\mathcal{X}}\times\Delta_{\mathcal{X}}$ as the push-forward of ${\MP}$ by $f_\times$, i.e., $\gamma_f(B)=\MP(f_{\times}^{-1}(B))$ for every Borel set $B$ of $\Delta_{\mathcal{X}}\times \Delta_{\mathcal{X}}$. It is easy to see that the marginals of $\gamma_f$ on the first and second factors are ${\MP}$ and ${\MP}_f$ respectively. Therefore, $\gamma_f$ is a coupling between ${\MP}$ and ${\MP}_f$, hence
\begin{align*}
W_1({\MP},{\MP}_f)&=\inf_{\gamma\in\Gamma({\MP},{\MP}_f)}\int_{\Delta_{\mathcal{X}}\times \Delta_{\mathcal{X}}}d(p,p')\cdot d\gamma(p,p')\leq \int_{\Delta_{\mathcal{X}}\times \Delta_{\mathcal{X}}}d(p,p')\cdot d\gamma_f(p,p')\\
&\stackrel{(a)}{=}\int_{\Delta_{\mathcal{X}}}d(p,f(p))\cdot d{\MP}(p)\stackrel{(b)}{\leq} \frac{\epsilon}{2},
\end{align*}
where (a) follows from the fact that $\gamma_f$ is the push-forward of ${\MP}$ by $f_{\times}(p)=(p,f(p))$. (b) follows from the fact that $d(p,f(p))<\frac{\epsilon}{2}$ for every $p\in \Delta_{\mathcal{X}}$. Therefore, ${\MP}_f$ well approximates ${\MP}$ and it is finitely supported. However, ${\MP}_f$ may not be balanced, so more work needs to be done in order to find a balanced and finitely supported meta-probability measure that well approximates ${\MP}$.

For every $x\in\mathcal{X}$, we have:
\begin{align*}
\int_{\Delta_{\mathcal{X}}} p(x)\cdot d{\MP}_f(p)\stackrel{(a)}{=}\int_{\Delta_{\mathcal{X}}} (f(p))(x)\cdot d{\MP}(p)=\sum_{i=1}^n p_i(x){\MP}(A_i)\stackrel{(b)}{\geq} \sum_{i=1}^n l_{i,x}{\MP}(A_i)\geq\frac{1}{|\mathcal{X}|}-\frac{\epsilon}{12|\mathcal{X}|},
\end{align*}
where (a) follows from the fact that ${\MP}_f$ is the push-forward of ${\MP}$ by $f$. (b) follows from the fact that $p_i\in A_i$ and so $p_i(x)\geq l_{i,x}$ for every $1\leq i\leq n$. Similarly, we have
\begin{align*}
\int_{\Delta_{\mathcal{X}}} p(x)\cdot d{\MP}_f(p)=\sum_{i=1}^n p_i(x){\MP}(A_i)\stackrel{(c)}{\leq} \sum_{i=1}^n u_{i,x}{\MP}(A_i)\leq\frac{1}{|\mathcal{X}|}+\frac{\epsilon}{12|\mathcal{X}|},
\end{align*}
where (c) follows from the fact that $p_i\in A_i$ and so $p_i(x)\leq u_{i,x}$ for every $1\leq i\leq n$. We conclude that for every $x\in\mathcal{X}$, we have
\begin{align*}
\left|\pi_{\mathcal{X}}(x)-\int_{\Delta_{\mathcal{X}}} p(x)\cdot d{\MP}_f(p)\right|\leq \frac{\epsilon}{12|\mathcal{X}|},
\end{align*}
where $\pi_{\mathcal{X}}$ is the uniform distribution on $\mathcal{X}$. Define $\tilde{p}\in\Delta_{\mathcal{X}}$ as:
$$\tilde{p}=\int_{\Delta_{\mathcal{X}}} p\cdot d{\MP}_f(p).$$
For every $x\in\mathcal{X}$, define
$$p'(x)=\frac{6(\pi_{\mathcal{X}}(x)-\tilde{p}(x))}{\epsilon}+\tilde{p}(x).$$
Clearly, $\displaystyle\sum_{x\in\mathcal{X}}p'(x)=1$. Moreover,
\begin{align*}
p'(x)&=\frac{6(\pi_{\mathcal{X}}(x)-\tilde{p}(x))}{\epsilon}+\tilde{p}(x)\\
&\stackrel{(a)}{\geq} \frac{\displaystyle 6\left(\pi_{\mathcal{X}}(x)-\pi_{\mathcal{X}}(x)-\frac{\epsilon}{12|\mathcal{X}|}\right)}{\epsilon}+\frac{1}{|\mathcal{X}|}-\frac{\epsilon}{12|\mathcal{X}|}=\frac{1}{2|\mathcal{X}|}-\frac{\epsilon}{12|\mathcal{X}|}\geq 0,
\end{align*}
Where (a) follows from the fact that $\displaystyle|\pi_{\mathcal{X}}(x)-\tilde{p}(x)|\leq \frac{\epsilon}{12|\mathcal{X}|}$. We conclude that $p'\in\Delta_{\mathcal{X}}$. Now define the meta-probability measure ${\MP}'$ as follows:
$$\displaystyle{\MP}'=\frac{\epsilon}{6}\cdot\delta_{p'}+\left(1-\frac{\epsilon}{6}\right){\MP}_f,$$ where $\delta_{p'}$ is a Dirac measure centered at $p'$.

For every $x\in\mathcal{X}$, we have
\begin{align*}
\int_{\Delta_{\mathcal{X}}} p(x)\cdot d{\MP}'(p)&=\frac{\epsilon}{6}\cdot p'(x)+\left(1-\frac{\epsilon}{6}\right)\int_{\Delta_{\mathcal{X}}} p(x)\cdot d{\MP}_f(p)=\frac{\epsilon}{6}\cdot p'(x)+\left(1-\frac{\epsilon}{6}\right)\cdot\tilde{p}(x)\\
&=\pi_{\mathcal{X}}(x)-\tilde{p}(x)+\frac{\epsilon}{6}\cdot\tilde{p}(x) + \left(1-\frac{\epsilon}{6}\right)\tilde{p}(x)=\pi_{\mathcal{X}}(x).
\end{align*}
Therefore, ${\MP}'$ is balanced and finitely supported. Moreover,
\begin{align*}
W_1({\MP},{\MP}')&\leq W_1({\MP},{\MP}_f)+W_1({\MP}_f,{\MP}')\stackrel{(a)}{\leq} \frac{\epsilon}{2} + \|{\MP}_f-{\MP}'\|_{TV}\\
&=\frac{\epsilon}{2} + \left\|{\MP}_f-  \left(1-\frac{\epsilon}{6}\right){\MP}_f - \frac{\epsilon}{6}\cdot\delta_{p'}\right\|_{TV}\leq \frac{\epsilon}{2} + \left\|\frac{\epsilon}{6}\cdot{\MP}_f\right\|_{TV} + \left\|\frac{\epsilon}{6}\delta_{p'}\right\|_{TV}\\
&=\frac{\epsilon}{2} + \frac{\epsilon}{6}+\frac{\epsilon}{6}<\epsilon,
\end{align*}
where (a) follows from the fact that the $1^{st}$ Wasserstein metric is upper bounded by the total variation multiplied by the diameter of $\Delta_{\mathcal{X}}$ (which is equal to 1 in our case) \cite{WassersteinMetric}. We conclude that $\mathcal{MP}_{bf}(\mathcal{X})$ is dense in $\mathcal{MP}_{b}(\mathcal{X})$ which is weakly-$\ast$ closed. Therefore, $\mathcal{MP}_{b}(\mathcal{X})$ is the weak-$\ast$ closure of $\mathcal{MP}_{bf}(\mathcal{X})$.

\bibliographystyle{IEEEtran}
\bibliography{bibliofile}
\end{document}